\newtheorem{theorem}{Theorem}[section]
\newtheorem{lemma}[theorem]{Lemma}
\newtheorem{corollary}[theorem]{Corollary}
\newtheorem{prop}[theorem]{Proposition}
\newtheorem{defn}[theorem]{Definition}
\newtheorem{example}[theorem]{Example}
\newtheorem{remark}[theorem]{Remark}
\numberwithin{equation}{section}
\newcommand{\cX}{\mathcal{X}}
\newcommand{\seed}{\textbf{s}}
\newcommand{\fd}{\mathfrak{d}}
\newcommand{\fD}{\mathfrak{D}}
\newcommand{\TV}{\mathrm{TV}}
\newcommand{\Spec}{\mathrm{Spec}}
\newcommand{\bA}{\mathbb{A}}
\newcommand{\PP}{\mathbb{P}}
\newcommand{\Q}{\mathbb{Q}}
\newcommand{\fb}{\mathfrak{b}}
\newcommand{\fm}{\mathfrak{m}}
\newcommand{\Z}{\mathbb{Z}}
\newcommand{\R}{\mathbb{R}}
\newcommand{\C}{\mathbb{C}}
\newcommand{\Hom}{\mathrm{Hom}}
\newcommand{\rank}{\mathrm{rank}}
\newcommand{\WT}[1]{\widetilde{#1}}
\newcommand{\cA}{\mathcal{A}}
\newcommand{\AI}{A_\infty}
\DeclareFontFamily{U}{mathx}{\hyphenchar\font45}
\DeclareFontShape{U}{mathx}{m}{n}{
      <5> <6> <7> <8> <9> <10>
      <10.95> <12> <14.4> <17.28> <20.74> <24.88>
      mathx10
      }{}
\DeclareSymbolFont{mathx}{U}{mathx}{m}{n}
\DeclareMathAccent{\widecheck}{0}{mathx}{"71}
\DeclareMathAccent{\wideparen}{0}{mathx}{"75}
\begin{document}

\author[Bardwell-Evans]{Sam Bardwell-Evans}
\address{Department of Mathematics and Statistics\\ Boston University}
\email{sambe@bu.edu}

\author[Cheung]{Man-Wai Mandy Cheung}
\address{Kavli IPMU}
\email{manwai.cheung@ipmu.jp}

\author[Hong]{Hansol Hong}
\address{Department of Mathematics \\ Yonsei University}
\email{hansolhong@yonsei.ac.kr}

\author[Lin]{Yu-Shen Lin}
\address{Department of Mathematics and Statistics\\ Boston University}
\email{yslin@bu.edu}

\begin{abstract}
We construct special Lagrangian fibrations for log Calabi-Yau surfaces, and scattering diagrams from Lagrangian Floer theory of the fibres. Then we prove that the scattering diagrams recover the scattering diagrams of Gross-Pandharipande-Siebert \cite{GPS} and the canonical scattering diagrams of Gross-Hacking-Keel \cite{GHK}. With an additional assumption on the non-negativity of boundary divisors, we compute the disc potentials of the Lagrangian torus fibres via a holomorphic/tropical correspondence. As an application, we provide a version of mirror symmetry for rank two cluster varieties. 
\end{abstract}

\title[Scattering diagrams from holomorphic discs]{
Scattering diagrams from holomorphic discs in log Calabi-Yau surfaces}

\maketitle

\setcounter{tocdepth}{1}
\tableofcontents

\section{Introduction}

A \emph{Looijenga pair} $(Y,D)$ is 
 a projective rational surface $Y$ with an anticanonical cycle $D$. Gross-Hacking-Keel \cite{GHK} provided a pure algebraic method to construct the mirror family for $(Y,D)$ following the SYZ heuristic. 
First, a certain affine manifold $B$ with a singularity was constructed from $(Y,D)$ as an alternative to the base of an SYZ fibration, capturing the expected asymptotic behavior of a putative SYZ fibration near $D$.\footnote{In the case of a rational elliptic surface, this is indeed verified in \cite{CJL2}.} One can intuitively view it as the limiting affine structure obtained by squeezing all the singular SYZ fibres together. Then Gross-Hacking-Keel used the relative Gromov-Witten invariants counting $\mathbb{A}^1$-curves, rational curves in $Y$ intersecting $D$ exactly at a single point, to serve as substitutes for quantum correction from holomorphic discs of Maslov index zero in $X:=Y\setminus D$ with boundaries on the SYZ fibres.
 The enumeration of the $\mathbb{A}^1$-curves decodes the so-called \emph{canonical scattering diagram} on $B$. The canonical scattering diagram intuitively tells how the local charts glue together to form a subset of the mirror family, which can be viewed as the mirror family for the non-compact Calabi-Yau surface $X$. In practice, the canonical scattering diagram is a limit of scattering diagram constructed by Gross-Siebert-Pandharipande \cite{GPS} with certain modification. 
 
The symplectic counterpart of the canonical scattering diagram has not been fully understood  previously. The major difficulty lies in analyzing the moduli spaces of holomorphic discs, which can be highly obstructed, especially when their Maslov indices are zero, due to possible contributions of multiple covers. In \cite{L8}, the fourth author established a general scheme for constructing a scattering diagram out of an SYZ fibration in dimension $2$ using \emph{Fukaya's trick} \cite{F1}, which will be summarized in Theorem \ref{LF scattering diagram} (see also \cite{Y5} for a comprehensive exposition). The scattering diagram sits in the base of the SYZ fibration, and the rays (or walls) consist of torus fibres on which boundaries of Maslov zero discs lie.
Throughout, it will be denoted by $\mathfrak{D}^{LF}$ where the superscript ``\emph{LF}" highlights the role of Lagrangian Floer theory (of SYZ fibres) in its construction. 

Following the SYZ spirit, we construct a special Lagrangian fibration on $X:=Y \setminus D$ (see  Lemma \ref{Lagrangian fibration}), but this does not immediately reproduce the canonical scattering diagram in \cite{GHK}. The main obstacle here is that the moduli space of Maslov index zero stable discs may lose its compactness after removing $D$. A priori, it is possible that the limiting configurations involve negative Chern number spheres lying in $D$. Note that $X$ itself is neither geometrically bounded nor complete.
Understanding such contributions is already a challenging task that usually requires intricate virtual techniques.
To remedy the situation, we make use of the fact that $(Y,D)$ admits a toric model $(\bar{Y},\bar{D})$, i.e. that there exists a non-toric blowup $Y\rightarrow \bar{Y}$ up to a birational modification of the boundary divisor $D$. 
Let us call SYZ fibres in $Y \setminus D$ pulled-back from toric fibres (and sitting away from small neighborhoods of singular fibres) \emph{admissible SYZ fibres}. Their Floer theory is somewhat easier to deal with, as Maslov index zero discs with boundaries on an admissible fibre correspond injectively to holomorphic discs with boundary on a toric fibre in $\bar{Y}$. The latter are essentially determined by their intersection profiles with the toric divisor $\bar{D}$, which are prescribed by the singular fibres that the Maslov index zero discs pass through. Then we argue that the limiting configuration of such discs with a given intersection profile cannot have a sphere bubble, as otherwise we would have a disc with components of higher genus. See Section \ref{sec: compactness} for more details.

Having demonstrated compactness, we can now implement the argument in \cite{L8} to produce a scattering diagram on the portion of the SYZ base parametrizing admissible fibres. Our first task is to match our scattering diagram with the scattering diagram constructed in Gross-Pandharipande-Siebert \cite{GPS}.
The source of \emph{initial} rays are singular fibres associated with each point in the blowup center for $Y \to \bar{Y}$. This can be seen from the local model of a non-toric blowup in \cite{A5}, which we incorporate into our construction of the SYZ fibration on $Y \setminus D$. One can easily see that these initial rays coincide with the initial data of the scattering diagram of \cite{GPS} (or \cite{GHK}), but
another subtlety arises at this point. While ideally one would expect that the initial rays described above are the only rays coming out from a neighborhood of a singular fibre, as is the case in \cite{GPS}, this is not automatically true in our geometric situation, 
since we do not have control over the holomorphic discs with boundaries on non-admissible SYZ fibres. In other words, there might be a Maslov index zero disc with boundary on an admissible fibre that crosses a neighborhood of a singular fibre, in which case one would find an additional ray seemingly sourced from this region.
The resolution of this subtlety comes from the ideas of tropical geometry. 
Having in mind the one-to-one correspondence between discs with boundary on admissible fibres in $Y$ and those with boundary on toric fibres in $\bar{Y}$, we assign a tropical disc to each holomorphic disc in $\bar{Y}$ that has precisely the same intersection profile with $\bar{D}$. The existence of such a tropical disc, in turn, provides a strong obstruction to the existence of holomorphic discs with boundaries on admissible SYZ fibres. With the help of a tropical analogue of Gromov compactness (see Section \ref{sec: LF/GPS}), we recover the Gross-Siebert-Pandharipande scattering diagram from Lagrangian Floer theory. Following the same procedure as in \cite{GHK}, from the Gross-Siebert-Pandharipande scattering diagrams to the canonical scattering diagram, we reach the following theorem:
\begin{theorem}[see Theorem \ref{correspondence: scattering diagram} and discussion in Section \ref{sec: can}]\label{thm:intromain}
One can recover the Gross-Hacking-Keel canonical scattering diagram from the Lagrangian Floer theory of the admissible SYZ fibres in $Y\setminus D$.	
\end{theorem}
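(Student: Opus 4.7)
The plan is to prove the theorem via an intermediate comparison with the Gross-Pandharipande-Siebert scattering diagram $\fD^{GPS}$, and then invoke the identification of $\fD^{GPS}$ with the canonical scattering diagram of \cite{GHK} that is discussed in Section \ref{sec: can}. Concretely, I would first use Lemma \ref{Lagrangian fibration} to fix a special Lagrangian fibration $\pi\colon X \to B$, isolate the locus $B^{\mathrm{adm}}\subset B$ parametrizing admissible SYZ fibres (those pulled back from toric fibres of the toric model $Y\to\bar{Y}$), and feed this into the machinery of Theorem \ref{LF scattering diagram} to produce the Lagrangian Floer scattering diagram $\fD^{LF}$ on $B^{\mathrm{adm}}$. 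The theorem then reduces to the statement $\fD^{LF}=\fD^{GPS}$ restricted to $B^{\mathrm{adm}}$.

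The first half of the comparison is identifying initial rays. For an admissible fibre $L$, the injection from Maslov zero holomorphic discs with boundary on $L$ to discs in $\bar{Y}$ with boundary on the corresponding toric fibre lets me read off the moduli from intersection profiles with $\bar{D}$; as sketched around Section \ref{sec: compactness}, this rigidity rules out sphere bubbles inside $D$ after $D$ is removed, yielding the compactness needed to define $\fD^{LF}$ at all. The local model of a non-toric blowup from \cite{A5} then shows that a small neighborhood of each singular fibre emits a single initial ray whose attached wall-crossing function coincides with the initial datum of $\fD^{GPS}$ attached to the corresponding blowup point.

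The subtler half is ruling out extraneous rays, since a priori a Maslov zero disc with boundary on an admissible fibre could cross a neighborhood of a singular fibre and create a spurious wall there. I would eliminate this possibility by attaching to each such disc, via its intersection profile with $\bar{D}$, a tropical disc in $B$ and applying the tropical analogue of Gromov compactness from Section \ref{sec: LF/GPS}: any putative non-initial ray emerging from a singular fibre would lift to a tropical disc rooted in that neighborhood, which a combinatorial dimension/valence count would rule out. Once no such extra initial data can exist, the consistency of $\fD^{LF}$ together with the Kontsevich-Soibelman uniqueness theorem for consistent scattering diagrams with prescribed initial data force $\fD^{LF}=\fD^{GPS}$. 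Combining this with Section \ref{sec: can} yields the theorem. The principal obstacle is precisely this tropical no-wall argument, since the non-compactness of $X$ together with the possibility of disc boundaries migrating near singular fibres forces the compactness and tropical arguments to be run in tandem; the admissibility hypothesis and the bijection with discs in $\bar{Y}$ are exactly what keep this workable.
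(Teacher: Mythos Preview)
Your outline tracks the paper's strategy: construct the fibration, establish compactness for admissible fibres, identify initial rays via the Auroux local model, control non-initial rays tropically, and close with Kontsevich--Soibelman uniqueness and Section~\ref{sec: can}.

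The gap is in your treatment of the extraneous rays. You claim that a tropical disc rooted near a singular fibre but not equal to an initial ray would be ``ruled out'' by a ``combinatorial dimension/valence count''. No such count exists: tropical discs with the prescribed unbounded-edge constraints (edges lying on the initial lines of $\fD^{GPS}_{in}$) do occur in abundance, and nothing about their dimension or valence forbids them. What the paper proves instead (Lemma~\ref{no initial}, resting on the tropical Gromov-compactness Lemma~\ref{lower bound vertices}) is an \emph{energy} bound: any non-initial ray of $\fD^{LF}_\epsilon$ that enters a suitable convex region $B_{\lambda,\epsilon}\subset B_\epsilon$ carries a disc class of symplectic area at least $\lambda=\lambda(\epsilon)$, with $\lambda(\epsilon)\to\infty$ and $B_{\lambda,\epsilon}\to M_\R$ as $\epsilon\to 0$. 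Consequently the comparison with $\fD^{GPS}$ is only established modulo $T^\lambda$ on $B_{\lambda,\epsilon}$, and the full statement is obtained as the limit $\epsilon\to 0$ (Theorem~\ref{correspondence: scattering diagram}). Your sketch treats the admissible base $B^{\mathrm{adm}}$ as fixed and omits this filtration and limiting apparatus entirely; without it the uniqueness step cannot be invoked, because you have not shown that the two scattering diagrams share the same initial data at every energy level.
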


The last step of the proof involves the construction of the canonical scattering diagram from the Gross-Pandharipande-Siebert diagram following \cite[Section 3.4]{GHK}. As a result, we obtain an algorithm entirely based on Lagrangian Floer theory that derives the canonical scattering diagram of Gross-Hacking-Keel. An immediate corollary of the above theorem is to establish the equivalence between open Gromov-Witten invariants and log Gromov-Witten invariants via tropical geometry.
\begin{corollary}[Corollary \ref{open-closed}]
For a Looijenga pair $(Y,D)$ and the SYZ fibration on $Y\setminus D$ in Theorem \ref{thm:intromain}, the sum of open Gromov-Witten invariants of a fixed relative class for admissible SYZ fibres is the log Gromov-Witten invariant for the corresponding curve class.
\end{corollary}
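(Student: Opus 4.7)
The plan is to deduce this identity directly from Theorem \ref{thm:intromain} by comparing the wall-crossing functions of the two scattering diagrams it identifies. On each wall $\fd \subset B$ of $\mathfrak{D}^{LF}$, the attached function takes the form
\begin{equation*}
f_{\fd}^{LF} \;=\; \exp\Bigl(\sum_{\beta} n_\beta\, z^{\partial \beta}\Bigr),
\end{equation*}
where $\beta$ ranges over relative classes of Maslov-zero discs with boundary on admissible SYZ fibres whose boundary lies along $\fd$, and $n_\beta$ is the associated open Gromov-Witten invariant; the compactness established earlier in the paper ensures this is unambiguous. On the corresponding wall of the Gross-Hacking-Keel canonical scattering diagram, the wall function takes the form
\begin{equation*}
f_{\fd}^{can} \;=\; \exp\Bigl(\sum_{\gamma} k_\gamma\, N_\gamma\, z^{[\partial \gamma]}\Bigr),
\end{equation*}
indexed by $\mathbb{A}^1$-curve classes $\gamma$ with log Gromov-Witten invariant $N_\gamma$ and tangency multiplicity $k_\gamma$.

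The second step is to match relative disc classes with $\mathbb{A}^1$-curve classes. Using the toric model $(\bar{Y}, \bar{D})$ and the one-to-one correspondence between Maslov-zero discs with boundary on admissible fibres and holomorphic discs with boundary on toric fibres of $\bar{Y}$, each disc acquires a well-defined intersection profile with $\bar{D}$; capping the disc by the vanishing thimble carried by the relevant singular fibre and pushing the outcome through the non-toric blowup $Y \to \bar{Y}$ assigns to every relative class $\beta$ a closed curve class $\overline{\beta}$ realized by an $\mathbb{A}^1$-curve in $(Y,D)$. Once this dictionary $\beta \mapsto \overline{\beta}$ is in place, the equality $f_{\fd}^{LF} = f_{\fd}^{can}$ provided by Theorem \ref{thm:intromain} permits coefficient extraction: taking logarithms and comparing the coefficients of $z^{[\partial \gamma]}$ on the two sides yields
\begin{equation*}
\sum_{\beta:\, \overline{\beta} = \gamma} n_\beta \;=\; k_\gamma\, N_\gamma,
\end{equation*}
which is exactly the asserted identity between the (sum of) open Gromov-Witten invariants of relative classes capping to $\gamma$ and the log Gromov-Witten invariant of $\gamma$.

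The principal obstacle I anticipate is organizing the multiplicities on the two sides so that coefficient comparison is fully rigorous. Several relative disc classes may share the same capped curve class, which is precisely why the statement appears as an equality of sums rather than of single invariants; conversely, the tropical/tangency multiplicities entering $k_\gamma$ on the closed side must be reproduced by the weighted disc counts feeding into $n_\beta$ on the open side. The tropical correspondence of Section \ref{sec: LF/GPS}, together with the multiplicity formulas for disc counts that underlie the construction of $\mathfrak{D}^{LF}$, should supply exactly the bookkeeping needed for this last step, but this is where the bulk of the technical content of the corollary sits.
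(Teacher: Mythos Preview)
Your overall strategy---identify the two scattering diagrams via Theorem~\ref{thm:intromain} and then read off the identity of invariants by comparing wall functions---is exactly what the paper does. But two substantive points separate your argument from the paper's, and the second is a genuine gap.

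First, you compare $f_\fd^{LF}$ with the wall function of the \emph{canonical} scattering diagram $\mathfrak{D}^{can}$. The paper's working comparison (Theorem~\ref{correspondence: scattering diagram}) is with $\mathfrak{D}^{GPS}$, where walls are genuinely separated in the plane (initial walls through distinct blowup points, scattered walls emanating from distinct collision points) and the wall functions carry the variable $t^{\beta}$ recording the full curve class. Passing to $\mathfrak{D}^{can}$ collapses all parallel rays into one and multiplies their wall functions together; this is precisely the step that would force you into the summed identity $\sum_{\beta:\bar\beta=\gamma} n_\beta = k_\gamma N_\gamma$ rather than an equality of individual invariants.

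Second---and this is the missing idea---the paper in fact proves the sharper statement $\tilde\Omega(\gamma;u)=N_{\bar\gamma}$ for a single relative class $\gamma$ (see the precise formulation of Corollary~\ref{open-closed}). What makes this possible is an injectivity statement: the curve class $\bar\gamma$ determines the relative class $\gamma$, and in particular determines the affine line $l$ along which the open invariant can be nonzero. The paper extracts from $\bar\gamma$ the intersection multiplicities $p_i$ via $\bar\gamma=\pi^*\pi_*\bar\gamma-\sum_i p_iE_i$, which fixes the directions and weights of all unbounded edges of the associated tropical disc; then a tropical Menelaus-type result (Mikhalkin, \cite[Proposition~6.12]{M8}) pins down the position of the remaining edge and hence the unique ray in $\mathfrak{D}^{GPS}$ carrying this class. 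Only with this in hand does coefficient extraction give an equality of single invariants rather than a sum. You correctly flag the multiplicity bookkeeping as the crux, but the tool that resolves it---this tropical rigidity statement---does not appear in your sketch. Your capping construction (via a vanishing thimble) is also not the paper's; the paper caps $\gamma$ by a cylinder $\mathrm{Cyc}_{\partial\gamma}$ over an affine ray to infinity and takes the one-point compactification in $\widetilde Y$.
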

 Notice that the former invariants are defined via symplectic geometry, whereas the latter is purely algebro-geometric. The novelty of our approach of establishing a tropical/holomorphic correspondence is that we can bypass the direct comparison of virtual fundamental cycles for moduli spaces in symplectic geometry and algebraic geometry, which would cause a huge amount of technicalities. It is worth mentioning that the later invariants in Corollary 1.2 are equivalent to many other types of enumerative invariants from the work of Bousseau-Brini-Van Garrel \cite{BBV}, all defined via algebraic geometry.\\

The mirror of $(Y,D)$ is a Landau-Ginzburg superpotential $W$, which is a holomorphic function on some complex manifold\footnote{In symplectic setting, it is a function on the rigid analytic variety over $\Lambda$.}, expected to capture the geometry of $(Y,D)$. For instance, if $Y$ is a toric Fano manifold, then $W$ is simply the generating functions of Maslov index two discs with boundaries on the moment fibres \cite{CO}.  
Furthermore, \cite{G9} introduced the notion of broken lines, which are expected to be the tropicalization of Maslov index two discs with boundaries on SYZ fibres. Later, it is further developed by Carl-Pumperla-Siebert \cite{CPS}. Enumeration of broken lines results in the theta functions, which are well-defined global functions on the mirror family of $(Y,D)$, due to their compatibility with the wall-crossing behavior with respect to the canonical scattering diagram. In particular, the spectrum of the algebra generated by the theta functions provide the mirror family of $(Y,D)$. 
On the mirror side, Pascaleff \cite{P5} constructed a non-canonical correspondence between theta functions and certain generators of $SH^0(X)$ assuming $X=Y\setminus D$ is an affine surface (or an exact 4-dimensional symplectic manifold). Later, Ganatra-Pomerleano \cite{GP} generalized the result to higher dimensional cases. 

We will prove that $W$ can be calculated tropically, again by establishing the tropical/holomorphic correspondence for the Maslov index two discs. Namely, the tropical counterpart of the Maslov index two discs is shown to be exactly the broken lines in \cite{GHK}, which confirms the general expectation mentioned at the beginning. For instance, wall-crossing in the holomorphic setting is caused by the bubbling of a Maslov index two disc into a union of Maslov index two and zero discs, and bending of a broken line precisely reflects this phenomenon. Hence, our method provides an explicit (algorithmic) count of Maslov index two discs, which would be very difficult if directly looking at holomorphic discs themselves. The following is a consequence of Theorem \ref{thm: main} and Lemma \ref{blow down}:
\begin{theorem}\label{thm:winintro}
	Let $(Y,D)$ be a Looijenga pair such that 
	$D$ contains no negative Chern number spheres, and let $L$ be an admissible SYZ fibre. Then the number of Maslov index two discs with boundaries on $L$ in class $\beta\in H_2(Y,\mathbb{Z})$ equals to the weighted count of broken lines with stop at $u$ in class $\beta$ and suitable unbounded directions.
\end{theorem}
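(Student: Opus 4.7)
The plan is to extend the tropical/holomorphic dictionary used for Maslov index zero discs in Theorem \ref{thm:intromain} to Maslov index two discs, where on the tropical side a broken line ending at the base point $u$ of $L$ plays the role of the Maslov index zero tropical disc. The relative class $\beta$ pins down the intersection numbers $\beta\cdot D_i$ with the components of $D$; exactly one of them equals one (the ``stop direction''), and the remaining non-zero intersections prescribe bendings.

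First I would set up the moduli space of Maslov index two stable discs with boundary on the admissible fibre $L$ in class $\beta$ passing through the marked point over $u$, and prove its compactness. Admissibility of $L$ lets me lift each such disc to a toric disc in $\bar{Y}$ as in Section \ref{sec: compactness}, so any bubbling limit decomposes into a single Maslov index two disc joined to a (possibly empty) union of Maslov index zero discs, plus possibly sphere components in $D$. The hypothesis that no component of $D$ has negative Chern number is used exactly here to exclude such sphere bubbles in the stable limit, paralleling the Maslov index zero argument. This yields a compact moduli space whose boundary strata record precisely the configurations responsible for wall-crossing.

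Next I would tropicalize. To each holomorphic Maslov index two disc I assign a tropical disc in the SYZ base $B$ by the same procedure used for Maslov index zero discs: images of nodes under the base projection determine vertices, the intersection profile with $\bar{D}$ assigns edge weights, and the unique boundary marked point maps to $u$. Compactness forces the resulting tropical object to be a tree with a single unbounded ``stop'' edge, whose primitive direction matches the component $D_i$ with $\beta\cdot D_i=1$, and whose bendings at walls of $\fD^{LF}$ reflect the Maslov index zero bubblings. This is precisely the data of a broken line with stop at $u$ and the prescribed infinity direction in the sense of \cite{GHK}.

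Finally, the multiplicities must be matched. The weight of a broken line in \cite{GHK} at each bend is read off from the scattering function on the adjacent wall, and by Theorem \ref{thm:intromain} this equals the generating series of Maslov index zero disc counts on the corresponding wall of $\fD^{LF}$; multiple-cover contributions at each bend then match term by term. Summing open Gromov-Witten contributions in the fixed class $\beta$ according to the combinatorial type of the associated tropical disc therefore reproduces the weighted broken line count. The main obstacle is the compactness step: the no-negative-Chern-number hypothesis has to be leveraged carefully to rule out sphere components in $D$ in the limit, since unlike the Maslov index zero case the moduli problem in $Y\setminus D$ is open and degenerations into $D$ are a priori possible. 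Once the compactness and weight bookkeeping are in place, the theorem follows by the same tropical Gromov compactness argument underlying Theorem \ref{thm:intromain}.
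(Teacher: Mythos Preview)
Your approach has a genuine gap at the multiplicity-matching step. You propose to assign to each holomorphic Maslov two disc a broken line by ``projecting nodes to vertices'' and then claim that summing open Gromov--Witten contributions by combinatorial type reproduces the weighted broken-line count. But the paper's only tropicalization tool (Lemma~\ref{weak correspondence}) merely produces \emph{some} tropical disc with the correct intersection profile, via the Ronkin function and spine construction; it gives neither a canonical assignment nor any control on multiplicities. To make your argument work you would need a genuine enumerative tropical/holomorphic correspondence for discs --- that the virtual count of holomorphic discs of each combinatorial type equals the corresponding product of wall-function coefficients --- and no such statement is available here. The sentence ``multiple-cover contributions at each bend then match term by term'' is exactly the assertion to be proved, not a consequence of Theorem~\ref{thm:intromain}, which only identifies the wall functions.

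The paper avoids this difficulty entirely by never attempting to match individual discs to individual broken lines. Instead (Theorem~\ref{thm: main} via Lemmas~\ref{initial discs MI=2} and~\ref{split attractor flow}) it shows that the holomorphic count $n_\beta(u)$ and the broken-line count $n^{\mathrm{trop}}_\beta(u)$ satisfy the \emph{same recursion} and the same initial condition, hence agree by induction on the maximal number of bends $n(\beta;u)$. One follows the affine ray from $u$ in direction $\partial\beta$ with $\omega(\beta)$ decreasing: if $n_\beta$ stays constant, Lemma~\ref{weak correspondence} forces $\beta=\beta_i+\alpha$ (base case); if $n_\beta$ jumps at a wall, Lemma~\ref{wall-crossing}(1) expresses it via the wall-crossing operator $\mathcal{K}_{\mathfrak{d}}$ applied to counts of strictly smaller area on the far side. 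Broken-line counts obey the identical recursion by definition, and the operators agree by Theorem~\ref{correspondence: scattering diagram}. The semi-Fano hypothesis on $\widetilde{D}$ is used not for a separate compactness argument but to guarantee that $n_\beta$ is well-defined and obeys clean wall-crossing (no negative Maslov discs). Finally, Lemma~\ref{blow down} transfers the result from $\widetilde{Y}$ to $Y$ along the toric blowdown $\pi'$, a step your sketch does not address.
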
 	
Theorem \ref{thm:winintro} implies that the theta functions associated with $D_i$ in \cite{GHK}, an irreducible component of $D$, agree with the generating functions of Maslov index two discs that intersect $D_i$. 
We remark that the computation of the mirror of toric semi-Fano surfaces will be significantly used as a stepping stone in the proof of Theorem \ref{thm:winintro}. It has been already done by \cite{CL}, but we will examine in Section \ref{subsec:spbintrop} a tropical interpretation of their calculation which is of its own interest. It is worthwhile to mention that for the symplectic form chosen here, $Y$ is not a monotone symplectic manifold nor do we impose the monotone assumption on the SYZ fibres. Therefore, the wall-crossing results of Seidel \cite{S11} or Pascaleff-Tonkonog \cite{PT} do not directly apply here. As an application, we compute the LG mirror of the degree $5$ del Pezzo surface, and verify closed-string mirror symmetry (Theorem \ref{thm:maindp5}).

Additionally, we explore a few more examples that do not exactly fit into the setting of Theorem \ref{thm:winintro}, including a non-Fano surface. More specifically, we find a tropical (and hence holomorphic, through our correspondence) argument that can cover classic examples of Auroux \cite{A, A5} and cubic surfaces \cite{GHKS}.
Mirror symmetry can further extend to non-Fano manifolds with similar pictures, although the superpotential is much harder to compute in the non-Fano cases due to the possible bubbling phenomena. Nevertheless, we expect that the method here generalizes beyond the semi-Fano case by understanding the degeneration of Looijenga pairs.

Last but not least, Theorem \ref{thm:intromain} has a nice application to cluster varieties of rank 2. 
There are two types of cluster varieties, namely the $\cA$ and the $\cX$ cases. 
Fock-Goncharov \cite{FGX} conjectured that the $\cA$ cluster varieties are mirror to the Langlands dual of the $\cX$ varieties, and vice versa. 
Roughly speaking, taking Langlands dual is taking negative transpose of the exchange matrix of the original cluster structure. We will discuss more about Lauglands dual in Section \ref{subsubsec:Langlandsdual}.
On the $\cA$ side, there is a natural torus action, denoted by $T_{K^{\circ}}$, on the $\cA$ cluster variety while there is a fibration of the $\cX$ to a torus $T_{K^*}$ on the $\cX$ side. We refer the readers to Section \ref{sec:clusterdata} on the definition for cluster varieties, $K^{\circ}$, and $K^*$.
By understanding the toric models of the rank 2 cases, we employ the scattering diagram construction in Gross-Pandharipande-Siebert \cite{GPS}.
Then we identify the Gross-Pandharipande-Siebert scattering diagrams with the fibers of $\cX$ cluster scattering diagrams. 
Hence we obtain the following theorem, and we refer the readers to Theorem \ref{thm:cluster} for the precise statement. 
\begin{theorem} 
  Consider the rank 2 cluster varieties. 
  The quotient of the $\cA$ cluster varieties by $T_{K^{\circ}}$ are mirror to the the Langlands dual ${}^L\cX_e$ cluster varieties, where ${}^L\cX_e$ is the fibre of the Langlands dual $\cX$ family at $e \in T_{K^*}$. 
\end{theorem}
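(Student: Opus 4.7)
The plan is to leverage the identification between Lagrangian Floer, GPS, and GHK canonical scattering diagrams established in Theorem~\ref{thm:intromain}, and then match these two-dimensional scattering diagrams with the appropriate fibre of the $\cX$-cluster scattering diagram of the Langlands dual. The theorem is ultimately a statement about scattering diagrams, so the proof will rely on identifying the two diagrams on the nose (walls and attached automorphisms).

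First, I would exhibit the quotient $\cA/T_{K^\circ}$ of a rank 2 $\cA$-cluster variety as the interior $Y\setminus D$ of a Looijenga pair $(Y,D)$. Rank 2 cluster varieties are built from a seed toric pair by a sequence of mutations, each of which corresponds to a non-toric blowup of a toric divisor component. The torus $T_{K^\circ}$ accounts precisely for the frozen variables and the kernel directions of the exchange matrix, so modding out collapses the redundancy and leaves a two-dimensional log Calabi--Yau surface with the desired Looijenga compactification. This puts us inside the setup of Theorem~\ref{thm:intromain}.

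Second, apply Theorem~\ref{thm:intromain} to $(Y,D)$: the canonical scattering diagram $\fD^{\mathrm{can}}$ is recovered as $\fD^{LF}$ on the portion of the SYZ base parametrizing admissible fibres, and by the intermediate step of the proof agrees with the GPS scattering diagram $\fD^{GPS}$ built from the non-toric blowup data. The GHK mirror family of $(Y,D)$, defined via theta functions associated to broken lines on $\fD^{\mathrm{can}}$, is therefore the mirror of the quotient $\cA$-cluster variety.

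Third, and this is the crux, identify $\fD^{GPS}$ with the fibre over $e \in T_{K^*}$ of the $\cX$-cluster scattering diagram for ${}^L\cX$. The Fock--Goncharov duality exchanges the lattices $K^\circ$ and $K^*$ and swaps the roles of the kernel/frozen directions with the cokernel/fibration directions; under this swap the $T_{K^\circ}$-quotient on the $\cA$-side corresponds to restricting to the unit fibre of the $T_{K^*}$-fibration on the Langlands dual $\cX$-side. In rank 2, that fibre is two-dimensional, and its cluster scattering diagram is generated by the same incoming rays as the GPS scattering diagram, with the same attached wall-crossing automorphisms (since both are governed by the two-variable mutation formulas). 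The consistency of both diagrams then forces equality via the uniqueness results for consistent scattering diagrams. Together with the previous step, this gives ${}^L\cX_e = \Spec$ of the theta function algebra $=$ GHK mirror of $\cA/T_{K^\circ}$, proving the theorem.

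The main obstacle is step three: the wall-by-wall matching of $\fD^{GPS}$ with the restricted ${}^L\cX$-scattering diagram. Unlike the holomorphic/tropical correspondence, which is a geometric identification of walls from disc enumeration, this step is combinatorial and requires careful bookkeeping of the Langlands dual isomorphism between $K^\circ$ and $K^*$, of sign conventions in the mutation formulas for $\cA$ versus $\cX$, and of how the frozen directions of $\cA$ pass to fibration directions of ${}^L\cX$. One must verify not only that the rays coincide, but that the attached binomial automorphisms exactly match the wall functions produced by the GPS procedure, which is the content of Theorem~\ref{thm:cluster} referenced in the statement.
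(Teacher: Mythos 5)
Your overall route is the paper's: realize $\cA/T_{K^{\circ}}$ (up to codimension two) as the complement of the boundary in a blowup of the toric variety of the quotient fan, attach to it the Gross--Pandharipande--Siebert diagram, match initial walls with those of the Langlands dual $\cX_e$ cluster scattering diagram, and conclude by uniqueness of the consistent completion (Theorem \ref{consistency}), with Theorem \ref{correspondence: scattering diagram} supplying the Floer-theoretic interpretation. But the crux, your third step, is asserted rather than proved: you say the two diagrams have ``the same attached wall-crossing automorphisms (since both are governed by the two-variable mutation formulas)'' and then defer the bookkeeping to ``the content of Theorem \ref{thm:cluster}'' --- which is circular, since that bookkeeping \emph{is} the theorem. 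Concretely, what has to be checked is that the GPS initial wall attached to the blowup locus $\bar{Z}_i=\bar{D}_i\cap \bar{V}\bigl((1+z^{\bar{v}_i})^{\mathrm{ind}\, d_i\bar{e}_i}\bigr)$ is $\bigl(\R\cdot d_i\bar{e}_i,\ (1+z^{d_i\bar{e}_i})^{\mathrm{ind}\,\bar{v}_i}\bigr)$: the exponent comes from the number $\mathrm{ind}\,\bar{v}_i$ of distinct points in $\bar{Z}_i$, while on the dual side the exponent is the index of $v_i$ in ${}^LM=M^{\circ}$; this ``exponent swap'' is exactly the Langlands-duality phenomenon and is not automatic from the mutation formulas. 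Moreover, when $d_i\bar{e}_i$ fails to be primitive in the quotient lattice $\bar{N}^{\circ}$ the relevant blowup is an orbifold blowup, and one needs the orbifold GPS result (\cite[Theorem 5.6]{GPS}, cf.\ Section \ref{subsubsec:orbiblowupGPS}) to see that the wall function is still $1+z^{d_i\bar{e}_i}$ per point; your argument does not address this case at all, and it is why the paper's Floer-theoretic upgrade is only stated when the $d_i\bar{e}_i$ remain primitive.

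Two further inaccuracies in your setup: the paper's fixed data has \emph{no} frozen variables, and $T_{K^{\circ}}$ comes from $K=\ker p^*$, i.e.\ the degenerate directions of the rank-two skew form --- not from frozen directions, so your description of the quotient is off. Also, the identification of $\cA/T_{K^{\circ}}$ with $U_{\seed,\cA_k}$ up to codimension two requires the standing hypothesis (or a reduction to it) that the $p^*(e_i)$ are nonzero and pairwise non-proportional, so that the lemmas of \cite{ghk_bir} on mutation-invariance of the blowup model apply; without stating this, your first step does not yet produce a well-defined Looijenga-type pair independent of the seed. Finally, it is not one blowup per mutation: there is one (possibly orbifold, possibly non-reduced) blowup locus per index $i$ of the fixed seed, all of whose initial walls pass through the origin because the blowup points lie on $|z^{q_i}|=1$.
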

 
The organization of the paper is as follows. After reviewing preliminary materials in Section \ref{sec:prelim1}, we revisit Floer theory of Lagrangian fibres in toric surfaces in Section \ref{sec:holodisctoric}, where we mostly focus on \emph{tropicalization} of holomorphic discs in a sense that will be clarified therein. Section \ref{sec:constsyznon-toric} is devoted to constructing an SYZ fibration on a non-toric blowup of a toric surface and analyzing the moduli of Maslov index zero discs, which leads to the scattering diagram $\mathfrak{D}^{LF}$. We then prove our main theorem  in Section \ref{sec:mainholotrop}, stating that $\mathfrak{D}^{LF}$ coincides with $\mathfrak{D}^{GPS}$. Section \ref{sec:apptocluster} provides a cluster duality interpretation of the mirror symmetry studied in this paper. Finally, as an application, we compute the LG mirror and show the closed string mirror symmetry for the del Pezzo surface of degree 5 in Section \ref{sec:msdp5cal}.

\subsection*{Acknowledgments}
We thank Mark Gross, Yoosik Kim, Siu-Cheong Lau, Travis Mandel, Grigory Mikahlkin, Johannes Rau, Tony Yue Yu for their valuable comments.
The second named author is partially supported by NSF grant DMS-1854512.
The third named author is supported by the National Research Foundation of Korea (NRF) grant
funded by the Korea government (MSIT) (No. 2020R1C1C1A01008261). The last named author is partly supported by a Simons Collaboration Grant for Mathematician.  

\subsection*{Notation}
\begin{enumerate}
\item[$\bullet$] Let $N\cong \mathbb{Z}^2$ be a lattice and $M:=\mbox{Hom}(N,\mathbb{Z})$ be the dual lattice. Let $N_{\mathbb{R}}:=N\otimes \mathbb{R}$ and $M_{\mathbb{R}}:=M\otimes \mathbb{R}$. Let $\Sigma\subseteq  M_{\mathbb{R}}$ be a toric fan and $\bar{Y}:=Y_{\Sigma}$ be the associated toric surface. We denote the toric boundary divisor by $\bar{D}$ and its irreducible component by $\bar{D}_i$. We will write $m_i$ for the primitive generator of the $1$-cone corresponding to $\bar{D}_i$ in $\Sigma$. 
\item[$\bullet$] Let $p \colon \bar{Y} \rightarrow P\subseteq N_{\mathbb{R}}$ be the moment map fibration, and let $P$ denote the moment polytope. We will write $\mbox{Log} \colon \bar{Y}\backslash \bar{D} \cong (\C^\ast)^2 \rightarrow \R^2, \ (X,Y) \mapsto (\log |X| , \log |Y|)$. Since fibres of $p$ are fibres of $\mbox{Log}$ and vice versa, we have $p\circ \mbox{Log}^{-1}:\mathbb{R}^2\rightarrow \mbox{Int}P$ is a diffeomorphism. 
For each $u\in \mbox{Int}P$, we denote $L_u$ the moment fibre over $u$.
\item[$\bullet$] Fixing a Lagrangian $L$, we denote by $\mathcal{M}_{k+1, l}(L, \beta, J)$ the moduli space of stable bordered $J$-holomorphic discs representing the class $\beta$ in $H_2(X, L)$ with $k+1$ boundary marked points and $l$ interior marked points with respect to the \textit{standard toric complex structure} $J$ on $X$ in \cite{Gu}.
\item[$\bullet$] $\Lambda_0$ denotes the Novikov ring over the real numbers $\R$,
\begin{equation}
\Lambda_0:=\left\{\sum_{i=1}^{\infty} a_i T^{\lambda_i} \mathbin{|}  \lambda_i \geq 0, \displaystyle\lim_{i \rightarrow \infty} \lambda_i= \infty \text{ and }a_i \in \C\right\}. \nonumber
\end{equation}
There is a non-Archimedean valuation $\mathrm{val} \colon \Lambda_0 \rightarrow \R,$  
\begin{equation}
\mathrm{val}\left(\sum_{i} a_i T^{\lambda_i}\right)= \inf\{ \lambda_i \mathbin{|} a_i \neq 0 \} \text{ and } \mathrm{val}(0)=\infty. \nonumber
\end{equation}
The maximal ideal of $\Lambda_0$ is denoted by $\Lambda_+:=\mathrm{val}^{-1}((0, \infty)).$ 
\end{enumerate}

\section{Preliminaries}\label{sec:prelim1}
\subsection{Tropical Geometry}\label{subsec:tropgeomintro}
The modified SYZ conjecture \cite{KS4} suggests that a Calabi-Yau manifold collapses to an integral affine manifold (possibly with singularities) toward the large complex structure limit. It has been folklore that holomorphic curves in Calabi-Yau manifolds should converge to $1$-skeletons in the integral affine manifolds satisfying a certain balancing condition. These 1-skeletons are referred to as tropical curves. 
The pioneering work of Mikhalkin \cite{M2} shows that the enumeration of holomorphic curves in toric surfaces equals the weighted count of tropical curves.
Since tropical curves are purely combinatorial objects, establishing a correspondence between tropical and holomorphic curves provides a powerful tool for enumerative geometry.

In this section, we recall some knowledge of tropical geometry that we will use later. Let us first give the definition of tropical discs. 
\begin{defn} 
	A parametrized tropical disc with end at $u$ in $M_{\mathbb{R}}$ is a triple $(h,\mathcal{T},w)$ satisfying the following properties: 
	\begin{enumerate}
		\item[(i)] $\mathcal{T}$ is a rooted tree that possibly contains unbounded edges. If $x$ is the root, then $\mathcal{T}$ has only trivalent vertices besides $x$. The set of vertices is denoted by $\mathcal{T}^{[0]}$, and the set of edges is denoted by $\mathcal{T}^{[1]}$. 
		\item[(ii)] $h:\mathcal{T}\rightarrow M_{\mathbb{R}}$ is a map such that $h(x)=u$, and $h(e)$ is an embedding of an affine line segment (resp. an affine ray) if $e \in \mathcal{T}^{[1]}$ is bounded (resp. unbounded).  
		\item[(iii)] The map $w:\mathcal{T}^{[1]}\rightarrow \mathbb{Z}_{> 0}$ assigns a weight to each edge with the following balancing condition. For any vertex besides $x$, the three adjacent edges $e_1,e_2$ and $e_3$ satisfy 
		\begin{align*}
		w(e_1)v(e_1)+w(e_2)v(e_2)+w(e_3)v(e_3)=0,
		\end{align*}
		where $v(e_i)$ is the primitive vector tangent to $h(e_i)$ that is pointed away from $v$.
	\end{enumerate}
	Furthermore, the triple $(h,\mathcal{T},w)$ is called a parametrized tropical disc of $\bar{Y}$ if every unbounded edge is of the form $m'+\mathbb{R}_{\geq 0}m$ where $m'\in {M}_{\mathbb{R}}$ and $m\in M$ is the primitive generator of an $1$-cone in $\Sigma$. A tropical curve/disc is the image of a parametrized tropical curve/disc. 
\end{defn}
\begin{remark}
	Notice that the balancing conditions of tropical curves/discs follow directly from that of the parametrized tropical curves/discs. 
\end{remark}

Motivated by \cite[Lemma 3.1]{A}, the Maslov index of a tropical disc is defined as follows. 
\begin{defn}\label{def:tropMI}
	Given a tropical disc $(h,\mathcal{T},w)$, its Maslov index $\mbox{MI}(h)$ is defined to be the twice the sum of weights of unbounded edges.
\end{defn}
We next recall the Mikhalkin weight \cite{M2} for the trivalent tropical discs. 

\begin{defn}\label{def:mikmult}
	Given a tropical disc $(h,\mathcal{T},w)$ with end at $u\in M_{\mathbb{R}}$. Let $v\in \mathcal{T}^{[0]}$ be a trivalent vertex with adjacent edges $e_1,e_2,e_3$. Then the (Mikhalkin) weight at $v$ denoted as $\mbox{Mult}_v$ is given by 
	\begin{align*}
	\mbox{Mult}_v:=    w(e_1)v(e_1)\wedge w(e_2)v(e_2)\in \wedge^2 M\cong \mathbb{Z}.
	\end{align*} 
	Then the weight $\mbox{Mult}(h)$ of the tropical disc $(h,\mathcal{T},w)$ is defined as 
	\begin{align*}
	\mbox{Mult}(h)= \prod_{v\in \mathcal{T}^{[0]}, v\neq x}\mbox{Mult}_v.
	\end{align*} 
\end{defn}

We will mainly consider tropical discs appearing naturally on the toric setting (or its slight variants). Recall that there exists a natural map
   \begin{align*}
   	 \mbox{Log}: (\mathbb{C}^*)^2&\rightarrow \mathbb{R}^2\cong M_{\mathbb{R}}\\
   	     (z_1,z_2)&\mapsto (\log{|z_1|},\log{|z_2|}).   	 
   \end{align*}
Tropical curves are drawn on $M_{\mathbb{R}}$ and their unbounded edges are required to be parallel to 1-cones of the fan $\Sigma$, each corresponds to a toric divisor. One can intuitively think of each unbounded edge intersecting the corresponding facet of moment polytope at infinity after taking Legendre transform. More generally, any affine line segment in the base of a SYZ fibration with a rational slope can be completed to a cylinder in the total space with help of a complex structure (to be canonically chosen in our geometric setting below), whose symplectic area obviously makes sense. Every trivalent vertex of the tropical disc topological corresponds to a pair of pants within the fibre over the vertex. 

On the other hand,  the intersection patterns of a holomorphic disc with toric divisors completely determine its topological type, and in particular its symplectic area and Maslov index. Thus it is natural to define the relative class of a given tropical disc to be that of the holomorphic disc which intersects the toric divisors in accordance with  unbounded edges of the tropical disc. Throughout, we will frequently use the terms such as relative classes, symplectic areas, Maslov indices, etc., for tropical discs in this spirit. 
 
We will sometimes need to impose further constraints to tropical discs. Namely, we fix a set of generic points in $\R^2$, and consider the counting invariant concerning tropical discs that pass through these points. Obviously, the constraints make discs more rigid, i.e., they behave like discs with lower Maslov indices. For this reason, we define \emph{generalized Maslov index} of a constrained tropical disc by
\begin{equation}\label{eqn:genmutrop}
MI' (h,\mathcal{T},w) :=  2|\mbox{unbounded edges of} \,\, \mathcal{T}| - 2 \, | \mbox{point-constraints}| 
\end{equation}
where unbounded edges are counted with their multiplicities.
This is of course consistent with the generalized Maslov index of a holomorphic disc defined in \cite{HLZ} (see Section \ref{subsubsec:bulkdeformintro}, also). Note that if there is no constraint, then $MI'$ agrees with the Maslov index of the tropical disc above.

\subsection{Scattering diagrams}\label{subsec:scattering}

Fix $R$ an Artin local $\C$-algebra or a complete local $\C$ algebra and let $\mathfrak{m}_R$ be the unique maximal ideal of $R$. For our purpose, we will take $R$ to be the Novikov ring, $\mathbb{C}[[t]]$ or $\mathbb{C}[[NE(Y)]]$\footnote{This is the completion of $\mathbb{C}[NE(Y)]$ with respect to some filtration.}, where $NE(Y)$ is the effective curve cone of a surface $Y$.
We now define a dimension 2 scattering diagram. 
\begin{defn}
	A scattering diagram $\fD$ is a collection of walls $\{(\mathfrak{d},f_{\mathfrak{d}})\}$ where
		\begin{itemize}
		\item $\fd \subseteq M_{\R}$ is either a ray of the form $\fd = m_{\mathfrak{d}}' + \R_{\geq 0} m_{\mathfrak{d}}$ or a line of the form $\fd = m_{\mathfrak{d}}' + \R m_{\mathfrak{d}}$, for some $m_{\mathfrak{d}}' \in M_{\R}$ and $m_{\mathfrak{d}} \in M \setminus \{ 0\}$. 
		The set $\fd$ is called the support of the line or ray. 
		\item $f_{\fd} \in \C [[z^{m_{\mathfrak{d}}}]] \hat{\otimes}_{\C}R \subseteq \C[M]\hat{\otimes}_{\C}R$, called wall functions, satisfy $f_{\fd} \equiv 1 \mod z^{m_{\mathfrak{d} }} \mathfrak{m}_R$.
	\end{itemize} such that
	for every power $k >0$, there are only a finite number of $(\fd, f_{\fd})$ with $f_{\fd} \not\equiv 1 \mod \mathfrak{m}_R^k$. We will also called a restriction of a scattering diagram to a convex subset of $M_{\mathbb{R}}$ a scattering diagram. 
\end{defn}

If $\fD$ is a scattering diagram, we write
\[
\mathrm{Sing} (\fD) = \bigcup_{\fd \in \fD, \fd: \mbox{ray}} \{m_{\mathfrak{d}}' \} \cup \bigcup_{\fd_1, \fd_2 \dim \fd_1 \cap \fd_2 =0} \fd_1 \cap \fd_2.
\]
Given a scattering diagram $\mathfrak{D}$, one can associate an asymptotic scattering diagram $\mathfrak{D}_{as}$ by replacing each ray $(m'_{\mathfrak{d}}+\mathbb{R}_{\geq 0},f_{\mathfrak{d}})$ (and each line $(m'_{\mathfrak{d}}+\mathbb{R}m_{\mathfrak{d}},f_{\mathfrak{d}})$) with the ray $(\mathbb{R}_{\geq 0},f_{\mathfrak{d}})$ (with the line $\mathbb{R}m_{\mathfrak{d}},f_{\mathfrak{d}})$ line respectively). 

Now consider a smooth immersion $\phi: [0,1] \rightarrow M_{\R} \setminus \mathrm{Sing} (\fD)$ such that endpoints avoid the support of the scattering diagram $\fD$ and all intersection of $\phi$ with the walls are transverse.
We then define the path-ordered product as follows:
For each power $k >0$, $\phi$ will cross only a finite number $s_k$ of walls with $f_{\fd} \not\equiv 1 \mod \mathfrak{m}_R^k$. 
We label them by $\fd_i$, where $i =1, \dots s_k$ with respect to the order of the path intersecting the walls.
Each wall $\fd$ determines an automorphism 
\begin{align}\label{formula: wc}
	\theta_{\phi, \fd_i} (z^m) = z^m f_{\fd_i}^{\langle n_0, m \rangle}, 
\end{align}
where $n_0 \in N$ is primitive normal to $\fd_i$, and $\langle n_0, \phi'(t_i) \rangle <0$, where $t_i$ is the moment $\gamma$ crosses the wall $\fd_i$.
Then we define $\theta^k_{\phi, \fD} = \theta_{\fd_{s_k}} \circ \cdots \circ \theta_{\fd_1}$. Then we define $\theta_{\phi, \fD} = \lim_{k \rightarrow \infty} \theta^k_{\phi, \fD}$.

We recall the following theorem, we refer the readers to \cite[Theorem 1.4]{GPS} for its proof.
\begin{theorem} \cites{GS1, KS1} \label{consistency}
	Let $\fD'$ be a scattering diagram. Then there exists a scattering diagram $\fD$ containing $\fD'$ such that $\fD \setminus \fD'$ consists only of rays, and such that $\theta_{\phi, \fD} = \mathrm{Id}$ for any closed loop $\gamma$ for which $\theta_{\phi, \fD}$ is defined. We will call such a scattering diagram consistent. 
	After combining $(\mathfrak{d},f_{\fd}),(\mathfrak{d}',f_{\fd'})$ into $(\mathfrak{d},f_{\fd}f_{ \fd'})$ if $\mathfrak{d}=\mathfrak{d}'$, the resulting $\fD$ is unique. 
\end{theorem}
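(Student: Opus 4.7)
The plan is to construct $\fD$ order by order with respect to the powers of $\mathfrak{m}_R$. Concretely, I will build, for each $k \geq 1$, a scattering diagram $\fD_k \supseteq \fD'$ such that $\fD_k \setminus \fD'$ consists only of rays and the path-ordered product around any admissible loop becomes trivial modulo $\mathfrak{m}_R^k$, and then take $\fD := \bigcup_k \fD_k$ and combine coincident walls. The starting case $k=1$ is vacuous because every $f_\fd \equiv 1 \pmod{\mathfrak{m}_R}$, so $\fD_1 = \fD'$ already works. Working modulo $\mathfrak{m}_R^k$ keeps the problem finite at each step: only finitely many walls contribute nontrivially, so only finitely many singular points of $\fD_k$ need to be inspected.

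The inductive step is the heart of the argument. Assuming $\fD_k$ has been produced, for each $p \in \mathrm{Sing}(\fD_k)$ I take a small counterclockwise loop $\phi_p$ around $p$ and consider the defect $\Theta_p := \theta_{\phi_p, \fD_k}$, which by induction satisfies $\Theta_p \equiv \mathrm{Id} \pmod{\mathfrak{m}_R^k}$. Passing to the logarithm identifies the leading term of $\Theta_p - \mathrm{Id}$ with an element of the order-$k$ piece of the truncated tropical vertex Lie algebra, spanned by derivations of the form $z^m \partial_n$ with $\langle n, m\rangle = 0$. The crucial structural fact is that elements supported along a single primitive outgoing direction $m$ form an abelian Lie subalgebra (the relevant brackets vanish identically because $\langle n, m \rangle = 0$ for all allowed $n$), so $\log \Theta_p$ admits a unique decomposition $\sum_m \xi_m$ indexed by primitive outgoing directions $m$. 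Each summand $\xi_m$ is then cancelled by appending a single new ray $p + \R_{\geq 0} m$ whose wall function is determined order by order so that its associated automorphism is $\exp(-\xi_m)$. Doing this at every $p$ yields $\fD_{k+1}$, and only finitely many new rays appear at each stage because $\mathrm{Sing}(\fD_k)$ is finite modulo $\mathfrak{m}_R^k$.

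The main obstacle I anticipate is the Lie-algebraic analysis at each singular point. One must verify that the defect genuinely lies in the Lie subalgebra supported on outgoing primitive directions (so that no bidirectional line extensions are ever required) and that its splitting by direction is unambiguous modulo higher order. The directional abelianness handles the ambiguity within one ray, while for distinct rays the commutators land in $\mathfrak{m}_R^{2k}$ and can thus be absorbed at later inductive stages by an iterated Baker--Campbell--Hausdorff argument once an ordering of the directions around $p$ is fixed. This same directional abelianness underpins uniqueness: any two completions of $\fD'$ to a consistent diagram would agree modulo $\mathfrak{m}_R$ trivially, and an inductive step would show that any order-$k+1$ discrepancy in their wall functions produces nontrivial monodromy around some singular point, contradicting consistency of both diagrams; combining coincident walls then identifies the two completions.

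Finally, the limit diagram $\fD = \bigcup_k \fD_k$ inherits the required finiteness condition by construction, and by design $\theta_{\phi, \fD}^k \equiv \mathrm{Id} \pmod{\mathfrak{m}_R^k}$ for every $k$, so $\theta_{\phi, \fD} = \mathrm{Id}$. Consistency on an arbitrary admissible closed loop reduces, via a van Kampen-type argument, to consistency around each point of $\mathrm{Sing}(\fD)$, which is exactly what the inductive construction guarantees.
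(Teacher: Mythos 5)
Your proposal is correct, and it is essentially the standard order-by-order Kontsevich--Soibelman/Gross--Siebert argument that the paper does not reprove but simply cites for this theorem: inductively cancel the order-$k$ monodromy defect at each singular point by taking its logarithm in the tropical vertex Lie algebra, splitting it into the (pairwise commuting, direction-wise abelian) pieces $z^m\partial_n$ with $\langle n,m\rangle=0$, and adding one ray per direction, with uniqueness and the reduction of arbitrary loops to small loops around singular points handled exactly as you indicate. So your route coincides with the proof in the cited sources; the only remaining work is the routine bookkeeping you already flag.
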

Such resulting diagram $\mathfrak{D}$ is called consistent. It is easy to see if a scattering diagram is consistent, then its associate asymptotic scattering diagram is also consistent. Also, a restriction of a consistent scattering diagram to a convex subset is again consistent (with respect to loops within the convex subset). 

\begin{example}  Figure \ref{ex:firstA2} illustrated the first example of consistent scattering diagram. It corresponds to the cluster algebra of type $A_2$. From the geometry perspective, the diagrams is given by the del Pezzo of degree 5 with a cycle of five $(-1)$-curves which is indicated in \cite[Example 8.31]{GHKK}. 
	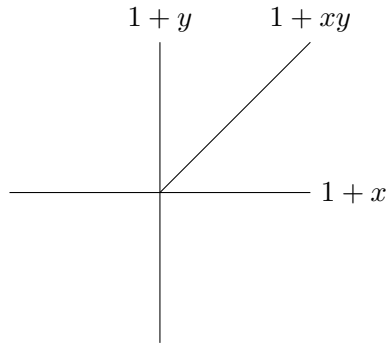
\begin{figure}[H]
 		\centering
 		\begin{tikzpicture}
 		\draw (-2,0) -- (2,0) node[right] {$1+x$};
 		\draw (0,-2) -- (0,2) node[above] {$1+y$};
 		\draw (0,0) -- (2,2) node[above] {$1+xy$};
 		\end{tikzpicture}
 		\caption{$A_2$ scattering diagram} \label{ex:firstA2}
 	\end{figure}
\end{example}

\subsubsection*{Broken lines} We recall the notion of the broken line, which will be important in studying the mirror of a Looijenga pair $(Y,D)$ in Section \ref{subsec:LGmirrorY}. Let $\bar{Y}$ be a projective toric surface. Recall that there is a natural short exact sequence from toric geometry
  \begin{align}\label{toric exact}
    0\rightarrow K_{\Sigma}  \rightarrow T_{\Sigma}:=\bigoplus_{i=1}^n \mathbb{Z}t_i \xrightarrow{r} M\rightarrow 0,
  \end{align} where $r(t_i)=m_i$ and $K_{\Sigma}=\mbox{Ker}(r)$. 

 \begin{defn}\label{broken line} Let $\mathfrak{D}$ be a consistent scattering diagram on $M_{\mathbb{R}}$.
	A broken line with stop at $u\in M_{\mathbb{R}}$ is a continuous map 
	\begin{align*}
	\mathfrak{b}:(-\infty,0]\rightarrow M_{\mathbb{R}}
	\end{align*} such that $\mathfrak{b}(0)=u$ together with the monomial data $c_iz^{m^{\mathfrak{b}(t)}_i}$ described below and satisfy the following properties:
	there exist
	\begin{align*}
	-\infty=t_0<t_1<\cdots <t_n=0
	\end{align*} such that $\mathfrak{b}|_{(t_{i-1},t_i)}$ is affine. For each $i=1,\cdots, n$, there is a monomial $c_iz^{m^{\mathfrak{b}(t)}_i}\in T_{\Sigma}$ such that 
	\begin{enumerate}
		\item For each $i$, $r (m^{\mathfrak{b}}_i)=-\beta'(t)$.
		\item $m^{\mathfrak{b}}_1=m_j$ for some $j\in \{1,\cdots,n\}$ and $c_1=1$. 
		\item $\mathfrak{b}(t_i)\in \mbox{Supp}(\mathfrak{D})\backslash \mbox{Sing}(\mathfrak{D})$. 
		\item If $\mathfrak{b}(t_i)\in \cap_i \mathfrak{d}_i$ (hence in a $1$-dimensional intersection of walls), then the monomial $c_{i+1}z^{m^{\mathfrak{b}(t_{i+1})}_{i+1}}$ is a term in 
		\begin{align}\label{broken line}
		\big(\prod_i\theta_{\mathfrak{b},\mathfrak{d}_i}\big)(c_iz^{m^{\mathfrak{b}(t_i)}_i}).
		\end{align}
	\end{enumerate} 
\end{defn}  We also generalize the notion of broken lines to allow limits of the broken lines in Definition \ref{broken line}

Notice that the dual of $K_{\Sigma}$ is canonically isomorphic to $\mbox{Pic}(\bar{Y})$ from \cite[Section 3.4]{F6}. Since $\bar{Y}$ is rational and $H^{1,0}(\bar{Y})\cong 0$, we have $K_{\Sigma}\cong H_2(\bar{Y},\mathbb{Z})$ canonically. Also, by the classification result of Cho-Oh \cite{CO}, there is a family of Maslov index two discs with Lagrangian boundary condition of class $\beta_i$ for each toric boundary $\bar{D}_i$. Mapping $t_i\in T_{\Sigma}$ to $\beta_i\in H_2(\bar{X},L_u)$ defines an isomorphism. Finally, $L_u\cong M\otimes S^1$ implies $H_2(L_u,\mathbb{Z})\cong M$. Thus we see that the short exact sequence \eqref{toric exact} is naturally identified with 
\begin{align*}
    0\rightarrow H_2(\bar{Y},\mathbb{Z})\rightarrow H_2(\bar{Y},L_u)\rightarrow H_1(L_u,\mathbb{Z})\rightarrow 0,
\end{align*} for any $u\in \mbox{Int}(P)\cong M_{\mathbb{R}}$ since the moment fibration has no monodromy. Indeed, 
\begin{defn}
	Given a broken line $\mathfrak{b}$, we say that $n$ is the length of the broken line $\mathfrak{b}$ and it has homology $[\mathfrak{b}]:=m^{\mathfrak{b}}_n(u)\in H_2(\bar{Y},L_u)$ using the above identification of short exact sequence and weight $\mbox{Mono}(\mathfrak{b}):=c_n$. 
\end{defn}
The following is the simplest example of the broken line. 
\begin{example}\label{initial broken line}
	For any $u\in M_{\mathbb{R}}$ and $m_i$, there exists a broken line $\mathfrak{b}:(-\infty,0]\rightarrow B_0$ with image of the form $u+\mathbb{R}_{\leq 0}m_i$. In this case, $n=1$ with $z^{\beta_i}$, where $\beta_i$ is the unique disc class of Maslov index two in a tubular neighborhood of $\bar{D}_i$. The weight of this broken line is $1$. 
\end{example}

\subsection{The scattering diagrams of Gross-Pandharipande-Siebert}\label{subsec:GPS}

The most relevant example of a scattering diagram to the purpose of this paper is the one constructed in \cite{GPS} (a combination of Theorem 2.8, Theorem 3.4, Theorem 4.4 and Proposition 5.2 therein), which we will denote by $\mathfrak{D}^{GPS}$. We give a detailed review on it, as one of our goals is to retrieve $\mathfrak{D}^{GPS}$ from holomorphic disc counting in the realm of Lagrangian Floer theory.

\subsubsection{Scattering Diagrams from Simple Blowups}
Let $\pi:\widetilde{Y}\rightarrow \bar{Y}$ be the blowup of $\bar{Y}$ at distinct generic\footnote{This is only to avoid the situation a ray from scattering falls in the support of an initial ray.} non-toric points $q_{ij}, j=1,\cdots,l_i$, for some $l_i\in \mathbb{Z}_{\geq 0}$ sitting in an irreducible component $\bar{D}_i$ of the toric boundary divisor $\bar{D}$. 
	Denote by $E_{ij}$ the exceptional divisor corresponding to $q_{ij}$, and by $\widetilde{D}$ the proper transform of $\bar{D}$, which is anticanonical in $\widetilde{Y}$. Then there is a natural consistent scattering diagram $\mathfrak{D}^{GPS}$ associated to the above geometry constructed as follows.
	
	Let $\mathfrak{D}_{in}^{GPS}$ be the scattering diagram consisting of lines $\{(\mathfrak{d}_{ij},f_{\mathfrak{d}_{ij}})\}$ where 
	$\mathfrak{d}_{ij}=m'_{ij}+\mathbb{R}m_{i}$ and $m'_{ij}$ is chosen such that the closure of the preimage of $p\circ \mbox{Log}^{-1}(\mathfrak{d}_{ij})$ under the moment map contains $p(q_{ij})$. 
For simplicity, we will often use the expression \emph{the wall determined by $q_{ij}$} to indicate such a wall $\mathfrak{d}_{ij}$.

	 The associated wall function is  given by $f_{\mathfrak{d}_{ij}}=1+t_{ij}z^{m_{i}}$, where $t_{ij}$ is the formal variable in $R=\mathbb{C}[[t_{ij}]]$. From Theorem \ref{consistency}, $\mathfrak{D}_{in}^{GPS}$ uniquely determines a consistent scattering diagram $\mathfrak{D}^{GPS}$. Below we will explain the geometric interpretation of its wall functions. 
	 If $(\mathfrak{d},f_{\mathfrak{d}})\in \mathfrak{D}^{GPS}\setminus \mathfrak{D}_{in}^{GPS}$, then $\mathfrak{d}$ must be a ray, i.e.  $\mathfrak{d}=m'_{\mathfrak{d}}+\mathbb{R}_{\geq 0}m_{\mathfrak{d}}$, and
	 the wall function $f_{\mathfrak{d}}$ is given as the generating series of certain algebraic curves counts. If $-m_{\mathfrak{d}}$ is not part of the toric fan of $\bar{Y}$, then 
	 let $\bar{Y}_{\mathfrak{d}}$ be the weighted toric blowup obtained by adding a new ray in the direction $-m_{\mathfrak{d}}$ to the fan of $\bar{Y}$ and denote the corresponding toric divisor $D_{\mathfrak{d}}$. Otherwise, let $\bar{Y}_{\mathfrak{d}}=\bar{Y}$ and $D_{\mathfrak{d}}$ be the toric boundary corresponds to $-m_{\mathfrak{d}}$. Assume that there exists non-negative integers $\mathbf{P}=(p_{ij})$ such that  
	 \begin{align}\label{?}
	 	\sum_{i,j}p_{ij}m_i=k_{\mathbf{P}}m_{\mathfrak{d}},
	 \end{align} for some $k_{\mathbf{P}}\in \mathbb{N}$.
	  The exceptional divisor in $\bar{Y}_{\mathfrak{d}}$ appears as a toric boundary component that we denote by $\bar{D}_{\mathfrak{d}}$. (The toric blowup given in Definition \ref{toric model} slightly generalizes this.) Let $\widetilde{Y}_{\mathfrak{d}}$ be the fibre product
	   \begin{align*}
	  	     \xymatrix{
	  	      \widetilde{Y}_{\mathfrak{d}} \ar[r]\ar[d]_{\pi_{\mathfrak{d}}}  &\widetilde{Y} \ar[d]^{\pi} \\
	  	      \bar{Y}_{\mathfrak{d}} \ar[r] & \bar{Y} }
	  	  \end{align*} with induced map $\pi_{\mathfrak{d}}:\tilde{Y}_{\mathfrak{d}}\rightarrow \bar{Y}_{\mathfrak{d}}$
		  and $\widetilde{D}_{\mathfrak{d}}$ is the proper transform of the toric boundary $\bar{D}_{\mathfrak{d}}$ of $\bar{Y}_{\mathfrak{d}}$. 
			  Then from \cite[Theorem 3.31]{GHK} (or see \cite[Section 5.7]{GPS}) the wall function is given as \begin{align} \label{eq:gps}
			 \log{ f_{\mathfrak{d}}}=\sum_{\mathbf{P}}k_{\mathbf{P}}N_{\beta_{\mathbf{P}}}t^{\mathbf{P}}z^{k_{\mathbf{P}}m_{\mathfrak{d}}},\end{align}
			   where $t^{\mathbf{P}}=\prod_{i,j}t^{p_{ij}}_{ij}$ and $\mathbf{P}$ runs over all possible $\mathbf{P}$ that satisfies \eqref{?}. The coefficient $N_{\beta_{\mathbf{P}}}$ computes the associated log Gromov-Witten invariant in $\tilde{Y}_{\mathfrak{d}}$ \cite{GS3}. 
	Intuitively, $N_{\beta_{\mathbf{P}}}$ counts the number of rational curves $C$  in $\widetilde{Y}$ such that 
    \begin{enumerate}
    	\item $\pi(C)$ is tangent to $\bar{D}_i$ at $q_{ij}$ with tangency multiplicity $p_{ij}$. 
    	\item $C$ intersects $\widetilde{D}$ at exactly one point in $\tilde{D}_{\mathfrak{d}}$.
    	\item $\sum_{i,j} p_{ij} m_{i}=k_{\mathbf{P}}m_{\mathfrak{d}}$ for some $k_{\beta}\in \mathbb{Z}_{> 0}$. 
    \end{enumerate} Such curve $C$ are known as an $\mathbb{A}^1$-curve, as its intersection with the Looijenga interior $\tilde{Y}\setminus \tilde{D}$ is an $\mathbb{A}^1$. 
   Notice that (1)(2)(3) uniquely determine the homology class of $\beta_{\mathbf{P}}\in H_2(\tilde{Y}_{\mathfrak{d}},\mathbb{Z})$. Indeed, there exists a unique homology class $\beta\in H_2(\bar{Y}_{\mathfrak{d}},\mathbb{Z})$ such that if $D_{\mathfrak{d}}$ if not one of the component ${D}_i$ of $\bar{D}_{\mathfrak{d}}$, then $\beta\cdot D_i=\sum_j p_{ij}$ and $\beta\cdot D_{\mathfrak{d}}=k_{\mathbf{P}}$. Otherwise $D_{\mathfrak{d}}=D_{i_0}$ for some $i_0$ and $\beta\cdot D_i=\sum_{j}p_{ij}$ if $i\neq i_0$ and $\beta\cdot D_{i_0}=\sum_{j}p_{i_0 j}+k_{\mathbf{P}}$. 
   	Then $\beta_{\mathbf{P}}$ is determined by  $\beta_{\mathbf{P}}=\pi_{\mathfrak{d}}^*\beta-\sum_{i,j}p_{ij}[E_{ij}]$. 
  We will refer readers to \cite[Section 4]{GPS} (or \cite[Definition 3.1]{GHK}) for the precise definition of $N_{\beta_{\mathbf{P}}}$. 
  
  \subsubsection{Scattering Diagrams from Orbifold Blowups}\label{subsubsec:orbiblowupGPS}
  Gross-Pandharipande-Siebert further considered the orbifold blowup $\pi:\widetilde{Y}\rightarrow Y$ at non-toric point $q_{ij}$ with multiplicity $r_{ij}$ and studied the enumerative meaning of the wall-functions in the associate scattering diagram. We will discuss the heuristic picture in Lagrangian Floer theory in Section \ref{sympl orbifold} and use this to explain the mirror symmetry of cluster varieties of rank two in Section \ref{sec:apptocluster}.
  
  In this case, $\widetilde{Y}$ has an $A_{r_{ij}-1}$-singularity for each point $q_{ij}$ and admits a unique structure as a non-singular Deligne-Mumford stack. The exceptional divisor $E_{ij}$ over $q_{ij}$ contains the orbifold point due to higher multiplicity given at the blowup point $q_{ij}$. Denote by $\mathfrak{D}^{GPS}_{in}$ the scattering diagram consisting of $\{(\mathfrak{d}_{ij},f_{\mathfrak{d}_{ij}})\}$, where $\mathfrak{d}_{ij}$ is the wall determined by $q_{ij}$ and $f_{\mathfrak{d}_{ij}}=1+t_{ij}z^{r_{ij}m_i}$. We will refer the readers to Section \ref{sympl orbifold} for the heuristic symplectic analogue. Again we write $\mathfrak{D}^{GPS}$ for the unique consistent scattering diagram constructed from $\mathfrak{D}^{GPS}_{in}$. For each ray $(\mathfrak{d},f_{\mathfrak{d}})\in \mathfrak{D}^{GPS}\setminus \mathfrak{D}^{GPS}_{in}$, one can similarly define $\bar{Y}_{\mathfrak{d}}$ and $\widetilde{Y}_{\mathfrak{d}}=\bar{Y}_{\mathfrak{d}}\times_{\bar{Y}}\widetilde{Y}$. Then the wall function $f_{\mathfrak{d}}$ is given by similar formula as \eqref{eq:gps} with $N_{\beta}$ the orbifold Gromov-Witten invariant intuitively counting the number of rational curves $C$ in $\widetilde{Y}$ such that 
  \begin{enumerate}
  	\item $\pi(C)$ passes through $q_{ij}$ exactly once with tangency multiplicity $p_{ij}$. 
  	\item $C$ intersects $\widetilde{D}$ at exactly one point.
  	\item $\sum_{i,j} p_{ij} m_{i}=k_{\beta}m_{\mathfrak{d}}$ for some $k_{\beta}\in \mathbb{N}$. 
  \end{enumerate}
  
 \subsection{Canonical Scattering Diagram of Gross-Hacking-Keel}\label{sec: can}
 We begin with the same geometric setup in \cite{GHK} and then summarize the construction of the canonical scattering diagram.
 
 \begin{defn}
 	A Looijenga pair $(Y,D)$ is a smooth projective rational surface $Y$ with $D\in |-K_{Y}|$ a reduced rational curve which has at least one singular point. 
 \end{defn}	
 
 If $D$ is irreducible, then it is a genus one curve with a single nodal point. Otherwise, it is a cycle of smooth rational curves.
 The following is a useful observation from the classification of surfaces. 
 \begin{prop}\cite[Proposition 1.3]{GHK} \label{toric model}
 	For any Looijenga pair $(Y,D)$, there exist two other Looijenga pairs $(\widetilde{Y},\widetilde{D})$ and $(\bar{Y},\bar{D})$ such that 
 	\begin{enumerate}
 		\item $\pi':\widetilde{Y}\rightarrow Y$ is a blowup of nodal point(s) of $D$, called the toric blowup,
 		and $\widetilde{D}=\pi'^*D$;
 		\item $(\bar{Y},\bar{D})$ is a toric pair and, $\pi:\widetilde{Y}\rightarrow \bar{Y}$ is a blowup at smooth points of $\bar{D}$, called a non-toric blowup, with $\widetilde{D}$ being the proper transform of $\bar{D}$. 
 	\end{enumerate}
 \end{prop}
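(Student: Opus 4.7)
The plan is to produce $\widetilde Y$ by a sequence of blowups of nodes of $D$, and then to build $\pi: \widetilde Y \to \bar Y$ by iteratively contracting $(-1)$-curves that meet the boundary cycle transversally in a single interior point, descending eventually to a toric pair. First I would arrange that $\widetilde D$ is a sufficiently long cycle of smooth rational curves. If $D$ is irreducible and nodal, let $\pi'_1 : Y_1 \to Y$ be the blowup at the unique node; its total transform is a cycle of length two. If needed, iterate by blowing up newly created self-intersection points of the total transform until one has a cycle $\widetilde D \subseteq \widetilde Y$ with sufficiently many components. Since blowups of nodes replace $D$ by its pullback and preserve the anticanonical property (using $K_{\widetilde Y} = \pi'^{*} K_Y + E$ together with the fact that $E$ enters as a new cycle component), $(\widetilde Y, \widetilde D)$ is again a Looijenga pair.

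Next I would construct the non-toric blowup $\pi : \widetilde Y \to \bar Y$ by repeatedly contracting \emph{interior} $(-1)$-curves, i.e.\ smooth rational curves $E \subseteq \widetilde Y$ with $E^2 = -1$, not contained in $\widetilde D$, and meeting $\widetilde D$ transversally at a single smooth point of one of its components. Each such contraction produces a smooth projective surface on which the image of $\widetilde D$ remains a reduced anticanonical nodal cycle; the local picture is just the inverse of the blowup of a boundary component of a surface pair at a smooth point. Iterating, the Picard number strictly decreases, so the procedure terminates in a pair, which I call $(\bar Y, \bar D)$, admitting no further interior $(-1)$-curves.

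The hard step is then to show that this terminal pair is toric. I would argue via Mori theory on smooth projective rational surfaces: on a Looijenga pair $(\bar Y, \bar D)$ in which $-K_{\bar Y}$ is represented by the anticanonical cycle $\bar D$, the cone theorem produces a $K_{\bar Y}$-negative extremal ray whose contraction is either (i) a component of $\bar D$, (ii) an interior $(-1)$-curve, or (iii) a ruling of a Hirzebruch fibration. Case (ii) is excluded by construction; case (i) is excluded because the cycle is anticanonical and nontrivial; and (iii), after passing through a Hirzebruch base, also forces the nonexistence of further interior $(-1)$-curves only when the self-intersection sequence of $\bar D$ matches the combinatorial data of a complete two-dimensional fan.

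The main obstacle is precisely this combinatorial endgame: one must check that the sequence $(d_1,\ldots,d_n)$ of self-intersections of the components of $\bar D$, together with the absence of interior $(-1)$-curves, forces the embedding of $\bar D$ into $\bar Y$ to arise from a fan $\Sigma$ with ray generators satisfying $m_{i-1}+m_{i+1} = -d_i\, m_i$. I would leverage the classification of smooth projective toric surfaces by their fans, together with the observation that in our situation the primitive boundary ray data can be read off from the cyclic self-intersection sequence, to conclude uniqueness and existence of the toric model. The composition of the chosen contractions is then the desired non-toric blowup $\pi : \widetilde Y \to \bar Y$, with each contracted curve exceptional over a smooth point of $\bar D$.
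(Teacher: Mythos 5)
Note first that the paper itself offers no proof of this statement: it is quoted verbatim from \cite[Proposition 1.3]{GHK}, so the benchmark is the argument there (and in Looijenga's and Friedman's work on anticanonical pairs), which is organized around an inductive lemma that your sketch assumes rather than proves.

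The genuine gap is your claim that the terminal pair of the contraction process is toric. By adjunction, any interior $(-1)$-curve $E$ automatically satisfies $E\cdot D=E\cdot(-K_Y)=E^2+2=1$, so ``meets $D$ transversally at one smooth point'' is not a condition at all; your procedure therefore just contracts interior $(-1)$-curves until none remain. But a Looijenga pair with no interior $(-1)$-curves need not be toric. For example, on $\mathbb{F}_1$ (the blowup of $\mathbb{P}^2$ at a point, with exceptional curve $E$ and $H$ the pullback of the hyperplane class) take $D=E+f+C$ with $f$ a fibre and $C\in|2H-E|$: this is an anticanonical triangle with self-intersection sequence $(-1,0,3)$, the pair is not toric (its charge $12-D^2-3=1$ is nonzero), and the only $(-1)$-curve on $\mathbb{F}_1$ is the boundary component $E$. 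So your process gets stuck at a non-toric pair, and arranging at the outset that the cycle is ``sufficiently long'' does not help: the role of the toric blowup $\widetilde{Y}\to Y$ is not to lengthen the cycle but must be chosen adaptively so that interior exceptional curves are created (in the example, blowing up the corner $C\cap E$ turns the fibre through that node into an interior $(-1)$-curve meeting the new boundary component once). Establishing that a non-toric pair acquires an interior $(-1)$-curve after suitable corner blowups, and interleaving these corner blowups with the contractions, is precisely the content of the cited proposition and is absent from your argument. Two further points: in your Mori-theoretic trichotomy, the exclusion of case (i) is unjustified, since a boundary component (e.g.\ a $(-1)$-curve in $D$) can perfectly well span a $K$-negative extremal ray; and the final combinatorial assertion that the self-intersection sequence together with the absence of interior $(-1)$-curves forces toricity is a restatement of what has to be proved, not a proof of it.
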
	
 We write $X:=Y\setminus D$, and similarly $\tilde{X}:=\widetilde{Y} \setminus \widetilde{D}$ and $\bar{X}:=\bar{Y} \setminus \bar{D}$. There exists a meromorphic form $\Omega$ on $Y$ restricting to a holomorphic volume form on $X$, unique up to $\mathbb{C}^*$-scaling. We  denote the corresponding holomorphic volume forms by $\tilde{\Omega}$ and $\bar{\Omega}$, respectively. A straightforward calculation shows that $\tilde{\Omega}=\pi'^*\Omega=\pi^*\bar{\Omega}$, where we abuse notations by putting $\pi':\tilde{X} \to X$ and $\pi:\tilde{X}\rightarrow \bar{X}$. 
 
  Given a Looijenga pair $(Y,D)$, Gross-Hacking-Keel \cite{GHK} associated an integral affine manifold $B_{GHK}$ with a singularity. Topologically, it is $\mathbb{R}^2$ with a unique singularity at the origin. There is a cone decomposition $B_{GHK}=\cup_{i=1}^n \sigma_i/\sim$, where $\sigma_i$ is a cone bounded by two rays $\mathbb{R}_{\geq 0}v_i$ and $\mathbb{R}_{\geq 0}v_{i+1}$, and each ray $\mathbb{R}_{\geq 0}v_i$ corresponds to a component $D_i$ of the boundary divisor $D$. Here the relation $\sim$ glues $\mathbb{R}_{\geq 0}v_1$ with $\mathbb{R}_{\geq 0}v_{n+1}$.   The canonical scattering diagram $\mathfrak{D}^{can}$ is then a collection $\{(\mathfrak{d},f_{\mathfrak{d}})\}$ satisfying the following.
   If $\mathfrak{d}$ is a ray generated by $av_i + bv_{i+1}$ for $a,b\in \mathbb{Z}_{\geq 0}$, then 
   	  \begin{align*}
   	     \log{f_{\mathfrak{d}}}=\sum_{k\geq 1} kc_kX_i^{-ak}X_{i+1}^{-bk}\in R[[X_i^{-a}X_{i+1}^{-b}]],
   	  \end{align*} where $R=\mathbb{C}[NE(Y)]$ and $NE(Y)$ is the monoid generated by effective curve classes in $Y$. The coefficient $c_k$ is the generating function of the log Gromov-Witten invariants  
   	  \begin{align*}
   	  c_k=\sum_{\beta} N_{\beta}z^{\beta},
   	  \end{align*} where the summation is over all possible classes $\beta\in H_2(Y,\mathbb{Z})$ with incidence relation $\beta.D_i=ak,\beta.D_{i+1}=bk$ and $\beta.D_j=0$, for $j\neq i,i+1$. The coefficient $N_{\beta}$ is the algebraic count of $\mathbb{A}^1$-curves in class $\beta$ as in Section \ref{subsec:GPS}. 
 
      The integral affine manifold with singularity $B_{GHK}$ and the canonical scattering diagram $\mathfrak{D}^{can}$ are independent under deformation of the Looijenga pair $(Y,D)$ as stated in \cite[Lemma 3.9]{GHK}, thanks to the deformation invariance of log Gromov-Witten invariants. If $(\widetilde{Y},\widetilde{D})\rightarrow (Y,D)$ is a toric blowup, then their corresponding integral affine manifolds with singularities are naturally isomorphic, and so are their associated canonical scattering diagrams by \cite[Lemma 1.6]{GHK}. 
      
      If $(Y,D)\rightarrow (\bar{Y},\bar{D})$ is a non-toric blowup, from the deformation invariance of the canonical scattering diagram, we may assume that $Y$ is the simple blowup of $\bar{Y}$ at mutually distinct points $q_{ij} \in \bar{D}_i$. Recall that these data determines the scattering diagram $\mathfrak{D}^{GPS}$. 
      The relation between the canonical scattering diagram $\mathfrak{D}^{can}$ of $(Y,D)$ and the scattering diagram $\mathfrak{D}^{GPS}$ is as follows: 
      we first take the asymptotic scattering diagram $(\mathfrak{D}^{GPS})_{as}$ of $\mathfrak{D}^{GPS}$.

      For any $av_i+bv_{i+1}$, $(a,b)\in \mathbb{N}^2\setminus \{(1,0),(0,1)\}$ and $(a,b)$ is primitive, we have a ray $\mathfrak{d}=\mathbb{R}_{\geq 0}(av_i+bv_{i+1})$ in $\mathfrak{D}^{can}$ with the wall function 
         \begin{align*}
            f_{\mathfrak{d}}=\prod_{(\mathfrak{d}',f_{\mathfrak{d}'})\in (\mathfrak{D}^{GPS})_{as}} f_{\mathfrak{d}'},
         \end{align*} here $(\mathfrak{d}',f_{\mathfrak{d}'})$ runs through all possible rays in $(\mathfrak{D}^{GPS})_{as}$ with corresponding curve class $\beta\in H_2(\widetilde{Y}_{\mathfrak{d}'},\mathbb{Z})$ such that $(\pi_{\mathfrak{d}'})_*\beta\cdot D_i=a,(\pi_{\mathfrak{d}'})_*\beta\cdot D_{i+1}=b$. 
     If $(a,b)=(1,0)$, then wall function $f_{\mathfrak{d}}$ attached to the ray $\mathfrak{d}=\mathbb{R}_{\geq 0}v_i$ is given by 
       \begin{align*}
          f_{\mathfrak{d}}=\prod_{j=1}^{j=l_i}(1+t^{[E_{ij}]}z^{-m_i}) \prod_{(\mathfrak{d}',f_{\mathfrak{d}'})}f_{\mathfrak{d}'}.
       \end{align*} Here $(\mathfrak{d}',f_{\mathfrak{d}'})$ runs through all possible rays in $(\mathfrak{D}^{GPS})_{as}$ with the corresponding curve class $\beta\in H_2(\widetilde{Y}_{\mathfrak{d}'},\mathbb{Z})$ such that $(\pi_{\mathfrak{d}'})_*\beta\cdot D_i>0, (\pi_{\mathfrak{d}'})_*\beta\cdot D_j=0$ for $j\neq i$ but $\beta$ is not a multiple of $E_{ij}$ for any $j=1\cdots, l_i$. To sum up, given any Looijenga pair $(Y,D)$, one can construct the associated canonical scattering diagram $\mathfrak{D}^{can}$ of $(Y,D)$ from the asymptotic scattering diagram $(\mathfrak{D}^{GPS})_{as}$ of $(\widetilde{Y},\widetilde{D})\rightarrow (\bar{Y},\bar{D})$\footnote{Here we identify $m_i$ with $-\phi_i(v_i)$ in \cite{GHK}.}. 
      We refer readers to \cite[Section 3.4]{GHK} for further information. 
      \begin{remark}
      	 It is expected that the orbifold analogue of the work of Gross-Hacking-Keel mirror construction also hold. However, the authors are not aware of that in the literature.
      \end{remark}

\subsection{Lagrangian Floer theory}\label{subsec:LFT1}
We briefly review Lagrangian Floer theory of a compact Lagrangian and its bulk deformation following \cite{FOOO}. We begin by fixing a generic almost complex structure $J$. For a compact Lagrangian $L$ in a symplectic manifold $Y$, one can assign a possibly curved $A_\infty$-algebra structure on $H^\ast (L;\Lambda_0)$ as follows.
If we write $\mathcal{M}_{k+1} (Y,L,\beta;J)$ for the moduli space of $J$-holomorphic discs in class $\beta \in \pi_2 (Y,L)$ with $k+1$ boundary marked points $z_0,\cdots ,z_k$ then one has
\begin{equation}\label{eqn:AIoperations}
m_{k,\beta} (h_1,\cdots, h_k):= (ev_0)_! (ev_1 ,\cdots,ev_k)^\ast (h_1 \times \cdots \times h_k) ,\quad m_k = \sum_{\beta \in \pi_2 (X,L)} m_{k,\beta} \, T^{\omega(\beta)}
\end{equation}
for $k \geq 0$ and $h_i \in H^\ast(L;\Lambda)$ where $h_1 \times \cdots \times h_k$ is the wedge product of pull-backs of $h_i$ under the projection map and $ev_i$ is the evaluation at $z_i$.
We additionally put $m_{1,0}(h) = (-1)^{ n+ \deg x +1} d_{\mathrm{dR}} h$. The degree of $m_{k,\beta}$ is given by $2-k - \mu(\beta)$ 
where $\mu (\beta)$ is the Maslov index of the class $\beta \in \pi_2 (X,L)$, which directly follows from 

\begin{equation}\label{eqn:dimmkp1}
 \dim \mathcal{M}_{k+1} (Y,L,\beta;J) = \mbox{dim}_{\mathbb{R}}L + \mu (\beta) + k - 2.
 \end{equation}
The family $\{m_k\}_{k \geq 0}$ of multi-linear operations satisfies the quadratic relations
$$ \sum_{k_1 + k_2 = k+1 } (-1)^\star m_{k_2} (h_1,\cdots,  m_{k_1} ( h_i ,\cdots, h_{i+k_1 -1}), \cdots, h_k) =0$$
called the $A_\infty$-relations, where 
$\star = |h_1|' + \cdots + |h_{i-1}|'$. When $m_1^2=0$, then the resulting homology is referred to as the Lagrangian Floer cohomology of $L$.

The formula \eqref{eqn:AIoperations} includes the case where the term $m_0 (1):=m_0 \in H^\ast (L;\Lambda_+)$ is nontrivial, which gives an obstruction for $m_1$ to  become a differential. In order to derive any meaningful homological invariants, one should deform the $A_\infty$-structure 
by a so-called weak bounding cochain $b \in 
H^1 (L;\Lambda_+)$, which satisfies
\begin{equation}\label{eqn:weakMCeqn}
m_0 (1) + m_1(b) + m_2(b,b) +\cdots =  W(b) \cdot 1_L
\end{equation}
where $W(b)$ is a Novikov constant depending on $b$, known as the superpotential of $L$. If \eqref{eqn:weakMCeqn} has a solution, then $L$ is said to be weakly unobstructed.
One can easily see from \eqref{eqn:dimmkp1} that
$W(b)$ is contributed by the Maslov index $2$ discs. 

When $L$ is a Lagrangian in $X=Y \setminus D$ for a log Calabi-Yau pair $(Y,D=\cup_i D_i)$ with $D_i$ irreducible and $c_1(D_i)$ positive, those discs are precisely the ones that hit $D$ exactly once. If some of $D_i$ is not positive, then one should additionally take into account the higher Maslov discs attached with negative or zero Chern number sphere bubbles.
Typical examples are toric manifolds with $L$ being a toric fibre, see for e.g., \cite{FOOO6}. 
It is conventional that one uses the exponential coordinate to write $W(b)$ in such a case. That is, we set
\begin{equation}\label{eqn:expcoordch}
 z_i:= \exp x_i  \qquad i=1,\cdots n
\end{equation}
where we take $ b =\sum  x_i d \theta_i$ where $\{d \theta_i : i=1,\cdots n\}$ is a basis of $H^1 (L)$. Throughout, we will use the notation
  \begin{align*}
     z^{\partial \gamma}:=z_1^{( \partial \gamma,d\theta_1 )} \cdots z_n^{ ( \partial \gamma,d\theta_n ) },
  \end{align*}
 for $\partial \gamma \in H_1 (L,\mathbb{Z})$, where $(\,\, , \,\,) $ is the natural pairing between homology and cohomology.\footnote{We use $\partial \gamma$ to denote a general element in $H_1 (L,\mathbb{Z})$, as  we will frequently look at classes in $H_1(L,\mathbb{Z})$ appearing as the boundary class of a holomorphic disc.} It is worthwhile to mention that $z^{\partial \gamma}$ is independent of the choice of the basis $\{d\theta_i\}$ and the notation is thus intrinsic.

The exponential coordinate also helps us to include holonomy variables for flat $\C^\ast$-connections on $L$ more naturally, which enables us to expand the set of solutions $b$ of \eqref{eqn:weakMCeqn} over $\Lambda_0$.
We remark that the coordinate change \eqref{eqn:expcoordch} makes sense, provided the divisor axiom \cite[Lemma 13.1]{F1} (or see \cite[Lemma 11.8]{FOOO7}, also):
\begin{equation}\label{eqn:divisor1}
  \sum_{ \sum n_i =n} m_{k+n,\beta}(b^{\otimes n_0}, h_1, b^{\otimes n_2}, \cdots, b^{\otimes n_{k-1}}, h_k, b^{\otimes n_k})=\frac{1}{n!} \big(  \partial \beta, b  \big)^n m_k (h_1, \cdots, h_k)
\end{equation}
for $\beta \in \pi_2 (Y,L)$, which tells us that $W(b)$ is actually a function in $e^{x_i}$.

Let us denote by $n_\beta (L)=n_\beta^{Y} (L)$ the number of the holomorphic discs in class $\beta$ with $\mu (\beta)=2$ whose boundary passes through a generic point in $L$, or in other words, $n_\beta (L)$ is the degree of the map $ev_0 : \mathcal{M}_1 (Y,L;\beta) \to L$ provided the weak unobstructedness \eqref{eqn:weakMCeqn} (and all necessary transversality).  It is easy to see that the coefficient of $z^{\partial \beta}$ in $W$ is given by $n_\beta (L) T^{\omega(\beta)}$. 
Indeed, the coefficient is given by $\langle m_{0,\beta} (1), [pt] \rangle $ by the divisor axiom since $m_{0,\beta} (1)$  is a multiple of the unit (weak unobstructedness), and this is precisely the number of discs in class $\beta$ passing thru a generic point representing $[pt]$.

 Alternatively, one can define $W$ as a function on the `moduli space' of the pair $(L,\nabla)$ where $L$ varies over torus fibres and $\nabla$ is a flat $U(1)$-connection on the trivial line bundle $L \times \C$ by 
\begin{equation}\label{eqn:Wglobaltdual}
W (L,\nabla) = \sum_{\beta, \mu (\beta)=2} n_\beta (L) T^{\omega(\beta)} hol_{\partial \beta} \nabla,
\end{equation}
which fits more into the SYZ setup (see \cite{A} for more details). The superpotential presented here can be thought of as a local restriction of \eqref{eqn:Wglobaltdual}.

  \subsubsection{Bulk-Deformed $A_{\infty}$ Structures and Bulk-Deformed Superpotentials}\label{subsubsec:bulkdeformintro}
  
  One can further deform the $A_\infty$-structure on $H^\ast (L;\Lambda)$ along the direction of $H^\ast (X)$ by inserting interior marked points to holomorphic discs and requiring them to pass through (the Poincare duals of) given ambient cocycles in $H^\ast(X)$. It is called the bulk-deformation of the Floer theory of $L$.

Suppose an ambient cocycle $\mathfrak{b} \in H^{even} (X;\Lambda_+)$ is given.
Consider for each $\beta \in \pi_2 (X,L)$, we denote $\mathcal{M}_{k+1,l}(L, \beta, J)$ the moduli space of stable $J$-holomorphic discs with $k+1$ boundary marked points and $l$ interior marked points representing the class $\beta$. Its dimension is given by $n + \mu (\beta) +k-2 + 2l$ where the additional $2l$ compared with \eqref{eqn:dimmkp1} comes from the freedom of locations of interior markings.
We then impose the incidence condition to the interior markings by taking fibre product to obtain
\begin{equation} \label{eqn:fibre_prod}
\mathcal{M}_{k+1,l}(L,\beta, J; \mathfrak{b}^{\otimes l}):=\mathcal{M}_{k+1,l}(L, \beta, J)_{ ev^{int}} \times_{X^l} \prod_{i=1}^l \widetilde{PD[\mathfrak{b}]}.  
\end{equation}
where $\widetilde{PD[\mathfrak{b}]}$ is a cycle representing $PD[\mathfrak{b}]$ and $ev^{int}$ is the evaluation map associated with interior marked points.
Finally, the bulk-deformed $A_\infty$-operations are defined by
$$\fm_{\beta,k}^{\fb}(h_1, \cdots ,h_k)=\sum_{l} \frac{1}{l !}(ev_0)_!(ev_1, \cdots, ev_k)^*(h_1\times \cdots \times h_k),\quad \fm_{k}^{\fb} = \sum_\beta \fm_{\beta,k}^{\fb} T^{\omega(\beta)} $$
Notice that $\fm_{\beta,k}^{\fb}$ is contributed by $\mathcal{M}_{k+1,l}(L,\beta, J; \mathfrak{b})$ with $l$ varying over $\Z_{\geq 0}$.\footnote{Strictly speaking, \eqref{eqn:fibre_prod} makes sense when $\fb$ is a single geometric cycle, not a linear combination of such. In the latter case, however, we can simply defined $\fm_{\beta,k}^{\fb}$ by expanding it linearly in accordance with the linear combination forming $\fb$.}

The superpotential $W(b)$ also deforms accordingly now by solving the analogous equation for $m_k^{\mathfrak{b}}$:
$$ m_0^{\fb} (1) + m_1^{\fb} (b) + m_2^{\fb} (b,b) +\cdots =  W^{\fb} (b) \cdot 1_L$$
and the resulting $W^{\mathfrak{b}}$ is called the bulk-deformed potential. In \cite{HLZ}, the second and third authors considered a special type of a bulk-deformation of the Floer theory of toric fibres in a Fano surface, for which $\mathfrak{b}$ is taken to be a linear combination $\sum t_i q_i$ of generic points with $t_i^2=0$. 
One needs to impose $l$-many point-constraints to discs of Maslov index $\mu=2l+2$ in order for them to contribute to $W^{\mathfrak{b}}$. The dimension formula of the disc moduli (see Section \ref{subsubsec:bulkdeformintro}) tells us that these constrained discs behave like Maslov 2 discs, and more generally, the \emph{generalized Maslov index} $\mu'$ of a holomorphic disc was introduced in \cite{HLZ} as twice the number of point-constraints subtracted from the ordinary Maslov index $\mu$ (i.e., $\mu'=\mu - 2l$). Notice that \eqref{eqn:genmutrop} is precisely its tropical analogue. $W^{\mathfrak{b}}$ shows certain discontinuity when $L$ varies over toric fibres due to the existence of \emph{generalized} Maslov zero discs. Such a phenomenon is generally called a wall-crossing, which we recall below in more general context.

  \subsubsection{Pseudo-isotopies and wall-crossing}\label{subsubsec:lfwallopengw}

Consider two Lagrangian submanifolds $L$ and $L'$ in $(X,\omega)$ which are related by a smooth isotopy $\phi_t$. Namely, $\phi_t$ is a diffeomorphism on $X$ such that $\phi_0 = id$ and $\phi_1 (L) = L'$. We further assume that for each $t$, the almost complex structure $J_t = ( \phi_t )^\ast J$ is tame with respect to $\omega$.
We can relate the superpotentials for $L$ and $L'$ in the following way.

Notice that $J_0(=J)$-holomorphic discs bounding $L'$ isotope to $J_1$-holomorphic discs bounding $L$ thorough $\phi$. The Floer theory of $L$ does not depend on the choice of almost complex structures in the following sense: the isotopy results in an $\AI$-algebra isomorphism 
\begin{equation*}
\left\{f_k = \sum_\beta f_{k,\beta} T^{\omega(\beta)}\right\}_{k \geq 0} : H^\ast (L; \Lambda)^{\otimes k} \to H^\ast (L';\Lambda),
\end{equation*}
where $f_{k,\beta}$ is obtained by counting discs in
\begin{equation*}\label{eqn:modfuktrick}
\cup_{t \in [0,1]} \mathcal{M}_{k+1} (L,\beta; J_t)
\end{equation*}
whose boundary marked points are subject to suitable incidence conditions. The degree of $f_{k,\beta}$ is given by $1-k-\mu(\beta)$.

The $A_\infty$-homomorphism $\{f_k\}_{k \geq 0}$ induces a map between associated sets of weak bounding cochains
\begin{equation}\label{eqn:fastbbb}
 f_\ast : b \mapsto f_0 (1) + f_1 (b) + f_2 (b,b) + \cdots.
\end{equation}
This construction is now broadly referred to as \emph{Fukaya's trick} since it has first appeared in \cite{F1}. See \cite{Y5} for more recent development. 

\begin{lemma}\cite{F1}\label{wall-crossing}
Let $L$ and $L'$ be two Lagrangians that can be interpolated by an isotopy $\phi$.
\begin{enumerate}
\item
The superpotential $W$ and $W'$ of $L$ and $L'$ satisfy 
$$W'(f_\ast (b) ) = W(b)$$
after rescaling the term by suitable powers of the Novikov parameter.\footnote{We refer readers to \cite[Lemma 4.2]{T4} for details on the rescaling factors. This will not be needed for our purpose, since $L$ and $L'$ in the application are infinitesimally close to each other.}
\item
The map $f_\ast$ only depends on the homotopy type of the isotopy $\phi_t$. Namely, if there is an isotopy $\Phi_{s,t}$ over $[0,1]^2$ between two isotopies $(\phi_1)_t=\Phi_{1,t}$ and $(\phi_2)_t=\Phi_{1,2}$ from $L$ and $L'$, then $\phi_1$ and $\phi_2$ induced the same map $f_\ast$ on the weak Maurer-Cartan space of $L$ and $L'$.
\end{enumerate}
\end{lemma}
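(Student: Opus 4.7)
The plan is to derive both statements from the general algebraic fact that $A_\infty$-homomorphisms preserve the weak Maurer--Cartan equation (part (1)) and that $A_\infty$-homotopic homomorphisms induce identical maps on its solution space modulo gauge equivalence (part (2)). Everything therefore reduces to showing that the family $\{f_k\}$ coming from the isotopy $\phi_t$ is a genuine unital $A_\infty$-homomorphism, and that a homotopy of isotopies produces an $A_\infty$-homotopy between two such.

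First I would realize $\{f_k\}$ by counting isotoped parametrized discs in $\bigcup_{t \in [0,1]} \mathcal{M}_{k+1}(L,\beta;J_t)$, with the $0$-th boundary marked point free and the remaining $k$ boundary markings subject to cycle constraints pulled back via $\phi_t$. The codimension-one boundary of the one-dimensional components of this parametrized moduli splits into the $t=0,1$ strata (recovering $m_k$ and $m'_k$ contributions respectively) and interior disc-bubbling at a fixed $t$ (producing either factorizations $f_{k_2}(\ldots, m_{k_1}(\ldots), \ldots)$ or $m'_r(f_{j_1}(\ldots),\ldots,f_{j_r}(\ldots))$). A standard cobordism argument matches these contributions with the terms of the $A_\infty$-homomorphism relation.

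For (1), the algebraic content is then the following. Writing $\hat{m}(b) := \sum_k m_k(b^{\otimes k})$, re-summation of the $A_\infty$-homomorphism identity over all placements of $b$ gives
\begin{equation*}
\sum_k m'_k\bigl(f_*(b)^{\otimes k}\bigr) = \sum_{p,q \geq 0} f_{p+1+q}\bigl(b^{\otimes p},\, \hat{m}(b),\, b^{\otimes q}\bigr).
\end{equation*}
Substituting $\hat{m}(b) = W(b) \cdot 1_L$ and invoking the unital properties $f_1(1_L) = 1_{L'}$ and $f_k(\ldots,1_L,\ldots) = 0$ for $k \geq 2$ (using a homotopy-unital model if necessary) collapses the right side to $W(b) \cdot 1_{L'}$. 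This is precisely the statement that $f_*(b)$ is a weak Maurer--Cartan element for $\{m'_k\}$ with $W'(f_*(b)) = W(b)$. The rescaling of Novikov parameters mentioned in the footnote absorbs the change in symplectic areas of holomorphic discs between $L$ and $L'$ along the isotopy.

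For (2), I would promote the parametrized moduli to a doubly-parametrized family over $[0,1]^2$ governed by the homotopy $\Phi_{s,t}$. A codimension-one analysis of its one-dimensional boundary produces an $A_\infty$-homotopy $\{h_k\}$ between $\{f^1_k\}$ and $\{f^2_k\}$, and purely algebraic arguments then show that such homotopic $A_\infty$-homomorphisms send weak Maurer--Cartan elements to gauge-equivalent ones, hence induce the same map on the weak Maurer--Cartan space in the sense relevant for Fukaya's trick. The principal obstacle here is analytical rather than algebraic: transversality and compactness of the parametrized moduli in the noncompact setting $Y \setminus D$ must be secured, which on admissible SYZ fibres follows from the disc compactness established in Section \ref{sec: compactness}, together with a choice of abstract perturbation compatible with the parameter so that the cobordism argument yields the $A_\infty$-relations on the nose.
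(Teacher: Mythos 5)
The paper offers no proof of this lemma: it is imported wholesale from Fukaya \cite{F1}, with the Novikov-rescaling subtlety deferred to \cite{T4}. Your argument is exactly the standard pseudo-isotopy proof behind those citations — parametrized moduli over $[0,1]$ (resp. $[0,1]^2$) yielding a unital $A_\infty$-homomorphism (resp. an $A_\infty$-homotopy), followed by the algebraic re-summation showing weak Maurer--Cartan elements and potential values are preserved — so it is correct and takes essentially the same approach the paper relies on.
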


The contributions to \eqref{eqn:fastbbb} are from discs with $\mu (\beta)=0$, and by dimension reason, a generic Lagrangian torus fibre does not bound such discs. Collecting  Lagrangians bounding Maslov zero discs forms a structure called the wall in the base $B$ of the torus fibration, which is conjecturally of real codimension $1$ in $B$ for generic almost complex structures. 

Let us now focus on the situation where $X$ admits a special Lagrangian torus fibration $\pi : X \to B$ with respect to a holomorphic volume for $\Omega$, and $L,L'$ are two different fibres of $\pi$. Given a reference point $u_0\in B_0$ and a choice of the basis $\check{e}_1,\check{e}_2\in H_1(L_{u_0},\mathbb{Z})$, we will define the local affine coordinates around $u_0$. For any $u\in B_0$ in a small neighborhood of $u_0$, one choose a path $\phi$ contained in $B_0$ connecting $u,u_0$. Let $C_{k}$ be the $S^1$-fibration over the image of $\phi$ such that the fibres are in the homology class of parallel transport of $\check{e}_k$ along $\phi$. Then the local  complex symplectic affine coordinates are defined by
\begin{align}\label{affine coor}
x_{\check{e}_k}(u)=\int_{C_{k}}\mbox{Im}\Omega.
\end{align} It is straight-forward to check that the transition functions fall in $GL(2,\mathbb{Z})\rtimes\mathbb{R}^2$, and thus the above coordinates give an integral affine structure on $B_0$. We will say that the affine line $x_k=const$ is an affine line defined by $\check{e}_k$.

One can show that the loci of torus fibres bounding Maslov zero discs form an affine line in $B$ with respect to these affine coordinates (see, for e.g., \cite[Proposition 5.6]{L8}).  Such a line is called the wall, and the above discussion tells us that $f_\ast$ is nontrivial only if the isotopy goes across some wall, in which case we call $f_\ast$ a wall-crossing transformation. Strictly speaking, the wall should be defined as the loci of fibres $L_u$ which admit nontrivial open Gromov-Witten invariants $\tilde{\Omega} (\gamma;u)$ for some relative class $\gamma \in H_2(X,L_u)$. See \cite[Definition 4.12]{L4} for the precise definition of the invariants. 
If the underlying isotopy goes across the wall at the point $u$, one can roughly think of $f_\ast$ as the generating series of $\tilde{\Omega} (\gamma;u)$. 
    \begin{theorem}\label{YS} \cite[Theorem 6.15]{L8}
Let $\pi : X \to B$ be a special Lagrangian fibration on a symplectic manifold $M$ with $\dim_\R M=4$. Suppose there exists a wall in $B$ contributed by a disc class $\gamma \in \pi_2 (X,L)$ of Maslov index zero.	Then the wall is an affine line (ray) along the direction corresponding to $\partial \gamma \in H^1(L)$, and the associated wall-crossing transformation is of the form 
\begin{equation}\label{eqn:wctrans}
z^{\partial \gamma'}\mapsto z^{\partial \gamma'} f_{\gamma}^{\langle \gamma',\gamma\rangle}.
\end{equation}
	  where $f_{\gamma}\in 1+\Lambda[[T^{\omega(\gamma)}z^{\partial\gamma}]]$. Moreover, $\log{f_{\gamma}}$ is given by the generating function of the  open Gromov-Witten invariants of $d\gamma, d\geq 1$ and can be derived from the pseudo-isotopies of $A_{\infty}$-structures on the fibres.
   \end{theorem}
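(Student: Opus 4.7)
My plan is to deduce the theorem by combining Fukaya's trick (Lemma \ref{wall-crossing}) with the affine geometry supplied by the special Lagrangian fibration. First I would pin down the shape of the wall: using the affine coordinates \eqref{affine coor} defined via $\int_{C_k}\mathrm{Im}\,\Omega$, Stokes' theorem shows that for every relative class $\gamma \in H_2(X,L_u)$ the period $\int_\gamma \Omega$ depends affine-linearly on $u$, with differential controlled by the boundary class $\partial\gamma \in H_1(L_u;\Z)$. The condition that $L_u$ bounds a holomorphic representative of a Maslov zero class $\gamma$ amounts to a single real linear equation, after normalizing $\Omega$ so that the phase of $\int_\gamma \Omega$ vanishes along the wall. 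Since $B$ is $2$-dimensional, this cuts out a line (or ray, if the disc only survives on one side of a singular fibre), and its tangent direction is the kernel of $d(\int_\gamma \Omega)$ restricted to the wall; under the identification $T_u B \cong H^1(L_u;\R)$ this is precisely the direction corresponding to $\partial\gamma$.

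For the transformation formula itself, I would fix an isotopy $\phi_t$ transversely crossing the wall at a single point and let $\{f_k\}_{k\geq 0}$ denote the induced $\AI$-homomorphism from Lemma \ref{wall-crossing}. By Gromov compactness and a Maslov-index budget, the only classes contributing to $f_\ast$ are positive integer multiples $k\gamma$ together with their stable bubble configurations; discs in any other Maslov zero class would have to lie on a different wall and are excluded by a transversality choice of $\phi_t$. Assembling the virtual counts of bordered $J_t$-holomorphic discs in classes $k\gamma$ yields a series $f_\gamma \in 1 + \Lambda[[T^{\omega(\gamma)}z^{\partial\gamma}]]$ whose coefficients are the weighted open Gromov-Witten invariants $\tilde{\Omega}(k\gamma;u)$ of \cite{L4}. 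Since the corrections produced by such discs are supported along the boundary direction $\partial\gamma$, applying the resulting automorphism to a monomial $z^{\partial\gamma'}$ and using the divisor axiom \eqref{eqn:divisor1} contracts the boundary cohomology with $\partial\gamma$ via the intersection pairing on $L$; packaging all $k\geq 1$ turns the exponential correction into $f_\gamma^{\langle\gamma',\gamma\rangle}$, which is exactly the claimed formula.

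The hardest part is the virtual analysis of the moduli spaces of multiply-covered Maslov zero discs, which are obstructed and force one to work with Kuranishi structures (or an equivalent virtual perturbation scheme) to extract well-defined counts. In the present surface setting this is precisely the content of the open Gromov-Witten invariants $\tilde{\Omega}(\gamma;u)$ of \cite{L4}; the logical core of the theorem is then that those invariants organize themselves into the formal power series $f_\gamma$ and that the Fukaya-trick comparison map respects this packaging. A secondary subtlety is ruling out contributions from walls of \emph{different} disc classes that might accidentally intersect the chosen isotopy, which is handled by taking $\phi_t$ short and transverse, and then invoking Lemma \ref{wall-crossing}(2) to confirm that the resulting formula depends only on the homotopy class of the crossing.
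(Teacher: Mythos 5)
This statement is imported verbatim from \cite[Theorem 6.15]{L8}; the paper gives no proof of it beyond the citation, so there is no internal argument to compare against, and your sketch should be judged against the argument of the cited source. On that score your outline follows essentially the same route: the affine-line claim is exactly the mechanism of \cite[Proposition 5.6]{L8} (also invoked later in the paper), namely that $u \mapsto \int_\gamma \mathrm{Im}\,\Omega$ is affine in the complex affine coordinates \eqref{affine coor} and must vanish on any fibre bounding a holomorphic representative, so the wall sits inside an affine line in the direction defined by $\partial\gamma$; and the transformation formula is obtained from Fukaya's trick (Lemma \ref{wall-crossing}) plus the observation that at a generic wall point only classes whose boundary is a positive multiple of $\partial\gamma$ (possibly with sphere components attached, which is why the coefficients live in $\Lambda$) can contribute, with the genuinely hard input being the virtual counts $\tilde{\Omega}(k\gamma;u)$ of \cite{L4} — which you correctly identify as the core and correctly defer to. Two small corrections of emphasis: no phase normalization of $\Omega$ is needed, since the pullback of the $(2,0)$-form $\Omega$ to a holomorphic curve vanishes identically and only $\int_\gamma \mathrm{Im}\,\Omega$ is well defined on relative classes (because $\mathrm{Im}\,\Omega|_L=0$), so the wall is simply the zero locus of that single affine function; and the restriction to contributions in classes $k\gamma$ is not purely a transversality choice of $\phi_t$ but follows from the same period constraint, since any contributing Maslov-zero class at a generic wall point must have its own affine line through that point, forcing its boundary to be proportional to $\partial\gamma$. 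With those adjustments your proposal is a faithful reconstruction of the cited proof.
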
 
The last author then constructed a scattering diagram based on Lagrangian Floer theory, as was speculated in \cite{F3, KS1}. 
Notice that the wall-crossing transformations in \eqref{eqn:wctrans} preserve $\frac{dz_1}{z_1}\wedge \frac{dz_2}{z_2}$ and are exactly of the form \eqref{formula: wc}.
Therefore, one can form a scattering diagram $\mathfrak{D}^{LF}$ by taking the ray/lines to be the loci parametrizing the fibres bounding holomorphic discs of Maslov index zero, which will be affine with respect to the complex affine structure on the base. The corresponding wall function is taken to be $f_{\gamma}$ in \eqref{eqn:wctrans}. Given a loop in the base, the composition 
of $f_\ast$ \eqref{eqn:fastbbb} for each wall that the loop goes across is simply the path-ordered product of the scattering diagram $\mathfrak{D}^{LF}$. Therefore, $\mathfrak{D}^{LF}$ is consistent from (2) of Lemma \ref{wall-crossing} provided the absence of negative Maslov discs. See \cite{L8} for more details of the proof. 
The precise statement is given as follows. 
  \begin{theorem}\label{LF scattering diagram} \cite[Theorem 4.16]{L8}
  	Let $(X,\omega)$ be a symplectic Calabi-Yau manifold of dimension $4$ and let $J$ be a tamed almost complex structure. Assume that $X'\subseteq X$ with $X'\rightarrow B$ be a Lagrangian fibration such that 
  	\begin{enumerate}
  		\item there are no singular fibres over $B$;
  		\item there exists an affine structure on $B$ such that the loci of fibres bounding holomorphic discs of Maslov index zero in a fixed class (up to parallel transport) are constrained in an affine line; 
  		\item the moduli space $\mathcal{M}(X,L,\gamma;J)$ of stable discs is compact for a fibre $L$ of $X'\rightarrow B$ and $\gamma\in H_2(X,L)$;
  		\item no fibres bound $J$-holomorphic discs of negative Maslov index. 
  	\end{enumerate}
  Then there exists a consistent scattering diagram $\mathfrak{D}^{LF}$ on $B$ constructed from pseudo-isotopies between fibres. 
  \end{theorem}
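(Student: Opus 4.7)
The plan is to produce $\mathfrak{D}^{LF}$ order-by-order in the Novikov filtration, using Fukaya's trick to extract wall-crossing transformations from pseudo-isotopies and using the Kontsevich-Soibelman lemma to fill in rays needed to enforce consistency. The four hypotheses are each essential: assumption (3) ensures the Floer-theoretic operations are well-defined, assumption (4) prevents infinitely many disc classes from contributing below any given energy level (so the wall structure is locally finite modulo $\Lambda_+^k$), assumption (2) guarantees that the locus of fibres bounding Maslov zero discs in a given relative class really is an affine line, and assumption (1) removes the complication of monodromy around singular fibres when parallel transporting relative classes.

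First, I would fix an energy cutoff $E$ and enumerate the finitely many Maslov zero relative classes $\gamma$ with $\omega(\gamma) \leq E$ that support nontrivial open Gromov-Witten invariants $\tilde{\Omega}(\gamma; u)$ in the sense of \cite{L4}. By assumption (2) each such $\gamma$ is carried by a fibre $L_u$ only for $u$ lying on a single affine line $\mathfrak{d}_\gamma \subset B$; to each such $\mathfrak{d}_\gamma$ I attach the wall function
\begin{equation*}
f_{\mathfrak{d}_\gamma} = \exp\!\left( \sum_{k \geq 1} k\, \tilde{\Omega}(k\gamma;u)\, T^{k\omega(\gamma)} z^{k\partial\gamma} \right),
\end{equation*}
which by Theorem \ref{YS} is precisely the wall-crossing transformation induced by $f_\ast$ in \eqref{eqn:fastbbb} for an infinitesimal isotopy crossing $\mathfrak{d}_\gamma$. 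This defines a preliminary scattering diagram $\mathfrak{D}^{LF}_{\leq E}$ modulo energies above $E$.

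Consistency is then proved by Lemma \ref{wall-crossing}(2). Given any closed loop $\phi$ in $B$ avoiding $\mathrm{Sing}(\mathfrak{D}^{LF}_{\leq E})$ and transverse to all walls, the composition of wall-crossing transformations along $\phi$ equals the map $f_\ast$ for a self-pseudo-isotopy of the basepoint fibre. But any loop in $B$ is null-homotopic relative to its endpoint (as $B$ is locally a disc and we may work in a contractible chart, extending by inductive analysis to any loop using that the affine lines meet in a discrete set), so by the homotopy-invariance clause in Lemma \ref{wall-crossing}(2) this composition equals the identity. This shows that the diagram assembled from the Floer-theoretic walls is already consistent wherever the loop-homotopy argument directly applies, and more generally the discrepancy around any loop enclosing a collision must vanish.

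The main obstacle, and the point where Kontsevich-Soibelman enters, is at points where two (or more) incoming walls meet. A priori the union of the Floer walls constructed above is not itself consistent as a scattering diagram because new Maslov zero disc classes can be generated at the collision locus from the bubbling of existing ones; the corresponding outgoing rays have to be added by hand. Here I would argue inductively on the Novikov order: assuming $\mathfrak{D}^{LF}_{\leq E-\epsilon}$ is consistent, at each collision point one identifies the unique finite set of outgoing rays with wall functions in $1 + T^E \Lambda_0[[z^m]]$ that restore triviality of the loop monodromy modulo energies $>E$; existence and uniqueness of this completion is the content of the Kontsevich-Soibelman lemma \cite[Theorem 6]{KS1}. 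The key compatibility to check is that these added rays are themselves Floer-theoretic, i.e.\ that the new wall function is realized as the wall-crossing contribution coming from Maslov zero discs whose moduli appears in the limit of bubbled configurations of the incoming classes; this is guaranteed by the fact that the infinitesimal wall-crossing transformation must equal $f_\ast$ along any path by Lemma \ref{wall-crossing}(1), together with assumption (3) which rules out non-compact degenerations into sphere bubbles of nonpositive Chern number. Letting $E \to \infty$ and using assumption (4) to ensure finiteness modulo each $\Lambda_+^k$ yields the desired consistent diagram $\mathfrak{D}^{LF}$.
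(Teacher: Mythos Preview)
Your proposal follows the same outline as the paper: define walls as the affine loci of fibres bounding Maslov-zero discs (using (2)), attach wall functions via Theorem~\ref{YS}, and prove consistency by interpreting the path-ordered product around a small loop as a composition of the maps $f_\ast$ from Lemma~\ref{wall-crossing}, which is then the identity by homotopy invariance. The invocation of the Kontsevich--Soibelman lemma at collisions is also what the paper does.

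There is one misattribution worth correcting. You say assumption~(4) ``prevents infinitely many disc classes from contributing below any given energy level'' and later invoke it again ``to ensure finiteness modulo each $\Lambda_+^k$''. Finiteness below a given energy is a consequence of Gromov compactness, i.e.\ of assumption~(3), not of~(4). The actual role of~(4) is in the consistency step: when you contract the loop using the absence of singular fibres (assumption~(1)), the resulting two-parameter family $\Phi_{s,t}$ can in principle sweep out $J$-holomorphic discs of negative Maslov index, which would contribute nontrivially to the pseudo-isotopy and spoil the conclusion that the composite $f_\ast$ equals the identity. Assumption~(4) rules this out, so the contraction really witnesses $\theta_{\phi,\mathfrak{D}^{LF}} = \mathrm{id}$. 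The paper makes exactly this point in the paragraph following the theorem.

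A smaller remark: your description of the Kontsevich--Soibelman step frames it as ``adding outgoing rays by hand'' and then checking they are Floer-theoretic. In the paper's logic the order is reversed: the Floer-theoretic diagram already contains all walls (including those from bubbled classes at collisions), consistency is automatic from the homotopy argument, and the Kontsevich--Soibelman lemma is then a tool to \emph{compute} the outgoing wall functions from the incoming ones. Your version is not wrong, but it makes the compatibility check look like an extra obligation rather than a consequence.
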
 
In terms of calculation,
 when a new wall is produced by the collision of walls, the new walls and wall functions can thus be captured by (2) of Lemma \ref{wall-crossing} together with the so-called Kontsevich-Soibelman lemma (see e.g. \cite[Theorem 6]{KS1}).

\section{Floer theory on toric surfaces}\label{sec:holodisctoric}

In this section, we prove some useful facts about holomorphic discs whose boundaries lie on the moment fibre $L_u=p^{-1} (u)$, which will be the crucial ingredient for the proof of our main theorem in Section \ref{sec:mainholotrop}. We remark that holomorphic discs bounding $L_u$ are completely classified by the work of Cho-Oh \cite{CO}, and they admit very concrete coordinate-wise descriptions from which we can read off much of topological information of discs. Our task here is to assign a tropical disc to each of these holomorphic discs, whose end lies at $u$, and whose intersection pattern with the boundary divisors precisely match with the original holomorphic disc. 

In the second half, we compute the superpotentials for semi-Fano toric surfaces by means of tropical counting. While they have already been computed in the previous work of Chan-Lau \cite{CL}, the method we use here is completely different, and is of independent interest. These toric potentials serve as a stepping stone to compute potentials for general log Calabi-Yau surfaces thorough blowup and blowdown process (under some non-negativity assumption).

\subsection{Construction of tropical discs from holomorphic discs in toric surfaces}\label{subsec:weakcorresholotrop} 
We will establish a weak correspondence between holomorphic and tropical discs for toric surfaces in this subsection. 
More precisely, to a  
holomorphic disc with a fixed set of point-constraints (on the toric boundary), we assign a tropical disc whose unbounded edges have directions and locations determined by the point-constraints. This will impose an obstruction for existence of certain holomorphic discs to exist, which will be crucial in proving Theorem \ref{correspondence: scattering diagram}.

Let us begin with the following useful fact about holomorphic discs in toric manifolds. Given a moment torus fibre $L$ in a toric manifold $\bar{Y}$, up to $(\mathbb{C}^*)^n$-action, we may assume that $L$ is the fibre over the origin in $\mathbb{R}^n$ which is identified with $\mbox{Int}\, P$ via the Legendre transformation. Then $L$ is the fixed locus of the anti-holomorphic involution $\sigma:(z_1,\cdots, z_n)\mapsto (\frac{1}{\overline{z_1}},\cdots, \frac{1}{\overline{z_n}})$ on $(\mathbb{C}^*)^n$. 

Any holomorphic disc with boundary on $L$ restricted to $(\mathbb{C}^*)^n (\subset \bar{Y})$ can be doubled, and then extend to a rational curve in $\bar{Y}$. This gives the following lemma. 

\begin{lemma}
\label{doubling}
Let $\bar{Y}$ be a toric manifold, and $L$ a moment fibre.
   Given a holomorphic disc $f:(D^2,\partial D^2)\rightarrow (\bar{Y},L)$, there exists a unique rational curve $C_{f}$ in $\bar{Y}$ such that it contains the image of $f$, and that $C_f \cap (\mathbb{C}^*)^n$ is invariant under $\sigma$. 
\end{lemma}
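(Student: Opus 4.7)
The plan is to construct $C_f$ by doubling $f$ via Schwarz reflection along $\partial D^2$ using the anti-holomorphic involution $\sigma$, and to deduce uniqueness from the Zariski density of $f(D^2)$ in its closure.

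First, I would note that the coordinate-wise map $\sigma\colon (z_i)\mapsto (1/\overline{z_i})$ extends from $(\C^\ast)^n$ to an anti-holomorphic involution $\sigma\colon \bar Y \to \bar Y$, namely the real structure on the toric compactification associated with the compact torus $L$. Let $\iota\colon \widehat{\C} \to \widehat{\C}$, $\iota(z)=1/\bar z$, denote the anti-holomorphic involution on $\CP^1\cong \widehat{\C}$ that fixes $\partial D^2$ and swaps the two hemispheres, and set $U:=f^{-1}((\C^\ast)^n)$, an open subset of $D^2$ containing $\partial D^2$. Define
\[
\widetilde f(z) = \begin{cases} f(z), & z \in U, \\ \sigma\bigl(f(\iota(z))\bigr), & z \in \iota(U). \end{cases}
\]
The two branches agree on $U\cap \iota(U)=\partial D^2$ since $f(\partial D^2)\subset L = \mathrm{Fix}(\sigma)$ and $\iota$ restricts to the identity on $\partial D^2$. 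The lower branch is holomorphic because $f$ is, while the upper branch is holomorphic as $\sigma\circ f\circ \iota$ is the composition of two anti-holomorphic maps with one holomorphic map.

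Next, I would invoke the Schwarz reflection principle to conclude holomorphicity across $\partial D^2$: the two pieces match continuously along the real analytic arc $\partial D^2$ with common values lying on the totally real submanifold $L$, which is precisely the classical reflection setup. The complement of $U\cup \iota(U)$ in $\CP^1$ is the finite set $f^{-1}(\bar D)\cup \iota(f^{-1}(\bar D))$ of isolated points, and since $\bar Y$ is proper, the removable singularity theorem produces a holomorphic extension $\widetilde f\colon \CP^1 \to \bar Y$. I then define $C_f:=\widetilde f(\CP^1)$, an irreducible rational curve containing $f(D^2)$. The identity $\widetilde f\circ \iota = \sigma\circ \widetilde f$ holds on $\iota(U)$ by construction, hence on all of $\CP^1$ by analytic continuation, which proves that $C_f\cap (\C^\ast)^n$ is $\sigma$-invariant.

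Finally, for uniqueness, a non-constant holomorphic disc is locally open off its discrete critical locus, so $f(D^2)$ is a real two-dimensional subset and therefore Zariski dense in any irreducible algebraic curve containing it. Any rational curve $C'$ satisfying the stated conditions must then contain $C_f$, and by irreducibility and equality of dimensions $C'=C_f$. The one genuine — though quite mild — obstacle is the removable singularity step across $f^{-1}(\bar D)$, where properness of the toric compactification $\bar Y$ is essential; in particular, the argument would fail if we only worked in the open torus $(\C^\ast)^n$.
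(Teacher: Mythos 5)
Your overall strategy is exactly the one the paper intends (the paper only sketches it): double $f$ by Schwarz reflection through the totally real fibre $L=\mathrm{Fix}(\sigma)$, extend the reflected map over the finitely many points mapping to the toric boundary to get a morphism $\mathbb{P}^1\to\bar Y$, and get uniqueness from the fact that $C_f$ is the Zariski closure of the two-dimensional set $f(D^2)$. However, two of your justifications are not correct as written. First, the map $\sigma(z)=(1/\overline{z_1},\dots,1/\overline{z_n})$ does \emph{not} in general extend to an anti-holomorphic involution of $\bar Y$: it is conjugation composed with the torus inversion, and inversion corresponds to $-\mathrm{id}$ on the cocharacter lattice, so it extends to $\bar Y$ only when the fan is centrally symmetric (it already fails for $\mathbb{P}^2$, where it is undefined at the torus-fixed points). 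Consequently the asserted global identity $\widetilde f\circ\iota=\sigma\circ\widetilde f$ on all of $\mathbb{P}^1$ is not meaningful; this is precisely why the lemma only claims invariance of $C_f\cap(\mathbb{C}^*)^n$. The fix is easy: the identity holds on $U\cup\iota(U)$ by construction, and since the extended map sends the punctures into $\bar D$, every point of $C_f\cap(\mathbb{C}^*)^n$ is of the form $\widetilde f(z)$ with $z\in U\cup\iota(U)$, which gives the invariance — but the argument should be phrased this way rather than via a nonexistent real structure on $\bar Y$.

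Second, "properness of $\bar Y$ plus the removable singularity theorem" is not a valid reason for the extension across the punctures: a holomorphic map from a punctured disc into a compact complex manifold can perfectly well have an essential singularity (compose $e^{1/z}$ with $\mathbb{C}\subset\mathbb{P}^1$). What saves you is the specific structure of the reflected map. At a puncture $p\in f^{-1}(\bar D)$ in the original hemisphere nothing needs to be done, since there $\widetilde f=f$ is already holomorphic. At a reflected puncture $\iota(p)$, write $f$ near $p$ in torus coordinates: each component $f_j=z_j\circ f$ is meromorphic at $p$ (holomorphic in an affine toric chart, with finite vanishing orders of the chart monomials), so the components of $\widetilde f$, namely $1/\overline{f_j(1/\bar z)}$, are meromorphic at $\iota(p)$ with poles or zeros of finite order. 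Hence $\widetilde f$ is a rational map from the smooth curve $\mathbb{P}^1$ to the proper variety $\bar Y$ and therefore extends to a morphism (equivalently, one can check the energy is finite and invoke Gromov's removal of singularities). With these two repairs — and noting that $f^{-1}(\bar D)$ is finite because $f(\partial D^2)\subset L$ keeps it away from the boundary circle — your proof is correct and coincides with the paper's argument; the uniqueness step via Zariski density is fine as you wrote it.
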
 

We now restrict to the case $n=2$. Suppose that a curve $C$ in $(\mathbb{C}^*)^2$ is defined to be the zero loci of a Laurent polynomial $F=F(z_1,z_2)$. 
 Consider the amoeba $\mathcal{A}$ of $C$, the image of $C$ under the log map 
\begin{equation} \label{log map}
\begin{array}{cccl}
\mbox{Log}: & ( \mathbb{C}^*)^2 &\rightarrow& \mathbb{R}^2  \\
&(z_1,z_2) & \mapsto & (\log{|z_1|},\log{|z_2|}).
\end{array}
\end{equation}
We recall the \emph{Ronkin function} $N_{F}(u)$ \cite{PR, R4} defined on $\mathbb{R}^2$. It is given as the push-forward of $\log{|F|}$, i.e. 
\begin{align*}
N_{F}(u)=\frac{1}{(2\pi i)^2}\int_{L_u}\log{|F|}\frac{dz_1}{z_1}\wedge \frac{dz_2}{z_2}
\end{align*} 
for $u \in \mathbb{R}^2$. Notice that
$N_F$ is well-defined even for $u$ lying in $\mathcal{A}$. 
It is also known that $N_{F}: \R^2 \to \R$ is a convex function, strictly convex on $\mathcal{A}$ and is linear (up to translation) on each connected component of the complement of $\mathcal{A}$ \cite{R3}. For $E$ a component of $\mathbb{R}^2 \setminus \mathcal{A}$, we write $N_F^{E}$ to be the unique affine function on $\mathbb{R}^2$ that coincides with $N_F|_E$ on $E$.
Finally we set $N_F^{\infty}:=\mbox{max}_E N_F^E$.

The spine $\mathcal{S}$ of $\mathcal{A}$ (or of $C$) is now defined as the corner locus of $N_F^{\infty}$, i.e. the locus where $N_F^{\infty}$ is not locally linear. On a component $E$ of $\mathbb{R}^2 \setminus \mathcal{A}$, $N_F^E$ can be written as $N_F^E=\nu^E_1u_1+\nu^E_2 u_2+a_E$, and in this case, $\nu_E:=(\nu^E_1,\nu^E_2)\in \mathbb{Z}^2$ is called the index of the component $E$\footnote{From the Jensen's formula \cite{A6}, the index $\nu_E$ can be determined by the intersection number of $C_f$ with certain holomorphic discs.}. Then $\mathcal{S}$ can be also described as the zero loci of the tropical polynomial $F_s=\sum_{\nu_E}s^{a_E}z_1^{\nu^E_1}z_2^{\nu^E_2}$, where $s$ is the formal variable of the Puiseux series. Therefore, $\mathcal{S}$ is a tropical curve by definition.\footnote{If we view $s$ to be real variable, then $N^{\infty}=\lim_{s\rightarrow \infty} N_{F_s}$.}

Now we are ready to prove that every holomorphic disc with boundary on the moment map fibre has a (not necessarily unique) corresponding tropical disc. The idea is to use the spine $\mathcal{S}$ of the doubling $C_f$ of a holomorphic disc, but the tricky part here is that in general, the genus of the spine can be strictly larger than that of the original curve (see Remark 2.3 \cite{MR} for such an example). For this reason, we will make a further modification of $\mathcal{S}$ in the proof.

\begin{lemma}\label{weak correspondence} Let $\bar{Y}$ be a toric surface and  consider a moment fibre $L_u$ over $u\in \mathbb{R}^2$. For each holomorphic disc $f:(D^2,\partial D^2)\rightarrow (\bar{Y},L_u)$, there exists a tropical disc with stop at $u$ that satisfies the following:
\begin{itemize}
\item[$\bullet$]
if the holomorphic disc $f$ intersects a component of the toric boundary divisor at a point of multiplicity $m$, then the corresponding tropical disc has an unbounded edge defined by the vanishing cycle of the toric boundary and of weight $m$.
\item the edge adjacent to $u$ is defined by $[f(\partial D^2)]$.
\end{itemize}  
\end{lemma}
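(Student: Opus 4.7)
The plan is to build the required tropical disc from the spine of the amoeba of the doubled curve $C_f$ of Lemma~\ref{doubling}, and then cut it in half using the $\sigma$-symmetry.

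First I apply the doubling: extend $f$ to a rational curve $C_f\subset \bar{Y}$ whose intersection with $(\C^\ast)^2$ is $\sigma$-invariant, where $\sigma$ is normalized so that $L_u$ sits over the origin. Let $F$ be the defining Laurent polynomial of $C_f\cap(\C^\ast)^2$, let $\mathcal{A}=\mathrm{Log}(C_f\cap(\C^\ast)^2)$ be its amoeba, and let $\mathcal{S}$ be the spine produced by the Ronkin function $N_F^\infty$. I will invoke the standard fact recalled before the statement: $\mathcal{S}$ is a tropical curve whose unbounded edges, together with their weights, record the intersection pattern of $C_f$ with the toric boundary, via the edges of the Newton polytope of $F$. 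The $\sigma$-invariance of $C_f$ forces the Newton polytope of $F$ to be centrally symmetric, so both $\mathcal{A}$ and $\mathcal{S}$ are invariant under the reflection $R_u$ about $u$.

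Next I locate $u\in \mathcal{S}$ and identify the local edge directions there. Since $\partial f\subset L_u$, the point $u$ lies in $\mathcal{A}$, and the $R_u$-symmetry of $N_F^\infty$ forces $u$ to be a corner of this piecewise linear convex function, hence a vertex of $\mathcal{S}$. To pin down the directions of $\mathcal{S}$ at $u$, I examine the map $\mathrm{Log}\circ f$ near $\partial D^2$: by a standard local model for a holomorphic disc meeting the Lagrangian $L_u$, the image of a small collar of $\partial D^2$ enters a neighborhood of $u$ in $\R^2$ along the ray determined by $\partial[\mathrm{Im}(f)]\in H_1(L_u,\Z)\cong \Z^2$, with weight equal to the winding multiplicity of $\partial f$. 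By the $R_u$-symmetry, the contribution from $\sigma(f(D^2))$ enters $u$ along the opposite direction $-\partial[\mathrm{Im}(f)]$.

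I then extract the tropical disc as the \emph{$f$-half} of $\mathcal{S}$: starting at $u$ along the edge in direction $\partial[\mathrm{Im}(f)]$, I follow the subgraph of $\mathcal{S}$ supporting precisely the unbounded edges that come from intersections of $f$ (rather than of $\sigma(f)$) with the toric divisors, each weighted by its intersection multiplicity, which is the content of the usual Newton polygon calculation. Balancing at all interior vertices is inherited from the balancing of $\mathcal{S}$, and $u$ is the root so no balancing condition is imposed there. It remains to check that the $f$-half is a tree.

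The main obstacle, acknowledged in the discussion before the lemma, is that the first Betti number of $\mathcal{S}$ may exceed the geometric genus of $C_f$, so the $f$-half need not be a tree. When such a cycle occurs, I will argue it must be \emph{interior}: the unbounded edges on the $f$-side are rigidly accounted for by the intersections of $f$ with $\bar{D}$, so a cycle cannot involve any of them and is by itself a balanced tropical $1$-cycle. I can therefore subtract it — i.e. reduce the weights of its edges by a common positive integer — without disturbing the unbounded edges, the edge adjacent to $u$, or the balancing at any vertex. Iterating this reduction terminates in a tropical disc rooted at $u$ with the directions, weights and unbounded edges prescribed by the lemma, completing the proof.
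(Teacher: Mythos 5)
Your overall strategy (double the disc via Lemma~\ref{doubling}, pass to the amoeba and its spine, exploit the $\Z_2$-symmetry, then halve) is the same as the paper's, but the step where you deal with the genus of the spine — which is precisely the delicate point the paper flags — does not work as written. You claim that a cycle of $\mathcal{S}$ consisting of bounded edges "is by itself a balanced tropical $1$-cycle" and can be removed by lowering the weights of its edges by a common positive integer without disturbing balancing. This is false: a cycle in a plane tropical curve is a closed polygon, and at any vertex where it turns, the two adjacent cycle edges have non-opposite primitive directions $d_1,d_2$; reducing both weights by $1$ changes the vertex sum by $-(d_1+d_2)\neq 0$, so balancing is destroyed (it is preserved only where the cycle passes straight through, which cannot happen at every vertex of a closed polygon). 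So the iterative "subtraction" neither produces a tropical curve nor terminates in the object you want. The paper instead kills the genus globally: it takes the Newton polygon $\Delta'$ of the tropical polynomial of $\mathcal{S}$ and replaces $\mathcal{S}$ by the curve $\mathcal{S}'$ defined by the tropical polynomial retaining only the monomials at lattice points of $\partial\Delta'$; by duality with subdivisions of $\Delta'$ \cite{M2} this $\mathcal{S}'$ is a tree, it is still $\Z_2$-invariant, and its unbounded edges lie on the same affine lines (with the same weights) as those of $\mathcal{S}$, so no enumerative data is lost.

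A second, smaller gap: the assertion that the $R_u$-symmetry of $N_F^\infty$ "forces $u$ to be a corner" of the spine is not immediate. Symmetry only shows that the affine pieces come in pairs $(\nu_E,a_E)\leftrightarrow(-\nu_E,a_E)$; if a complement component with order equal to the center of the (centrally symmetric) Newton polygon existed and its constant were the strict maximizer at $u$, then $u$ would lie in the interior of a linearity domain even though $u\in\mathcal{A}$. One has to exclude this — e.g.\ by convexity of amoeba-complement components (a convex component invariant under the point reflection about $u$ must contain $u$, contradicting $u\in\mathcal{A}$) — or argue as the paper does, proving $u\in\mathcal{S}'$ from uniqueness of paths in the tree $\mathcal{S}'$ together with the symmetry. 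Relatedly, your "standard local model" claim that $\mathrm{Log}\circ f$ enters a neighborhood of $u$ along the ray determined by $\partial[\mathrm{Im}(f)]$ is not true in general (the Log-image of a boundary collar is typically two-dimensional); the correct and much simpler route to the last bullet, which the paper uses, is that once the halved tree is in hand, the direction and weight of the edge adjacent to the root are forced by the balancing condition at all other vertices. Finally, note that your "$f$-half versus $\sigma(f)$-half" decomposition is not canonical and needs the tree structure at $u$ to even make sense; with $\mathcal{S}'$ a tree containing $u$, the halving is done by splitting the components of $\mathcal{S}'\setminus\{u\}$, which the involution swaps in pairs.
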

\begin{proof}
Lemma \ref{doubling} states that there exists a unique rational curve $C_f\subseteq \bar{Y}$ containing $\mbox{Im}f$. Let $\mathcal{S}$ be the corresponding spine. 
    The $\mathbb{Z}_2$-symmetry on the rational curve $C_f$
    implies that 
     \begin{enumerate}
     	\item $\mathcal{A}$ is symmetric under $u\mapsto -u$,
     	\item the defining equation $F$ of $C_f$ satisfies $F(z_1^{-1},z_2^{-1})=z_1^az_2^bF(z_1,z_2)$ for some $a,b\in \mathbb{Z}$. 
     \end{enumerate}
    Then straightforward calculation using the definition of $N_F$ shows that 
    \begin{align*}
      N_F(-u)=N_F(u)+au_1+bu_2.
    \end{align*} Notice that adding a fixed linear function to every $N^E_F$ does not change the corner locus of $N^{\infty}_F$.
    Thus, the tropical polynomial of $\mathcal{S}$ is also $\mathbb{Z}_2$-invariant.  As mentioned, we will modify $\mathcal{S}$ into another tropical curve $\mathcal{S}'$ in order to ensure that the resulting tropical curve is of genus $0$.
    
    Let $\Delta'$ be the Newton polytope of the tropical polynomials of $\mathcal{S}$, that is, the convex hull of the set $\{\nu_E\}$, where $E$ varies over all the components of $\mathbb{R}^2\setminus \mathcal{A}$. 
    Then we take $\mathcal{S}'$ to be the tropical curve defined by the tropical polynomial $\sum_{\nu_E\in \partial \Delta'}t^{a_E}z_1^{\nu^E_1}z_2^{\nu^E_2}$. From  \cite[Proposition 3.11]{M2}, the tropical curve $\mathcal{S}'$ is dual to a subdivision of $\Delta'$. Since the coefficient corresponding to any interior lattice point of $\Delta'$ is zero, we see that $\mathcal{S}'$ is a tree from \cite[Proposition 4.19]{M2}(and see the proof therein). On the other hand, the coefficients of $\mathcal{S}$ and $\mathcal{S}'$ over the lattice points on $\partial \Delta'$ still remain the same.    
   Hence, the sets of affine lines obtained by extending the unbounded edges of $\mathcal{S}$ and $\mathcal{S'}$ agree with each other, which are determined by the intersection of $C_f$ with the toric boundary of $\bar{Y}$. Lastly, it is obvious that $\mathcal{S}'$ is $\mathbb{Z}_2$-invariant.
   
        \begin{figure}[h]
   	\begin{center}
   		\includegraphics[scale=0.4]{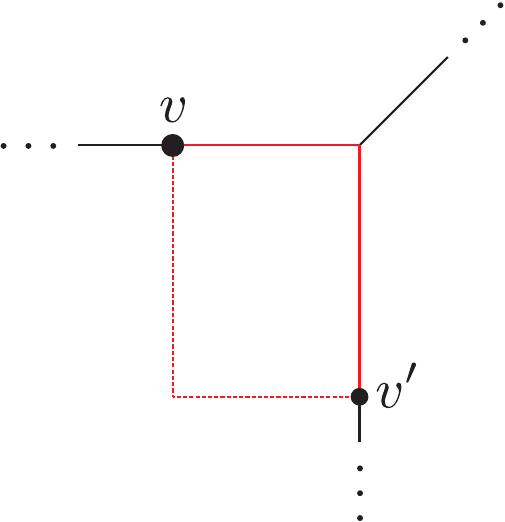}
   		\caption{$\Z_2$-symmetry of a tropical curve}
   		\label{fig:genustree}
   	\end{center}
        \end{figure}
   Notice that the involution $\sigma$ preserves the fibres of $\mbox{Log}$ and thus induces a map $\underline{\sigma}:\mathbb{R}^2\times \mathbb{R}^2$ simply given by $(u_1,u_2)\mapsto (-u_1,-u_2)$. 
    We claim that $u$ is contained in $\mathcal{S}'$. Let $v$ be a vertex in $\mathcal{S}'$, and $v'$ its reflection image, $v=\underline{\sigma}v'$, which also lies in $\mathcal{S}'$.  We may assume $v \neq v'$. 
    There exists a path in $\mathcal{S}'$ connecting $v$ and $v'$. If $u\notin \mathcal{S}'$, then the image of the path under $\underline{\sigma}$ provides another path between $v$ and $v'$, which contradicts the fact that $\mathcal{S}'$ is a tree (see Figure \ref{fig:genustree}). Since $\mathcal{S}'$ is a tropical curve of genus zero which is symmetric about $u$, one can halve it to gain a tropical disc with stop at $u$. It is known that the amoeba $\mathcal{A}$ contains the spine $\mathcal{S}$ \cite[Proposition 1 (iii)]{PR}. Notice that the compactification $\bar{\mathcal{A}}\subseteq P$ contains the points $p(C_f\cap \bar{D})$, where $\bar{D}$ is the boundary divisor. The modification from $\mathcal{S}$ to $\mathcal{S}'$ does not change the unbounded edges (up to enlongation) and thus the first part of the lemma is proven.
    The second part of the lemma is a consequence of the balancing condition at all vertices. 
\end{proof}

   We emphasize again that  the lemma provides a strict obstruction of existence of holomorphic discs.

\subsection{Tropical interpretation of the  potentials for toric semi-Fano surfaces}\label{subsec:spbintrop}

We end this section by examining the relationship between the bulk-deformed superpotential of a compact toric Fano surface and the superpotential of a compact toric semi-Fano surface obtained from the Fano surface by toric blowup. Any compact toric semi-Fano surface can be obtained in this way, with the exception of the Hirzebruch surface $\mathbb{F}_2 = \mathbb{P}(\mathscr{O}_{\mathbb{P}^1}\oplus \mathscr{O}_{\mathbb{P}^1}(-2))$. This process provides a means for calculating the superpotential on any toric semi-Fano surface (except $\mathbb{F}_2$) using a tropical calculation on a toric Fano surface.

Let $Y$ be a compact semi-Fano toric surface with $\widetilde{Y}$ a semi-Fano toric blowup of $Y$ (at a single point). Note that $Y$ and $\widetilde{Y}$ are necessarily smooth. The fans for both $Y$ and $\widetilde{Y}$ are complete. The fan for $\widetilde{Y}$ has one more 1-cone than that for $Y$. Let $\tilde{w}$ denote the primitive vector in this 1-cone of the fan for $\widetilde{Y}$ not appearing in the fan for $Y$. We have that $\widetilde{w} = w_1 + w_2$, where $w_1$ and $w_2$ are primitive vectors in adjacent $1$-cones of the toric fan for $Y$. We choose the labeling $w_1,w_2$ so that the ordered pair $(w_1, w_2)$ is positively oriented, meaning that $w_2$ is counterclockwise from $w_1$. We choose primitive vectors in all of the 1-cones of $Y$, labeling them $w_i$ as we move counterclockwise, i.e. so that $w_i$ and $w_{i+1}$ are adjacent, in the sense that they bound a 2-cone of the fan, and $(w_i,w_{i+1})$ is positively oriented. Note that, by the smoothness of $Y$, we have $\det(w_i,w_{i+1}) = 1$ for all $i$.

Let $q$ be a point in $M_\R$. We associate a scattering diagram $\mathfrak{D}_s$ with the point $q_s = q + s\tilde{w}$ as follows: $\mathfrak{D}_s$ is the scattering diagram with one ray $(\mathfrak{d}_v, f_{\mathfrak{d}}) = (q_s + v\R_{\geq 0}, 1 + z^{-v}t)$ for each primitive vector $v$ in a 1-cone of the fan for $Y$. Here the ring $R$ we are using for the functions associated with the walls of the scattering diagram is the ring $\C[t]/(t^2)$. When we consider scattering diagrams associated with multiple point insertions $q_s, p_1,\hdots, p_n$, we will use the ring $\C[t, t_1,\hdots, t_n]/(t^2, t_1^2, \hdots, t_n^2)$, with one variable for each point insertion. See \cite{HLZ} for further details on the choice of ring. Note that $\mathfrak{D}_s$ is not consistent, though the wall-crossing function $\theta_{\phi,\mathfrak{D}_s} = \text{Id}$ for any closed loop $\phi$ that is contractible in $M_\R\setminus \{q_s\}$.

As $s$ approaches infinity, one chamber of $\mathfrak{D}_s$ will exhaust $M_\R$. This is the open cone spanned by $-w_1$ and $-w_2$ based at $q_s$. We let $U_s$ denote this open cone. Letting $p_1,\hdots, p_n$ be generic points in $U_s$, we will study the generalized Maslov index $2$ tropical discs (with respect to the points $q_s,p_1,\hdots, p_n$) with stop in $U_s$.

We establish some facts about the toric fan of $Y$.

\begin{lemma}\label{SemiFano Fan Sums}
	Let $Y$, $\widetilde{Y}$, $w_1$, $w_2$, and $\widetilde{w}$ be as above. That is, let $Y$ be a compact toric semi-Fano surface with $\widetilde{Y}$ a semi-Fano toric blowup of $Y$ (at a single point). Let $\tilde{w}$ be the primitive vector in the 1-cone of the fan for $\widetilde{Y}$ not appearing in the fan for $Y$, and let $w_1$ and $w_2$ be the primitive vectors in the adjacent $1$-cones with $w_1 + w_2 = \tilde{w}$, labeled so that $(w_1,w_2)$ is positively oriented. Given any primitive vector $w$ lying in a 1-cone of the toric fan of $Y$, we have that $w = bw_1 + cw_2$, where $b,c < 2$.
\end{lemma}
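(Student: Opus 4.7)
Since $Y$ is smooth, $\{w_1, w_2\}$ is a $\mathbb{Z}$-basis of $N$, so I fix coordinates with $w_1 = e_1$ and $w_2 = e_2$; then $\widetilde{w} = (1,1)$, and the coefficients $(b,c)$ in $w = bw_1 + cw_2$ are simply the integer coordinates of $w$. The plan is to extract the bound $b, c \leq 1$ directly from the semi-Fano hypothesis on $\widetilde{Y}$, without any fan-induction or case analysis on the other rays, by passing to the moment polytope
\[
P_{-K_{\widetilde{Y}}} := \{ m \in M_{\mathbb{R}} : \langle m, \rho \rangle \geq -1 \text{ for every ray } \rho \text{ of } \widetilde{Y} \}.
\]

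The main input is the standard characterization of nefness on a smooth complete toric variety: $-K$ is nef if and only if, for every maximal cone $\sigma$, the dual vertex $m_\sigma$ (the unique element of $M_{\mathbb{R}}$ satisfying $\langle m_\sigma, \rho \rangle = -1$ on both rays $\rho \subset \sigma$) actually belongs to $P_{-K}$. I apply this on $\widetilde{Y}$ to the maximal cone $\sigma = \langle w_1, \widetilde{w} \rangle$: the two equations $\langle m_\sigma, w_1 \rangle = -1$ and $\langle m_\sigma, \widetilde{w} \rangle = -1$ reduce to $m_1 = -1$ and $m_1 + m_2 = -1$, so $m_\sigma = (-1, 0)$. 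The semi-Fano assumption on $\widetilde{Y}$ then forces $(-1, 0) \in P_{-K_{\widetilde{Y}}}$, which, unwound on an arbitrary ray $w = (b, c)$ of $\widetilde{Y}$, becomes $-b = \langle (-1, 0), w \rangle \geq -1$, i.e.\ $b \leq 1$. Since the rays of $\widetilde{Y}$ are exactly those of $Y$ together with $\widetilde{w}$, the inequality $b \leq 1 < 2$ holds for every primitive generator of a 1-cone of $Y$.

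The symmetric argument applied to $\sigma' = \langle \widetilde{w}, w_2 \rangle$ produces the dual vertex $m_{\sigma'} = (0, -1)$, and the analogous computation yields $c \leq 1 < 2$ for every ray of $Y$. The one genuinely new ingredient over a pure $Y$-side argument is the choice of the polytope vertices $(-1, 0)$ and $(0, -1)$: these come from maximal cones of $\widetilde{Y}$, not of $Y$ itself. The old vertex $(-1, -1)$ attached to the cone $\langle w_1, w_2 \rangle$ in $Y$ only yields the weak diagonal bound $b + c \leq 2$; it is precisely the presence of the extra ray $\widetilde{w}$ that splits this vertex into the two vertices providing the sharp separate bounds on $b$ and $c$, so the semi-Fano hypothesis on $\widetilde{Y}$ (and not merely on $Y$) is genuinely used.
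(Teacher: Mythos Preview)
Your proof is correct and takes a genuinely different route from the paper's. The paper argues by induction around the fan: starting from $w_2$ and walking counterclockwise, it uses the self-intersection formula $D_n^2 = -\det(w_{n-1},w_{n+1})$ together with the semi-Fano bound $D_n^2 \geq -2$ (applied in $\widetilde{Y}$) to show that the successive rays are forced to be $w_n = -(n-2)w_1 + w_2$, so the $w_2$-coefficient is always $1$. Your argument bypasses this ray-by-ray walk entirely by reading the bound off the anticanonical polytope of $\widetilde{Y}$: the nef criterion places the two new vertices $(-1,0)$ and $(0,-1)$ (coming from the cones $\langle w_1,\widetilde{w}\rangle$ and $\langle \widetilde{w},w_2\rangle$) inside $P_{-K_{\widetilde{Y}}}$, and evaluating the defining inequalities on any ray $(b,c)$ gives $b\leq 1$ and $c\leq 1$ directly. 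This is shorter and more conceptual; your remark that only the diagonal bound $b+c\leq 2$ is visible from $Y$ alone, and that the blowup is what separates it into the two coordinate bounds, is a nice explanation of why the hypothesis on $\widetilde{Y}$ is essential.

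The one thing the paper's approach buys that yours does not is the explicit description $w_n = -(n-2)w_1 + w_2$ of the rays on each side, which the paper immediately reuses in the next lemma (Lemma~\ref{SemiFano No New Directions}) via the identity $w_1 + w_n = w_{n-1}$. If you want to proceed to that lemma with your method, you would need either to recover this structure separately or to find an analogous polytope argument there.
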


Given the exhaustive description of semi-Fano toric surfaces given by Chan-Lau \cite{CL}, this could be checked directly. We present an elementary inductive proof here.

\begin{proof}
	By the smoothness of $Y$, $w_1$ and $w_2$ form an integral basis of $M$, so we can write $w = bw_1 + cw_2$ for some integers $b,c$. By assumption, the 1-cones corresponding to $w_1,w_2$ are adjacent in the fan of $Y$, in the sense that they bound a 2-cone of the fan, so $b$ and $c$ cannot both be positive. Without loss of generality, we will assume $c > 0$ and show $c = 1$. By the smoothness of $Y$ and our labeling convention, $\det (w_1, w_2) = 1$.
	
	The primitive vector in the next 1-cone after that of $w_2$, proceeding counterclockwise, is $w_3$. We write $w_3 = b_3w_1+c_3w_2$. If $c_3\leq 0$, then all remaining $w_i = b_iw_1 + c_i w_2$ must have $c_i < 0$, and the lemma holds, so we assume that $c_3 > 0$. This implies $b_3 < 0$. Since $\widetilde{Y}$ is semi-Fano, the self intersection number in $\widetilde{Y}$ of the divisor corresponding to the $1$-cone containing $w_2$ is at least $-2$. It is known that this self intersection number is equal to $-\det(w_1+w_2, w_3) = -c_3 + b_3$. We see that $-c_3 + b_3 \leq -2$, with equality only when $b_3 = -1$ and $c_3 = 1$. Hence, $w_3 = -w_1 + w_2$.
	
	We induct, assuming $w_{i} =-(i-2)w_1 + w_2$. The vector $w_{i+1} = b_{i+1}w_1 + c_{i+1} w_2$ is the primitive vector in the next $1$-cone counterclockwise from that of $w_i$. Again, if $c_{i+1} <0$, we have exhausted all possible examples that could violate the lemma, so the lemma holds. Instead, assume that $c_{i+1} > 0$. From the semi-Fano condition on $\widetilde{Y}$, it follows that the self intersection number of the divisor in $\widetilde{Y}$ corresponding to the $1$-cone containing $w_{i}$ is greater than or equal to $-2$, so 
	\begin{align*}
		-\det (w_{i-1},w_{i+1}) &= -\det \begin{pmatrix}{-(i-3)}&{b_{i+1}}\\{1}&{c_{i+1}} \end{pmatrix}\\
		&= b_{i+1} + (i-3) c_{i+1}\\
		&\geq -2
	\end{align*}
	Since $w_{i+1}$ is counterclockwise from $w_{i}$ and $Y$ is smooth, we have that $$1 = \det \begin{pmatrix}{-(i-2)}&{b_{i+1}}\\{1}&{c_{i+1}}\end{pmatrix},$$ so $-1 = b_{i+1} + (i-2)c_{n+1} > b_{i+1} + (i-3)c_{n+1} \geq -2$. Since $c_{i+1}>0$, it then follows that $c_{i+1} = 1$ and $b_{i+1} \geq -(i-1)$. Since $w_i = -(i-2)w_1+w_2$, it follows that $b_{i+1} = -(i-1)$, and our induction is complete.
	
	Returning to our original $w = bv_1 + cv_2$, we see that, since $c > 0$ and $Y$ has finitely many 1-cones, the vector $w$ must eventually appear in the form $w = w_i = -(i-2)w_1 + w_2$. Thus $c = 1$, as desired.
	
\end{proof}

The proof of this lemma immediately gives the following lemma.

\begin{lemma}\label{SemiFano No New Directions}
	Let $Y$, $\widetilde{Y}$, $w_1$, $w_2$, and $\widetilde{w}$ be as in Lemma \ref{SemiFano Fan Sums}, and let $w = bw_1 + cw_2$ be a primitive vector lying in a 1-cone of the toric fan of $Y$. If $c > 0$, then $w_1 + w$ is a primitive vector lying in a 1-cone of the toric fan of $\widetilde{Y}$. Likewise, if $b > 0$, then $w_2 + w$ is a primitive vector lying in a 1-cone of the toric fan of $\widetilde{Y}$.
\end{lemma}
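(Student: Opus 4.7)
The plan is to read the lemma off of the explicit enumeration of primitive fan generators that was produced inside the proof of Lemma \ref{SemiFano Fan Sums}. There, moving counterclockwise starting from $w_2$, every subsequent primitive generator of a $1$-cone of $Y$ was shown to take the form $w_n = -(n-2)w_1 + w_2$, with $n$ running from $n = 2$ (which recovers $w_2$ itself) up to some finite maximum; crucially, consecutive generators on that list differ by precisely $w_1$. The mirror statement, obtained by running the same argument clockwise starting from $w_1$, says that on the other side the primitive generators take the shape $w_1 - (n-2)w_2$, consecutive ones differing by $w_2$.

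I would then dispatch the case $c > 0$ as follows. The smoothness of $Y$ together with the adjacency of the rays spanned by $w_1, w_2$ forces $c = 1$ by Lemma \ref{SemiFano Fan Sums}, so $w$ must be one of the $w_n$ above for some $n \geq 2$. If $n = 2$, so $w = w_2$, then $w + w_1 = w_1 + w_2 = \tilde{w}$, which by definition is the new primitive generator introduced when passing from the fan of $Y$ to that of $\widetilde{Y}$. If $n \geq 3$, then $w + w_1 = -(n-3)w_1 + w_2 = w_{n-1}$ is a primitive generator of a $1$-cone already present in the fan of $Y$, hence also of the fan of $\widetilde{Y}$ (the case $n = 3$ returns $w_2$ and is fine). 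In every subcase $w + w_1$ is primitive, because it appears on the generator list.

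The case $b > 0$ is then immediate by the symmetric argument, interchanging the roles of $w_1$ and $w_2$ and using the clockwise enumeration instead. Since the result is essentially a bookkeeping consequence of the lattice picture already extracted in the previous proof, I do not anticipate any serious obstacle; the only subtle point worth flagging is the boundary case $n = 2$ above, where adding $w_1$ lands on the newly introduced ray $\tilde{w}$ rather than on a pre-existing $1$-cone of $Y$, but this is still a $1$-cone of $\widetilde{Y}$ so no extra work is required.
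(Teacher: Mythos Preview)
Your proof is correct and follows exactly the same approach as the paper, which simply observes (in the notation of the previous proof) that $w_1 + w_n = w_{n-1}$ for $n \geq 3$. You are in fact more thorough: the paper's proof does not explicitly mention the boundary case $n=2$ where $w_1 + w_2 = \tilde{w}$ lands on the new ray of $\widetilde{Y}$, whereas you handle it separately.
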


\begin{proof}
	In the notation of the proof of Lemma \ref{SemiFano Fan Sums}, our first statement amounts to observing, for $i\geq 3$, that $w_1 + w_i = w_{i-1}$.
\end{proof}

Let $h: \mathcal{T}\to M_{\mathbb{R}}$ be a generalized Maslov index 2 disc (with respect to the points $q_s, p_1,\hdots, p_n$) with stop $u$ in the chamber $U_s$ of $\mathfrak{D}_s$ described above. Here the \textit{stop} of the disc $h$ is the point in $\R^2$ to which the unique univalent vertex of $\mathcal{T}$ is mapped, and $U_s$ is the open (affine) cone based at $q_s$ determined by $-w_1$ and $-w_2$. We establish the following lemma regarding tropical discs with multiple unbounded edges entirely outside of a closed half plane (which will, in our uses, contain the chamber $U_s$).

\begin{lemma}\label{Tropical Clipping Half Plane}
	Consider generic points $q_s, p_1,\hdots, p_n$, all lying outside an open half plane $H$, and let $h$ be a generalized Maslov index 2 tropical disc with stop at a point $u\notin H$. Then each connected component of $h^{-1}(H)$ can completely contain at most one unbounded edge of $\mathcal{T}$.
\end{lemma}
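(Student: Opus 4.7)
The plan is to combine a combinatorial rigidity coming from the $MI'=2$ condition with the convexity of the half plane $H$. The rigidity will force every non-root vertex of $T$ to be a point-constraint vertex, which places every vertex of $T$ outside $H$; convexity of $H$ then forces each component of $h^{-1}(H)$ to sit inside a single edge of $T$, from which the conclusion is immediate.

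First I would run a short combinatorial count on $T$. Since the root is univalent and every other vertex is trivalent, the tree identity $E_b = V - 1$ together with the degree sum $2 E_b + E_u = 3V-2$ give $V = E_u$ and $E_b = E_u - 1$, where $V$, $E_b$, $E_u$ denote the numbers of vertices, bounded edges, and unbounded edges of $T$. Let $m$ be the number of point-constraint vertices and $f = V - 1 - m$ the number of remaining ``free'' trivalent vertices. Since every edge weight is a positive integer, the tropical Maslov index satisfies $MI(h) = 2 \sum_{e \textrm{ unbounded}} w(e) \geq 2 E_u$, and so the hypothesis
\[
MI'(h,T,w) = MI(h) - 2m = 2
\]
forces $E_u \leq 1 + m$. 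Combined with $V = 1 + m + f = E_u$ this gives $f = 0$ and $E_u = 1 + m$, meaning every non-root vertex of $T$ is a point-constraint vertex mapped to one of $p, p_1,\hdots, p_n$, and every weight equals $1$.

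With every vertex of $T$ therefore mapped outside $H$, I would conclude by an elementary convexity argument. For each edge $e$ of $T$, $h$ restricts to an affine embedding of $e$ onto a segment or ray whose finite endpoints lie outside $H$, and since $H$ is a convex open half plane, $h(e) \cap H$ is either empty or a single connected open subsegment or subray lying in the interior of $h(e)$. No vertex of $T$ lies in $h^{-1}(H)$, so each connected component of $h^{-1}(H)$ is confined to the interior of a single edge of $T$; a component inside a bounded edge contains no unbounded edge, while one inside an unbounded edge contains exactly one. The only potentially subtle point is the combinatorial rigidity at the first step, where the $MI'=2$ hypothesis is just tight enough to eliminate free trivalent vertices and to force all weights to be $1$; once this is in place, the convexity argument is immediate.
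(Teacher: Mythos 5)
Your first step does not hold up, and everything after it depends on it. You treat the point constraints as if they were vertices of $T$: the decomposition $V = 1 + m + f$ silently assumes that each of the $m$ constraints is realized at a distinct trivalent vertex. For generic constraints this is precisely what does not happen — a tropical disc passes through a generic point through the interior of an edge, not at a vertex (a trivalent vertex landing exactly on a prescribed generic point is a non-generic incidence). What your count $V = E_u$ together with $\sum_{e\ \mathrm{unbounded}} w(e) = 1+m$ actually yields is only the bound $\#\{\text{non-root vertices}\}\leq m$ (with unbounded weights $1$ in the extremal case); it gives no identification of vertices with constraint points, and even bounded edges may still carry weight $>1$. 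Consequently the key conclusion ``every vertex of $T$ maps outside $H$'' is unjustified, and it is false in exactly the situations the lemma is meant for: a disc can descend into $H$, branch at a trivalent vertex inside $H$ into an unbounded edge pointing further into $H$ and an edge climbing back out, with all of its point constraints sitting on the portions outside $H$. Indeed, the paragraph immediately after the lemma analyzes such configurations (``Since we have this vertex lying in $H$, we must have at least one unbounded edge lying entirely in $H$\dots''), so an argument showing that no vertex ever enters $H$ would render that discussion vacuous and cannot be correct. Once vertices inside $H$ are allowed, your convexity step says nothing about how many unbounded edges a single component of $h^{-1}(H)$ can contain.

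The missing idea is to use genericity of $p, p_1,\hdots,p_n$ not for a pointwise rigidity statement but for finiteness of the moduli space of generalized Maslov index $2$ discs with the given stop, and then to argue by deformation: if some component of $h^{-1}(H)$ met two or more unbounded edges, cut $h$ along that component, regard the piece meeting $H$ as a tropical curve whose ends are the $\ell$ unbounded edges in $H$ together with ends created by the cut (whose positions are pinned by points chosen on them outside $H$), and apply the standard dimension count $\dim = \#\{\text{ends}\} - 1$ minus the number of point conditions. This produces an $(\ell-1)$-dimensional family of generalized Maslov index $2$ discs with the same stop and the same constraints, contradicting finiteness as soon as $\ell\geq 2$. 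That is the route the paper takes, and your combinatorial count, while correct as far as $V=E_u$ and $E_u\leq 1+m$ go, cannot substitute for it.
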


Intuitively, this says that, to get from one unbounded edge contained in $H$ to any other, our path must leave $H$. 

\begin{proof}
	Since our points $q_s,p_1,\hdots, p_n$ are generic, our moduli space of generalized Maslov index 2 tropical discs with stop at $u$ must be finite. It thus suffices to show that if a connected component of $h^{-1}(H)\subseteq \mathcal{T}$ has multiple unbounded edges with image contained entirely in $H$, then $h$ lies in a positive dimensional family of tropical discs with the same stop $u$ and marked point constraints $q_s, p_1, \hdots, p_n$.
	
	Choose a connected component of $h^{-1}(H)\subseteq \mathcal{T}$. We obtain a new graph $\mathcal{T}'$ by removing every edge and vertex of $\mathcal{T}$ that does not intersect our chosen connected component of $h^{-1}(H)$. The graph $\mathcal{T}'$ is a connected subgraph of $\mathcal{T}$, and is hence a tree with unbounded edges. Let $\mathcal{T}''$ be the subtree of $\mathcal{T}'$ obtained by removing all edges that are unbounded in $\mathcal{T}'$ but bounded in $\mathcal{T}$. That is, $\mathcal{T}''$ is the subgraph of $\mathcal{T}'$ obtained by removing every edge not contained in $h^{-1}(H)$. Again, $\mathcal{T}''$ is a tree.
	
	We define the map $h': \mathcal{T}'\to M_\R$ so that $h'$ agrees with $h$ on $\mathcal{T}''$ and agrees with $h$ on every edge of $\mathcal{T}'$ that was unbounded in $\mathcal{T}$. Finally, we define $h'$ so that it maps the remaining edges, which are bounded in $\mathcal{T}$ but unbounded in $\mathcal{T}'$, to rays in the same direction as the line segments they were sent to by $h$. This new object $h'$ is like a tropical curve with all vertices contained in $H$, except its unbounded edges that are not contained in $H$ may be in non-toric directions, since their directions may have been inherited from bounded edges of $h$. This can be understood as a tropical curve in some toric blowup of $Y$. Let $\Delta$ be the degree of this tropical curve $h'$.
	
	Let $m$ be the number of edges in $\mathcal{T}'\setminus \mathcal{T}''$. For each edge in $\mathcal{T}'\setminus \mathcal{T}''$, we take an associated point $q_i$ contained in the image under $h'$ of that edge but not contained in $H$.
	Let $\mathcal{M}$ be the moduli space of tropical curves of degree $\Delta$ with the image of each edge of $\mathcal{T}'\setminus \mathcal{T}''$ covering its associated point $q_i$. Note that all such tropical curves have the same number of unbounded edges, and this number must be greater than $m$. We let $\ell$ be the positive integer such that $m + \ell$ is this number of unbounded edges. Note that $\ell$ is precisely the number of unbounded edges of $\mathcal{T}$ whose image under $h$ is contained in $H$ By a standard argument, we find that the dimension of $\mathcal{M}$ is $\ell - 1$.
	
	Every curve $h''$ in a small open neighborhood of $h'$ in $\mathcal{M}$ gives rise to a tropical disc in the same class as our original $h$ subject to the same point constraints as $h$. This new disc $h''':\mathcal{T}\to M_\R$ agrees with $h$ on $\mathcal{T}\setminus \mathcal{T}'$ and agrees with $h''$ on $\mathcal{T}''$. On $\mathcal{T}'\setminus \mathcal{T}$, which is the rest of $\mathcal{T}$ and consists only of edges, the maps $h$ and $h''$ send each edge into the same affine line as each other. This is because of the constraints $q_i$. For $h''$ sufficiently close to $h'$, we have that $h''$ maps at least some of each such edge into $H$. It follows that we are able to define $h'''$ on $\mathcal{T}'\setminus \mathcal{T}''$ simply by changing the lengths of these edges as mapped by $h$.
	
	Note for clarity that the image of $h'''$ minus $H$ is exactly the same as the image of $h$ minus $H$, and that the image of $h'''$ intersect $H$ is exactly the same as the image of $h'$ intersect $H$.
	
	This then gives us an $\ell-1$ dimensional family of generalized Maslov index 2 discs relative to the original point constraints. Since $\ell$ is precisely the number of unbounded edges of $\mathcal{T}$ with image under $h$ entirely contained in $H$, we have the desired result.
\end{proof}

This brings us to the following lemma:

\begin{lemma}\label{Tropical Clipping Main}
	Let $Y$, $w_1$, and $w_2$ be as above. Let $q_s, p_1,\hdots, p_n$ be point constraints positioned as above, so that $p_1,\hdots, p_n$ all lie in the exhausting chamber of the scattering diagram $\mathfrak{D}_s$ determined by the point $q_s = q + s(w_1+w_2)$ with $s$ taken large after fixing the $p_i$. Let $h$ be a tropical disc in $Y$ of generalized Maslov index 2 with respect to the point constraints $q_s, p_1,\hdots, p_n$, with stop at a point $u\in U_s$, and assume there is some edge $e$ of $h$ with $h(e)$ containing $q_s$. Then $e$ is unbounded, with its outward primitive tangent vector equal to $w_i$ for $i$ equal to $1$ or $2$, and one of the two edges sharing a vertex with $e$ is also unbounded, with outward pointing primitive tangent vector having positive $w_j$ component ($j = 1,2$, $j\neq i$). The third edge $e'$ adjacent to this vertex must then have weight 1, and the primitive vector $v(e')$ tangent to $h(e')$ and pointing toward the vertex is either the vector $w_1 + w_2$ or a primitive vector belonging to a 1-cone of the toric fan of $Y$.
\end{lemma}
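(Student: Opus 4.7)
The plan is to proceed by local analysis at the edge $e$ containing $p$, combining vertex balancing with the three preceding lemmas. Let $v(e)$ denote the primitive tangent vector of $h(e)$ pointing outward from the vertex that comes before $e$ along the disc. I would first argue that $v(e)$ cannot lie in the closed cone spanned by $w_1$ and $w_2$: if it did, applying the argument of the next step to $-v(e)$ would force $e$ to be unbounded in the direction $-v(e)$, which contradicts how $v(e)$ was defined as pointing away from a preceding vertex.

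Assuming $v(e)$ lies outside the closed cone, the main step is to show that $e$ is unbounded with $v(e) \in \{-w_1,-w_2\}$. Write $v(e) = b w_1 + c w_2$ and, without loss of generality, suppose $c < 0$. If $e$ were bounded, the vertex at its far end would lie in the open half-plane $H = \{p + a_1 w_1 + a_2 w_2 \mid a_2 < 0\}$; since every point constraint lies outside $H$, Lemma \ref{Tropical Clipping Half Plane} applies and tells us that each connected component of $h^{-1}(H)$ carries at most one unbounded edge lying entirely in $H$. The balancing condition at the component then forces the sum of weighted outward tangent vectors along its exit edges to reproduce a single primitive tropical direction, but having at least two edges with negative $w_2$-coordinate yields an outward direction with $w_2$-coordinate $\leq -2$, contradicting Lemma \ref{SemiFano Fan Sums}. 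Hence $e$ is unbounded. Rerunning the same half-plane argument rules out the possibility $v(e) = b w_1 + c w_2$ with $b<0$ and $c>0$ simultaneously (the case that $v(e)$ is not adjacent to the $w_1$- or $w_2$-ray), so $v(e) \in \{-w_1,-w_2\}$.

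For the vertex $V$ adjacent to $e$, its image then lies on the ray $p + \R_{\geq 0} w_i$. Balancing at $V$ with the two remaining edges shows that at least one of them has outward primitive tangent not in the closed cone spanned by $w_1,w_2$; applying the previous step to that edge shows it must be unbounded with outward primitive tangent having negative $w_j$-coordinate ($j\neq i$). The third edge $e'$ at $V$ is then determined by balancing; since $h$ has generalized Maslov index $2$ and the two unbounded edges already account for this count, $e'$ must have weight $1$, and Lemma \ref{SemiFano No New Directions} restricts $v(e')$ (pointing toward $V$) to be either $-w_1-w_2$ or the negative of a primitive 1-cone generator of $Y$. The main obstacle is the half-plane contradiction in the middle step: the argument must handle all configurations in which multiple unbounded edges could escape through $H$, and it is precisely for this subtlety that Lemma \ref{Tropical Clipping Half Plane} is designed. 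Once that step is in place, the structure of the lemma follows by propagating the same cone analysis from $e$ to the adjacent vertex $V$.
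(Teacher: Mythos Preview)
Your proposal is correct and follows essentially the same route as the paper: the same case split on whether $v(e)$ lies in the closed cone spanned by $w_1,w_2$, the same half-plane contradiction via Lemma~\ref{Tropical Clipping Half Plane} together with Lemma~\ref{SemiFano Fan Sums} to force $e$ unbounded with $v(e)\in\{-w_1,-w_2\}$, the same propagation of the argument to the vertex $V$ on the ray $p+\R_{\ge 0}w_i$, and the same appeal to Lemma~\ref{SemiFano No New Directions} for the direction of $e'$.

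One small imprecision: your justification that $e'$ has weight $1$ (``the two unbounded edges already account for this count'') is not quite the right reason, since $e'$ is a bounded edge and therefore does not contribute to the (generalized) Maslov index. The weight-$1$ statement for $e'$ comes instead from balancing at $V$ once you know the two unbounded edges there each have weight $1$, combined with the primitivity forced by Lemma~\ref{SemiFano No New Directions}. The paper is equally terse on this point, so this does not affect the correctness of your outline.
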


\begin{proof}
	Letting $\mathcal{T}$ be the domain tree of $h$, we assign each edge an ``inward'' and ``outward'' direction, so that any injective path in $\mathcal{T}$ starting at the root moves in the outward direction.
	
	Assume $h$ has some edge $e$ with $q_s$ lying on $h(e)$, and let $v(e)$ be the primitive vector tangent to $h(e)$ pointing in the outward direction. We first show that $v(e)$ is contained in the closed cone generated by $w_1$ and $w_2$.
	
	Assume for the sake of deriving a contradiction that $v(e)$ lies outside this closed cone. Writing $v(e) = bw_1 + cw_2$, we have that either $b < 0$ or $c < 0$, and we will assume without loss of generality that $c < 0$. It follows that $-v(e)$, the vector tangent to $h(e)$ pointing in the inward direction, is pointing toward a vertex lying inside the open half plane $H$ of points $q_s + a_1w_1 + a_2w_2$ with $a_2 > 0$.
	
	Since we have this vertex lying in $H$, we must have at least one unbounded edge lying entirely in $H$. Noting that the complement of $H$ contains all of our marked points, by Lemma \ref{Tropical Clipping Half Plane} we get that any path along the disc from one unbounded edge lying entirely in $H$ to another must leave $H$. Let $e'$ be the unbounded edge contained in $H$ that can be connected to $e$ by a path in the disc lying entirely in $H$. Further, let $e_1,\hdots, e_m$ be all of the edges of $h$ intersecting the connected component of $h^{-1}(H)$ to which the edge $e'$ belongs. Next, let $v(e_i)$ be the primitive vector tangent to $h(e_i)$ pointing into $H$, and let $v(e')$ be the primitive vector in the unbounded $h(e')$ direction. Letting $w(e_i)$ be the weight of the edge $e_i$, the vertex balancing condition gives that $v(e') = \sum_{i=1}^m w(e_i)v(e_i)$. Here we are using that $e'$ will have weight $1$. Since each $v(e_i)$ has positive $w_2$ coordinate, and $m\geq 2$, it follows that $v(e')$ has $w_2$ coordinate greater than or equal to $2$. By Lemma \ref{SemiFano Fan Sums}, we have a contradiction. Thus, $v(e)$ lies in the closed cone generated by $w_1$ and $w_2$.
	
	Assume without loss of generality that $v(e) \neq w_1$, and hence has positive $w_2$ coordinate. Since the edge $h(e)$ contains the point $q_s$, moving along $h(e)$ in the outward direction now enters $H$. Another application of Lemma \ref{Tropical Clipping Half Plane} then shows that the edge $e$ must be unbounded. It follows that $v(e)$ must equal $w_2$.
	
	Consider the unique vertex adjacent to $e$. If we consider the three vectors pointing away from this vertex along the three adjacent edges, we must have that one of these vectors has positive $w_1$ coordinate. We then repeat the above argument with the half plane $H'$ of points $q_s + a_1w_1 + a_2w_2$ with $a_1 > \epsilon$ for some small $\epsilon$ to show this edge is also unbounded.
	
	Finally, by Lemma \ref{SemiFano No New Directions}, the final edge adjacent to this vertex, which is the edge named $e'$ in the lemma statement, must have weight 1 and outward pointing primitive vector $v(e')$ either equal to $w_1 + w_2$ or lying in a 1-cone of $Y$, as desired.
	
	
	
\end{proof}

This lemma allows us to completely describe the generalized Maslov index 2 discs in $Y$ with respect to the point constraints $q_s, p_1,\hdots, p_n$ in terms of the generalized Maslov index 2 discs in $\widetilde{Y}$ with respect to the point constraints $p_1,\hdots, p_n$. Every such tropical disc in $Y$ through the point $q_s$ can be ``clipped" by removing the (unbounded) edge through $q_s$ and its neighboring vertex, and extending the only surviving edge (now unbounded) to infinity.

We now describe a procedure for obtaining the superpotential on $\widetilde{Y}$ from the superpotential on $Y$ using a single point constraint $q_s$. Let $U_s$ be our chamber of interest, as before, and let $W_Y$ be the superpotential for $Y$. Since $Y$ is toric semi-Fano, Chan-Lau \cite{CL} tells us that $W_Y$ has a term $a_vz^{v}$ for each primitive vector $v$ in a toric direction, where $a_v$ is 1 if the toric divisor corresponding to $v$ has self intersection not equal to $-2$, and is otherwise determined in the following way: Let $m_v$ be the (total) number of consecutive $-2$ toric divisors in the run of consecutive $-2$ toric divisors containing the divisor corresponding to $v$, and let $\ell_v$ be the position of the divisor corresponding to $v$ within this run (counting from either end). Then, in the terminology of Chan-Lau, $a_v$ is equal to the number of admissible sequences with center $\ell_v$ of length at most $m_v$. Numerically, we have $a_v = {{m_v+1}\choose{\ell_v}}$.

Let $h$ be a generalized Maslov index 2 disc in $Y$ with respect to our single point constraint $qs$ with stop in $U_s$. We will assign this disc a weight $w(h)$. If $h$ does not go through $q_s$, then it consists of a single ray in a toric direction $v$ of $Y$. In this case, we let $w(h)$ be the coefficient of $z^{v}$ in $W_Y$. If $h$ does go through $q_s$, then by Lemma \ref{Tropical Clipping Main}, $h$ consists of three edges, one adjacent to the stop and the other two unbounded. The edge through $p$ has outward pointing primitive tangent vector $-w_i$. Let $-v$ be the other outward pointing primitive tangent vector to the other unbounded edge. Then the primitive tangent vector on the compact edge adjacent to the stop, pointing toward the stop, is $w_i+v$. We let $w(h)$ be the product of the coefficient of $z^{w_i}$ (which is $1$) and the coefficient of $z^{v}$ (which may not be $1$).

Letting $v(h)$ be the primitive vector tangent to the edge adjacent to the stop, pointing toward the stop, we claim that $W_{\widetilde{Y}} = \sum_h w(h) z^{v(h)}$. If $h$ has no vertices, then $w(h) = {{m_{v(h)}+1}\choose{\ell_{v(h)}}}$. If $h$ has a single vertex, then $w(h) = {{m_{v(h)}+1}\choose{\ell_{v(h)} + 1}}$, where here we choose $\ell_{v(h)}$ so that the divisor adjacent to $w_i$ is in position $1$. Here we are using Lemma \ref{SemiFano No New Directions} and the last observation in the proof of Lemma \ref{SemiFano Fan Sums} to find $w(h)$. The coefficient of $z^{v(h)}$ is then precisely ${{m_{v(h)}+1}\choose{\ell_{v(h)}}} + {{m_{v(h)}+1}\choose{\ell_{v(h)} + 1}} = {{m_{v(h)}+2}\choose{\ell_{v(h)} + 1}}$, as desired.

Finally, let $Y$ be a toric Fano surface and consider a sequence $\widetilde{Y}_n\to \cdots \to \widetilde{Y}_1 \to Y$ of single point toric blowups, such that each $\widetilde{Y}_j$ is semi-Fano. By inductively applying the above procedure, we can relate the superpotential on $\widetilde{Y}_n$ to the bulk-deformed superpotential on $Y$ corresponding to appropriately chosen marked points.

Let $w_i'$ now denote the primitive vector lying in the 1-cone of the toric fan of $\widetilde{Y}_i$ that does not appear in the fan of $\widetilde{Y}_{i-1}$ (this corresponds to the $i$th blowup of $Y$). Let $\mathcal{D}_i$ be the scattering diagram associated with the single point $p_i$ and $\widetilde{Y}_i$. This has a ray in the $v$ direction for each primitive vector $v$ in a 1-cone of the fan of $\widetilde{Y}_i$. Letting the point $p_i$ tend to infinity in the $-w_{i}'$ direction, we get one chamber $U_i$ exhausting $M_\R$. By positioning the points $p_1,\hdots, p_n\in M_\R$ such that the chamber $U_i$ contains all points $p_j$ with $j > i$, we claim that the bulk-deformed superpotential on $Y$ at a point in $\bigcap_{i=1}^{n}U_i$ is equal to the un-deformed superpotential on $\widetilde{Y}_n$.

This follows inductively from our above procedure involving single blowups. Since $Y$ is Fano, the above procedure gives us that the superpotential for $\widetilde{Y}_1$ equals the bulk-deformed superpotential on $Y$ in $U_{1}$ with the point $p_{1}$. Considering now the point $p_{2}$ in the chamber $U_1$, the above procedure gives us the superpotential on $\widetilde{Y}_2$ using the superpotential on $\widetilde{Y}_1$ and a weighted count of generalized Maslov index 2 tropical discs with respect to $p_2$. This weighted count corresponds exactly to the number of ways of modifying such a tropical disc by ``unclipping" (replacing an unbounded edge by a generalized Maslov index 2 tropical disc with stop on that edge) to get a generalized Maslov index 2 tropical disc with respect to $p_1$ and $p_2$, so the bulk-deformed superpotential on $Y$ with the points $p_1$ and $p_2$ in the region $U_1\cap U_2$ agrees with the superpotential on $\widetilde{Y}_2$. Inducting gives the desired general result, repeated here:

\begin{theorem}\label{SemiFano Toric Blowup Theorem}
	Let $Y$ be a toric Fano surface, and let $\widetilde{Y}_n\to \cdots \to \widetilde{Y}_1 \to Y$ be a sequence of toric blowups, such that each $\widetilde{Y}_i$ is semi-Fano. Let $p_1,\hdots, p_n$ be points in $M_\R$ arranged as described above, and let $U_1,\hdots, U_n$ be the open cones with vertex at those points described above. The bulk-deformed superpotential for $Y$ in the chamber $\bigcap_{i=1}^nU_i$ with respect to these points is equal to the superpotential for $\widetilde{Y}_n$.
\end{theorem}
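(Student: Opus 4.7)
The proof strategy is induction on $n$, the number of toric blowups, leveraging the single-blowup procedure described just before the theorem statement. The base case $n=0$ asserts that the (un-bulk-deformed) superpotential of the toric Fano $Y$ equals itself, which is immediate, since no marked point constraints are imposed and the formula of Chan-Lau \cite{CL} applies directly.

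For the inductive step, assume the statement holds for the sequence $\widetilde{Y}_{n-1}\to\cdots\to\widetilde{Y}_1\to Y$ with points $p_1,\ldots,p_{n-1}$ arranged in the prescribed chambers. Thus the bulk-deformed superpotential on $Y$ at a point in $\bigcap_{i=1}^{n-1}U_i$ with respect to $p_1,\ldots,p_{n-1}$ coincides with $W_{\widetilde{Y}_{n-1}}$. Now I would introduce the additional marked point $p_n$ in $U_n \subset \bigcap_{i=1}^{n-1} U_i$ and apply the single-blowup argument (the paragraphs preceding the theorem) to the pair $\widetilde{Y}_n \to \widetilde{Y}_{n-1}$. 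Concretely, the terms of the bulk-deformed superpotential at a point $u \in \bigcap_{i=1}^n U_i$, after imposing the new constraint at $p_n$, split into contributions from generalized Maslov index $2$ tropical discs that do not pass through $p_n$ (which by the inductive hypothesis yield $W_{\widetilde{Y}_{n-1}}$ minus the terms that will be replaced) and those that do pass through $p_n$. By Lemma \ref{Tropical Clipping Main}, the latter have a rigid three-edge shape: one unbounded edge through $p_n$ in direction $-w_i'$ (for $i=1$ or $2$, where $w_1',w_2'$ are the adjacent primitive vectors in the fan of $\widetilde{Y}_{n-1}$), another unbounded edge, and a compact edge to the stop.

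The heart of the argument is the \emph{unclipping} correspondence: removing the edge through $p_n$ and the adjacent vertex, and extending the surviving edge unboundedly, defines a bijection between generalized Maslov index $2$ tropical discs through $p_n$ in $\widetilde{Y}_{n-1}$ and certain tropical discs in $\widetilde{Y}_n$. The balancing condition combined with Lemma \ref{SemiFano Fan Sums} and Lemma \ref{SemiFano No New Directions} guarantees that the primitive direction $w_i' + v$ of the surviving edge always lies in a $1$-cone of the fan of $\widetilde{Y}_n$, so the corresponding disc is genuinely a toric disc in $\widetilde{Y}_n$. The weight bookkeeping amounts to the binomial identity
\begin{equation*}
\binom{m_v+1}{\ell_v} + \binom{m_v+1}{\ell_v+1} = \binom{m_v+2}{\ell_v+1},
\end{equation*}
which matches the multiplicity coefficients $a_v$ of Chan-Lau \cite{CL} across the blowup, so that $\sum_h w(h)\,z^{v(h)} = W_{\widetilde{Y}_n}$.

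The main obstacle, and the step that requires the most care, is verifying that no generalized Maslov index $2$ disc with stop in $U_n$ passes through $p_n$ in a way not accounted for by Lemma \ref{Tropical Clipping Main}; in particular one must rule out tropical discs with edges crossing the complement of the cone based at $p_n$ generated by $w_1'$ and $w_2'$. This is where Lemma \ref{Tropical Clipping Half Plane} and the semi-Fano constraint on self-intersections $\ge -2$ combine: any such stray behavior would force, via the balancing condition at an interior vertex lying in a forbidden half-plane, the existence of an unbounded primitive direction with too-negative $w_j$-component, violating Lemma \ref{SemiFano Fan Sums}. Once this rigidity is in place, the inductive step closes, and the arrangement of points $p_1,\ldots,p_n$ into the nested chambers $U_1 \supset \bigcap_{i=1}^{2} U_i \supset \cdots$ ensures that at each stage the picture in the previously built surface $\widetilde{Y}_{i-1}$ governs the next blowup cleanly, giving the theorem.
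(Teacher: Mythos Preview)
Your proposal is correct and follows essentially the same approach as the paper: induction on the number of blowups, invoking the single-blowup procedure together with the clipping/unclipping correspondence governed by Lemma \ref{Tropical Clipping Main}, and matching the Chan--Lau coefficients via the binomial identity. Your write-up is in fact more detailed than the paper's own argument, which is quite terse; the only minor imprecision is your phrasing of the unclipping bijection as being between discs ``in $\widetilde{Y}_{n-1}$'' and discs ``in $\widetilde{Y}_n$'' --- strictly speaking the bijection is between generalized Maslov index $2$ tropical discs in $Y$ with respect to $p_1,\ldots,p_n$ and those with respect to $p_1,\ldots,p_{n-1}$, with the inductive hypothesis then identifying the latter (weighted) count with $W_{\widetilde{Y}_{n-1}}$.
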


\section{SYZ fibrations on Log Calabi-Yau Surfaces and wall-crossing}\label{sec:constsyznon-toric}
In this section, we construct a special Lagrangian torus fibration on $Y\setminus D$ for a log Calabi-Yau surfaces $(Y,D)$ modifying the construction of Auroux \cite{A5} and Abouzaid-Auroux-Kartzarkov \cite{AAK}, and prove that the fibration satisfies conditions in Theorem \ref{LF scattering diagram} away from singular fibres. Notice that the K\"ahler forms in all these constructions are not Ricci-flat. 
The main difficulty to study the Floer theory of $Y\setminus D$ lies in the non-completeness of $Y \setminus D$ as it may spoil the compactness of relevant disc moduli spaces. This occurs because on $Y \setminus D$, we use the restricted symplectic form (or the associated metric) from $Y$.
Once we have the compactness of the moduli spaces, the fibration will automatically produce a scattering diagram well-defined away from neighborhoods of singular fibres, which is one of the main subjects of the paper. It will be discussed thoroughly in Section \ref{sec:mainholotrop}.

\subsection{Construction of a SYZ fibration on a non-toric blowup} \label{Sec: non-toric blowup}
Let us first look into the toric surface $\bar{Y}$ and its non-toric blowup $\widetilde{Y} \to \bar{Y}$.  
 We will equip $\bar{Y}$ with a K\"ahler form $\bar{\omega}$ which is invariant under the $T^2$-action. 
 
In general, for given a $T^2$-action on $\bar{Y}$ compatible with the complex structure and a K\"ahler class, we can take any K\"ahler form, say $\bar{\omega}_0$, in the class, and average it with respect to the $T^2$-action. This leads to the $T^2$-invariant K\"ahler form $\bar{\omega}$ on $\bar{Y}$ that belongs to the K\"ahler class we began with. Since the first Betti number of $\bar{Y}$ vanish, the $T^2$-action is Hamiltonian. It is worth mentioning that for the fixed $T^2$-action on $\bar{Y}$, the base of the moment map fibration (with respect to the K\"ahler forms in different K\"ahler classes) with the complex affine coordinates are canonically isomorphic. 

Let $\{q_i \in \bar{D}_{\sigma(i)}\}_i \subset \bar{Y}$ be the blowup center for $\widetilde{Y} \to \bar{Y}$. Namely, $\widetilde{Y}$ is obtained by taking a simple blowup at each $q_i$. We denote by $E_i:=\pi^{-1} (q_i)$ the exceptional divisor associated with $q_i$.
Notice that we changed the notation from \ref{subsec:GPS} for simplicity. The $q_i$ and $E_i$ were previously written in the form of  $q_{i j}$ and $E_{i j}$ indicating they are attached to the toric divisor indexed by $j=\sigma(i)$.
We will first construct a particular K\"ahler form $\tilde{\omega}_{\epsilon}$ on $\widetilde{Y}$  from the torus-invariant K\"ahler form $\bar{\omega}$ downstairs. 

\begin{lemma}\label{Kahler form}
	Given $\epsilon>0$, there exists a K\"ahler form $\tilde{\omega}_{\epsilon}$ on $\widetilde{Y}$ and  a pair of neighborhoods $\tilde{U}_i \subseteq  \tilde{U}'_i$ of $E_i$ with $\tilde{U}'_i \cap \tilde{U}'_j = \emptyset$ for $i \neq j$ that satisfy the following:
	\begin{enumerate}
		\item $\tilde{\omega}_{\epsilon}=\pi^*\omega$ outside $\cup_i \tilde{U}_i$;  
		\item Symplectic areas of exceptional divisors with respect to $\tilde{\omega}_{\epsilon}$ sum up to $\epsilon$.
		\item For each $q_i\in \bar{D}_{\sigma(i)}$, the sub-toric $S^1$-action that fixes $\bar{D}_{\sigma(i)}$ locally lifts to $\tilde{U}'_i\subseteq \widetilde{Y}$. (In particular, $\tilde{U}'_i$ is $S^1$-invariant.)
	\end{enumerate}
	Moreover, we have $\tilde{U}'_i\searrow E_i$ as $\epsilon \rightarrow 0$. 
\end{lemma}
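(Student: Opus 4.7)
The plan is to modify $\pi^*\bar{\omega}$ only within small neighborhoods of the exceptional divisors $E_i$ by adding a $\partial\bar{\partial}$-exact correction that locally realizes a symplectic blowup of size $\delta_i$, and to tune the $\delta_i$'s so that the exceptional areas sum to $\epsilon$. The sub-$S^1$-action of $T^2$ which pointwise fixes $\bar{D}_{\sigma(i)}$ has $q_i$ as a fixed point, so it lifts holomorphically to a neighborhood of $E_i \subset \widetilde{Y}$, and the correction will be chosen invariant under this action. The whole construction is then local, separate, and equivariant on each exceptional locus.

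First, near each $q_i$ I would use the equivariant Darboux theorem for the sub-$S^1$-action $H_i \subset T^2$ fixing $\bar{D}_{\sigma(i)}$ at its fixed point $q_i$ to obtain $H_i$-equivariant holomorphic coordinates $(z_1,z_2)$ in which $\bar{D}_{\sigma(i)} = \{z_2 = 0\}$ and $H_i$ acts by rotation in $z_2$; the compatibility with the complex structure and $\bar{\omega}$ comes from torus-invariance of the toric K\"ahler data. In these coordinates, $\pi : \widetilde{Y} \to \bar{Y}$ is the standard blowup of $\mathbb{C}^2$ at the origin, and $H_i$ lifts canonically to the blowup. I would then choose a local K\"ahler potential $\phi_{\delta_i}$ (for example, a suitable truncation of the Fubini--Study potential on the tautological bundle $\mathcal{O}(-1) \to \mathbb{CP}^1$, pulled back) that is $H_i$-invariant and gives the local exceptional $\mathbb{CP}^1$ symplectic area $\pi \delta_i^2$, together with an $H_i$-invariant smooth cutoff $\chi_i$ equal to $1$ near $E_i$ and vanishing outside a slightly larger neighborhood.

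Next, I would set
\[
\tilde{\omega}_\epsilon := \pi^*\bar{\omega} + \sum_i \sqrt{-1}\,\partial\bar{\partial}\bigl( \chi_i \, \phi_{\delta_i} \bigr),
\]
take $\tilde{U}_i$ to be the support of $\chi_i$, and let $\tilde{U}'_i \supset \tilde{U}_i$ be an $H_i$-invariant neighborhood of $E_i$ disjoint from the other $\tilde{U}'_j$'s (possible since the $q_i$'s are distinct). By construction $\tilde{\omega}_\epsilon = \pi^*\bar{\omega}$ outside $\cup_i \tilde{U}_i$, giving (1); the $H_i$-invariance of $\chi_i \phi_{\delta_i}$ makes each $\tilde{U}'_i$ preserved by the lifted $H_i$-action, giving (3); and $\int_{E_i} \tilde{\omega}_\epsilon = \pi \delta_i^2$ from the local model, so that choosing $\sum_i \pi \delta_i^2 = \epsilon$ yields (2). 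Taking the supports of $\chi_i$ to shrink with $\delta_i$ gives $\tilde{U}'_i \searrow \{q_i\}$ as $\epsilon \to 0$.

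The main obstacle is verifying that $\tilde{\omega}_\epsilon$ is actually K\"ahler on the transition annulus where $\chi_i$ is not locally constant, since $\sqrt{-1}\,\partial\bar{\partial}(\chi_i\phi_{\delta_i})$ contains cross terms $\partial\chi_i \wedge \bar{\partial}\phi_{\delta_i}$ and $\phi_{\delta_i} \, \partial\bar{\partial}\chi_i$ that need not be semi-positive. I would handle this by the standard trick of placing the cutoff far enough from $E_i$ that $\phi_{\delta_i}$ and its first derivatives are already $O(\delta_i^2)$ smooth functions there, so the problematic cross terms are $O(\delta_i^2)$ and dominated by $\pi^*\bar{\omega}$ once $\epsilon$ (hence each $\delta_i$) is chosen sufficiently small. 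Positivity on the core region $\{\chi_i \equiv 1\}$ is immediate from the positivity of $\sqrt{-1}\,\partial\bar{\partial}\phi_{\delta_i}$ built into the local model.
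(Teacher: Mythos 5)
Your construction is correct in substance and lands on the same geometric picture as the paper, but the two proofs organize the work differently. The paper does not build the form by hand: it cites the standard blowup construction (Griffiths--Harris) to get \emph{some} K\"ahler form on $\widetilde{Y}$ agreeing with $\pi^*\bar{\omega}$ outside $\tilde{U}_i$, and then restores $S^1$-invariance afterwards by averaging over the lifted local $S^1$-action on $\tilde{U}'_i$, using that on the annulus $\tilde{U}'_i\setminus \tilde{U}_i$ the form already equals the invariant $\pi^*\bar{\omega}$, so the averaged local piece glues back to the global form; condition (2) is then arranged by scaling the local potential. You instead bake the invariance in from the start, writing $\tilde{\omega}_\epsilon = \pi^*\bar{\omega} + \sum_i \sqrt{-1}\,\partial\bar{\partial}(\chi_i\phi_{\delta_i})$ with an $H_i$-invariant model potential and cutoff, which eliminates the averaging step entirely but obliges you to reprove positivity on the transition annulus rather than quoting it --- essentially you are re-deriving the cited standard fact equivariantly. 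That is a legitimate trade: your version is more self-contained and makes the $S^1$-invariance of $\tilde{\omega}_\epsilon$ on $\tilde{U}'_i$ (which the paper needs later, e.g.\ to get the local moment map) manifest, while the paper's version outsources the analysis. Three small points to tighten: (a) the appeal to an ``equivariant Darboux theorem giving holomorphic coordinates'' conflates two normal forms --- one cannot in general make $\bar{\omega}$ standard in holomorphic coordinates; all you need (and all the paper uses) is holomorphic coordinates linearizing the $S^1$-action near $q_i$, which the toric structure already provides, since only $\phi_{\delta_i}$, not $\bar{\omega}$, must be explicit; (b) if the supports of $\chi_i$ are to shrink as $\epsilon\to 0$, the cutoff radius $r_i$ must shrink strictly more slowly than $\delta_i$ (e.g.\ $r_i\sim\delta_i^{1/2}$) so that the cross terms, of size roughly $\delta_i^2/r_i^2$, are still dominated by $\pi^*\bar{\omega}$ --- worth stating, since ``support shrinks'' and ``cutoff far from $E_i$'' pull in opposite directions; (c) your positivity argument requires $\epsilon$ (hence each $\delta_i$) small, whereas the lemma is quantified over all $\epsilon>0$; this mismatch is also implicit in the paper's cited construction and harmless for the intended regime $\epsilon\to 0$, but you should flag the restriction rather than phrase it as choosing $\epsilon$.
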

\begin{proof}
	Set $p_i:= p(q_i) \in \partial P$ where $p:\bar{Y}\rightarrow P$ is the moment map. Let $V_i \subseteq P$ be a small neighborhood of $p_i$ such that $V_i\cap V_j=\emptyset$ if $p_i \neq p_j$.  
	Consider the subgroup $S^1\subseteq T^2$ fixing $\bar{D}_{\sigma(i)}$. If we write the toric action locally as $(e^{i\theta_1},e^{i\theta_2})\cdot (x,y)=(e^{i\theta_1}x,e^{i\theta_2}y)$ for some local coordinates $(x,y)$ near $q_i$ with $\bar{D}_{\sigma(i)} = \{ y=0\}$, then the mentioned $S^1$-action is given by $(x,y) \mapsto (x, e^{i\theta} y)$. Let us choose $S^1$-invariant neighborhoods $U_i$ and $U'_i$ of $q_i$  such that $U_i\subsetneq U'_i \subseteq p^{-1}(V_i)$. We have $U'_i\cap U'_j=\emptyset$ for $i\neq j$. 
	We finally take $\tilde{U}_i:=\pi^{-1}(U_i)\subsetneq \tilde{U}'_i:=\pi^{-1}(U'_i)$ as  neighborhoods of the exceptional divisor $E_i$. 
	
	Since $U'_i$'s are mutually disjoint, the local $S^1$-action lifts to $\tilde{U}'_i$ by the universal property of blowup, which we denote by $a_\theta : \tilde{U}'_i \to \tilde{U}'_i$. Observe that the complement of $\cup_i \tilde{U}_i$ in $\widetilde{Y}$ admits a $T^2$-action lifted from the toric action on $\bar{Y}$. The local $S^1$-action can be identified with the corresponding factor of this $T^2$-action away from $E_i$, in particular, on $\tilde{U}'_i \setminus \tilde{U}_i$. 
	
	It is a standard fact that there exists a K\"ahler metric $\tilde{\omega}$ on $\widetilde{Y}$  which coincides with $\pi^*\bar{\omega}$ outside $\tilde{U}_i$ (see, for e.g., \cite[p. 185]{GH}). 
	Since $\bar{\omega}$ is invariant under the toric action, so is $\tilde{\omega}|_{\widetilde{Y} \setminus \cup \tilde{U}'_i}$ under the lifted $T^2$-action on $\widetilde{Y} \setminus \cup \tilde{U}'_i$. 
	On the other hand, one can average $\tilde{\omega}|_{\tilde{U}'_i}$ by the local $S^1$-action with respect to the Haar measure $d \mu (\theta)$ on $S^1$. This produces an $S^1$-invariant local K\"ahler form $\tilde{\omega}_i=\frac{\int_{S^1} a_\theta^\ast \tilde{\omega} \,\, d \mu (\theta)}{\int_{S^1}  d \mu (\theta)}$ on $\tilde{U}'_i$. Notice that the local form $\tilde{\omega}_i$ agrees with the global form $\tilde{\omega}$ on the overlapped region $\tilde{U}'_i\setminus \tilde{U}_i$ by $T^2$-invariance of $\tilde{\omega}$.
	
	We conclude that there exists a K\"ahler form on $\widetilde{Y}$ which equals $\tilde{\omega}=\pi^*\omega$ on the complement of $\cup_i \tilde{U}_i$ and restricts to $\tilde{\omega}_i$ on $\tilde{U}'_i$. Repeating the same procedure for all $q_i$'s, we obtain a global K\"ahler form, say $\tilde{\omega}_{\epsilon}$, on $\widetilde{Y}$ which certainly satisfies the conditions (1) and (3) in the statement. The condition (2) can be easily achieved by adjusting the size of the K\"ahler potential for $\tilde{\omega}|_{\tilde{U}_i}$. 
	
\end{proof}

\begin{remark}\label{rmk:momentmaplocal}
We remark that the local $S^1$-action on $\tilde{U}_i$ above is Hamiltonian, since it is symplectic (preserving $\tilde{\omega}_{\epsilon}$) and the first Betti number of $\tilde{U}_i$ vanishes. Also, the $S^1$-action can actually be defined on a larger region away from $\cup_{j\neq i} E_j$, as it is induced from the toric action on $\bar{Y}$ which lifts to $\widetilde{Y} \setminus \cup_i \tilde{U}'_i$. In particular, we can find a moment map $\mu_{S^1}: (\pi\circ p)^{-1}(V_i)\rightarrow \mathbb{R}$ for this action.
\end{remark}

With a K\"ahler form $\tilde{\omega}_{\epsilon}$ on $\widetilde{Y}$ constructed in the lemma and the meromorphic volume form $\tilde{\Omega}$ on $\widetilde{Y}$ having simple pole along $\widetilde{D}$, we adapt the constructions in \cite{A5, G2} into our setting in order to obtain an almost special Lagrangian fibration on $\tilde{X}=\widetilde{Y}\setminus \widetilde{D}$.

\begin{lemma}\label{Lagrangian fibration}
	There exists a Lagrangian fibration $\tilde{X}\rightarrow B$ with respect to $\tilde{\omega}_{\epsilon}$ such that
	\begin{enumerate}
		\item the fibration coincides with the pull-back of toric fibration on $\bar{X}$ outside $(\pi\circ p)^{-1}(V_i)$; 
		\item $\mbox{Im}\, {\tilde{\Omega}} |_{\tilde{L}}=0$ for any Lagrangian torus fibre $\tilde{L}$ after a suitable normalization of the phase; 
		\item all the singular Lagrangian torus fibres are immersed $S^2$, which are in one-to-one correspondence with blowup points $\{q_i\}$.
	\end{enumerate}
\end{lemma}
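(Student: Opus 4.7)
The plan is to patch two constructions. On $\tilde{X} \setminus \bigcup_i \tilde{U}_i$, Lemma \ref{Kahler form}(1) gives $\tilde\omega_\epsilon = \pi^*\bar\omega$, and since $\pi$ is a biholomorphism there we also have $\tilde\Omega = \pi^*\bar\Omega$. Hence the pullback $p \circ \pi$ is already a smooth torus fibration whose fibres are special Lagrangian for $\tilde\Omega$ after the standard toric phase normalization. This supplies the exterior piece of the global fibration without any singular fibre in the region, and settles condition (1) outside neighborhoods of the $q_i$. It remains to extend across each $\tilde{U}_i$ in a Lagrangian (and special Lagrangian) way producing exactly one singular fibre per blowup point.

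On each $\tilde{U}'_i$ I would apply the Auroux--Gross local model from \cite{A5, G2} adapted to our setup. In holomorphic coordinates $(x,y)$ on $\bar{Y}$ near $q_i$ with $\bar{D}_{\sigma(i)} = \{y=0\}$, the local $S^1$-action of Lemma \ref{Kahler form}(3) reads $(x,y) \mapsto (x, e^{i\theta} y)$, and its Hamiltonian moment map $\mu_{S^1}$ from Remark \ref{rmk:momentmaplocal} is the first component of the local fibration. For the second component I would take $\nu_i := |x - c_i|$ (pulled back to the blowup) for a small constant $c_i$ chosen so that the critical locus $\{x = c_i\}$ of $\nu_i$ sits well inside $\tilde{U}_i$. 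Because $\nu_i$ depends only on the $S^1$-invariant coordinate $x$, the bracket $\{\mu_{S^1}, \nu_i\}$ vanishes, so the joint level sets of $(\mu_{S^1}, \nu_i)$ are coisotropic and hence Lagrangian for dimension reasons away from the critical set. A direct chart analysis on the blowup (following \cite{A5, G2}) shows that the only joint critical point in $\tilde{X} \cap \tilde{U}'_i$ is the unique $S^1$-fixed point on $E_i$ not lying on the proper transform of $\bar{D}_{\sigma(i)}$, and that its fibre is the vanishing cycle of the local nodal degeneration, an immersed $S^2$. This gives condition (3).

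Gluing to the exterior fibration along $\tilde{U}'_i \setminus \tilde{U}_i$ is now direct: by Lemma \ref{Kahler form}(3), $\mu_{S^1}$ on that overlap agrees with the component of $p \circ \pi$ dual to the subcircle fixing $\bar{D}_{\sigma(i)}$, while $\nu_i = |x - c_i|$ depends only on $|x|$ up to a smooth bijection of the corresponding interval in $M_\R$, and therefore coincides with the remaining toric coordinate on the overlap after absorbing this reparameterization into the base $B$. This yields a global smooth Lagrangian fibration $\tilde{X} \to B$ and condition (1). For (2), the standard toric calculation gives $\mbox{Im}\,\tilde\Omega|_{\tilde L} = 0$ on the exterior after a uniform phase choice; inside $\tilde{U}'_i$, the meromorphic form reads $\tilde\Omega = \pi^*(dx \wedge dy/y)$ near $\bar{D}_{\sigma(i)}$, and evaluating it on the tangent vectors to the joint level sets of $(\mu_{S^1}, \nu_i)$ gives, exactly as in the Auroux computation, a top form of constant phase. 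The two phase choices are compatible on the overlap by the matching above, so $\mbox{Im}\,\tilde\Omega|_{\tilde L} = 0$ holds globally.

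The main technical obstacle I anticipate is arranging the interior construction so that all three conclusions hold simultaneously: the joint critical analysis of $(\mu_{S^1}, \nu_i)$ must produce a single immersed sphere (not an empty or larger singular locus), the holomorphic function $x - c_i$ must remain $S^1$-invariant on the whole $\tilde{U}'_i$ (not just in a germ near $q_i$), and the gluing must preserve the special Lagrangian condition across $\tilde{U}'_i \setminus \tilde{U}_i$. All three are what force the Auroux--Gross choice for $\nu_i$ described above, and they rely crucially on the compatibility between the $S^1$-action, the K\"ahler form built in Lemma \ref{Kahler form}, and the simple-pole structure of $\tilde\Omega$ along $\tilde{D}$.
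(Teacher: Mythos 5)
Your overall architecture (pull back the toric fibration outside the blowup regions, build an $S^1$-invariant fibration from $\mu_{S^1}$ plus an $S^1$-invariant function of $x$ inside, then glue) is the same as the paper's, but the specific choice $\nu_i = |x-c_i|$ is not a harmless variant: it breaks two of the three conclusions. First, the gluing step fails. On the overlap $\tilde{U}'_i\setminus \tilde{U}_i$ the pulled-back toric fibres are the level sets of $(|x|,\mu_{S^1})$, while your fibres are level sets of $(|x-c_i|,\mu_{S^1})$; for $c_i\neq 0$ the function $|x-c_i|$ is \emph{not} a function of $|x|$ alone (it depends on $\arg x$), so the sentence ``$\nu_i=|x-c_i|$ depends only on $|x|$ up to a smooth bijection'' is false and the two foliations do not match, so no global fibration results. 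Second, the specialness fails. Locally $\tilde{\Omega}=\pi^*\bar{\Omega}$ with $\bar{\Omega}$ the toric form proportional to $\frac{dx\wedge dy}{xy}$, not $\frac{dx\wedge dy}{y}$ as you wrote ($\{x=0\}$ is also a toric divisor in these torus-equivariant coordinates; inside $V_i$ the factor $1/x$ is holomorphic and nonvanishing but has non-constant phase). Contracting with the $S^1$-direction gives $\iota_{S^1}\tilde{\Omega}\propto d\log x$ on the reduced space, so the $S^1$-sweep of a curve is special (after one constant phase rotation) exactly when the curve is $|x|=\mathrm{const}$, i.e.\ a circle centered at the origin. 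Circles $|x-c_i|=\mathrm{const}$ do not have this property (and even with your form $\frac{dx\wedge dy}{y}$ the special curves would be straight lines, not such circles), so condition (2) fails for your fibres. This is precisely why the paper takes the second coordinate to be $|x\circ\pi|$ itself, i.e.\ the map $z\mapsto(|x(\pi(z))|,\mu_{S^1}(z))$, invokes Gross's theorem for both the Lagrangian and the special condition, and finds the unique singular fibre at $|x|=|a_i|$, $\mu_{S^1}=\epsilon_i$, an immersed $S^2$ pinched at the isolated $S^1$-fixed point of $E_i$.

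A secondary gap: to apply the Auroux--Gross mechanism on all of $\tilde{U}'_i$ you need $\tilde{\Omega}$ itself to be invariant under the lifted $S^1$-action there, not just the function $x\circ\pi$. You flag this as an ``obstacle'' but give no argument. The paper proves it by noting $\tilde{\Omega}$ is $S^1$-invariant outside $(\pi\circ p)^{-1}(V_i)$ (where it equals the torus-invariant $\pi^*\bar{\Omega}$), extending $a_\theta^*\tilde{\Omega}$ as a nowhere vanishing holomorphic $(2,0)$-form, and concluding $a_\theta^*\tilde{\Omega}=\tilde{\Omega}$ because the holomorphic ratio $\tilde{\Omega}/a_\theta^*\tilde{\Omega}$ equals $1$ on an open set. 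Some such argument is needed before the phase computation for (2) can be carried out on the interior charts.
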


\begin{proof}
As in the proof of Lemma \ref{Kahler form}, let us take local coordinates $(x,y)$ around $q_i$ such that the toric divisor $D_{\sigma(i)}$ (that contains $q_i$) is locally given by $\{ y=0\}$, and the toric action trivializes as $(x,y) \mapsto (e^{i \theta_1} x, e^{i \theta_2} y)$. We normalize the moment map $\mu_{S^1}:(\pi\circ p)^{-1}(V_i)\rightarrow \mathbb{R}$ in Remark \ref{rmk:momentmaplocal} so that it vanishes on the proper transform of the $x$-axis, and $\mu_{S_1}=\epsilon_i<\epsilon$ at the isolated $S^1$-fixed point in $E_i$.

Notice that $\{|x|=const.\}$ projects to a circle in the $S^1$-reduction which is automatically a Lagrangian by the dimension reason. 
Therefore \cite[Theorem 1.2]{G2} applies to produce a Lagrangian fibration  
\begin{equation}\label{eqn:locfibbu}
(\pi\circ p)^{-1}(V_i) \rightarrow \mathbb{R}^2 \qquad z \mapsto (|x(\pi(z))|,\mu_{S^1}(z))
	\end{equation} 
	 with respect to $\tilde{\omega}_{\epsilon}$ on $\tilde{U}_i$, where $x(\pi(z))$ denotes the $x$-coordinate of $\pi(z)$.  
	Since $\tilde{\omega}_{\epsilon}=\pi^*\omega$, the fibration coincides with the pull-back of the moment map fibration from $\bar{X}$ outside of $(\pi\circ p)^{-1}((\pi\circ p)(\tilde{U}_i))$. Therefore, the fibration above can be glued with the pull-back of the moment fibration of $\bar{X}\rightarrow \mbox{Int}P$. Instead of using explicit coordinates as in \eqref{eqn:locfibbu}, one may take a splitting of $T^2$ (giving the toric action on $\bar{Y}$) into the product of the stabilizer of $D_{\sigma(i)}$ and its complementary circle, and replace $|x(\pi(z))|$ by the pull-back of the moment map for the latter factor. This way, one can identify the target of the fibration \eqref{eqn:locfibbu} coherently with the Lie algebra of $T^2$ for every $i$.

	To see the second part of the lemma, we first show that $\tilde{\Omega}$ is invariant under the $S^1$-action in a neighborhood of the proper transform of $\bar{D}_{\sigma(i)}$. Since $\tilde{\Omega}=\pi^{\ast}\bar{\Omega}$ outside of $(\pi\circ p)^{-1}(V_i)$ and $\bar{\Omega}$ is torus-invariant, $\tilde{\Omega}$ is $S^1$-invariant outside $(\pi\circ p)^{-1}(V_i)$. 
	Together with the fact that the $S^1$-action is holomorphic (i.e., it preserves the complex structure), this implies that the holomorphic $(2,0)$-form $a_{\theta}^*\tilde{\Omega}$ on $(\pi\circ p)^{-1}(V_i)$ can be extended as a nowhere vanishing holomorphic $(2,0)$-form on the entire $\tilde{X}$, which we still denote by $a_{\theta}^*\tilde{\Omega}$. Then the ratio $\tilde{\Omega}/a_{\theta}^*\tilde{\Omega}$ is a holomorphic function on $\tilde{X}$ which is $1$ on an open set, and we conclude that  $a_{\theta}^*\tilde{\Omega}=\tilde{\Omega}$ by maximum principle. Then (2) of the lemma again follows from \cite{G2}*{Theorem 1.2}.
	
	Finally, the fibre of \eqref{eqn:locfibbu} over $(r,\lambda)\in \mathbb{R}^2$ is singular if and only $(r,\lambda)=(|a_i|,\epsilon_i)$ where $\epsilon_i=\int_{E_i}\tilde{\omega}_{\epsilon}$ and $a_i = x(q_i)$. It has nodal singularities only, and hence the fibre can be at worst immersed. The singular point of the Lagrangian fibration happens at the isolated fixed point of the $S^1$-action, and hence is one-to-one corresponding to $q_i$. 
	
\end{proof}

Away from a neighborhood of the discriminant, the base $B$ of the Lagrangian fibration constructed in Lemma \ref{Lagrangian fibration} admits the affine structure which can be described as follows.
Let us first identify the interior of the polytope $\mbox{Int}\, P$ with $M_{\mathbb{R}}=\mathbb{R}^2$ via the Legendre transform so that the complex affine coordinates from the toric fibration $p:\bar{X} \rightarrow \mbox{Int}\, P$ agrees with the standard coordinates on $\mathbb{R}^2$. Recall that the Lagrangian fibration on $\tilde{X}=\widetilde{Y}\setminus \widetilde{D}$ in Lemma \ref{Lagrangian fibration} is the pull-back of the moment fibration of $X$ away from neighborhoods $(\pi\circ p)^{-1}(\bigcup_i V_i)$  of the exceptional divisors. Hence one can consider the pull-back of the complex affine structure on a subset $B'$ of $B$ which is diffeomorphic to $\mathbb{R}^2\setminus \bigcup_i V_i$, and we may still call it the complex affine structure. 

\begin{lemma}
Let $L$ be a moment fibre in $\bar{Y}$, and fix $\pi^{-1}(L)$ as a reference fibre for the complex affine structure (so that it sits over the origin). 
Given any $C>0$, there exists $\epsilon>0$ such that the affine coordinate $x_{\partial \gamma_i}>C$ of the singular fibre corresponds to $q_i$, if $\epsilon_i<\epsilon$. 
\end{lemma}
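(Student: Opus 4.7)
The plan is to evaluate $x_{\partial\gamma_i}(u_{\mathrm{sing}})$ directly via the integral formula \eqref{affine coor} and exhibit its divergence as $\epsilon_i \to 0$. Normalize so that $\pi^{-1}(L)$ sits over $u_0 = 0$. Choose a path $\phi$ in $B$ from $u_0$ to $u_{\mathrm{sing}}$ and consider the associated $S^1$-bundle $C$ with fibres in the class $\partial\gamma_i \in H_1(\tilde L)$, so that $x_{\partial\gamma_i}(u_{\mathrm{sing}}) = \int_C \mathrm{Im}\,\tilde\Omega$.

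The key observation is that the path $\phi$ can be split into two portions: a portion $\phi_1$ lying outside $\tilde U_i'$ and a short portion $\phi_2$ lying inside, joined at a point on $\partial \tilde U_i'$. On $\phi_1$, both $\tilde\omega_\epsilon$ and $\tilde\Omega$ coincide with the pull-backs of their toric counterparts $\bar\omega$ and $\bar\Omega$ by Lemmas \ref{Kahler form} and \ref{Lagrangian fibration}, and the Lagrangian fibration is the pull-back of the moment map fibration. Hence the integral over the $\phi_1$-part of $C$ reduces to a standard toric computation: it equals $2\pi$ times the change in the log coordinate on $(\mathbb{C}^\ast)^2$ paired with $\partial\gamma_i$ via the symplectic pairing on $M$, evaluated along $\pi(\phi_1)$.

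By the last assertion of Lemma \ref{Kahler form}, $\tilde U_i' \searrow \{q_i\}$ as $\epsilon \to 0$. Consequently the endpoint of $\phi_1$, which lies on $\partial \tilde U_i'$, recedes to the point $q_i$ on $\bar D_{\sigma(i)}$ in the ambient topology. But $q_i$ is at infinity in the complex affine chart on $B$ in the direction normal to $\bar D_{\sigma(i)}$, because $\tilde\Omega$ has a logarithmic pole along that divisor. The log coordinate paired with $\partial\gamma_i$ therefore diverges along $\phi_1$, and the $\phi_1$-contribution to $x_{\partial\gamma_i}(u_{\mathrm{sing}})$ grows without bound. (The sign of the divergence is consistent with the paper's conventions since $\partial\gamma_i$ is the boundary class of the Maslov $0$ disc whose wall emanates into the bulk of $B$.)

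It remains to control the $\phi_2$-portion inside $\tilde U_i'$. Here we use the $S^1$-invariance of $\tilde\Omega$ proved in Lemma \ref{Lagrangian fibration}: the contracted form $\iota_{v_{S^1}}\tilde\Omega = \pi^\ast(\iota_{v_{S^1}}\bar\Omega)$ is an $S^1$-invariant closed $(1,0)$-form on $\tilde U_i'$, and one can write the integral over the cylinder explicitly in terms of values of the toric coordinate pulled back from $\bar Y$. Since $\tilde U_i'$ shrinks in the ambient topology, the resulting contribution stays bounded (indeed, tends to zero) as $\epsilon \to 0$, and in particular does not cancel the divergent part. Adding the two contributions and choosing $\epsilon$ small enough produces $x_{\partial\gamma_i}(u_{\mathrm{sing}}) > C$, as desired. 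The principal obstacle is precisely this last step: the $\phi_2$-estimate must be made uniform in $\epsilon$, which is why the $S^1$-equivariance of the Kähler form and volume form arranged in the previous two lemmas is essential.
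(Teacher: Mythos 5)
Your overall strategy is the same as the paper's: express the affine coordinate by the period integral \eqref{affine coor} and extract the divergence from the logarithmic pole of $\tilde{\Omega}=\pi^*\bar{\Omega}$ along the boundary component containing $q_i$, which the singular fibre approaches as $\epsilon_i\to 0$; the paper's proof is essentially a compressed version of your outer-region computation. There is, however, one step that fails as literally written.

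The problem is your identification of the divergent coordinate with the one attached to $\partial\gamma_i$ when, as you say, $\partial\gamma_i$ is the boundary class of the Maslov zero disc. Work in the local coordinates of Section \ref{Sec: non-toric blowup}, with $\bar{D}_{\sigma(i)}=\{y=0\}$, $q_i=(a_i,0)$ and $\bar{\Omega}=\frac{dx}{x}\wedge\frac{dy}{y}$, so that $\mathrm{Im}\,\bar{\Omega}=d\log|x|\wedge d\arg y+d\arg x\wedge d\log|y|$. The Maslov zero disc is the proper transform of $\{x=a_i,\ |y|\le c\}$, so its boundary is the $\arg y$-circle. A chain fibred by $\arg y$-circles over a path in the base only picks up the first summand, and \eqref{affine coor} gives $x_{\partial\gamma_i}=2\pi\,\Delta\log|x|$; since the singular fibre sits over $|x\circ\pi|=|a_i|$ for every $\epsilon_i$, this quantity is bounded independently of $\epsilon_i$ and does not diverge, so the claimed divergence of your $\phi_1$-contribution (and also the quantity "change of the log coordinate symplectically paired with $\partial\gamma_i$") disappears. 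The coordinate that does blow up is the one attached to the complementary cycle, the $\arg x$-circle: the chain it sweeps picks up $d\arg x\wedge d\log|y|$, giving $\pm 2\pi\,\Delta\log|y\circ\pi|$, and the level set $\mu_{S^1}=\epsilon_i$ carrying the singular fibre approaches $\{y=0\}$ as $\epsilon_i\to 0$, so this grows without bound. (The paper's notation is loose here, since $\gamma_i$ is not yet defined at that point; the content of the lemma is that the singular fibre escapes to affine infinity along the wall direction, and that is measured by the coordinate dual to the cycle complementary to the Maslov zero boundary.) So you must swap the cycle before your divergence argument goes through.

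A smaller point: the interior estimate on $\phi_2$ is both shaky and unnecessary. It is shaky because $\mathrm{Im}\,\tilde{\Omega}$ is not uniformly bounded on $\tilde{U}_i'\setminus\widetilde{D}$ (the pole along $\widetilde{D}_{\sigma(i)}$ passes through it), and the descent from $\partial\tilde{U}_i'$ down to the level $\mu_{S^1}=\epsilon_i$ can itself contribute a term of order $\log\epsilon_i$, so "tends to zero" is not correct. It is unnecessary because $\mathrm{Im}\,\tilde{\Omega}$ is closed and vanishes on the fibres, including the special Lagrangian singular fibre, so the period depends only on the class of the boundary circles and not on their representatives; since most of the singular fibre lies outside $\tilde{U}_i$ (at $|x\circ\pi|=|a_i|$ and $\mu_{S^1}=\epsilon_i$ in the untouched toric region), one may end the swept chain on a circle of the singular fibre lying entirely in the toric chart, and the whole computation becomes the one-line toric period above, which is what the paper's proof implicitly does.
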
	
\begin{proof}
   This is because the fibration constructed in Lemma \ref{Lagrangian fibration} coincides with the pull-back of the moment fibration outside of $(\pi\circ p)^{-1}(\bigcup_i V_i)$. The holomorphic volume form $\bar{\Omega}$ near the interior of the boundary divisor component $\{x=0\}$ (in some local coordinates) is of the form $f\frac{dx}{x}\wedge dy$, where $f$ is a non-zero function. On the other hand, recall that the complex affine coordinates are given by $x_{\partial \gamma_i}=\int_{C_{\partial \gamma_i}} \mbox{Im} \tilde{\Omega}$ where $C_{\partial \gamma_i}$ is the 2-chain swept by the cycle $\partial \gamma_i \in H_1 (L)$, starting from $L$ and moving toward the singular fibre corresponding to $q_i$.
 As $\epsilon_i \to 0$, one of the boundary components of $C_{\partial \gamma_i}$ approaches $x=0$, and hence $\int_{C_{\partial \gamma_i}}\mbox{Im}\bar{\Omega} \rightarrow \infty$.
\end{proof}

In particular, as $\epsilon$ approaches $0$, one can take $V_i$ in the construction arbitrarily small so that $B'\cong (\mathbb{R}^2\setminus \bigcup_i V_i) \nearrow \mathbb{R}^2$ as $\epsilon\rightarrow 0$. When needing to emphasize this dependence of $B'$ on $\epsilon$, we will write $B_\epsilon$ instead of $B'$ in what follows.

\subsection{Holomorphic discs in non-toric blowups}\label{subsec:holoinntbl}

We now look into holomorphic discs bounding a fibre $\tilde{L}$ of the fibration Lemma \ref{Lagrangian fibration}, when $\tilde{L}$ is away from $(\pi\circ p)^{-1}(\bigcup_i V_i)$. 
Since we are using the standard (toric) complex structure on $\bar{Y}$ and the one on $\widetilde{Y}$ naturally induced through the blowup process, $\pi : \widetilde{Y} \to \bar{Y}$ is holomorphic.  This enables us to establish the following one-to-one correspondence between the holomorphic discs bounding by $L$ and $\tilde{L}$. If $f:(D^2,\partial D^2)\rightarrow (\bar{Y},L)$ is a holomorphic map, then its proper transform gives a holomorphic map $\tilde{f}:(D^2,\partial D^2)\rightarrow (\widetilde{Y},\tilde{L})$. On the other hand, if $\tilde{f}:(D^2,\partial D^2)\rightarrow (\widetilde{Y},\tilde{L})$ is holomorphic, then composition with $\pi$ gives a holomorphic map $f=\pi\circ \tilde{f}:(D^2,\partial D^2)\rightarrow (\bar{Y},L)$. See \cite{VWX} for related discussions.

Suppose that the disc $\tilde{f}:(D^2,\partial D^2) \rightarrow (\widetilde{Y},\tilde{L})$ has Maslov index zero. This happens if and only if $\mbox{Im}(\tilde{f})\cap \widetilde{D}=\emptyset$. Then its projection $f$ can only intersect $D$ at $q_i$ for some $i$. If $|\mbox{Im}(\tilde{f})\cap \widetilde{D}| = l$, then $\mbox{Im}(f)$ passes $l$-many point-constraints in $D$ counted with multiplicity, and its Maslov index is given by $2l$. With these point-constraints being regarded as bulk-insertions, $f$ has generalized Maslov index $0$.
More generally, the same argument shows that the Maslov index of $\tilde{f}$ is the same as the generalized Maslov index of $f$ allowing multiple insertions of $q_i$. To sum up, we have the following lemma.

\begin{lemma}\label{non-toric MI co} There exists a bijective correspondence between the Maslov index $\mu$ holomorphic discs for $(\tilde{X},\tilde{L})$ and  the generalized Maslov index $\mu$ holomorphic discs for $(X,L)$ viewing $q_i$ as bulk insertions. In particular, $L_u$ bounds a generalized Maslov index zero disc if and only if $\tilde{L}_u$ bounds for a Maslov index zero disc.
\end{lemma}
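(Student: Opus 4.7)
The plan is to set up the bijection concretely and then match the Maslov indices via a blowup intersection computation. For the forward direction, given a holomorphic disc $\tilde{f}:(D^2,\partial D^2)\to (\widetilde{Y},\tilde{L})$, we simply set $f := \pi \circ \tilde{f}$. Since $\pi$ is holomorphic and $\tilde{L}$ lies away from $\bigcup_i \tilde{U}'_i$ by assumption, the restriction $\pi|_{\tilde{L}}$ is a diffeomorphism onto the toric fibre $L$, so this yields a holomorphic disc in $(\bar{Y},L)$. For the reverse direction, given $f:(D^2,\partial D^2)\to (\bar{Y},L)$, we take its proper transform $\tilde{f}$. Because $L$ is disjoint from every blowup center $q_i$, the boundary lifts uniquely to $\tilde{L}$; the interior lift is well-defined on the complement of the finite set $f^{-1}(\{q_i\})$ and extends holomorphically across these isolated interior points by the universal property of the blowup (or equivalently by a Hartogs-type extension, since $\widetilde{Y}$ is projective). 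The two assignments are visibly mutual inverses.

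For the Maslov comparison, since $\bar{D}\in|-K_{\bar{Y}}|$ and $\widetilde{D}\in|-K_{\widetilde{Y}}|$, the Maslov indices are twice the intersection number with the respective anticanonical divisors: $\mu(f)=2\,f\cdot\bar{D}$ and $\mu(\tilde{f})=2\,\tilde{f}\cdot\widetilde{D}$. The standard blowup identity $\pi^{\ast}\bar{D}=\widetilde{D}+\sum_i E_i$ combined with the projection formula then gives
\[
f\cdot\bar{D} \;=\; \tilde{f}\cdot\pi^{\ast}\bar{D} \;=\; \tilde{f}\cdot\widetilde{D} \;+\; \sum_i m_i,
\]
where $m_i:=\tilde{f}\cdot E_i$ is exactly the tangency multiplicity of $f$ with $\bar{D}$ at $q_i$. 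Treating each such incidence as a bulk point-constraint in the sense of Section~\ref{subsubsec:bulkdeformintro}, the generalized Maslov index of $f$ is $\mu'(f)=\mu(f)-2\sum_i m_i=\mu(\tilde{f})$, giving the desired matching. Specializing to $\mu=0$ yields the second assertion, since $\mu(\tilde{f})=0$ forces $\mathrm{Im}(\tilde{f})\cap\widetilde{D}=\emptyset$ by positivity of intersection, so all contact of $f$ with $\bar{D}$ must be concentrated at the centers $q_i$.

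The main point requiring care is the lifting step: one must check that the proper transform produces a genuine smooth bordered holomorphic map rather than a meromorphic correspondence. This is standard once one observes that $\pi$ is an isomorphism outside $\bigcup_i E_i$, that $f^{-1}(\{q_i\})$ consists of finitely many interior points of $D^2$ (because $L$ avoids $\{q_i\}$), and that birational lifts of holomorphic maps into projective varieties extend across such isolated loci. A minor bookkeeping point is to track the relative homology classes under the correspondence, which follows immediately from $[\tilde{f}]=\pi^{\ast}[f]-\sum_i m_i[E_i]$ in $H_2(\widetilde{Y},\tilde{L};\mathbb{Z})$; this ensures the bijection is compatible with the class-indexed decomposition of moduli spaces and with the multiplicities of bulk insertions at each $q_i$.
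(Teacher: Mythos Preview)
Your proof is correct and follows essentially the same approach as the paper's argument (which appears in the paragraphs immediately preceding the lemma): the bijection via proper transform one way and composition with $\pi$ the other, followed by the Maslov comparison through intersection with the anticanonical boundary. You supply more detail than the paper does—namely the explicit blowup identity $\pi^*\bar{D} = \widetilde{D} + \sum_i E_i$ together with the projection formula, and the justification that the proper transform extends holomorphically across the finitely many interior preimages of the $q_i$—all of which the paper leaves implicit.
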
 

The following corollary is a direct consequence of (2) of Lemma \ref{Lagrangian fibration}, since fibres in $\tilde{X}$ are special with respect to $\tilde{\Omega}$ ((2) of Lemma \ref{Lagrangian fibration}). Alternatively, one can use the following fact combined with Lemma \ref{non-toric MI co}:
when point-constraints approach the toric boundary divisor, any wall for the bulk-deformed potential limits to a straight line (the proof is elementary). 

\begin{corollary}
	If $\tilde{L}_t\subseteq \tilde{X}$ is a family of torus fibre over $B'$ bounding holomorphic discs of Maslov index zero in a fixed relative class, then $\tilde{L}_t$ sit over an affine line in $B'$. 
\end{corollary}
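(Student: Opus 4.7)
The plan is to deduce the statement directly from the special Lagrangian condition of Lemma \ref{Lagrangian fibration}(2) together with the purely algebraic observation that a holomorphic $(2,0)$-form pulls back to zero on any $J$-holomorphic disc. These two ingredients together pin down one real affine equation on the base.

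First, fix a reference fibre $\tilde{L}_{u_0}$ and a basis $\check{e}_1, \check{e}_2 \in H_1(\tilde{L}_{u_0};\Z)$, and recall the complex affine coordinates $x_{\check{e}_k}$ on $B'$ from \eqref{affine coor}, namely $x_{\check{e}_k}(u) = \int_{C_k}\mathrm{Im}\,\tilde{\Omega}$ for $C_k$ the cylinder swept by $\check{e}_k$ along a path from $u_0$ to $u$. Writing $\partial \gamma = a_1 \check{e}_1 + a_2 \check{e}_2$ and using that the fibration has no singular fibres over $B'$, parallel transport identifies $\gamma \in H_2(\tilde{X},\tilde{L}_{u_0})$ with a canonical class $\gamma_u \in H_2(\tilde{X},\tilde{L}_u)$ for each $u \in B'$. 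Since $\tilde{\Omega}$ is closed, the period $\int_{\gamma_u}\tilde{\Omega}$ depends only on the class $\gamma_u$, and a direct Stokes-type computation together with the definition of $x_{\check{e}_k}$ yields
\begin{equation*}
\mathrm{Im}\int_{\gamma_u}\tilde{\Omega} \; - \; \mathrm{Im}\int_{\gamma_{u_0}}\tilde{\Omega} \; = \; a_1\, x_{\check{e}_1}(u) + a_2\, x_{\check{e}_2}(u),
\end{equation*}
so that $u \mapsto \mathrm{Im}\int_{\gamma_u}\tilde{\Omega}$ is an affine-linear function of the complex affine coordinates with differential $(a_1,a_2)$.

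Next, suppose $\tilde{L}_u$ bounds a $J$-holomorphic disc $f \colon (D^2,\partial D^2) \to (\tilde{X},\tilde{L}_u)$ representing $\gamma_u$. Since $\tilde{X}$ has complex dimension $2$ and $\tilde{\Omega}$ is of type $(2,0)$, its pullback $f^*\tilde{\Omega}$ is a $(2,0)$-form on the one-dimensional complex manifold $D^2$, hence identically zero. Therefore $\int_{\gamma_u}\tilde{\Omega} = \int_{D^2} f^*\tilde{\Omega} = 0$, and in particular $\mathrm{Im}\int_{\gamma_u}\tilde{\Omega} = 0$. Combined with the formula of the previous paragraph, this gives the affine equation $a_1 x_{\check{e}_1}(u) + a_2 x_{\check{e}_2}(u) = -\mathrm{Im}\int_{\gamma_{u_0}}\tilde{\Omega}$, cutting out an affine line in $B'$ whose direction is determined by $\partial \gamma$. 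Alternatively, since the conditions (1)--(4) of Theorem \ref{LF scattering diagram} are met by the fibration over $B'$ (with (3) guaranteed by the compactness analysis of Section \ref{sec: compactness}), one may appeal to Theorem \ref{YS} directly.

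The only real subtlety lies in justifying that the identification $\gamma \leftrightarrow \gamma_u$ is globally well-defined and that the Stokes computation above is unambiguous; both amount to the triviality of the local system $H_2(\tilde{X},\tilde{L}_u)$ over $B'$, which follows from the absence of singular fibres over $B'$ and the contractibility of small neighborhoods in $B'$. No genuine obstacle is expected, and the argument is essentially a specialization of \cite[Proposition 5.6]{L8} to our geometric setup.
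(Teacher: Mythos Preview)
Your proof is correct and follows essentially the same approach as the paper's primary justification: the paper simply asserts the corollary is a direct consequence of Lemma~\ref{Lagrangian fibration}(2) (the special condition $\mathrm{Im}\,\tilde{\Omega}|_{\tilde{L}}=0$), and you have spelled out precisely why, namely that $f^*\tilde{\Omega}$ vanishes on a holomorphic disc and that $u\mapsto \mathrm{Im}\int_{\gamma_u}\tilde{\Omega}$ is affine in the complex affine coordinates~\eqref{affine coor}. The paper also offers a second route via Lemma~\ref{non-toric MI co} and the limiting straightness of bulk-deformed walls in the toric model, which you did not take; your alternative appeal to Theorem~\ref{YS} is in the same spirit and equally valid.
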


Notice that the statement holds only for genuine holomorphic discs, but not for stable maps having nontrivial bubbled-off components that may appear in the compactification $\widetilde{Y}$, especially when there exists holomorphic spheres with negative Chern numbers. 
We can get rid of such contributions simply by restricting ourselves to the open part $(\tilde{X},\tilde{\omega}_{\epsilon})$, but the price to pay is that the compactness of the disc moduli is not automatically guaranteed, which we discuss now.

\subsubsection{Compactness of the disc moduli for $\tilde{X}$}\label{sec: compactness}

In order to make sense of Floer theory for Lagrangian fibre $\tilde{L}$ in $\tilde{X}$, one needs to separately show the compactness of the corresponding disc moduli spaces since $\tilde{X}$ is non-compact and the metric is incomplete.
Once we have the compactness, Theorem \ref{LF scattering diagram} automatically produces a scattering diagram $\mathfrak{D}^{LF}_{\epsilon}$ on $B_{\epsilon}$. We will see later that it converges to a limiting scattering diagram as $\epsilon$ approaches $0$.

\begin{remark}
	One may construct a scattering diagram from the geometry of $\widetilde{Y}$ instead, for which Theorem \ref{LF scattering diagram} directly applies. However, there can be a lot  more Maslov index zero discs due to negative Chern number spheres in $\widetilde{D}$ and,  none of existing techniques in Floer theory seem allowing us to explicitly compute such contributions. 
\end{remark}

\begin{lemma}\label{lem:cptnessnodiv}
	Let $\tilde{L}:=\pi^{-1}(L)$ be a torus fibre in $\tilde{X}$ which is a pre-image of a moment map fibre in $X$. Then the moduli space $\mathcal{M}(\tilde{X},\tilde{L},\tilde{\beta};J)$ of holomorphic discs of class $\tilde{\beta} \in \pi_2 (\tilde{X},\tilde{L})$  is compact for any $\tilde{\beta}$.
\end{lemma}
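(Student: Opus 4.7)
The plan is to lift the compactness question to the compact toric surface $\bar{Y}$ via the holomorphic projection $\pi : \widetilde{Y} \to \bar{Y}$, where the fact that each disc meets $\bar{D}$ only at the blowup points with prescribed multiplicities provides strong constraints on any Gromov limit.

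First I would take a sequence $\{\tilde{f}_n\} \subset \mathcal{M}(\tilde{X}, \tilde{L}, \tilde{\beta}; J)$ and form the holomorphic discs $f_n := \pi \circ \tilde{f}_n : (D^2, \partial D^2) \to (\bar{Y}, L)$ in the toric surface bounding the moment fibre $L$, all in class $\bar{\beta} := \pi_\ast \tilde{\beta}$. Their symplectic areas are uniformly bounded by $\bar{\omega} \cdot \bar{\beta}$. Because each $\tilde{f}_n$ avoids $\widetilde{D}$, its projection $f_n$ meets $\bar{D}$ only at the blowup points $q_i$, with multiplicity $m_i := \tilde{\beta} \cdot E_i$ at each $q_i$; this intersection profile is the same for every $n$. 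Next, I would apply Gromov compactness in the compact $\bar{Y}$ to extract, after passing to a subsequence, a stable bordered genus-zero limit $f_\infty : \Sigma_\infty \to \bar{Y}$ with boundary on $L$ in class $\bar{\beta}$. The total intersection multiplicity of $f_\infty$ with each component of $\bar{D}$ (away from its support) matches that of $f_n$ by topological invariance.

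The main obstacle, and where most of the work lies, is to exclude irreducible components of $f_\infty$ that are multiple covers of some $\bar{D}_j$. Assuming for contradiction that such a sphere component $S \subseteq \bar{D}_j$ appears, I would apply Lemma \ref{weak correspondence} to each $f_n$ to obtain a tropical disc $h_n$ with stop at $u$ whose unbounded edges are fixed by the data $\{m_i\}$. Passing to the doubling $C_{f_\infty}$ of the limit via Lemma \ref{doubling} and running the Ronkin/spine construction from the proof of Lemma \ref{weak correspondence}, I would argue that the presence of the trapped component $S$ forces the $\mathbb{Z}_2$-symmetric spine of $C_{f_\infty}$ to contain a nontrivial cycle (the halves of $S$ across the involution cannot be opened into a tree with end at $u$). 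This contradicts the fact that the spines of the $f_n$'s are trees (tropical discs) and that the tree property is preserved in the tropical limit. This is the ``disc with higher genus components'' obstruction alluded to in the introduction.

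Once sphere bubbles inside $\bar{D}$ are ruled out, every component of $f_\infty$ meets $\bar{D}$ only at the $q_i$ with the prescribed multiplicities, so the proper transform of $f_\infty$ under $\pi$ lifts to a stable disc $\tilde{f}_\infty$ in $(\widetilde{Y}, \tilde{L})$ whose image lies entirely in $\tilde{X}$ and which represents $\tilde{\beta}$, yielding $\tilde{f}_n \to \tilde{f}_\infty$ in $\mathcal{M}(\tilde{X}, \tilde{L}, \tilde{\beta}; J)$. The hardest part will be making the tropical/genus argument rigorous: one needs a clean mechanism showing that a sphere component trapped in a toric divisor necessarily introduces a cycle in the spine of the doubled curve, and then ensuring this cycle survives any simplification (such as the $\mathcal{S}\leadsto \mathcal{S}'$ passage in the proof of Lemma \ref{weak correspondence}) so that it genuinely contradicts the tree structure inherited from the bordered genus-zero discs $\tilde{f}_n$.
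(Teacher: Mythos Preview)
Your overall strategy—project to $\bar{Y}$, use that the discs meet $\bar{D}$ only at the $q_i$ with fixed multiplicities, and then rule out sphere bubbles in $\bar{D}$—is the right shape, but the sphere-bubble exclusion step has a real gap. The mechanism you propose, that a component trapped in $\bar{D}_j$ forces a cycle in the spine of the doubled limit, does not go through as written: Lemma \ref{doubling} applies to an irreducible holomorphic disc, not to a nodal stable map $f_\infty$ with sphere components, so ``$C_{f_\infty}$'' is not defined. Even if you doubled each smooth $f_n$ first, the spine construction in Lemma \ref{weak correspondence} deliberately discards interior lattice points (the passage $\mathcal{S}\leadsto\mathcal{S}'$) precisely to kill genus, so nothing prevents the limiting tropical object from being a tree; and a sphere inside $\bar{D}_j$ lives at infinity in the amoeba picture, so it is unclear how it would manifest as a cycle at all. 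You correctly flag this as the hardest step, but as stated it is not a proof.

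The paper sidesteps this entirely. It doubles each disc $\bar{f}$ to a closed rational curve, then passes to a refined toric surface $Y'$ whose fan is $\mathbb{Z}_2$-symmetric (adding $\mathbb{R}_{\leq 0}v$ for every $1$-cone $\mathbb{R}_{\geq 0}v$), so that the anti-holomorphic involution $\sigma$ extends globally to $Y'$. The doubled curves $C'_{\bar f}$ in $Y'$ meet the toric boundary only at the finitely many non-toric points $q_i'$ and $\sigma(q_i')$. For such closed rational curves with prescribed boundary incidence, \cite[Lemma 4.2]{GPS} already gives compactness \emph{and} absence of sphere bubbles. The disc moduli $\mathcal{M}''$ is then identified with the $\sigma$-fixed locus of this compact space. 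So rather than inventing a tropical genus obstruction, the paper reduces to a known compactness result for closed curves; the doubling plus the symmetric refinement is the key move you are missing.
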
 	

Throughout (as well as in the statement), $J$ denotes the \emph{standard} complex structure unless specified otherwise, and by abuse of notation, it will also stand for that on $\bar{X}$.
We remark that the proof  heavily relies on the usage of the standard complex structure.

\begin{proof}
Recall from Lemma \ref{non-toric MI co} that the set of Maslov index zero discs $\tilde{f}:(D^2,\partial D^2)\rightarrow (\tilde{X},\tilde{L})$ is in one-to-one correspondence with that of generalized Maslov index zero discs $\bar{f}:(D^2,\partial D^2)\rightarrow (\bar{Y},\bar{L})$. We denote by $\mathcal{M}''$, the subspace of $\mathcal{M}(\bar{Y},L,\beta;J)$ consisting of such discs $\bar{f}$.
Hence, the moduli space of stable discs $\mathcal{M}(\tilde{X},\tilde{L},\tilde{\beta};J)$ is homeomorphic to $\mathcal{M}''$. Notice that although $\mathcal{M}''$ does not contain stable maps with sphere-bubble components completely lying in the toric boundary $\bar{D}$, it can still contain discs with sphere bubbles which lift to interior $(-2)$-curves in $\tilde{X}$. These sphere bubbles in $\bar{Y}$ need to pass through prescribed point constraints, and are rigid. 

We show that $\mathcal{M}''$ is compact in the following.
Let $\Sigma'$ be a refinement of $\Sigma$ by adding $\mathbb{R}_{\leq 0}v$ for each $1$-cone $\mathbb{R}_{\geq 0}v$ of $\Sigma$. Let $Y'$ be the toric (orbi-)surface associated $\Sigma'$, and $\pi':Y'\rightarrow \bar{Y}$ the birational modification on the boundary corresponding to the refinement of the fan. Then the $(\mathbb{C}^*)^2$-action on $\bar{Y}$ lifts to $Y'$ by the universal property of blowups. In particular, the Hamiltonian $T^2$-action lifts to $Y'$, and  hence is again Hamiltonian since $H^1(Y')=0$. In other words, the moment maps of $Y'$ and $\bar{Y}$ are compatible, and one can naturally identify the interior of the moment polytopes $P'$ and that of $P$. 

Take $u \in \mathbb{R}^2$ over which the fibre $\bar{L}$ sits, where we identify both $\mbox{Int}\, P$ and $\mbox{Int}\, P'$ with $\R^2$ via the Legendre transform. 
Up to a $(\mathbb{C}^*)^2$-action, we may assume that $u=(0,0)\in \mathbb{R}^2\cong \mbox{Int}\, P$. 
Consider a holomorphic disc $\bar{f}:(D^2,\partial D^2)\rightarrow (\bar{Y},\bar{L})$ which lies in $\mathcal{M}''$.
Recall from the discussion in Section \ref{subsec:weakcorresholotrop} that with help of the anti-holomorphic involution $\sigma$ on $\bar{X} (\cong (\C^\ast)^2)$, one can double $\bar{f}$ into a unique rational curve $C_{\bar{f}}$ in $Y'$ containing $\mbox{Im}\bar{f}$ . We denote its proper transform in $Y'$ by $C'_{\bar{f}}$. 

Observe that the involution $\sigma$ acting on $(\mathbb{C}^*)^2\subseteq Y'$ extends to the whole $Y'$ due to its symmetry. Then $C'_{\bar{f}}$ is a rational curve in $Y'$ whose intersection with the toric boundary divisors can occur only at $q'_i$ and $\sigma(q'_i)$, where $q'_i=\pi'^{-1}(q_i)$. 
It is shown in \cite[Lemma 4.2]{GPS} that the moduli space rational curves in $Y'$ of a fixed homology class which can intersect the toric boundary only at prescribed non-toric points is compact and has no sphere bubble. This moduli space admits an induced action of $\sigma$, and the fixed loci of the action precisely consists of the proper transform $C'_{\bar{f}}$ for some $\bar{f}$. Therefore, $\mathcal{M}''$ can be identified with the fixed loci, and hence is compact.
\end{proof}	

\begin{corollary}\label{cor:defdlfepsilon}
There exists a consistent scattering diagram $\mathfrak{D}^{LF}_{\epsilon}$ on $B_{\epsilon}$ whose walls consist of Lagrangian fibres that bound Maslov $0$ discs (or, more precisely, their corresponding open Gromov-Witten invariants are nonzero).
\end{corollary}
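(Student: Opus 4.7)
The plan is to invoke Theorem \ref{LF scattering diagram} directly for the Lagrangian fibration $\tilde{X}\to B$ of Lemma \ref{Lagrangian fibration}, restricted to $B_{\epsilon}$ and equipped with the symplectic form $\tilde{\omega}_{\epsilon}$ and the standard complex structure $J$ on $\widetilde{Y}$. I will verify the four hypotheses (1)--(4) of that theorem one at a time.

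For (1), by construction $B_{\epsilon}$ is the complement of small neighborhoods $V_i$ of the singular values, and since the singular fibres are in one-to-one correspondence with the blowup points $q_i$ by Lemma \ref{Lagrangian fibration}(3), the restricted fibration over $B_{\epsilon}$ has only smooth torus fibres. For (2), the fibration on $B_{\epsilon}$ coincides with the pull-back of the toric moment fibration by Lemma \ref{Lagrangian fibration}(1), so I will equip $B_{\epsilon}$ with the integral affine structure pulled back from the standard complex affine coordinates on $\mbox{Int}\,P\simeq \mathbb{R}^2$ under the Legendre transform. The requirement that fibres bounding Maslov zero discs in a fixed class sit over an affine line is then the corollary stated immediately after Lemma \ref{non-toric MI co}, which in turn rests on the special Lagrangian condition $\mbox{Im}\, \tilde{\Omega}|_{\tilde{L}}=0$ established in Lemma \ref{Lagrangian fibration}(2).

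For (3), the required compactness of $\mathcal{M}(\tilde{X},\tilde{L},\tilde{\beta};J)$ is precisely Lemma \ref{lem:cptnessnodiv}. For (4), I would observe that any $J$-holomorphic stable disc with boundary on a fibre $\tilde{L}$ and relative class in $\pi_2(\tilde{X},\tilde{L})$ automatically has Maslov index zero: its image lies in $\tilde{X}=\widetilde{Y}\setminus \widetilde{D}$ and hence has zero intersection with the anticanonical divisor $\widetilde{D}$, so no fibre bounds a holomorphic disc of negative Maslov index in $\tilde{X}$. With all four conditions in hand, Theorem \ref{LF scattering diagram} produces the desired consistent scattering diagram $\mathfrak{D}^{LF}_{\epsilon}$, whose walls are precisely the affine lines (in the directions prescribed by $\partial \tilde{\beta}$) swept by families of fibres supporting nonzero open Gromov-Witten invariants in Maslov zero classes $\tilde{\beta}$. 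The only substantive input is the compactness proved in Section \ref{sec: compactness}; once that is granted, the corollary amounts to packaging the preceding lemmas into the framework of Theorem \ref{LF scattering diagram}, and I do not foresee any further obstacle.
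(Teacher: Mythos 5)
Your proposal is correct and follows essentially the same route as the paper: the paper also obtains $\mathfrak{D}^{LF}_{\epsilon}$ by feeding the fibration of Lemma \ref{Lagrangian fibration} (restricted to $B_{\epsilon}$, with the pulled-back complex affine structure and the affine-line property from the corollary to Lemma \ref{non-toric MI co}) into Theorem \ref{LF scattering diagram}, with the compactness Lemma \ref{lem:cptnessnodiv} supplying the only non-automatic hypothesis, and with the absence of negative Maslov discs coming from the fact that discs in $\tilde{X}$ miss $\widetilde{D}$ and hence have Maslov index zero. Your verification of conditions (1)--(4) is exactly the packaging the paper intends.
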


\subsubsection{Computing wall-crossing transformations}

Lemma \ref{lem:cptnessnodiv} enables us to apply Theorem \ref{LF scattering diagram} to the Lagrangian fibration $\tilde{X}$ with small neighborhoods of singular fibres being removed. It results in a scattering diagram, whose ``initial rays" are originated from these singular fibres. Here, the initial rays refer to the ones induced by Maslov 0 discs emanating from these singular fibres, and we often call these discs \emph{initial (Maslov 0) discs} for this reason. We warn readers that a priori there may exist more Maslov 0 discs coming out from neighborhoods of the singular fibres since we did not established the compactness of the disc moduli spaces for fibres near the singular fibres and the Floer theory cannot apply directly. Later we will see that one can use Lemma \ref{weak correspondence} to exclude such possibility. 
 
Consider a Lagrangian torus fibre $\tilde{L}\subseteq (\pi\circ p)^{-1}(V_i)$ in a neighborhood of the singular fibre.
In terms of local coordinates $(x,y)$ around the blowup point $q_i=(a_i,0)$ as in \eqref{eqn:locfibbu}, $\tilde{L}$  bounds Maslov 0 discs if and only if it sits over $|x \circ \pi|=|a_i|$. For later use, we write $\gamma$ for the relative class of the contributing Maslov 0 disc, which are portions of (the proper transform of) the parallel to $y$-axis. We are interested in those lying above the singular fibre (i.e., $\mu_{S_1} > \epsilon_i$), since we are considering the case  when the location of the singular fibre is arbitrarily close to the divisor $y=0$ (i.e., $\epsilon_i \to 0$). Let us denote the corresponding ray (wall) by $l_i$, i.e., in local coordinates,
\begin{equation}\label{eqn:initialwallli}
l_i =\{  (r,\lambda) \mid r= |x\circ \pi| = |a_i|, \,\, \lambda > \epsilon_i \}.
\end{equation}
Notice that $l_i$ forms a ray starting from (the image of) $q_i$, whose direction is normal to the toric divisor that $q_i$. Hence, $\{l_i\}$ and the initial rays(walls) in Section \ref{subsec:GPS} coincide with each other, after identifying the interior of the polytope $\mbox{Int}\, P$ with $\mathbb{R}^2$ via the Legendre transform. 

We show that $\{l_i\}$ coupled with wall functions from Floer theory agrees with the initial scattering diagram $\mathfrak{D}^{GPS}_{in}$ in Section \ref{subsec:GPS}.
Let us compute the wall-crossing transformation associated with the wall $l_i$. Our strategy is to first symplectically embed the neighborhood of the singular fibre $(\pi\circ p)^{-1}(V_i \setminus \partial P)$ into a simpler local model in \cite[Example 3.1.2]{A5}, the blowup $\WT{\C^2}$ of $\C^2$ at a generic point in the $x$-axis. It is crucial to have a $S^1$-invariant K\"ahler form on this region for this purpose. Our actual geometric situation is possibly different from this local model, in that the divisor containing $q_i$ may serve as a nontrivial sphere bubble in disc counting, while in the local model only a part of the divisor appears (as the $x$-axis). However, this does not affect Fukaya's trick across the corresponding wall $l_i$, as $l_i$ is located away from this divisor, and hence no discs for Fukaya's trick across $l_i$ can interact with the divisor. Therefore the wall-crossing for $l_i$ agrees with that for the corresponding wall in \cite[Example 3.1.2]{A5}.

The advantage of the latter is that one can detect the wall-crossing via the change of superpotentials whose computation is simpler. A similar idea was used in \cite{Y5}. One may regard the local coordinates $(x,y)$ above as coordinates on $\C^2$. By abuse of notation, let $\gamma$ denote the relative class in $\pi_2 (\WT{\C^2},\tilde{L})$ of the proper transform of the basic disc intersecting $x$-axis once. Similarly, we denote by $\delta$ the relative disc class of the one intersecting $y$-axis once. Then $\partial \gamma$ and $\partial \beta$ form a basis of $H_1(\tilde{L})$, and we can
take $z^{\partial \gamma}$ and $z^{\partial \delta}$ as variables for the superpotential.
It is not difficult to see that the wall-crossing transformation going across $l_i$ can be written in the form of
\begin{equation}\label{eqn:wcunknownf}
(z^{\partial \gamma} ,z^{\partial \delta}) \mapsto (z^{\partial \gamma},z^{\partial \delta} (1+ f(z))).
\end{equation}
The first component is the identity map since $\partial \gamma$ does not intersect the boundary of the Maslov zero disc at all, and hence contributes trivially to the pseudo-isotopy, and the second follows from Theorem \ref{YS}. On the other hand, the explicit calculation in \cite{A5} shows that the superpotential for $\tilde{L}$ with $|x \circ \pi| < a_i$ is given by
$$T^{\omega(\gamma)}z^{\partial \gamma} + T^{\omega(\delta)}z^{\partial \delta}$$
whereas $\tilde{L}$ with $|x \circ \pi| > a_i$ has 
$$ T^{\omega(\gamma)}z^{\partial \gamma} + T^{\omega(\delta)}z^{\partial \delta} + T^{\omega(\gamma+\delta)}z^{\partial \gamma} z^{\partial \delta}.$$
By (1) of Lemma \ref{wall-crossing}, the two should be compatible with the coordinate transition \eqref{eqn:wcunknownf}. Therefore we have
$$T^{\omega(\gamma)}z^{\partial \gamma} + T^{\omega(\delta)}z^{\partial \delta} (1+ f(z)) = T^{\omega(\gamma)}z^{\partial \gamma} + T^{\omega(\delta)} z^{\partial \delta} + T^{\omega(\gamma+\delta)}z^{\partial \gamma} z^{\partial \delta},$$
which leads to $f(z) = T^{\omega(\gamma)} z^{\partial \gamma}$.
 Thus we have shown 

\begin{lemma} \label{contribution of initial discs}
Suppose $\widetilde{Y}$ is a non-toric blowup of a toric surface $\bar{Y}$, and a point $q_i$ in the blowup center lies in the irreducible toric divisor $D_{\sigma(i)}$. Let $l_i$ be the wall associated to $q_i$ (see \eqref{eqn:initialwallli}) for the Lagrangian fibration on $\widetilde{Y}$ constructed in Lemma \ref{Lagrangian fibration}.
Then its wall-function is given by $1+T^{\omega(\gamma)}z^{\partial \gamma}$ where $\gamma$ is the relative class of the proper transformation of the holomorphic disc in $\bar{Y}$ intersecting $D_{\sigma(j)}$ exactly once.
\end{lemma}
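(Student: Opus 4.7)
The plan is to reduce the computation to Auroux's local model \cite{A5} for a non-toric blowup of $\C^2$ at a point on the $x$-axis, denoted $\widetilde{\C^2}$, for which the relevant disc enumeration has already been performed. Concretely, I would first use the Hamiltonian $S^1$-action constructed in Lemma \ref{Kahler form} and Remark \ref{rmk:momentmaplocal} to embed a neighborhood of the singular fibre $(\pi\circ p)^{-1}(V_i \setminus \partial P)$ symplectically into $\widetilde{\C^2}$ equipped with a comparable $S^1$-invariant K\"ahler form, in such a way that the Lagrangian fibration of Lemma \ref{Lagrangian fibration} matches the Gross-type fibration on the local model. Fibres of the original fibration that lie in this neighborhood are identified with fibres in $\widetilde{\C^2}$, so any Maslov zero disc responsible for the wall $l_i$ corresponds to a local disc in the Auroux model.

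The crucial point is that, although the toric divisor $D_{\sigma(i)}$ containing $q_i$ may globally support nontrivial sphere bubbles in $\widetilde{Y}$, these cannot affect Fukaya's trick across the particular wall $l_i$. Topologically, $l_i$ is located on the side of the singular fibre opposite to $D_{\sigma(i)}$ in the base, so an isotopy crossing $l_i$ between two nearby fibres can be chosen to avoid any neighborhood of $D_{\sigma(i)}$. Hence by (2) of Lemma \ref{wall-crossing}, the wall-crossing transformation associated to $l_i$ in $\widetilde{Y}$ agrees with the one in the local model $\widetilde{\C^2}$. In particular, Theorem \ref{YS} combined with the fact that the boundary of a generating Maslov zero disc pairs trivially with $\partial\gamma$ forces the transformation to have the form
\[
 (z^{\partial \gamma}, z^{\partial \delta}) \mapsto (z^{\partial \gamma}, z^{\partial \delta}(1 + f(z))),
\]
where $\gamma$ and $\delta$ are the two basic disc classes in $\pi_2(\widetilde{\C^2}, \tilde L)$ and $f(z) \in \Lambda_+[[z^{\partial \gamma}]]$.

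The last step is to pin down $f$ by matching the two superpotentials across the wall. From the known computation in \cite[Example 3.1.2]{A5}, fibres with $|x \circ \pi| < |a_i|$ have potential $T^{\omega(\gamma)}z^{\partial \gamma} + T^{\omega(\delta)}z^{\partial \delta}$, while fibres with $|x \circ \pi| > |a_i|$ pick up an additional smoothing term $T^{\omega(\gamma+\delta)}z^{\partial \gamma}z^{\partial \delta}$. Applying (1) of Lemma \ref{wall-crossing} and comparing coefficients yields $f(z) = T^{\omega(\gamma)}z^{\partial \gamma}$, whence the wall function is $1 + T^{\omega(\gamma)}z^{\partial \gamma}$ as claimed.

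I expect the main obstacle to be justifying rigorously the reduction to the Auroux local model, since our ambient geometry on $\widetilde{Y}$ is genuinely different and the disc moduli on $\widetilde{X}$ were only shown to be compact in Lemma \ref{lem:cptnessnodiv} for fibres pulled back from toric fibres, not for fibres arbitrarily close to the singular one. The argument therefore hinges on a careful topological choice of the path/isotopy across $l_i$ that stays within the neighborhood modeled by $\widetilde{\C^2}$, so that only the local Floer data enters the pseudo-isotopy computation. Once this is in place, the matching of potentials is a direct comparison.
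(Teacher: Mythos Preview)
Your proposal is correct and follows essentially the same argument as the paper: reduce to Auroux's local model $\widetilde{\C^2}$ via the $S^1$-equivariant symplectic embedding, argue that sphere bubbles in $D_{\sigma(i)}$ cannot enter the pseudo-isotopy across $l_i$ for the topological reason you give, then determine the unknown factor $f(z)$ by matching the two local superpotentials from \cite[Example 3.1.2]{A5} via Lemma~\ref{wall-crossing}(1). Your identification of the main subtlety---that the reduction to the local model is justified by choosing the isotopy entirely within the neighborhood so only local Floer data enters---is exactly the point the paper leans on as well.
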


A direct consequence is that $\{l_i\}$ together with the wall functions (of the form $1+f(z)$) calculated above forms the (initial) scattering diagram $\mathfrak{D}_{in}^{GPS}$ in Section \ref{subsec:GPS}.

\subsection{Orbifold blowups and Floer theory} \label{sympl orbifold}
Although we are mainly concerned with the case of simple blowups in the article, we believe that majority of our result extends to blowups with higher multiplicities (orbifold blowups) provided that Fukaya's trick is established in the orbifold setting. Here, we briefly examine the wall-crossing in Floer theory on some standard local model for such a blowup. 

Recall from \ref{subsubsec:orbiblowupGPS} that a non-toric blowup with a higher multiplicity  results in the nontrivial exponents in the wall functions attached to the corresponding wall in $\mathfrak{D}^{GPS}$. It creates an orbifold singularity in the total space, and the Maslov zero disc responsible for the wall-crossing passes through this singular point. Hence it is natural to consider the orbifold Floer theory of the Lagrangian torus fibres sitting over the wall.
Let us examine this in a local model which will turn out to be given as the \emph{global quotient orbifold}.

We begin with $\mathbb{C}^2$ with coordinates $(x,y)$ (equipped with the standard toric structure). As the blowup we are interested in is non-toric, let us take a nonreduced scheme supported at the point $(1,0)$, that is, the subscheme corresponding to the ideal generated by $(x-1)^r$ and $y$. ($r$ corresponds to $r_{ij}-1$ in \ref{subsubsec:orbiblowupGPS}.) The blowup will produce an orbifold projective line $E_r$ with exactly one orbifold singular point of order $r$. It admits a homogeneous coordinate $[a:b]$ where $[a:b] = [\rho a : \rho^r b]$ for $\rho \in \mathbb{C}^\times$. Now, the blowup can be identified as a subspace of $\mathbb{C}^2 \times E_r$ given as
$$ \widetilde{Y}_{r,loc}= \{ ((x,y), [a:b] ) \in  \mathbb{C}^2 \times E_r : (x-1)^r b = y a^r\}.$$

Like in the case of ordinary (non-toric) blowup, $\widetilde{Y}_{r,loc}$ admits a $S^1$-action. The action has exactly one isolated fixed point $((1,0),[0:1])$. $((1,0),[0:1])$, away from which $\widetilde{Y}_{r,loc}$ is smooth. It also fixes the proper transform of $x$-axis, $\{ ((x,y),[a:b]) : b=0 \}$. We denote its complement in $\widetilde{Y}_{r,loc}$ by $ \tilde{W}_{r,loc}$. This is isomorphic to the affine variety
$$\{(x,y,\tilde{a}) :  x^{r} =\tilde{a} y \} \cong \mathbb{C}^2 / \mathbb{Z}_r $$
via $\tilde{a} = \frac{a^r}{b}$, under which the point $((1,0),[0:1])$ in $\tilde{W}_{r,loc}$ maps to the (unique) orbifold singular point of order $r$. In fact, there exists a (global) quotient map 
$$\tilde{W}_{1,loc} \to \tilde{W}_{r,loc} \qquad ((\underline{x},\underline{y}),[\underline{a}:\underline{b}]) \to ((\underline{x},\underline{y}^r), [\underline{a}:\underline{b}^r] ) $$
from the local model for the ordinary non-toric blowup, which is nothing but the quotient map $\mathbb{C}^2 \to \mathbb{C}^2 / \mathbb{Z}_r $ by the identification above.

Notice that the quotient map sends a torus fibre $L_{1,u}$ in $\tilde{W}_{1,loc}$ to the torus fibre $L_{r,u}$. Restricting to the individual torus fibre, the quotient map gives a $r$-fold cover, where the loop $|\underline{y}| = \mathrm{const.}$ in $L_{1,u}$ wraps the loop $|y| = \mathrm{const.}$ in $L_{r,u}$ $r$-times through the covering, yet the complementary loop $|\underline{x}|=\mathrm{const.}$ maps isomorphically $|x|=\mathrm{const.}$ in $L_{r,u}$. On the local model $\tilde{W}_{r,loc}$, Floer theory of its torus fibres can be completely understood by $\mathbb{Z}_r$-invariant part of Floer theory of their liftings in $\tilde{W}_{1,loc}$, which has already been discussed in Section \ref{subsec:holoinntbl} in detail. For our purpose, we fist need to inspect the effect of the group action on the Maurer-Cartan space of the upstairs torus fibre $L_{1,u}$. Let $e_{\underline{x}}$ and $e_{\underline{y}}$ denote the standard generator of $H^1(L_{1,u}; \mathbb{Z})$, i.e., $\int_{|i|=\mathrm{const.}} e_j = \delta_{ij}$ ($i,j \in \{ \underline{x},\underline{y}\}$).

Since the action rotates the loop $|\underline{y}|=\mathrm{const.}$ by $2\pi / r$, the generator of the first (integral) cohomology of the torus fibre down stairs can be identified with $e_x=e_{\underline{x}}$ and $e_y =\frac{1}{r} e_{\underline{y}} $ (so that $e_y$ integrates over $|y|=\mathrm{const.}$ to give $1$). A general element of $H^1(L_{r,u}; \mathbb{R})$ is then a linear combination $x_1 e_x + x_2 e_y = x_1 e_{\underline{x}} + (x_2 /r) e_{\underline{y}}$. Recall that the potential is written in terms of the exponential coordinates $z_i = \exp (x_i)$, and the above discussion tells us that the exponential coordinates $(\underline{z_1},\underline{z_2})$ for $L_{1,u}$ and $(z_1,z_2)$ for $L_{r,u}$ are related by
$$ z_1 = \underline{z_1},\quad z_2 = (\underline{z_2})^r.$$
On the other hand, we have computed in Lemma \ref{contribution of initial discs} the wall-crossing formula for the unique wall $\{|\underline{x}|=1\}$ in the standard local model $\tilde{W}_{1,loc}$ as
\begin{equation}\label{eqn:wcupstairwloc}
(\underline{z_1}',\underline{z_2}') = (\underline{z_1},\underline{z_2}(1+ T^{\omega(\gamma)} \underline{z_1})),
\end{equation}
where $\gamma$ is the class of the Maslov index 0 disc responsible for the wall and where $(\underline{z_1},\underline{z_2})$ and $(\underline{z_1}',\underline{z_2}')$ are the (exponential) coordinates on the Maurer-Cartan spaces of the torus fibres in the chambers $\{|\underline{x}| <1\}$ and $\{|\underline{x}| >1\}$, respectively. In perspective of Floer theory of $\tilde{W}_{r,loc}$ yet without turning on the bulk insertions from twisted sectors, the formula \eqref{eqn:wcupstairwloc} is still valid if we write it in terms of coordinates $(z_1,z_2)$, which amounts to taking $\mathbb{Z}/r$-invariant part of Floer operations upstairs. It leads to the following wall-crossing formula
$$(z_1',z_2') =( z_1, z_2 (1+T^{\omega(\gamma)} z_1^r)))
$$
that is valid in the local model $\tilde{W}_{r,loc}$ whose Floer theory is taken as the $\Z_r$-invariant part of that on $\tilde{W}_{1,loc}$ along the same spirit as \cite{CH}.
The corresponding global statement should immediately follow, once the orbifold analogue of the Fukaya's trick is established (i.e., Lemma \ref{wall-crossing} in the orbifold setting). We remark that for our purpose, it does not require a full package of the orbifold Floer theory in the sense that the bulk-deformation by nontrivial twisted sectors is not at all involved. 
 
\subsection{Toric blowups}\label{Sec: toric blowup}
 Assume that $\pi':\widetilde{Y}\rightarrow Y$ is a toric blowup, i.e. 
  all of $q_i\in Y$ are located at corners of $D$.
  When $Y$ is a smooth toric surface, then so is $\widetilde{Y}$ whose moment polytope is obtained by chopping off the corresponding corner of the polytope.
  We set $\widetilde{D}:=\pi'^{-1}(D)$, which represents $c_1 (\tilde{X})$, and $\tilde{X}=\widetilde{Y}\setminus \widetilde{D}, X=Y\setminus D$. 
  
  Since $\tilde{X}$ are $X$ are biholomorphic, the holomorphic volume forms $\Omega$ and $\tilde{\Omega}$ on $X$ and $\tilde{X}$ are related by $\tilde{\Omega}=\pi'^*\Omega$. Any K\"ahler form $\tilde{\omega}$ on $\tilde{X}$ naturally induces a K\"ahler form $\omega'=(\pi'^{-1})^*\tilde{\omega}$ on $X$.\footnote{Later, we will apply this to $\tilde{\omega}=\tilde{\omega}_{\epsilon}$ constructed in the previous subsection.} In particular, $L\subseteq X$ is a Lagrangian with respect to $\omega'$ if and only if $\pi'^{-1}(L)\subseteq \tilde{X}$ is a Lagrangian with respect to $\tilde{\omega}$.  However, $\omega'$ does not necessarily extend to the compactification $Y$. Nevertheless, we can use the following lemma to link Floer theory on $X$  and that on $Y$. 
  
\begin{lemma} Assume that $D$ supports an ample divisor.\footnote{Under the same assumption, Gross-Hacking-Keel \cite[Remark 0.4]{GHK} proved that the mirror family is algebraic.}
	Given any $\omega'$ on $X$ as above, there exists a sequence of K\"ahler form $\omega_i$ on $Y$ such that $\omega_i=\omega'$ on $U_i$ for some sequence $\{U_i\}_{i \geq 1}$ of relatively compact open subsets with $U_i \nearrow X$.
\end{lemma}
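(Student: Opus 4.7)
The plan is to build each $\omega_i$ by gluing $\omega'|_{U_i}$ with a Kähler form $\omega_Y$ on $Y$ supplied by the ample divisor on $D$, interpolating through a smooth $dd^c$-potential on a thickening of $\overline{U_i}$ inside $X$.

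First I would use the hypothesis that $D$ supports an ample divisor $A=\sum_j a_j D_j$ ($a_j>0$) to fix a Kähler form $\omega_Y$ on $Y$ representing $c_1(\mathcal{O}_Y(A))$, realized as the curvature $-\frac{i}{\pi}\partial\bar\partial\log h$ of a Hermitian metric $h$ on $\mathcal{O}_Y(A)$. If $s_A$ is the defining section of $A$, then $\phi:=-\log|s_A|_h^2$ is a smooth plurisubharmonic exhaustion of $X$ with $dd^c\phi=\omega_Y|_X$ and $\phi\to+\infty$ along $D$; in particular $X$ is Stein. This ample-supported structure is the key geometric input, and it provides both a candidate Kähler form on the compactification and a potential-theoretic grip on the behavior near $D$.

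Next, given the exhaustion $U_i\nearrow X$ by relatively compact opens, I would choose a Stein open $V_i$ with $\overline{U_i}\subset V_i\Subset X$ and produce a smooth function $\psi_i$ on $V_i$ with $dd^c\psi_i=\omega'-\omega_Y$ (via the $dd^c$-lemma on Stein manifolds applied to the closed real $(1,1)$-form $\omega'-\omega_Y$). Pick a smooth cutoff $\chi_i:Y\to[0,1]$ supported in $V_i$ with $\chi_i\equiv 1$ on a neighborhood of $\overline{U_i}$, and set
\[
\omega_i := \omega_Y + dd^c(\chi_i\psi_i),
\]
where $\chi_i\psi_i$ is extended by $0$ outside $\mathrm{supp}(\chi_i)$. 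By construction $\omega_i=\omega'$ on $U_i$ and $\omega_i=\omega_Y$ off $\mathrm{supp}(\chi_i)$, so $\omega_i$ is a smooth closed real $(1,1)$-form on $Y$ agreeing with $\omega'$ on $U_i$. On the transition annulus one expands
\[
\omega_i = (1-\chi_i)\omega_Y + \chi_i\omega' + E_i,
\]
where $E_i$ gathers cross terms involving $d\chi_i$, $dd^c\chi_i$, $\psi_i$, $d\psi_i$. The convex combination $(1-\chi_i)\omega_Y+\chi_i\omega'$ is strictly positive definite with a uniform lower bound on the compact $\overline{V_i}$, while the error $E_i$ can be made arbitrarily small in operator norm by choosing $\chi_i$ with a sufficiently gradual transition (possibly after enlarging the transition region inside $V_i\setminus \overline{U_i}$). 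Hence $\omega_i$ is Kähler on $Y$ for a suitable choice of $\chi_i$, completing the construction.

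The main obstacle is producing the global smooth potential $\psi_i$ on $V_i$, which requires the de Rham class $[\omega'-\omega_Y]|_{V_i}\in H^2(V_i;\R)$ to vanish---a condition that is not automatic when $V_i$ carries nontrivial topology, which it does for large $i$ since $U_i$ exhausts $X$. I expect to handle this by either choosing $\omega_Y$ within its Kähler cone on $Y$ so that its restricted class matches $[\omega']|_{V_i}$ (exploiting that the restriction $H^2(Y;\R)\to H^2(V_i;\R)$ is large enough to accommodate this), or, more robustly, by patching locally defined potentials $\psi_{i,\alpha}$ on a finite Stein cover of the compact $\overline{V_i}$ via a Čech-cocycle trivialization on the Stein overlaps. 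Once $\psi_i$ is secured, the positivity estimate above is entirely elementary and local.
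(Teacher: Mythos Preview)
Your cutoff construction has a real gap at the positivity step. The error $E_i$ involves $\psi_i$ and $d\psi_i$, which are governed by $\omega'-\omega_Y$ on the transition annulus and are not small; slowing the transition of $\chi_i$ only helps in inverse proportion to the width of $V_i\setminus\overline{U_i}$, and you cannot enlarge that width without also enlarging the domain on which $\psi_i$ (and hence its sup-norm and that of its gradient) must be controlled. There is no a priori reason these competing effects balance in your favour, so ``make the transition gradual enough'' is not a proof. Your proposed handling of the cohomological obstacle is also inadequate: the \v{C}ech patching in (b) cannot produce a global smooth potential when $[\omega'-\omega_Y]|_{V_i}\neq 0$, since the $1$-cocycle of pluriharmonic differences then represents a nonzero class in $H^1(V_i,\mathcal{PH})\cong H^2(V_i;\R)$, and (a) imposes a matching condition on K\"ahler classes that you have not verified.

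The paper sidesteps the positivity problem entirely by replacing the cutoff with a regularized maximum. Writing $\omega'|_X=i\partial\bar\partial\psi'$ and $\omega''|_X=i\partial\bar\partial\psi''$ globally on the Stein $X$ (here $\omega''$ is your curvature form $\omega_Y$ and $\psi''$ is your exhaustion $\phi$), one sets $\omega_i:=i\partial\bar\partial\,\widetilde{\max}(\psi'+C_i,\psi'')$ for constants $C_i\nearrow\infty$. The regularized max of two plurisubharmonic functions is again plurisubharmonic, so $\omega_i\geq 0$ is automatic with no estimate required. Since $\psi''\to+\infty$ along $D$ (log pole of the section), in a neighbourhood of $D$ one has $\widetilde{\max}(\psi'+C_i,\psi'')=\psi''$, so $\omega_i=\omega''$ there and extends smoothly to $Y$; on the exhausting sets $U_i=\{\psi'+C_i-\psi''>\epsilon\}$ one has $\omega_i=\omega'$. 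This is the missing idea: let the two psh potentials themselves decide where to switch, rather than imposing an artificial cutoff.
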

\begin{proof}
   Assume that $D'=\sum_j a_jD_j$ is an ample divisor with support on $D$ for some $a_j\geq 0$. Then by replacing $D'$ by $lD'+\sum_{j:a_j=0}D_j$, for $l\gg 0$, we may assume that $a_j>0$ for all $j$. 
   Then for $k\gg 0$, $\sum_j ka_jD_j$ is a very ample divisor and thus $X$ is Stein. 
   
    Choose a Hermitian metric $e^{-\psi''}$  of the line bundle $\mathcal{O}_{Y}(D')$ such that the associate curvature is a K\"ahler form $\omega''$. In particular, $\omega''$ is defined globally on $Y$. Since $X$ is Stein and hence the $\partial \bar{\partial}$-lemma holds, we have $
         \omega'|_{X}=i\partial \bar{\partial}\psi', \omega''|_{X}=i\partial \bar{\partial}\psi''$ for some smooth functions $\psi',\psi''$ on $X$. 
    Take $\omega_i=i\partial \bar{\partial}\psi_i$, where $\psi_i=\widetilde{max}(\psi'+C_i,\psi'')$ is the regularized max function (see, for instance, \cite[Theorem I.5.18]{D3}) and $C_i\nearrow \infty$. One may take a regularized max function $\psi_i$  to be $\psi_i=\mbox{max}\{\psi'+C_i,\psi''\}$ if $|(\psi'+C_i)-\psi''|>\epsilon$ for some $\epsilon>0$. Thus, $\omega_i=\omega'$ on $U_i:=\{\psi_i'+C_i-\psi''>\epsilon\}$. Since $\psi''\sim \log{|w|}$ near $D$, where $D=\{w=0\}$ locally, we have $\psi''\nearrow \infty$ near $D$. Thus $U_i$ is relatively compact, and $i\partial\bar{\partial}\psi=\omega''$ is globally defined. Therefore $\omega_i$ is a globally defined K\"ahler form which coincides with $\omega'$ on $U_i$.

\end{proof}

\section{Scattering diagrams and the Landau-Ginzburg mirrors of log Calabi-Yau surfaces}\label{sec:mainholotrop}

Continuing the setting in the previous section, let $(Y,D)$ be a Looijenga pair, and consider its toric blowup $(\widetilde{Y},\widetilde{D})$. 
Recall from Corollary \ref{cor:defdlfepsilon} that the special Lagrangian fibration on $\widetilde{Y} \setminus \widetilde{D} \cong Y \setminus D$ (Lemma \ref{Lagrangian fibration}) induces a consistent scattering diagram $\mathfrak{D}^{LF}_\epsilon$ on $B_\epsilon$, away from $\epsilon$-neighborhoods of the singular fibres of the fibration $\tilde{X}=\widetilde{Y}\setminus \widetilde{D} \to \R^2$.
Yet, we do not have a good control of rays emanating from these neighborhood, except that there are canonical ones $l_i$ induced by initial Maslov $0$ discs from singular fibres. In this section we show that $\mathfrak{D}^{LF}_\epsilon$ behaves nicely as $\epsilon \to 0$, and its limit agrees with $\mathfrak{D}^{GPS}$ in Section \ref{subsec:GPS}.

We also give an explicit calculations of the Landau-Ginzburg potential which counts Maslov 2 discs in $\widetilde{Y}$ and $Y$ bounding torus fibres under some non-negativity assumption (the same assumption of Section \ref{Sec: toric blowup}). This will be done by establishing the correspondence between tropical and holomorphic discs. Interestingly, the same technique can apply to some non-Fano examples to recover previous computation by Auroux \cite{A} and \cite{A5} using tropical geometry.

\subsection{$\mathfrak{D}^{LF}$ and $\mathfrak{D}^{GPS}$}\label{sec: LF/GPS}

We now prove that $\mathfrak{D}^{LF}_{\epsilon}|_{B_{\epsilon'}}$ coincides with $\mathfrak{D}^{LF}_{\epsilon'}$ for $\epsilon'<\epsilon$, which enables us to obtain a well-defined limiting scattering diagram $\mathfrak{D}^{LF}$ on $\mathbb{R}^2=\lim_{\epsilon\rightarrow 0}B_{\epsilon}$. Our strategy is to compare $\mathfrak{D}^{LF}_\epsilon$ with the known one $\mathfrak{D}^{GPS}$ given in Section \ref{subsec:GPS} modulo a certain energy level depending on $\epsilon$. Then we proceed inductively as the energies (symplectic areas) of contributing discs increase.
 
For later argument, we fix, once and for all,  $u_i\in \mathfrak{d}_i$ for each initial ray in $\mathfrak{D}^{GPS}$ close enough to infinity, and denote by $\mathfrak{d}_i'$ the part of the ray starting from $u_i$ with $\omega(\gamma_{\mathfrak{d}})$ decreasing from $u_i$ as it moves away to  infinity. 
We also choose an open neighborhood $K_i$ of for each $\mathfrak{d}_i'$ such that $K_i\cap K_j=\emptyset$ if $i\neq j$, and set $K:= \cup K_i$. See Figure \ref{fig:vivjk}.

\begin{figure}[h]
	\begin{center}
		\includegraphics[scale=0.45]{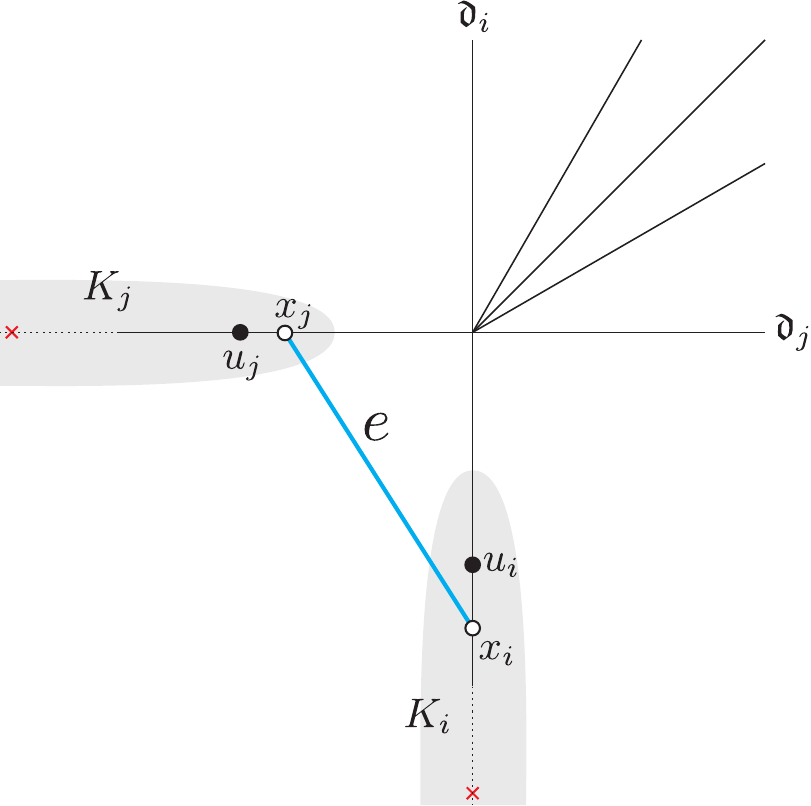}
		\caption{The areas of affine line segments between $K_i$ and $K_j$ are bounded below.}
		\label{fig:vivjk}
	\end{center}
\end{figure}

\begin{lemma}\label{minimal area}
	In the above setting, choose two arbitrary points $x_i\in K_i$ and $x_j\in K_j$ with $i\neq j$ such that the affine line segment connecting $x_i$ and $x_j$ has a rational slope (Figure \ref{fig:vivjk}). Then there exists $\hbar>0$, independent of the choice of $x_i$ and $x_j$, such that the symplectic area of the affine line segment is bounded below by $\hbar$. 
\end{lemma}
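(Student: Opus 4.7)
The plan is to express the symplectic area of such a segment as a product of the Euclidean length of the segment in the affine coordinates and the norm of its primitive integer direction vector, and then to bound each factor uniformly below.

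Let $m$ be the primitive integer vector parallel to $x_j - x_i$, so that $x_j - x_i = \lambda m$ for some $\lambda > 0$. The symplectic area of the segment is, by definition, the $\tilde{\omega}_\epsilon$-area of the cylinder obtained by sweeping out along the segment the cycle in $H_1(L_u;\Z)$ canonically determined by $m$ via the integer affine structure. Because the $K_i$'s are positioned near infinity (so that the $u_i$'s, and hence the segments of interest, avoid the neighborhoods of the singular fibres), the Lagrangian fibration of Lemma \ref{Lagrangian fibration} coincides there with the pull-back of the toric moment map, and a direct computation in toric Darboux coordinates $(u_1, u_2, \theta_1, \theta_2)$ yields
$$
\tilde{\omega}_\epsilon(\mathrm{cylinder}) \;=\; 2\pi\,\langle m,\, x_j - x_i\rangle \;=\; 2\pi\,|m|\cdot|x_j - x_i|,
$$
where $|\cdot|$ denotes the Euclidean norm in the affine coordinates, and the second equality uses the parallelism between $m$ and $x_j - x_i$. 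Since $m \in \Z^2\setminus\{0\}$ is primitive, $|m|\geq 1$, so the area is bounded below by $2\pi\,|x_j - x_i|$.

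It therefore suffices to show that $d(K_i, K_j) > 0$ for each pair $i\neq j$, where $d$ denotes the Euclidean distance. If the underlying lines of $\mathfrak{d}_i'$ and $\mathfrak{d}_j'$ share the same primitive direction (which happens precisely when $q_i$ and $q_j$ lie on the same toric divisor), they are distinct parallel lines (because $q_i \neq q_j$ forces the walls to be distinct), hence separated by a positive constant distance, and narrow tubular neighborhoods inherit strictly positive mutual distance. If the primitive directions differ, I would pick the $u_i$'s far enough toward infinity so that $\mathfrak{d}_i'$ and $\mathfrak{d}_j'$ both flow away from the (at most one) intersection point of their underlying lines; the linear divergence of the rays at infinity then permits choosing narrow enough $K_i, K_j$ with positive separation. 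Since there are only finitely many initial rays, one concludes by taking
$$
\hbar \;:=\; 2\pi\,\min_{i\neq j}\, d(K_i, K_j) \;>\; 0.
$$
The main subtlety lies in the non-parallel case, where the neighborhoods could a priori come arbitrarily close at infinity; this is resolved by choosing the $u_i$'s sufficiently far out and the $K_i$'s sufficiently narrow, which reduces the bound to elementary Euclidean geometry.
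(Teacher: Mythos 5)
There is a genuine gap, and it sits in your very first step. The base on which the scattering diagram (and the sets $K_i$) live carries the \emph{complex} affine structure, i.e.\ the log coordinates $(\log|X|,\log|Y|)$, whereas the symplectic area of the cylinder over a segment of primitive direction $m$ is computed against the \emph{action} (moment map) coordinates: if $\check{u}(x)$ denotes the moment map value of the fibre $\tilde{L}_x$, the area is $2\pi\,\langle m,\ \check{u}(x_j)-\check{u}(x_i)\rangle$, not $2\pi\,\langle m,\ x_j-x_i\rangle$. The two affine structures differ by the Legendre transform of the K\"ahler potential, which preserves neither directions nor lengths; in particular $m$ (the direction in log coordinates, which is what makes the cylinder holomorphic) need not be parallel to $\check{u}(x_j)-\check{u}(x_i)$, so the pairing cannot be replaced by a product of norms. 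In fact your identity is demonstrably false: since the action coordinates take values in the bounded moment polytope $P$, every such cylinder with $|m|$ fixed has area at most $2\pi |m|\,\mathrm{diam}(P)$, no matter how long the segment is in the base coordinates, while your formula would make the area grow linearly in the Euclidean length. (Concretely, in the local model with potential $\frac{1}{2}(e^{2u_1}+e^{2u_2})$, a very long segment placed deep near a boundary divisor has tiny area.) Consequently the reduction to $d(K_i,K_j)>0$ in the Euclidean metric of the base bounds the wrong quantity, and the second half of your argument, even if carried out, does not yield the lemma. A secondary point: you re-position the $u_i$'s and shrink the $K_i$'s during the proof, but in the paper these are fixed once and for all before the lemma and are used again in Lemmas \ref{no initial} and \ref{lower bound vertices}, so $\hbar$ must work for the given $K_i$'s.

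For comparison, the paper's proof is short but of a different nature: a segment of rational slope joining $x_i$ and $x_j$ is by definition the log-image of an honest holomorphic cylinder with boundaries on the fibres $\tilde{L}_{x_i}$ and $\tilde{L}_{x_j}$, and whose symplectic area equals that of the segment; a uniform positive lower bound $\hbar$ then follows from Gromov compactness, since a sequence of nonconstant holomorphic cylinders of this kind with areas tending to $0$ cannot exist. If you prefer an estimate-based route, the correct quantity to bound from below is $\langle m,\ \check{u}(x_j)-\check{u}(x_i)\rangle$, using convexity of the K\"ahler potential and the fact that deep along $\mathfrak{d}_i'$ the moment images accumulate at the distinct boundary points $p(q_i)$ of $P$ — but that is a different argument from the one you gave.
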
	

Here, the symplectic area of the affine line segment means that of the holomorphic cylinder lying over the segment. Notice that the usage of the terminology is consistent with the way we define the symplectic area of a tropical disc, see Section \ref{subsec:tropgeomintro}.

\begin{proof}
	Notice that if the affine line segment connecting $x_i,x_j$ has a rational slope, there exists a holomorphic cylinder with boundaries on $\tilde{L}_{x_i}$ and $\tilde{L}_{x_j}$, whose symplectic area equals that of the line segment by definition. Such holomorphic cylinders can be constructed as follows: identify the maximal torus of the toric surface as $(\mathbb{C}^*)^2\cong \mathbb{R}^2\times T^2$ via the polar decomposition and $J$ is the standard complex structure on $(\mathbb{C}^*)^2$. Fix a constant section $s$ of the torus fibration. For each $x$ on the line segment connecting $x_i,x_j$ and $v\in T_x\mathbb{R}^2$ be a tangent vector of the line segment at $x$. There exists a unique $S^1\subseteq \mbox{Log}^{-1}(x)\cong T^2$ such that $\mbox{Log}_*(JTS^1)\in \mathbb{R}v$ passing through $s(x)$ when $v$ is rational. Then union of the circles above the line segments form a holomorphic cylinder with boundaries on $L_{x_i},L_{x_j}$. Since the distance between $K_i,K_j$ is positive, the area of such holomorphic cylinder is bounded below by a positive number from \cite[Proposition 4.4.1]{S6}.
\end{proof}	

The key step in the induction on energy is the following statement, which roughly tells us that any disc traveling through the complement of $B_\epsilon$ should have a large enough energy unless it is one of $l_i$'s.

\begin{lemma} \label{no initial} Given $\epsilon>0$, let $\mathfrak{D}^{LF}_\epsilon$ denote the scattering diagram  constructed from the admissible Lagrangian fibration over $B_{\epsilon}$. There exists $\lambda=\lambda(\epsilon)>0$ and a convex (bounded) region  $B_{\lambda,\epsilon}\subseteq B_\epsilon$ such that a ray entering  $ B_{\lambda,\epsilon}$ other than $l_i$'s (induced by initial Maslov 0 discs) has symplectic area at least $\lambda$. Moreover, we have $\lim_{\epsilon\rightarrow 0}\lambda=\infty$ and $\lim_{\epsilon\rightarrow 0}B_{\lambda,\epsilon}=M_{\mathbb{R}}$.  

\end{lemma}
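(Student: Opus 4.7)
The plan is to reduce the holomorphic question to a purely tropical one via the weak correspondence of Lemma \ref{weak correspondence}. Any wall $\mathfrak{d}$ in $\mathfrak{D}^{LF}_\epsilon$ is supported by a Maslov~$0$ disc $\tilde f$ bounding some fibre $\tilde L_u$. By Lemma \ref{non-toric MI co}, $\tilde f$ descends to a generalized Maslov~$0$ holomorphic disc $\bar f\colon(D^2,\partial D^2)\to(\bar Y,L_u)$ meeting $\bar D$ only at the blowup points $q_i$, and Lemma \ref{weak correspondence} then produces a tropical disc $(h,T,w)$ with stop at $u$ whose unbounded edges lie along the normal directions at the $q_i$, with weights equal to the tangency multiplicities of $\bar f$. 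The initial walls $l_i$ correspond precisely to those tropical discs that are a single unbounded ray through one $q_i$.

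I would define $B_{\lambda,\epsilon}$ as the locus in $B_\epsilon$ of affine distance at least $R(\epsilon)$ from $\bigcup_i K_i$ (with $K_i$ the neighborhoods from Lemma \ref{minimal area}), intersected with a ball of radius $R(\epsilon)$ in the complex affine coordinates. The crucial geometric input is that, as $\epsilon\to 0$, the complex affine coordinates of the singular fibres corresponding to the $q_i$ diverge, so the ``live'' portions of the initial rays (and hence of $K_i$) retreat toward infinity; this makes it possible to arrange both $R(\epsilon)\to\infty$ and $B_{\lambda,\epsilon}\nearrow M_{\mathbb{R}}$, as demanded by the conclusion.

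Now suppose $\mathfrak{d}$ is a non-initial ray entering $B_{\lambda,\epsilon}$ at a point $u$, and let $(h,T,w)$ be the associated tropical disc. Because $\mathfrak{d}\neq l_i$ for any $i$, the disc cannot be a single ray through a single $q_i$; it must carry unbounded edges at two distinct $q_i,\,q_j$, and therefore contains at least one internal trivalent vertex. Each unbounded edge lies on an affine line passing through one of the $K_i$'s, so the path in $T$ joining the unbounded edge at $q_i$ to the one at $q_j$ is an affine broken segment whose endpoints lie in $K_i$ and $K_j$ respectively; Lemma \ref{minimal area} then gives the baseline bound $\omega(\tilde f)\geq\hbar$. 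To upgrade $\hbar$ into a quantity tending to infinity with $\epsilon\to 0$, I would exploit that the symplectic area of a tropical disc decomposes additively into weighted edge lengths: the balancing condition at the internal vertex(es) together with the fixed slopes of the unbounded edges determines the vertex location in terms of $u$ and the positions of $q_i,q_j$, so that at least one bounded segment of $(h,T,w)$ has affine length controlled from below by the distance from $u\in B_{\lambda,\epsilon}$ to the $K_i$'s, which is at least $R(\epsilon)$ by construction.

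The main obstacle is the quantitative lower bound in the last step, especially when $(h,T,w)$ has many trivalent vertices, several distinct unbounded edges, or large weights; in such cases one must argue that the combined weighted length of the bounded edges cannot collapse, and that the tree structure forces at least one long edge as $u$ is chosen deeper in $B_{\lambda,\epsilon}$. I expect this to follow from a straightforward induction on the number of internal vertices of $T$, using the balancing condition and convexity of $B_{\lambda,\epsilon}$, together with Lemma \ref{minimal area} at the base case. Once the estimate is established, setting $\lambda(\epsilon):=R(\epsilon)\cdot\hbar$ (up to a fixed multiplicative constant depending on the geometry of $\bar Y$) yields the desired divergence.
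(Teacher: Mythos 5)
Your first step (reducing to tropical discs via Lemma \ref{non-toric MI co} and Lemma \ref{weak correspondence}) matches the paper, but the core of your argument breaks down in two related places. First, your region $B_{\lambda,\epsilon}$ is built from the affine distance to $\bigcup_i K_i$, and you justify $B_{\lambda,\epsilon}\nearrow M_{\mathbb{R}}$ by saying the $K_i$ ``retreat toward infinity'' as $\epsilon\to 0$. They do not: the $K_i$ are fixed, $\epsilon$-independent neighborhoods of the far portions $\mathfrak{d}_i'$ of the GPS initial rays. What recedes as $\epsilon\to 0$ is $\partial B_\epsilon$ (the singular fibres escape in the \emph{complex affine} coordinates), while symplectic areas measured at a fixed point of the base stay bounded. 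Consequently the set $\{\mathrm{dist}(\cdot,\bigcup_i K_i)\ge R(\epsilon)\}\cap B_{R(\epsilon)}$ neither exhausts $M_{\mathbb{R}}$ (any point at bounded distance from some $K_i$ is eventually excluded) nor is convex, so it cannot serve as $B_{\lambda,\epsilon}$.

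Second, and more seriously, the quantitative step you defer to ``a straightforward induction'' --- that some bounded edge of the tropical disc has affine length at least $\mathrm{dist}(u,\bigcup_i K_i)$, so that the area is at least $R(\epsilon)\cdot\hbar$ --- is false. The simplest scattered wall already violates it: two unbounded edges along two initial rays meeting at a trivalent vertex $p$, with a short root edge from $p$ to a nearby stop $u$ deep inside the region. All bounded edges can be arbitrarily short, and the area is essentially the fixed, finite, $\epsilon$-independent sum of the two initial disc areas at $p$; it grows neither as $\epsilon\to 0$ nor with the distance to the $K_i$. Such walls do meet the central region with small area; the lemma is not contradicted because they are \emph{generated inside} $B_{\lambda,\epsilon}$ rather than entering it, and this is exactly the mechanism your proposal misses. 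The paper instead proves a finiteness statement: for any fixed $\lambda$ there are only finitely many tropical discs with unbounded edges on the initial rays and area less than $\lambda$ (Lemma \ref{lower bound vertices}, which bounds the number of non-root vertices in terms of area, followed by an induction on the number of vertices using Lemma \ref{minimal area}), and then \emph{defines} $B_{\lambda,\epsilon}$ as a convex region containing all vertices of these finitely many discs. Any wall of area less than $\lambda$ meeting $B_{\lambda,\epsilon}$ is then sourced inside it, so a ray genuinely entering $B_{\lambda,\epsilon}$ other than the $l_i$ must have area at least $\lambda$; the divergence $\lambda(\epsilon)\to\infty$ comes solely from $B_\epsilon\nearrow M_{\mathbb{R}}$, not from growth of individual disc areas. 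Your proposal has no substitute for this finiteness claim, which is the real content of the lemma, and the metric estimate intended to replace it does not hold. (A minor further point: a non-initial ray need not involve two distinct blowup points $q_i\neq q_j$; e.g.\ several unbounded edges may lie on the same initial ray, so your dichotomy also needs repair.)
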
	

For the proof, we will use the following sub-lemma, which can be viewed as a tropical analogue of Gromov compactness theorem. 

	 \begin{lemma}\label{lower bound vertices}
	 	 There exists a sequence $\{\lambda_n\}$ of real numbers with $\lim_{n \to \infty} \lambda_n = \infty$ such that for each $n\in \mathbb{Z}_{> 0}$, $\lambda_n$ gives a lower bound for the symplectic areas of tropical discs with $n$ non-root vertices whose unbounded edges are contained in the rays of $\mathfrak{D}^{GPS}_{in}$ and whose edge adjacent to the root is contracted.
		 \end{lemma}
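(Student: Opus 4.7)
I would argue the lemma by contradiction, in the spirit of a tropical Gromov compactness argument. Suppose that $\lambda_n$ does not diverge; then there exist $\Lambda > 0$ and a sequence of tropical discs $(h_k, T_k, w_k)$ satisfying the hypotheses with $n_k \to \infty$ non-root vertices and $\omega(h_k) \leq \Lambda$ for all $k$. The goal is to show that the combinatorial complexity of $T_k$ is in fact bounded in terms of $\Lambda$ alone, contradicting $n_k \to \infty$.

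First I would bound the weights of unbounded edges. Each unbounded edge $e$ of $T_k$ lies on some initial ray $\mathfrak{d}_i$ of $\mathfrak{D}^{GPS}_{in}$ and contributes $w(e)\cdot \omega(\gamma_i)|_{v(e)}$ to the area, where $\gamma_i$ is the primitive Maslov zero disc class along $\mathfrak{d}_i$ and $v(e)$ is the finite endpoint of $e$. By the construction of $u_i$ and $K_i$, the function $\omega(\gamma_i)|_x$ is bounded below by some $\delta > 0$ on $\mathfrak{d}_i \setminus K_i$; hence the total weight of unbounded edges whose finite endpoint lies outside $K := \bigcup_i K_i$ is at most $\Lambda/\delta$, and in particular those edges lie in finitely many combinatorial types. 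Next I would bound the bounded edges. Any bounded edge has rational slope (forced by the balancing condition), so one with both endpoints outside $K$ lies in a fixed compact region of $M_{\mathbb{R}}$ and contributes at least a uniform positive constant to the area, while one connecting distinct $K_i$ and $K_j$ contributes at least $\hbar$ by Lemma \ref{minimal area}. Consequently the total number of bounded edges not entirely contained in a single $K_i$ is $O(\Lambda)$.

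It remains to bound the portion of $T_k$ lying inside a single $K_i$. Let $S_i \subset T_k$ denote the maximal sub-tree whose vertices all lie in $K_i$. Its external edges are either unbounded edges on $\mathfrak{d}_i$ with finite endpoint in $K_i$ (whose total weight, and hence count, is bounded by the first step) or bounded edges leaving $K_i$ (whose count is bounded by the second step). Since $S_i$ is trivalent and balanced, its number of internal vertices is at most its number of external edges minus two, which is therefore bounded in terms of $\Lambda$. Summing over $i$ together with the bound from the second step yields $n_k = O(\Lambda)$, contradicting $n_k \to \infty$.

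The main obstacle in carrying out this plan is the third step: making precise the external-valence bound for $S_i$. One must rule out, for instance, bounded edges internal to $K_i$ that double back and proliferate trivalent merges beyond what the external weight permits. This requires verifying that a trivalent tropical sub-tree in the bounded region $K_i$ with prescribed bounded external data cannot have an unbounded number of internal vertices; this reduces to a purely combinatorial statement about balanced trivalent trees with bounded leaf weights, which should be handled by induction on the total external weight together with the observation that the tree is acyclic.
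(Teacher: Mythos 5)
Your reformulation (fix $\Lambda$ and bound the number of non-root vertices of any disc of area at most $\Lambda$) is an equivalent way to state the lemma, and since the trees in question have only trivalent non-root vertices, the number of non-root vertices equals the number of unbounded edges minus one; so everything hinges on bounding the number (and total weight) of unbounded edges in terms of $\Lambda$. This is exactly where your argument has a genuine gap. Your Step 1 is fine, but it only controls unbounded edges whose finite endpoint lies \emph{outside} $K=\cup_i K_i$: there $\omega(\gamma_i)$ is indeed bounded below. The dangerous edges are the ones whose finite endpoints lie \emph{inside} $K_i$, i.e.\ far out along the tail $\mathfrak{d}_i'$, where by the very choice of $\mathfrak{d}_i'$ the area $\omega(\gamma_{\mathfrak{d}})$ decreases toward infinity and is not bounded below by any positive constant; such edges can be arbitrarily cheap, so neither their count nor their total weight is bounded by $\Lambda/\delta$. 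In Step 3 you nevertheless assert that the unbounded external edges of $S_i$ with finite endpoint in $K_i$ have ``total weight, and hence count, bounded by the first step''---this is precisely the bound Step 1 does not provide, and it is the heart of the lemma: ruling out a proliferation of cheap unbounded edges deep in the tails is a geometric statement (connecting such edges to the rest of the disc forces, via balancing, either a crossing between distinct tails $K_i,K_j$, which costs $\hbar$ by Lemma \ref{minimal area}, or a return toward the region where areas are bounded below, carrying the accumulated weight), not the ``purely combinatorial statement about balanced trivalent trees'' to which you reduce it. The paper's proof is organized around exactly this point: it is an induction on the number of non-root vertices obtained by splitting the disc at the root edge, whose only geometric inputs are the $\hbar$-bound of Lemma \ref{minimal area} for a bounded edge joining two different $K_i$, $K_j$ and the uniform lower bound for an unbounded edge whose endpoint is not in $K$; these produce the recursion $\lambda_{k+1}\gtrsim\min\{\lambda_{\lfloor (k-2)/2\rfloor}+\lambda_1,\ \lambda_{k-2}+\hbar\}$, from which unboundedness of $\{\lambda_n\}$ follows.

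Two further inaccuracies would need repair even if the main gap were filled. In Step 2, the complement of $K$ is not compact (each $K_i$ is a neighborhood of an unbounded ray tail, so $M_{\R}\setminus K$ is unbounded), and a bounded edge with both endpoints outside $K$ can be arbitrarily short and hence of arbitrarily small area; so the claim that every bounded edge not contained in a single $K_i$ contributes a uniform positive constant is false (this step is also unnecessary once one counts unbounded edges, as above, but you reuse it in Step 3). Likewise, in your final paragraph $K_i$ is treated as a bounded region; it is not, and the ``prescribed bounded external data'' for $S_i$ is exactly what remains unproved.
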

		 
     \begin{proof}
%
%
%
%
For each tropical disc with only one vertex, i.e. a sub-ray of an initial ray in $\mathfrak{D}^{GPS}_{in}$, if the root is away from the corresponding $K_i$, then its symplectic area is bounded below by a positive number $\lambda_0>\min\{\hbar, \lambda_0\}>0$.     	
Since there are only finitely many initial rays and hence finitely many intersection points among the initial rays, every tropical disc with exactly one non-root vertex has a lower bound for symplectic area, say $\lambda_1>0$. In particular, any tropical disc with exactly one non-root vertex has its symplectic area larger than $2\lambda_0>2\min\{\hbar, \lambda_0\}$, because it contains two unbounded edges with adjacent vertices outside any of $K_i$. Suppose that now we are given a tropical disc $(h,\mathcal{T},w)$ with $k+1$ non-root vertices with symplectic area 
$\lambda$. Deleting the edge adjacent to the root induces sub-tropical discs $(h_i,\mathcal{T}_i,w_i)$ with $k_i$ non-root vertices such that $\sum_i k_i=k$. Denote the common root of $(h_i,\mathcal{T}_i,w_i)$ by $u_1$. There are two cases, $u_1\notin K_{i_0}$ for any $i_0$ or $u_1\in K_{i_0}$ for some $i_0$. In the first case, we have $\lambda\geq \sum_i \lambda_{k_i}$. In the second case, $\lambda\geq \sum'_i \lambda_{k_i}$, where $\sum'$ is the sum omitting the unique sub-tropical disc corresponding to the unbounded edge in $K_{i_0}$. We still have $\sum'_i k_i=k$ in this case, so $\lambda>\min\{\hbar, \lambda_1\}$. Now delete $u_1$ and the edge adjacent to it. We are left with sub-tropical discs $(h'_i,\mathcal{T}'_i,w'_i)$ with $k'_i$ non-root vertices such that $\sum_i k'_i=k-1$. Denote by $u_2$ the vertex adjacent to the second deleted edge. If $u_2$ does not fall in any of $K_i$, then $\lambda>\sum_i \lambda_{k'_i}$. Otherwise, the second deleted edge connects two different $K_i$'s, and hence has symplectic area at least $\hbar$ from Lemma \ref{minimal area} (see Figure \ref{fig:indtropre}). In particular, we have $\lambda>\lambda_{n-2}+\hbar$. To sum up, we have 
  \begin{align}
  	\lambda_{k+1}> \min \{ \lambda_{k-1}+\hbar, \sum_{\sum_i k_i=k-1} \lambda_{k_i}  \},
  \end{align} where the summation has at least two terms. Then by induction, it is easy to show that $\lambda_k\geq (\frac{k}{3}+1)\min\{\lambda_0,\hbar\}$. 
	  
	  \begin{figure}[h]
	\begin{center}
		\includegraphics[scale=0.35]{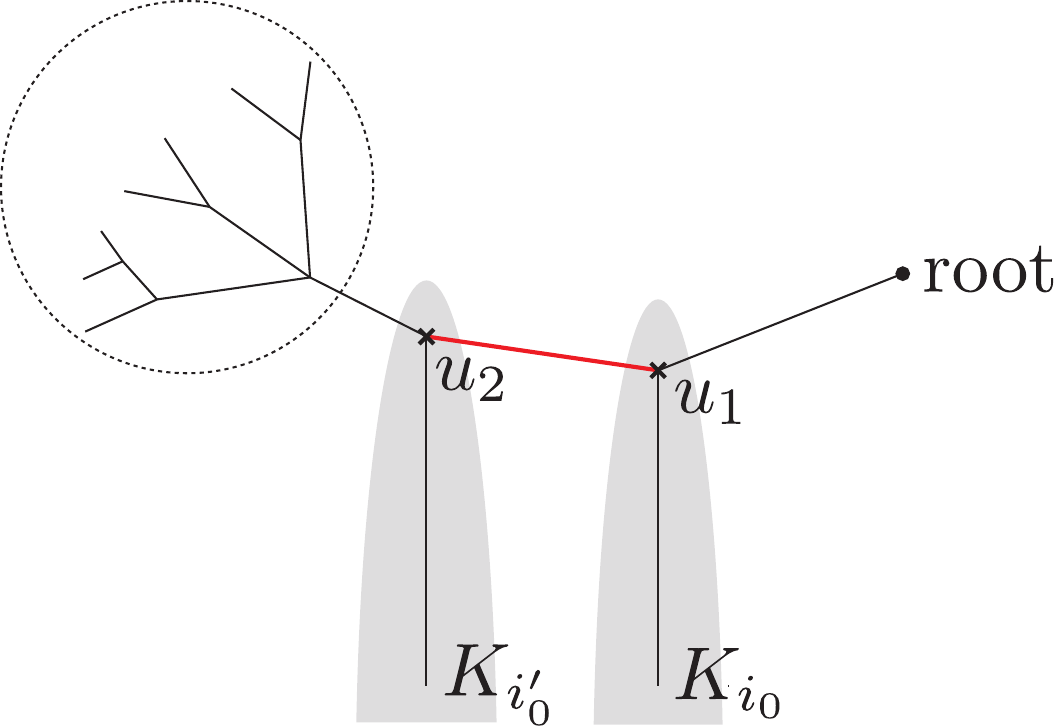}
		\caption{sub-tropical discs of $\mathcal{T}$}
		\label{fig:indtropre}
	\end{center}
\end{figure}

     \end{proof} 

\begin{proof}[Proof of Lemma \ref{no initial}]
	Assume that there exists a ray $\mathfrak{d}$ of $\mathfrak{D}^{LF}_{\epsilon}$  passing through $u\in \partial B_{\epsilon}$. By definition of the scattering diagram $\mathfrak{D}^{LF}_{\epsilon}$, there exists a holomorphic disc $\tilde{f}:(D^2,\partial D^2)\rightarrow (\tilde{X},\tilde{L}_u)$ of Maslov $0$. Lemma \ref{weak correspondence} produces a tropical disc with stop at $u$ whose edge adjacent to $u$ is parallel to $\mathfrak{d}$ and whose unbounded edges are contained inside rays in $\mathfrak{D}^{GPS}_{in}$ (i.e. initial rays of $\mathfrak{D}^{GPS}$, see Section \ref{subsec:GPS}). We claim that there are only finitely many such tropical discs (up to elongation of the edge adjacent to the root) if we further require their symplectic areas to be less than a fixed number $\lambda>0$.
	 Once the claim is shown, one can simply take $B_{\lambda,\epsilon}$ to be a convex region in $B_{\epsilon}$ containing all the vertices of such tropical discs. The last part of the lemma holds because $\lim_{\epsilon\rightarrow 0}B_{\epsilon}=M_{\mathbb{R}}$.

    Lemma \ref{lower bound vertices} tells us that for any tropical disc $(h,\mathcal{T},w)$ with symplectic area less than a fixed constant $\lambda$, the number of non-root vertices 
    is bounded above by some $N \in \mathbb{Z}_{> 0}$. 
Hence,  the claim will follow if we can prove that for each $n \in \mathbb{Z}_{> 0}$, there are only finitely many tropical discs with symplectic areas less than $\lambda$ which has precisely $n$ non-root vertices. We use induction on $n$. 
  
Let us first consider the case $n=1$. Since all the unbounded edges are contained in finitely many initial rays, there are only finitely many intersections among these initial rays. Initial rays cut out at their common intersection points give tropical discs without non-root vertices (whose roots lie at the intersection points), and their symplectic areas are bounded below, say, by $\lambda' >0$, since there are only finitely many such. Observe that each  unbounded edge of a tropical disc with a single non-root vertex is precisely such a portion of an initial ray. These unbounded edges may additionally carry nontrivial weights (the edge adjacent to the root is automatically determined by these data, up to elongation), but the symplectic area bound $\lambda$ on the whole disc forces the sum of the weights to be bounded above, since each unbounded edge contributes at least $\lambda'$ to the total area. Therefore, there are only finitely many possible combinations between weights, as is desired for $n=1$.
    
    Suppose we know that there are only finitely many tropical discs with at most $k$ non-root vertices whose symplectic areas are less than $\lambda$, and consider tropical discs with $k+1$ non-root vertices.
    Removing the edge adjacent to the root, one obtains sub-tropical discs $(h_i,\mathcal{T}_i,w_i)$ with $k_i$ non-root vertices such that $\sum_i k_i = k$. 
    If there are at least two $(h_i,\mathcal{T}_i,w_i)$ which do not have any non-root vertices, then their intersection can only happen outside the region $K$ chosen before Lemma \ref{minimal area}. Hence their symplectic areas are bounded below by some constant $\hbar'$. In particular, the weight of those edges are bounded by $\lambda/\hbar'$ and thus there are finitely many of such tropical discs.      
   
    Assume now that there is only one $(h_i,\mathcal{T}_i,w_i)$ without non-root vertices among the sub-tropical discs. Then the other sub-tropical discs have at most $k$ non-root vertices and symplectic areas less than $\lambda$. From induction hypothesis, there are only finitely many such tropical discs. Extending the edge adjacent to the root beyond the root if necessary, (union of) these tropical discs intersect the initial rays at finitely many points. We see that $(h_i,\mathcal{T}_i,1)$ should reach one of these points to form the original disc with $k+1$ non-root vertices.
    Therefore, there exists a lower bound $\hbar''$ for the symplectic area of $(h_i,\mathcal{T}_i,1)$, independent of $k$. Again the weight $w_i$ for the edge is bounded and thus there are finitely many tropical discs in this case as well, which finishes the proof. 
\end{proof}

We finally prove the main theorem of the paper, which provides a recipe to retrieve the scattering diagram of Gross-Pandharipande-Siebert from Lagrangian Floer theory. Recall that the fibration over $B'=B_{\epsilon}$ is obtained by removing neighborhoods of singular fibre from the fibration constructed in Lemma \ref{Lagrangian fibration}. Since $B_{\epsilon}$ is simply-connected by construction, the fibration has no monodromy over $B_{\epsilon}$, and hence, the relative classes  are well-defined up to parallel transport along $B_\epsilon$.

\begin{theorem} \label{correspondence: scattering diagram}
	Given $\epsilon>0$, the scattering diagram $\mathfrak{D}^{LF}_{\epsilon}$ defined in Corollary \ref{cor:defdlfepsilon} 
	coincides with $\mathfrak{D}^{GPS}$ on $B_{\lambda,\epsilon}$ for $\epsilon \ll 1$ up to contributions from discs with area$>\lambda=\lambda(\epsilon)$. In particular, one can recover $\mathfrak{D}^{GPS}$ from Lagrangian Floer theory by 
	 \begin{align*}
	    \lim_{\substack{  \epsilon \to 0}} \mathfrak{D}^{LF}_{\epsilon}(\mbox{mod }T^{\lambda})|_{B_{\lambda,\epsilon}}=\mathfrak{D}^{GPS}. 
	 \end{align*}
\end{theorem}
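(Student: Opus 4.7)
The strategy is to compare $\mathfrak{D}^{LF}_\epsilon$ and $\mathfrak{D}^{GPS}$ order-by-order in the Novikov filtration, reducing everything to the already-established match of initial data via uniqueness of consistent completions. First, Lemma \ref{contribution of initial discs} identifies the initial rays of $\mathfrak{D}^{LF}_\epsilon$, those sourced by Maslov $0$ discs emanating from the singular fibres, with the initial rays of $\mathfrak{D}^{GPS}_{in}$: after the natural identification of $B_\epsilon$ with the complement of small neighborhoods of the discriminant in $M_{\R}$ via the Legendre transform, the rays $l_i$ have direction $m_i$ and pass through (the image of) the corresponding blowup point $q_i$, while their wall functions $1 + T^{\omega(\gamma_i)} z^{\partial \gamma_i}$ match $1 + t^{-[E_{ij}]} z^{m_i}$ under the standard correspondence between symplectic areas and Novikov weights.

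Next, I would fix $\lambda>0$ and work modulo $T^\lambda$ on the region $B_{\lambda,\epsilon}$. By Lemma \ref{no initial}, the only rays of $\mathfrak{D}^{LF}_\epsilon$ entering $B_{\lambda,\epsilon}$ from its complement that contribute nontrivially modulo $T^\lambda$ are precisely the initial rays $l_i$; all other rays crossing $\partial B_{\lambda,\epsilon}$ have symplectic area at least $\lambda$ and so have wall function congruent to $1$ modulo $T^\lambda$. The truncation $\mathfrak{D}^{LF}_\epsilon|_{B_{\lambda,\epsilon}} \pmod{T^\lambda}$ is therefore a consistent scattering diagram on $B_{\lambda,\epsilon}$ (consistency being inherited from Theorem \ref{LF scattering diagram}, which applies here thanks to the disc moduli compactness established in Lemma \ref{lem:cptnessnodiv} and the absence of negative Maslov index discs) whose initial data coincides with $\mathfrak{D}^{GPS}_{in} \pmod{T^\lambda}$. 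The uniqueness clause of Theorem \ref{consistency}, applied inductively on the truncation level $T^\lambda$, then forces $\mathfrak{D}^{LF}_\epsilon \equiv \mathfrak{D}^{GPS}$ on $B_{\lambda,\epsilon}$ modulo $T^\lambda$. Letting $\epsilon \to 0$, Lemma \ref{no initial} gives $\lambda(\epsilon) \to \infty$ and $B_{\lambda,\epsilon} \nearrow M_{\R}$, yielding the asserted equality in the limit.

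The main obstacle is ensuring that no spurious rays enter $B_{\lambda,\epsilon}$ through the excised singular fibre neighborhoods at energy below $\lambda$, i.e.\ ruling out Maslov $0$ discs with boundaries on admissible fibres that travel through these neighborhoods and produce walls not accounted for by $\mathfrak{D}^{GPS}$. This is exactly the content of Lemma \ref{no initial}, whose proof in turn rests on the tropical compactness statement of Lemma \ref{lower bound vertices}, itself made available by the tropical/holomorphic correspondence of Lemma \ref{weak correspondence}. Once this technical input is in place, the remainder of the argument is a formal deployment of the Kontsevich--Soibelman-type uniqueness of consistent scattering diagrams together with a limiting procedure in $\epsilon$.
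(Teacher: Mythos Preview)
Your proposal is correct and follows essentially the same route as the paper: match the initial rays via Lemma~\ref{contribution of initial discs}, invoke Lemma~\ref{no initial} to exclude any other rays entering $B_{\lambda,\epsilon}$ below the energy cutoff, and then appeal to the uniqueness clause of Theorem~\ref{consistency} for consistent completions, followed by the limit $\epsilon\to 0$. The paper's own proof is terser but structurally identical; your added remarks on the supporting role of Lemmas~\ref{lem:cptnessnodiv}, \ref{lower bound vertices}, and \ref{weak correspondence} are accurate elaborations of how Lemma~\ref{no initial} is fed.
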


\begin{proof} Let $\gamma_i$ be the relative homology class of the disc associated with the blowup point $q_i$ in Lemma \ref{contribution of initial discs}, which can be represented as a holomorphic disc with boundary in a torus fibre over $B_{\epsilon}$. It is easy to see that $\{[E_i]\}$ (and $\{\gamma_i\}$) forms a linearly independent subset in the cohomology.
	From Lemma \ref{contribution of initial discs} and the identification $t_{ij}z^{m_{\sigma(i)}}\leftrightarrow T^{\omega(\gamma_i)}z^{\partial \gamma_i}$, each blowup point $q_i$ contributes to the same initial ray to the scattering diagram of $\mathfrak{D}^{LF}_{\epsilon}$ and $\mathfrak{D}^{GPS}$. From Lemma \ref{no initial}, the above are all the initial rays in the scattering diagrams $\mathfrak{D}^{LF}_{\epsilon} (\mbox{mod }T^{\lambda})|_{B_{\lambda,\epsilon}}$ and $\mathfrak{D}^{GPS}|_{B_{\lambda,\epsilon}}$. Thus, we have 
	 \begin{align*}
	   \mathfrak{D}^{LF}_{\epsilon} (\mbox{mod }T^{\lambda})|_{B_{\lambda,\epsilon}}=\mathfrak{D}^{GPS}|_{B_{\lambda,\epsilon}}
	 \end{align*} for every $\lambda,\epsilon>0$
	from Theorem \ref{consistency}, and this completes the proof since $B_{\lambda,\epsilon}\nearrow \mathbb{R}^2$.  
\end{proof}

As a byproduct, we can verify the folklore conjecture
\begin{center}
{\it``the open Gromov-Witten invariants $=$ the log Gromov-Witten invariants."}
\end{center}
in our geometric setup, by comparing the coefficients for the wall functions of the two scattering diagrams in Theorem \ref{correspondence: scattering diagram}.
Our argument passing to tropical geometry has a great advantage that we can avoid the difficulty of comparing the virtual fundamental classes in the algebraic and symplectic version of log Gromov-Witten invariants.

We can identify homology classes of the curves that the two invariants count in the following way.
To a given relative class $\gamma$ of a disc in $\tilde{X}$, one can associate a homology class $\bar{\gamma}\in H_2(\widetilde{Y},\mathbb{Z})$ as follows. Fix $u_0\in B_{\epsilon}$ and choose a representative $C_{\gamma}$ of $\gamma\in H_2(\tilde{X},\tilde{L}_{u_0})$. Let $l$ be an affine ray starting from $u_0$ parallel to the direction defined by $\partial \gamma$ (it moves away from the disc $\gamma$). Let $\mbox{Cyc}_{\partial \gamma}$ be a trivial $S^1$-fibration over $l$ with fibres in the class $\partial \gamma$. Since $[\mbox{Cyc}_{\partial \gamma} \cap L_u]=[C_{\gamma}\cap L_u]\in H_1(L_u,\mathbb{Z})$, there exists $C'$ in $L_u$ homologous to $C_{\gamma}\cup \mbox{Cyc}_{\partial \gamma}$  
where both are thought of as non-compact $2$-chains without boundary in $\tilde{X}$. The explicitly chosen such a $2$-chain has a one-point compactification in $\widetilde{Y}$ as a $2$-cycle, whose homology class will be denoted by $\bar{\gamma}$. 

\begin{corollary}\label{open-closed} Given a relative class $\gamma$, there exists a proper open set $U_{\gamma}\subseteq B_\epsilon$ (for any small enough $\epsilon$) such that 
 \begin{enumerate}
 	\item if $\tilde{\Omega}(\gamma;u)=0$ for all $u\in B_\epsilon \setminus U_{\gamma}$, then $N_{\bar{\gamma}}=0$. 
 	\item if $u\in B_\epsilon \setminus U_{\gamma}$ and $\tilde{\Omega}(\gamma;u)\neq 0$, then $\tilde{\Omega}(\gamma;u)=N_{\bar{\gamma}}$.
 \end{enumerate}	
\end{corollary}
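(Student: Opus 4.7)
The plan is to read off the desired identity of invariants from the scattering diagram identification in Theorem \ref{correspondence: scattering diagram}, coefficient by coefficient. Both $\tilde{\Omega}(\gamma;u)$ and $N_{\bar{\gamma}}$ enter as coefficients in wall functions, of $\mathfrak{D}^{LF}_{\epsilon}$ and $\mathfrak{D}^{GPS}$ respectively; the monomial identification $t^{-[E_i]}z^{m_{\sigma(i)}} \leftrightarrow T^{\omega(\gamma_i)}z^{\partial\gamma_i}$ from Lemma \ref{contribution of initial discs}, propagated through consistent scattering, converts one expression into the other. The open set $U_\gamma$ is introduced to localize to a region where the monomial $T^{\omega(\gamma)} z^{\partial\gamma}$ has a unique geometric source.

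First I would fix $\lambda > \omega(\gamma)$ and take $\epsilon$ small enough so that Lemma \ref{no initial} applies and $B_{\lambda,\epsilon}$ contains all walls of $\mathfrak{D}^{GPS}$ contributing modulo $T^{\lambda}$. By Theorem \ref{correspondence: scattering diagram}, the two diagrams agree there up to this energy. Next I define $U_\gamma$ as the bounded union of small thickenings of all walls of $\mathfrak{D}^{GPS}|_{B_{\lambda,\epsilon}}$ whose direction is not parallel to $\partial\gamma$, together with a bounded neighborhood of the finite collision locus of walls at energy $\leq \omega(\gamma)$ and of the singular-fibre neighborhoods excluded from $B_\epsilon$. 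Since $\partial\gamma$ is a single rational direction and only finitely many walls contribute up to energy $\omega(\gamma)$, $U_\gamma$ is a proper open subset of $B_\epsilon$, and outside $U_\gamma$ the only walls present are line segments parallel to $\partial\gamma$.

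For statement (2), pick $u \in B_\epsilon \setminus U_\gamma$ with $\tilde{\Omega}(\gamma;u) \neq 0$. Then $u$ lies on a wall of $\mathfrak{D}^{LF}_{\epsilon}$ of direction $\partial\gamma$, and the coefficient of $T^{\omega(\gamma)}z^{\partial\gamma}$ in the associated wall function, extracted from the wall-crossing formula \eqref{eqn:wctrans} of Theorem \ref{YS}, is precisely $\tilde{\Omega}(\gamma;u)$ up to the multiplicity normalization $k_\beta$ in \eqref{eq:gps}. By Theorem \ref{correspondence: scattering diagram}, the same coefficient is read off from the corresponding wall of $\mathfrak{D}^{GPS}$ through $u$ via \eqref{eq:gps}, and because $u \notin U_\gamma$, no other relative class of the same boundary and symplectic area contributes at $u$. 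Hence this coefficient equals $N_{\bar{\gamma}}$, with $\bar{\gamma}$ the class defined via the cycle $\mathrm{Cyc}_{\partial\gamma}$ preceding the corollary; this chases through the short exact sequence \eqref{toric exact} to match $(\pi_{\mathfrak{d}})_*\bar{\gamma}$ with the exponent of $t$ in \eqref{eq:gps}. Statement (1) follows by contrapositive: if $N_{\bar{\gamma}} \neq 0$, the monomial appears nontrivially in some $\mathfrak{D}^{GPS}$-wall meeting $B_\epsilon \setminus U_\gamma$, and by the diagram identification the same is true on the $\mathfrak{D}^{LF}_\epsilon$ side, forcing $\tilde{\Omega}(\gamma;u) \neq 0$ at some such $u$.

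The main obstacle is disentangling possible contamination by distinct relative classes $\gamma' \neq \gamma$ with $\partial\gamma' \parallel \partial\gamma$ and $\omega(\gamma') \leq \omega(\gamma)$ that could in principle contribute to the same wall-function monomial (in particular multiple-cover contributions accounted for by the factor $k_\beta$ in \eqref{eq:gps}). The construction of $U_\gamma$ must be enlarged to include the finite locus of points where such distinct classes' walls coincide with the wall carrying $\gamma$; since this is a finite enlargement inside $B_{\lambda,\epsilon}$, properness of $U_\gamma$ is preserved. A secondary technical point is the algebraic-topological identification $\gamma \leftrightarrow \bar{\gamma}$: although $\bar{\gamma}$ was constructed by completing $\gamma$ with the cycle $\mathrm{Cyc}_{\partial\gamma}$, one has to check this matches the class enumerated on the GPS side, which can be done by intersection-number computation against the components of $\widetilde{D}$ and comparison with the incidence conditions defining $N_{\bar{\gamma}}$.
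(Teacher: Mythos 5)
Your overall strategy, extracting both invariants as coefficients of matching wall functions via Theorem \ref{correspondence: scattering diagram}, is the same as the paper's; the problem is that your choice of $U_\gamma$ does not make statement (2) true. For a fixed relative class $\gamma$, the open invariant $\tilde{\Omega}(\gamma;u)$ is only \emph{locally} constant along the line $l$ supporting class-$\gamma$ walls: tropical discs in class $\gamma$ all have their root edge on $l$, but different discs in the same class can have their last vertex at different points of $l$ (these vertices are intersections of $l$ with other walls), and at a given $u$ only those discs whose last vertex lies on the appropriate side of $u$ contribute. Equivalently, several rays of $\mathfrak{D}^{GPS}$ with the same support line but different birth points can carry pieces of the class-$\bar{\gamma}$ count, and the coefficient read off at $u$ is only the sum over rays whose support contains $u$. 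Your $U_\gamma$ removes thickenings of non-parallel walls and small neighborhoods of the (finite mod $T^\lambda$) collision locus, but the intermediate segments of $l$ between consecutive collision points all survive in $B_\epsilon\setminus U_\gamma$, and on such a segment $\tilde{\Omega}(\gamma;u)$ can be nonzero yet strictly smaller than $N_{\bar{\gamma}}$; the coefficient comparison there yields a partial sum, not the total. The paper closes exactly this gap with two inputs you do not supply: (i) the positions and multiplicities of the unbounded edges, and hence (by the tropical position result invoked there, the analogue of \cite[Lemma 4.1]{GPS}) the unique supporting line $l$, are determined topologically by $\bar{\gamma}$ through $\bar{\gamma}=\pi^*\pi_*(\bar{\gamma})-\sum_i p_iE_i$ together with Lemma \ref{weak correspondence}; and (ii) by the finiteness argument from Lemmas \ref{lower bound vertices} and \ref{no initial} there are only finitely many tropical discs in class $\gamma$, so $\tilde{\Omega}(\gamma;\cdot)$ is constant on the far end of $l$, and $U_\gamma$ is then chosen so that $l\setminus U_\gamma$ lies entirely in this stable range. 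Statement (1), which you argue by contraposition, inherits the same defect, since you need to know at which points outside $U_\gamma$ the full count is actually visible.

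Two further points. Your ``contamination'' discussion is both misplaced and, as stated, incoherent: two parallel walls either have disjoint supports or overlap along a ray, so ``the finite locus where such walls coincide with the wall carrying $\gamma$'' is not finite, and excluding it would delete the very wall you want to use; in fact no such exclusion is needed, because the GPS wall functions take coefficients in $\C[[NE(\widetilde{Y})]]$ and the classes $[E_i]$ are linearly independent, so coefficients can be separated class by class. Finally, the identification of $\bar{\gamma}$ with the curve class appearing in \eqref{eq:gps}, which you defer as a secondary check, is load-bearing: it is precisely what fixes the multiplicities $p_i$ and, via the position statement above, the unique line on which the comparison can legitimately be made.
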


Recall from Section \ref{subsubsec:lfwallopengw} and Section \ref{subsec:GPS} that $\tilde{\Omega}(\gamma;u)$ and $N_{\bar{\gamma}}$ are the open Gromov-Witten invariant and the log Gromov-Witten invariant, respectively.

\begin{proof}
	The first part of the corollary is a direct consequence of Theorem \ref{correspondence: scattering diagram} combined with the discussion in Section \ref{subsec:GPS}.

	To see the second part, notice that every ray in the scattering diagram has a corresponding tropical curve. Thus, there are finitely many set of non-negative integers $\mathbf{P}=(p_{ij})$ satisfying \eqref{?} and $\bar{\gamma}$ coincides with the image of $\beta_{\mathbf{P}}$. In particular, there are only finitely rays in
	rays in $\mathfrak{D}^{GPS}$ which involves $N_{\bar{\gamma}}$	by Lemma \ref{lower bound vertices}.
 Projecting down $\gamma$ by $\pi : \widetilde{Y} \to \bar{Y}$, the intersection pairing of $\pi_*(\gamma)$ with each boundary component $\bar{D}_i$ is determined. All the intersections of $\pi_*(\gamma)$ with $\bar{D}$ occur at $\{q_1,\cdots, q_m\}$, since $\mu(\gamma)=0$. 
 
 Moreover, the multiplicities and the positions of unbounded edges of the tropical discs of relative class $\gamma$ are determined topologically by $\bar{\gamma}$. Indeed, the multiplicity $p_{ij}$ of the unbounded edge determined by $q_{ij}$ can be calculated via the relation 
	 \begin{align*}
	    \bar{\gamma}=\pi^* \pi_*(\bar{\gamma})-\sum_{i,j} p_{ij} E_{ij}.
	 \end{align*}

Fix  positions and multiplicities of all the edges of a tropical curve (say, in class $\bar{\gamma}$), but one unbounded edge. By balancing condition, the multiplicity and the direction of the last unbounded edge is automatically determined. Actually the position of the last unbounded edge is also determined by \cite[Proposition 6.12]{M8} (see \cite[Theorem 3.3.10]{K3}, also).
\footnote{This is a tropical analogue of \cite[Lemma 4.1]{GPS}.} 

Suppose the edge adjacent to the root of a tropical disc of relative class $\gamma$ lies in some affine line $l$. 
We essentially showed in the proof of Lemma \ref{no initial} that there are finitely many tropical discs (up to elongation of the edge adjacent to the root) in the class $\gamma$. Therefore $\tilde{\Omega}(\gamma;u')$ is constant along $l$ near infinity towards the direction along which $\omega(\gamma)\gg 0$. 
It suffices to take $U_{\gamma}$ such that $U_{\gamma}\cap l$ is contained the part of $l$ where $\tilde{\Omega}(\gamma)$ is constant, then the second part of the corollary holds from Theorem \ref{correspondence: scattering diagram}. 
\end{proof}	
\begin{remark}
	The unique rational curve in Lemma \ref{doubling} corresponding to relative $\gamma$ is different from those $\mathbb{A}^1$-curves contributing to $N_{\bar{\gamma}}$ even topologically. We refer readers to Example 5.3 \cite{HLZ}.   
\end{remark}

\begin{remark}
	 Although the toric model $(\bar{Y},\bar{D})$ of a Looijenga pair $(Y,D)$ is always a smooth toric surface, theorems in Section \ref{sec: LF/GPS} generalize to the case when $\bar{Y}$ only have orbifold singularity at the corners. This is because $X$ is still a smooth symplectic manifold and the local model for the non-toric blowup remains the same. 
\end{remark}

\subsection{The Landau-Ginzburg mirror of the compactification $Y$}\label{subsec:LGmirrorY}
For the rest of the section, we will assume that $\widetilde{Y}$ is semi-Fano and, consider an admissible SYZ fibre $\tilde{L}_u$ in $\widetilde{Y}$ (i.e., a pull-back of a moment map torus $L_u$ in $Y$). We will show that the superpotential of $\tilde{L}_u$ can be calculated tropically. More specifically, we have the following theorem. 
\begin{theorem}\label{thm: main}
	Assume that $\widetilde{D}$ contains no negative Chern number spheres. Then $n_\beta (u):=n^{\widetilde{Y}}_{\beta}(\tilde{L}_u)$ coincides with the weighted count of the broken lines with respect to the scattering diagram $\mathfrak{D}^{LF}$ in Theorem \ref{LF scattering diagram}. 
\end{theorem}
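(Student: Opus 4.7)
The plan is to extend the holomorphic/tropical correspondence of Theorem \ref{correspondence: scattering diagram} from Maslov $0$ to Maslov $2$ discs and then to match the tropical count with broken lines for $\mathfrak{D}^{LF}$. For an admissible fibre $\tilde{L}_u = \pi^{-1}(L_u)$, I would first use Lemma \ref{non-toric MI co} to identify $n_\beta(u)$ with a bulk-deformed count of generalized Maslov $2$ discs in $(\bar{Y}, L_u)$ with bulk insertions at the blowup points $\{q_i\}$ weighted by $\beta \cdot E_i$. The non-negativity of Chern numbers of spheres in $\widetilde{D}$ ensures that the relevant disc moduli space in $\widetilde{Y}$ is compact by an extension of Lemma \ref{lem:cptnessnodiv} (no sphere bubble in $\widetilde{D}$ can destabilize a limiting configuration), so $n_\beta(u)$ is well-defined and the bijection of Lemma \ref{non-toric MI co} is weight-preserving.

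Next I would assign to each generalized Maslov $2$ holomorphic disc a tropical disc on $M_{\mathbb{R}}$ with stop at $u$, using the doubling procedure of Lemma \ref{doubling} and the spine/Ronkin construction of Lemma \ref{weak correspondence} in the point-constrained setting. Such a tropical disc carries a distinguished "main" unbounded edge in a toric direction, together with auxiliary unbounded edges anchored at the points $q_i$ whose weights record the intersection multiplicities $\beta \cdot E_i$, and it is counted with its Mikhalkin weight. I would then construct a weight-preserving bijection between these tropical discs and broken lines for $\mathfrak{D}^{GPS}$ with stop at $u$ and $[\mathfrak{b}] = \beta$: a broken line is the "main spine" of such a tropical disc, and each of its bending points corresponds to attaching at that vertex a Maslov $0$ tropical sub-tree whose image is the supporting wall $\mathfrak{d}$ of $\mathfrak{D}^{GPS}$. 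By Theorem \ref{correspondence: scattering diagram}, each term of $f_\mathfrak{d}$ records exactly the weighted count of such sub-trees, so the path-ordered monomial $\mathrm{Mono}(\mathfrak{b})$ of Definition \ref{broken line} reassembles as the product of Mikhalkin multiplicities at the bending vertices of the completed tropical disc. Since $\mathfrak{D}^{LF} = \mathfrak{D}^{GPS}$, the broken-line counts for the two diagrams agree.

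The main obstacles are the base case and the combinatorial bookkeeping of multiplicities. For the base case of a toric Fano surface (no bending), the Cho--Oh classification and the tropical picture in Section \ref{subsec:spbintrop} yield that $n_\beta$ matches the single-edge broken-line count through $u$, and the clipping/unclipping procedure developed there, together with Theorem \ref{SemiFano Toric Blowup Theorem}, lifts this equality inductively to toric semi-Fano surfaces by treating each non-toric blowup as an extra point-constraint. Across walls of $\mathfrak{D}^{LF}$, Lemma \ref{wall-crossing} (Fukaya's trick) ensures that the generating function $\sum_\beta n_\beta(u) T^{\omega(\beta)} z^{\partial \beta}$ transforms identically to the broken-line generating function, so it suffices to check the equality in a single chamber. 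Thus the only delicate combinatorial check is that, summing over the ways of attaching a Maslov $0$ sub-tree at a bending vertex, the Mikhalkin weights reproduce the coefficients of $f_\mathfrak{d}$; this is Theorem \ref{correspondence: scattering diagram} iterated at each bending vertex, and I expect it to be the technical heart of the proof.
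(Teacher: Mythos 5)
Your overall scaffolding (compactness via the no-negative-Chern-sphere assumption, the identification $\mathfrak{D}^{LF}=\mathfrak{D}^{GPS}$ from Theorem \ref{correspondence: scattering diagram}, and wall-crossing compatibility via Lemma \ref{wall-crossing}) matches ingredients the paper also uses, but the engine of your argument is different and it has a genuine gap. You propose a \emph{weight-preserving bijection}: tropicalize each generalized Maslov index two disc via Lemma \ref{doubling} and Lemma \ref{weak correspondence}, decompose the resulting tropical disc into a ``main spine'' (the broken line) plus Maslov zero sub-trees at the bends, and then claim that Mikhalkin multiplicities at the bending vertices reassemble into the coefficients of the wall functions $f_{\mathfrak{d}}$, so that the holomorphic count equals the broken-line count term by term. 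Nothing in the paper supports the multiplicity half of this claim: Lemma \ref{weak correspondence} is strictly one-directional and existence-only (it produces \emph{some} tropical disc for each holomorphic disc, as an obstruction to existence), and there is no statement anywhere that the number of holomorphic discs tropicalizing to a fixed tropical disc equals its Mikhalkin weight, nor that the constrained counts at bends reproduce the wall-function coefficients. Establishing such a count-preserving tropical correspondence for discs with boundary on these (non-monotone) fibres is exactly the kind of direct moduli-space comparison the authors explicitly say they bypass; iterating Theorem \ref{correspondence: scattering diagram} ``at each bending vertex'' does not supply it, because that theorem identifies scattering diagrams (wall functions), not individual disc multiplicities attached to vertices of a given tropical curve. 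Your proposed base case is also off target: Theorem \ref{SemiFano Toric Blowup Theorem} concerns toric (bulk-deformed) potentials and is not what anchors the induction here.

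The paper's actual proof is indirect and avoids your hard step entirely. It first shows finiteness of broken lines in a fixed class (area bounds from Lemma \ref{lower bound vertices} plus Lemma \ref{weak correspondence}), then inducts on $n(\beta;u)$, the maximal number of edges of a broken line representing $\beta$. The base case is Lemma \ref{initial discs MI=2}: moving $u$ to infinity along the affine ray in the direction of $\partial\beta$, a nonvanishing constant count forces $\beta$ to be a basic (initial) class, where both counts are $1$. For the inductive step, one travels along that ray until $n_\beta$ jumps at a wall; the argument of Lemma \ref{split attractor flow} (Fukaya's trick) produces a class $\beta_1$ of strictly smaller complexity, the induction hypothesis gives $n_{\beta_1}(u_1)=n^{trop}_{\beta_1}(u_1)$, and then $n_\beta(u)$ and the broken-line count are both \emph{determined} by the same wall-crossing formula, the two formulas being identified by Theorem \ref{correspondence: scattering diagram}. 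So the equality of counts is propagated through identical wall-crossing relations from the unbent regime, rather than proved disc-by-disc. To repair your proposal you would either have to prove the missing multiplicity-matching tropical correspondence (a substantial new theorem), or restructure the argument along the paper's inductive, wall-crossing-determined lines.
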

Here, $n^{\widetilde{Y}}_{\beta}(\tilde{L}_u)$ is the algebraic count of Maslov index 2 discs in class $\beta$. See Section \ref{subsec:LFT1}. We first need a few preliminary lemmas.

  \begin{lemma}\label{initial discs MI=2}
  	  Assume that $l$ is an affine ray from $u$ in the direction of $\partial \beta$ for some $\beta\in H_2(\widetilde{Y},\tilde{L}_u)$ such that $\tilde{\omega}(\beta)$ is decreasing along the ray. There exists a constant $\hbar>0$ such that if
  	  \begin{enumerate}
  	  	\item $n_{\beta}(u)$ is non-zero and constant along the ray, or 
  	  	\item $\omega(\beta)<\hbar$ and $n_{\beta}(u)\neq 0$,
  	  \end{enumerate}
  	    then $\beta=\beta_i+\alpha$ for some $\alpha\in H_2(\widetilde{Y},\mathbb{Z})$ and some $i$. 
  \end{lemma}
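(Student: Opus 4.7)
The plan is to tropicalize. Given any Maslov index $2$ holomorphic disc $\tilde{f}: (D^2, \partial D^2) \to (\widetilde{Y}, \tilde{L}_u)$ representing $\beta$ with $n_\beta(u) \neq 0$, its projection $f := \pi \circ \tilde{f}$ is a holomorphic disc in the toric surface $\bar{Y}$ bounding $L_u$, whose generalized Maslov index is $2$ in the sense of Lemma \ref{non-toric MI co}. Applying Lemma \ref{weak correspondence}, $f$ carries an associated tropical disc $h_\beta$ with end $u$ whose edge adjacent to $u$ points in direction $\partial \beta$ and whose unbounded edges record the intersection profile of $f$ with $\bar{D}$. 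Writing $k_i := \tilde{f} \cdot E_i$ and using $\tilde{f} \cdot \widetilde{D} = 1$, these unbounded edges consist of exactly one edge of weight $1$ in some toric direction $m_\iota$ (from the simple intersection with $\widetilde{D}_\iota$) together with, for each $i$ with $k_i > 0$, an edge of weight $k_i$ in direction $m_{\sigma(i)}$ pinned to the initial wall $l_i$ through $q_i$. Balancing then yields $\partial \beta = m_\iota + \sum_i k_i m_{\sigma(i)}$.

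The conclusion $\beta = \beta_i + \alpha$ with $\alpha \in H_2(\widetilde{Y}, \mathbb{Z})$ is equivalent via the short exact sequence
\[
0 \to H_2(\widetilde{Y}, \mathbb{Z}) \to H_2(\widetilde{Y}, \tilde{L}_u) \xrightarrow{\partial} H_1(\tilde{L}_u, \mathbb{Z}) \to 0
\]
to $\partial \beta$ being one of the primitive toric generators $m_j$. Setting $k := \sum_i k_i$, the case $k = 0$ immediately gives $\partial \beta = m_\iota$ and we are done; so it suffices to rule out $k \geq 1$ under either hypothesis. In that case $h_\beta$ has at least two unbounded edges and hence contains a trivalent internal vertex, so the edge of $h_\beta$ adjacent to $u$ is bounded, connecting $u$ to an internal vertex $v_0$ with $v_0 - u$ a positive multiple of $\partial \beta$.

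For hypothesis (2), I would produce a positive lower bound $\hbar_0 = \hbar_0(u)$ on $\omega(\beta)$ valid whenever $k \geq 1$. Using the cohomological identity $\tilde{\omega}_\epsilon(\beta) = \bar{\omega}(\pi_\ast \beta) - \sum_i \epsilon_i k_i$ coming from the construction of $\tilde{\omega}_\epsilon$ in Lemma \ref{Kahler form}, together with the toric area formula $\bar{\omega}(\pi_\ast \beta) = \omega(\beta_\iota(u)) + \sum_i k_i \omega(\beta_{\sigma(i)}(u))$ for the disc $f$, one obtains
\[
\tilde{\omega}_\epsilon(\beta) = \omega(\beta_\iota(u)) + \sum_i k_i \bigl( \omega(\beta_{\sigma(i)}(u)) - \epsilon_i \bigr),
\]
where $\omega(\beta_j(u)) > 0$ is the symplectic area of the basic disc in $\bar{Y}$ in direction $m_j$. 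For $u$ generic (so that $\omega(\beta_j(u))$ is bounded below) and $\epsilon$ sufficiently small, each factor $\omega(\beta_{\sigma(i)}(u)) - \epsilon_i$ admits a uniform positive lower bound, and hence $\tilde{\omega}_\epsilon(\beta) \geq \hbar_0$ whenever $k \geq 1$. Choosing $\hbar < \hbar_0$ excludes this case.

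For hypothesis (1), I would track how $h_\beta$ deforms as $u$ slides along $l$ to $u' = u + t \partial \beta$. When $k = 0$ the tropical disc is a single toric ray, honestly translation-invariant along $l$, which is precisely why $n_\beta$ remains constant. When $k \geq 1$, the bounded edge from $u$ to $v_0$ merely shortens as $t$ grows from $0$ to $\lambda$; at $t = \lambda$ it collapses, and for $t > \lambda$ the original combinatorial type of $h_\beta$ cannot support the stop at $u'$. Any ``replacement'' tropical disc of class $\beta$ appearing past $v_0$ must have strictly smaller symplectic area since $\omega(\beta)$ strictly decreases along $l$, and by a Lemma \ref{lower bound vertices}-type stratification only finitely many combinatorial types of such discs exist below any fixed area. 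Using the rigidity provided by the fixed walls $l_i$, together with the fact that $\omega(\beta)$ eventually reaches zero on the infinite ray $l$, one argues that $n_\beta$ cannot remain a constant nonzero value along $l$ unless $k = 0$. The main obstacle is this final ``no-cancellation'' step: verifying that distinct combinatorial types of tropical discs of class $\beta$ cannot conspire along $l$ to produce an exactly constant count, which ultimately hinges on a careful analysis of the broken-line structure of $\mathfrak{D}^{LF}$ anticipated by Theorem \ref{thm: main}.
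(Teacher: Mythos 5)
Your tropicalization set-up (Lemmas \ref{non-toric MI co} and \ref{weak correspondence}, the identification of unbounded edges with the intersection profile, and the reduction to showing $k=\sum_i k_i=0$) matches the paper, but both branches of your argument have genuine gaps where they diverge from it. For hypothesis (2), the area identity $\tilde{\omega}_\epsilon(\beta)=\omega(\beta_\iota(u))+\sum_i k_i\bigl(\omega(\beta_{\sigma(i)}(u))-\epsilon_i\bigr)$ is correct, but the asserted uniform positive lower bound on the factors is not: $\omega(\beta_\iota(u))$ and $\omega(\beta_{\sigma(i)}(u))-\epsilon_i$ degenerate (the latter can even be negative) as $u$ approaches the corresponding facet of the polytope, and such $u$ remain in $B_\epsilon$ away from the removed neighborhoods $V_i$. ``Genericity of $u$'' does not yield a constant $\hbar$ independent of $u$; excluding the degenerating $u$ would require knowing on which chambers $n_\beta\neq 0$, i.e.\ precisely the wall/chamber structure the lemma is meant to feed into. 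The paper obtains $\hbar$ by a different mechanism: any \emph{bent} Maslov index two tropical disc contains a Maslov index zero tropical sub-disc of area less than $\omega(\beta)$, whose unbounded edges are pinned to the finitely many initial rays, and Lemmas \ref{minimal area}, \ref{lower bound vertices} and \ref{no initial} give the uniform positive lower bound/finiteness for those sub-discs; choosing $\hbar$ below that bound forbids any bending, hence $k=0$.

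For hypothesis (1), the gap is the one you flag yourself: nothing in your deformation-tracking argument rules out distinct combinatorial types of tropical discs of class $\beta$ conspiring to keep $n_\beta$ constant, and appealing to the broken-line structure of Theorem \ref{thm: main} would be circular, since that theorem is proved \emph{using} the present lemma. The paper's route avoids any cancellation analysis: constancy and non-vanishing of $n_\beta$ along $l$ produce, via Lemma \ref{weak correspondence}, a tropical disc of class $\beta$ with stop at points of $l$ arbitrarily far towards infinity; since by Lemma \ref{no initial} there are only finitely many Maslov index zero tropical discs of area below the fixed bound, $l$ meets them in finitely many points, so at a far enough stop the only available tropical disc of the given area is an unbent initial ray. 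This forces $\partial\beta=m_i$, hence $\beta=\beta_i+\alpha$, and case (2) is then disposed of by the same finiteness/lower-bound input rather than by a direct area computation.
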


\begin{proof} First we assume that $n_{\beta}$ is non-zero along $l$, then there exists a stable holomorphic disc of relative class $\beta$ with boundary on $L_u$ for for each $u\in l$.
   Applying Lemma \ref{weak correspondence} to the disc component of such a stable disc, we obtain a tropical disc with stop at $u$ whose edge adjacent to $u$ is parallel to $l$. From the hypothesis, we may assume that $\tilde{\omega} (\beta)<\lambda$ for some constant $\lambda>0$. Recall that any broken lines except those without bending (see Example \ref{initial broken line}), the corresponding Maslov index two tropical disc will contain a Maslov index zero tropical sub-disc, which has area less than $\omega(\beta)$. Since Lemma \ref{no initial} tells us that there are only finitely many tropical discs on Maslov index zero in $\widetilde{Y}$ with symplectic area less than $\lambda$, there are only finitely many intersections of the tropical discs of Maslov index zero with symplectic area less than $\omega(\beta)$ with $l$.   
   Therefore, if $u \in l$ is close enough to infinity, there exists no tropical discs with stop on $l$ between $u$ and the infinity while the symplectic area is less than $\lambda$ other than the (multiple) of the initial disc. In particular, $l$ is defined by the vanishing cycle of a toric boundary divisor. Since $\beta$ is of Maslov index two, the corresponding tropical disc has the same relative class as the basic disc. The second case follows from the same line of the argument of the first case.

\end{proof} 
	We next show a weak version of tropical/holomorphic correspondence theorem. 
\begin{lemma}	\label{split attractor flow}
	 If $n^{\widetilde{Y}}_{\beta}(u)\neq 0$ for a generic point $u$, there exists a broken line $\mathfrak{b}$ with end at $u$ such that $[\mathfrak{b}]=\beta$. 
\end{lemma}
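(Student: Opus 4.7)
The plan is to extract the required broken line from the tropical disc associated, via our weak holomorphic/tropical correspondence, to a Maslov index $2$ holomorphic disc representing $\beta$. Since $n^{\widetilde{Y}}_\beta(u)\neq 0$, there exists a Maslov index $2$ $J$-holomorphic disc $\tilde{f}$ bounding $\tilde{L}_u$ in class $\beta$. By Lemma \ref{non-toric MI co}, $\tilde{f}$ descends to a generalized Maslov index $2$ holomorphic disc $f$ in $(\bar{Y},L_u)$ passing through the blowup points $q_i$ with multiplicities prescribed by $\tilde{f}\cdot E_i$. Applying Lemma \ref{weak correspondence} to $f$ yields a tropical disc $(h,T,w)$ with stop at $u$ and unbounded edges in toric directions of $\bar{Y}$. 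Among these, exactly one is unconstrained (corresponding to the single intersection of $\tilde{f}$ with the proper transform of $\bar{D}$), while the remaining \emph{point-constrained} edges must pass through particular $q_i$'s with the prescribed multiplicities; the edge of $T$ adjacent to $u$ lies in direction $\partial\beta$.

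Let $\mathfrak{b}\colon(-\infty,0]\to M_{\mathbb{R}}$ parametrize the unique path in $T$ from the free unbounded edge (at $-\infty$) to $u$ (at $0$), and write $v_1,\ldots,v_{n-1}$ for its successive vertices, occurring at times $t_1<\cdots<t_{n-1}<0$ as in Definition \ref{broken line}. At each $v_i$ the tree $T$ branches into a \emph{side subtree} $T_i$ rooted at $v_i$. Tallying Maslov indices (total Maslov index $2(1+l)$ where $l=\sum_j \tilde{f}\cdot E_j$, of which $2$ is contributed by the free edge along $\mathfrak{b}$) forces every unbounded edge of each $T_i$ to be point-constrained, so $T_i$ is a generalized Maslov index $0$ tropical sub-disc rooted at $v_i$. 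By Theorem \ref{correspondence: scattering diagram} and the identification $\mathfrak{D}^{LF}=\mathfrak{D}^{GPS}$, each such $T_i$ contributes as a term of the wall function $f_{\mathfrak{d}_i}$ of a ray $\mathfrak{d}_i\in\mathfrak{D}^{LF}$ passing through $v_i$ in the direction of the outgoing boundary class $\partial\gamma_i$ of $T_i$ at $v_i$.

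The balancing condition at $v_i$ reads
\begin{equation*}
m^{\mathfrak{b}}_{i+1}(t_i) \;=\; m^{\mathfrak{b}}_i(t_i) + \partial\gamma_i,
\end{equation*}
and, tracking the Novikov and Mikhalkin weights carried by $T_i$, this is precisely the rule $(4)$ of Definition \ref{broken line}: the monomial $c_{i+1} z^{m^{\mathfrak{b}}_{i+1}(t_i)}$ arises as a term in $\theta_{\mathfrak{b},\mathfrak{d}_i}(c_i z^{m^{\mathfrak{b}}_i(t_i)})$, with the sign $\epsilon_i$ read off from the orientation of the crossing. The initial conditions $m^{\mathfrak{b}}_1=m_{\sigma}$ (a primitive toric generator) and $c_1=1$ are automatic since the initial segment comes from the free unbounded edge, which has weight $1$. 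Hence $\mathfrak{b}$ qualifies as a broken line with stop at $u$, and by construction $[\mathfrak{b}] = m^{\mathfrak{b}}_n(0) = \beta$ under the natural identification of relative classes.

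The main technical obstacle is the coherent identification of each side subtree $T_i$ with a specific term in $f_{\mathfrak{d}_i}$. For one-vertex $T_i$ this is immediate from Lemma \ref{contribution of initial discs}; deeper subtrees require iteration—either by induction on the depth of $T_i$, re-applying the weak correspondence of Lemma \ref{weak correspondence} to the Maslov $0$ sub-discs cut out along $\mathfrak{b}$, or by invoking the Kontsevich–Soibelman consistency of Theorem \ref{consistency}, which guarantees that all such nested contributions are bundled into the wall functions of the completed scattering diagram $\mathfrak{D}^{LF}$.
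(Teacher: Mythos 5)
Your route is genuinely different from the paper's, and it has a gap at its central step. From Lemma \ref{weak correspondence} you get \emph{some} tropical disc associated to the Maslov index $2$ disc, and you then declare that each side subtree $T_i$ hanging off your chosen path ``contributes as a term of the wall function $f_{\mathfrak{d}_i}$ of a ray $\mathfrak{d}_i\in\mathfrak{D}^{LF}$ passing through $v_i$.'' Nothing you have quoted delivers this. Lemma \ref{weak correspondence} is a one-way, obstruction-type statement (holomorphic $\Rightarrow$ existence of a tropical disc); it carries no enumerative content and no positional statement tying the vertex $v_i$ to the support of an actual wall of $\mathfrak{D}^{LF}=\mathfrak{D}^{GPS}$, let alone guaranteeing that the class of $T_i$ appears with \emph{nonzero} coefficient in $f_{\mathfrak{d}_i}$ --- which is exactly what condition (4) of Definition \ref{broken line} demands ($c_{i+1}z^{m^{\mathfrak{b}}_{i+1}}$ must be an actual term of the wall-crossing automorphism). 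You also never verify condition (3), that the bending points avoid $\mathrm{Sing}(\mathfrak{D})$; the vertices of the tropical disc are dictated by the amoeba of the doubled curve, so some genericity argument in $u$ is needed. Your closing suggestions do not repair this: re-applying Lemma \ref{weak correspondence} to the Maslov $0$ sub-discs only produces more tropical discs, not wall-function terms, and Theorem \ref{consistency} (Kontsevich--Soibelman consistency) is a statement about path-ordered products around loops, not a device for identifying an individual tropical subtree with a monomial of a wall function. To close the gap along your lines one would need the tropical enumeration of the $\mathfrak{D}^{GPS}$ wall functions (positivity of the Mikhalkin multiplicities plus the rigidity of the root-edge position, as invoked in the proof of Corollary \ref{open-closed}), none of which appears in your argument.

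For comparison, the paper avoids this issue entirely by running a Floer-theoretic induction on energy: starting from $u$ it follows the affine ray in the direction of $\partial\beta$ along which $\omega(\beta)$ decreases; if $n_\beta$ is constant the ray is an unbent broken line by Lemma \ref{initial discs MI=2}, and if $n_\beta$ jumps at a wall point $u_1$ then Lemma \ref{wall-crossing} gives $W(u)=\mathcal{K}_{\mathfrak{d}_1}W(u')$ modulo $T^{\lambda}$, so there is a class $\beta'$ with $n_{\beta'}(u')\neq 0$ whose image under $\mathcal{K}_{\mathfrak{d}_1}$ contains $z^{\beta}$; iterating, with termination guaranteed by the energy gap $\lambda_1$ of Lemma \ref{lower bound vertices}, the bending data are \emph{by construction} terms of wall-crossing transformations, so conditions (1)--(4) of Definition \ref{broken line} hold automatically. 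That is precisely the step your tropicalization argument leaves unproven.
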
	 
\begin{proof}
		For a generic $u$,  $n^{\widetilde{Y}}_{\beta}(u)$ is well-defined, i.e. $u\notin \mbox{Supp}(\mathfrak{D}^{LF})(\mbox{mod } T^\lambda)$ where $\lambda>0$ is a constant, since there are finitely many rays in $\mbox{Supp}(\mathfrak{D}^{LF})(\mbox{mod } T^\lambda)$. We will take $\lambda<\int_{\beta_u}\tilde{\omega}+\lambda_1$, where $\lambda_1$ is the lower bound of tropical discs of Maslov index zero (see Lemma \ref{lower bound vertices}).
	Consider an affine ray $l$ emanating from $u$ in the direction of $\partial \beta$ such that $\tilde{\omega} (\beta)$ is decreasing along the ray. By genericity of the position of $u$, we may assume that the affine ray intersects all the rays in $\mbox{Supp}(\mathfrak{D}^{LF})(\mbox{mod } T^\lambda)$ transversally. 
Traveling along the ray, the following two scenarios can occur.	
	\begin{enumerate}
	\item[(i)] $n^{\widetilde{Y}}_{\beta}$ is constant along the whole affine ray, then the lemma directly follows from Lemma \ref{initial discs MI=2}. 
	\item[(ii)] $n^{\widetilde{Y}}_{\beta}$ jumps at some point $u_1\in \mbox{Supp}(\mathfrak{D}^{LF}) (\mbox{mod } T^{\lambda})$, which means that a certain bubbling phenomenon occurs at $\tilde{L}_{u_1}$. Notice that by Gromov compactness theorem, there are only finitely many rays in the scattering diagram $\mathfrak{D}^{LF}(\mbox{mod }T^{\lambda})$. Let $u'$ on the affine ray close to $u_1$ such that $n^{\widetilde{Y}}_{\beta}(u')\neq n^{\widetilde{Y}}_{\beta}(u)$ and by the assumption of the generic position of $u$ we can assume that exactly one ray $l_{\mathfrak{d}_1}$ pass through $u_1$ with $\omega(\gamma_{\mathfrak{d}_i})<\lambda$. 
	 Apply Lemma \ref{wall-crossing} (1) to the path connecting $u',u$ along $l$
	  implies that 
	  \begin{align*}
	     W(u)=\theta_{\mathfrak{d}_1}W(u') \mbox{ mod }T^{\lambda}.
	  \end{align*} Therefore, there exists $\beta'\in H_2(\widetilde{Y},L_{u'})$ with $n_{\beta'}(u')\neq 0$ such that the coefficient of $z^{\beta}$ in $\theta_{\mathfrak{d}_1}(z^{\beta'})$ is non-zero.
	 
	 We then replace $(u,\beta)$ by $(u',\beta')$, and repeat the above procedure. Notice that $\tilde{\omega}(\beta_u)-\tilde{\omega}(\beta'_{u'})\geq \lambda_1 $, and the process ends in finite process, say at $u_1,\cdots,u_n$ and then it reduces to the first case. 

    Finally, we define the broken line $\mathfrak{b}:(-\infty,0]\rightarrow N_{\mathbb{R}}$ with $\mathfrak{b}(0)=u$ such that $\mathfrak{b}(t_i)=u_{n-i}$ for $-\infty=t_0<t_1<\cdots< t_n=0$ (here we set $u=u_0$). Then (1) in Definition \ref{broken line} follows from the choice of the affine rays above, (2) follows from Lemma \ref{initial discs MI=2}, (3) can be achieved by genericity of the position of $u$, and (4) follows from Lemma \ref{wall-crossing}.
	\end{enumerate}
\end{proof}

\begin{proof}[Proof of Theorem \ref{thm: main}]
 Fix $\beta\in H_2(Y,L_u)$. Notice that each bending of the broken line will increase its corresponding area by at least $\lambda_1>0$ (see the notation in Lemma \ref{lower bound vertices}). Thus a broken line $\mathfrak{b}$ such that $[\mathfrak{b}]=\beta$ can bend at most $\frac{\tilde{\omega}(\beta)}{\lambda_1}$ times. Together with Lemma \ref{lower bound vertices} implies that there are finitely many places the broken lines can bend. Thus, there are finitely many broken lines $\mathfrak{b}$ such that $[\mathfrak{b}]=\beta$ by Lemma \ref{weak correspondence}. Let $n(\beta;u)$ be the maximal number of edges of a broken line that can represent $\beta$. We will prove the theorem by induction on $n(\beta;u)$. The theorem reduces to Lemma \ref{initial discs MI=2} when $n(\beta;u)=1$. 
 
Assume that the theorem is true, i.e., $n_{\beta}(u)=n^{trop}_{\beta}(u)$, for all pairs of $(\beta;u)$ if $n(\beta;u)\leq n$ where $n^{trop}_{\beta} (u)$ is the number of broken lines in class $\beta$ with end on $u$. Given a pair $(\beta;u)$ such $n(\beta;u)=n+1$. There exists an affine ray emanating from $u$ corresponding to $\partial \beta$ such that $\tilde{\omega} (\beta)$ is decreasing along the affine ray. 

Suppose that $n_{\beta}$ jumps at $u_1$. Using the same argument as in the proof of  Lemma \ref{split attractor flow}, one can find $\beta_1\in H_2(Y,L_{u_1})$ with $n(\beta_1 ;u_1)<n(\beta;u)$. By the induction hypothesis, we have $n_{\beta_1}(u_1)=n^{trop}_{\beta_1}(u_1)$, and we can derive $n_{\beta}(u)$ and $n^{trop}_{\beta}$ from their compatibility with the wall-crossing formula.
The two wall-crossing formulas are identified in Theorem \ref{correspondence: scattering diagram}, and this finishes the proof. 
\end{proof}

We now consider the effect of the toric blowup $\widetilde{Y} \to Y$ on the counting $n_\beta$. Note that $Y$ is at worst semi-Fano, since contracting exceptional divisors in $\widetilde{D}$ do not decrease the self-intersections of the other divisors.

\begin{lemma}\label{blow down}
	Let $\pi':(\widetilde{Y},\widetilde{D})\rightarrow (Y,D)$ be a simple toric blowup. Let $\tilde{L}\subseteq \tilde{X}$ be an admissible SYZ fibre in the toric model of $\widetilde{Y}$ and $L=\pi'(\tilde{L})$. If the superpotential of $\tilde{L}$ is $\sum_{\beta\in H_2(\widetilde{Y},\tilde{L}):\mu(\beta)=2}n_{\beta}^{\widetilde{Y}}T^{\tilde{\omega}(\beta)}z^{\partial \beta}$, then the superpotential of $L$ is given by
	  \begin{align*}
	     \sum_{\substack{\beta\in H_2(\widetilde{Y},\tilde{L}) \\ \beta\cdot E=0, \, \mu(\beta)=2}} n_{\beta}^{\widetilde{Y}}T^{\omega(\pi'_*\beta)}z^{\partial \beta}.
	  \end{align*}
\end{lemma}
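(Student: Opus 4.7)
The plan is to identify the Maslov index $2$ disc moduli of $(Y,L)$ with the sub-moduli of the Maslov index $2$ disc moduli of $(\widetilde{Y},\tilde{L})$ consisting of classes $\beta$ with $\beta\cdot E=0$. Since the toric blowup $\pi'$ contracts $E\subseteq\widetilde{D}$ and is an isomorphism elsewhere, the restriction $\pi'|_{\tilde{X}}\colon\tilde{X}\to X$ is a biholomorphism and sends $\tilde{L}$ diffeomorphically onto $L$. Under this identification $H_1(\tilde{L})\cong H_1(L)$ via $\pi'_*$, so $\partial\beta$ and $\partial(\pi'_*\beta)$ may be equated.

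The core step is the bijection between the two classes of Maslov $2$ discs. For a Maslov $2$ disc $u\colon(D^2,\partial D^2)\to(Y,L)$ in class $\gamma$, the constraint $\mu(\gamma)=2\gamma\cdot D=2$ forces $\gamma\cdot D=1$, so $u$ meets $D$ exactly once with multiplicity one. Since meeting $D$ at a node would produce positive intersection with each of the two adjacent components and hence force $\gamma\cdot D\geq 2$, the unique intersection point of $u$ with $D$ must be a smooth point. Thus $u$ avoids every blowup center of $\pi'$ and its proper transform $\tilde{u}$ is a well-defined holomorphic disc in $(\widetilde{Y},\tilde{L})$ with $[\tilde{u}]\cdot E=0$ and $\mu([\tilde{u}])=2$, as one sees from $\pi'^*[D]=\sum_k[\widetilde{D}_k]+2[E]$. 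Conversely, any Maslov $2$ disc in $(\widetilde{Y},\tilde{L})$ with $\beta\cdot E=0$ pushes forward under $\pi'$ to a Maslov $2$ disc in $(Y,L)$; whereas a Maslov $2$ disc upstairs with $\beta\cdot E=1$ (the only other possibility by positivity of intersection, given $\beta\cdot\widetilde{D}=1$) pushes forward to a disc of class $\pi'_*\beta$ with $\pi'_*\beta\cdot D=\beta\cdot(\sum_k\widetilde{D}_k+2[E])=2$, hence of Maslov index $4$, so it does not contribute to $W_Y(L)$.

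It then remains to match symplectic areas. At the level of cohomology one has $[\tilde{\omega}]=\pi'^*[\omega']+c\,[E]$ for $\omega':=(\pi'^{-1})^*\tilde{\omega}$, the form induced on $X$. Using the sequence of K\"ahler forms $\omega_i$ on $Y$ of Section \ref{Sec: toric blowup}, which agree with $\omega'$ on relatively compact exhausting open subsets $U_i\nearrow X$, any given Maslov $2$ disc with $\beta\cdot E=0$ lies (up to its single intersection point with $D$) inside some $U_i$, so $\omega_i(\pi'_*\beta)=\omega'(\pi'_*\beta)=\tilde{\omega}(\beta)$ for $\omega$ in the tail of this sequence. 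Combined with $\partial(\pi'_*\beta)=\partial\beta$ and the identity $n_{\pi'_*\beta}^{Y}(L)=n_{\beta}^{\widetilde{Y}}(\tilde{L})$ from the bijection, summing over all $\beta$ with $\mu(\beta)=2$ and $\beta\cdot E=0$ yields the claimed formula for $W_Y(L)$.

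The main subtlety will be ruling out exotic degenerations in the compactification of the Maslov $2$ disc moduli on $(Y,L)$: a sequence of Maslov $2$ discs could in principle drift toward a node of $D$, which upstairs would correspond to a sphere bubble supported on $E$. The nonnegativity hypothesis on $\widetilde{D}$ (as in the ambient Theorem \ref{thm: main}), together with the compactness argument of Lemma \ref{lem:cptnessnodiv} adapted to the toric blowup, guarantees that any such bubble configuration lives in a class with $\beta\cdot E\geq 1$ upstairs. Consequently these degenerations do not contaminate the $\beta\cdot E=0$ sector, and the bijection above is clean.
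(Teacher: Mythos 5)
Your proposal is correct and follows essentially the same route as the paper's proof: both establish the bijection between Maslov index $2$ discs for $(Y,L)$ and Maslov index $2$ discs for $(\widetilde{Y},\tilde{L})$ with $\beta\cdot E=0$ via proper transform and projection, using positivity of intersection with $D$ to keep the downstairs discs away from the blown-up node and the identity $\mu(\pi'_*\beta)=\mu(\beta)+2\,\beta\cdot E$ to discard the $\beta\cdot E\geq 1$ sector, and then identify areas and counts because all relevant discs lie in the biholomorphic open subsets $\widetilde{Y}\setminus E\cong Y\setminus\{\text{blowup points}\}$. The only cosmetic differences are that the paper phrases the correspondence directly for stable discs (invoking the semi-Fano condition to rule out negative Maslov components) where you argue for honest discs and defer bubbling to a closing remark, and that your detour through the exhausting forms $\omega_i$ is unnecessary, since $(\pi'^{-1})^*\tilde{\omega}$ is defined away from the blown-up nodes and the area identification is immediate.
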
	
\begin{proof}
	 Let $u:(\Sigma,\partial \Sigma)\rightarrow (Y,L)$ be a stable disc. From the semi-Fano condition of $Y$, the Lagrangian $L$ does not bound any holomorphic discs of negative Maslov index. If it is	 
	  of Maslov index 0 or 2 of relative class $\beta\in H_2(Y,L)$, then its image does not hit the corner of $D$. Therefore, its proper transform $\tilde{u}:(\Sigma,\partial \Sigma)\rightarrow (\widetilde{Y},\tilde{L})$ is a stable disc of relative class $\tilde{\beta}\in H_2(\widetilde{Y},\tilde{L})$ avoiding the exceptional divisor $E$ of $\widetilde{Y}\rightarrow Y$ and the corners of $\widetilde{D}$. In particular, the proper transform preserves Maslov indices, in this case. 
	  
	  Conversely, if $\tilde{u}:(\Sigma,\partial \Sigma)\rightarrow (\widetilde{Y},\tilde{L})$ is a stable disc of Maslov index 2. Then the projection $u=\pi'\circ \tilde{u}:(\Sigma,\partial \Sigma)\rightarrow (Y,L)$ is a stable disc. Notice that for every non-exceptional curve $\tilde{C}\subseteq \widetilde{Y}$, one has
	  \begin{align*}
	     -K_{\widetilde{Y}}\cdot[\tilde{C}]=-K_{Y}\cdot [\pi'(\tilde{C})]-[\tilde{C}]\cdot [E]\leq -K_{Y}\cdot [\pi'(\tilde{C})]. 
	  \end{align*} Therefore, the Maslov index of projection is bigger than or equal to that of the original disc $\tilde{u}$, and the equality holds if and only if the image of $\tilde{u}$ does not contains a component intersecting the exceptional divisor. 
	  
	  Consequently, the stable discs of $(Y,L)$ of Maslov index 2 are in one-to-one correspondence with the stable discs of $(\widetilde{Y},\tilde{L})$ of Maslov index 2 which do not intersect the exceptional divisor. In other words, all these discs are contained in biholomorphic open subsets of $\widetilde{Y}$ and $Y$. In particular, the corresponding counts $n_\beta^{\widetilde{Y}}$ and $n_{\pi'_\ast \beta}^{Y}$  
	  are the same. 
	   
\end{proof}	
Together with Theorem \ref{thm: main}, we have Theorem \ref{thm:winintro} in the introduction. 
\begin{remark}
	With the admissible special Lagrangian fibration in $Y\setminus D$, Tu \cite{T4} constructed the family Floer mirror of the total space of the admissible special Lagrangian fibration.
	The superpotential for these admissible fibres satisfying the wall-crossing formula and thus is a well-defined function on the mirror as explained by Yuan \cite{Y5}. 
\end{remark}

 Motivated by Gross-Hacking-Keel \cite[Construction 2.27]{GHK}, we may define the theta functions $\theta_{[D_i]}(u)$ by
  \begin{equation}\label{eqn:thetadinbeta}
      \theta_{[D_i]}(u)=\sum_{\beta} n^{Y}_{\beta}(u)T^{\omega(\beta)}z^{\partial\beta},
  \end{equation} 
  where $\beta$ runs over all relative classes in $H_2(Y,L_u)$ with $\mu(\beta)=2$ and the (unique) disc components of their holomorphic representatives intersect $D_i$ once. Then Theorem \ref{thm: main} implies that \eqref{eqn:thetadinbeta} coincides with the original definition in \cite{GHK}. 
  
  The superpotential computed here is expected to be the mirror of $Y$. In fact, we can prove that the quantum period theorem holds under the additional assumption that $Y$ is Fano. (This does not directly lead to the full close-string mirror symmetry statement which concerns all Gromov-Witten invariants.) Let $\mathcal{M}_{0,1}(Y,A)$ be the moduli space of stable maps of genus zero with a marked point and image have class $A$ for $A\in H_2(Y,\mathbb{Z})$. Write $d_A:=A\cdot (-K_Y)$. The virtual dimension of the moduli space $\mathcal{M}_{0,1}(Y,A)$ is $d_A$, and it has a virtual fundamental class $[\mathcal{M}_{0,1}(Y,A)]^{vir}$. Let $ev:\mathcal{M}_{0,1}(Y,A)\rightarrow Y$ be the evaluation map, and $\psi$ the psi-class insertion at the marked point. Then the regularized quantum period of $Y$ is given by 
       $\hat{G}_Y:=\sum_{A\in H_2(Y,\mathbb{Z})} \langle \psi_{d_A-2}\, pt\rangle_{A} z^{A}$,
    where $\langle \psi_{d-2}pt\rangle_{A}$ is the descendant Gromov-Witten invariants 
    \begin{align*}
       \langle \psi_{d_A-2}pt\rangle_{A}=\int_{[\mathcal{M}_{0,1}(Y,A)]^{vir}}\psi^{d_A-2}ev^*{[pt]}.
    \end{align*} We also write $\langle \psi_{d-2}pt\rangle_d=\sum_{A:d_A=d} \langle \psi_{d_A-2}pt\rangle_{d_A}$.     
  The quantum period theorem is a direct consequence of \cite[Theorem 1.12]{M9} (see also \cite[Theorem 1.13]{TY3}) combined with Theorem \ref{thm: main}. 
  \begin{theorem}
  	 Assume that $Y$ is a Fano surface and 
  	 \begin{enumerate}
  	 	\item $Y$ is a non-toric blowup of a toric surface or
  	 	\item each component of the boundary divisor is nef and 
  	 \end{enumerate} then the constant term of $W(L)^d$ is simply $(d)!\langle \psi_{d-2}pt\rangle_d$, where $L$ is any admissible SYZ fibre in $Y$.
 \end{theorem}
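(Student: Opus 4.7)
The plan is to reduce the claim to a known open/closed correspondence in the literature, using Theorem \ref{thm: main} to identify our symplectically-defined superpotential with the enumerative object appearing on the open side of that correspondence. First I would use Theorem \ref{thm: main} (together with Lemma \ref{blow down} to pass from $\widetilde{Y}$ down to $Y$) to write
\[
 W(L) \;=\; \sum_{\substack{\beta\in H_2(Y,L)\\ \mu(\beta)=2}} n^{Y}_{\beta}(L)\, T^{\omega(\beta)} z^{\partial \beta},
\]
where each $n^{Y}_{\beta}(L)$ is a genuine count of Maslov index two holomorphic discs with boundary on $L$. The Fano assumption on $Y$, combined with the two alternative hypotheses, is precisely what guarantees that no negative Chern number spheres obstruct this count: under (1) the toric model argument in Section \ref{Sec: toric blowup} applies verbatim, while under (2) the nef boundary condition rules out negative spheres in $D$.

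Next I would expand the constant term of $W(L)^d$ as
\[
 [W(L)^d]_{z^0} \;=\; \sum_{\substack{(\beta_1,\dots,\beta_d)\\ \sum \partial\beta_i=0}} \Big(\prod_{i=1}^d n^{Y}_{\beta_i}(L)\Big)\, T^{\sum_i\omega(\beta_i)}.
\]
This is the expression that the open/closed theorems of Mandel (\cite[Theorem 1.12]{M9}) and Tonkonog--Yuan (\cite[Theorem 1.13]{TY3}) are designed to process: a configuration of $d$ Maslov index two discs with boundaries summing to zero in $H_1(L)$ glues, through the evaluation fibration and the tangent-circle trick of Tonkonog, to a closed genus zero curve through a generic point with $d-2$ psi classes. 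Applying the cited theorem yields exactly
\[
 [W(L)^d]_{z^0} \;=\; d!\,\langle \psi_{d-2}\,\mathrm{pt}\rangle_d,
\]
which is the stated formula.

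The main obstacle is verifying the hypotheses needed to invoke \cite[Theorem 1.12]{M9}/\cite[Theorem 1.13]{TY3} in our setting, since those results are typically stated for monotone Lagrangian torus fibres or for Lagrangians in toric Fano varieties, whereas our $L$ is an admissible SYZ fibre in a non-toric Fano log Calabi--Yau surface and is not monotone. Two points must be checked: compactness of the relevant Maslov index two disc moduli with point constraints (handled by our Lemma \ref{lem:cptnessnodiv} after reducing to the toric model $\bar Y$, where the Fano/nef hypothesis ensures no bubbling of negative Chern spheres), and the agreement of the algebraic count $n^{Y}_\beta(L)$ used on the symplectic side with the descendant Gromov--Witten contribution, which is the content of the tropical/holomorphic correspondence established in Theorem \ref{thm: main} combined with Corollary \ref{open-closed}. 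Once these are in place the quantum period identity follows directly, and the two alternative hypotheses (non-toric blowup, or nef boundary components) serve precisely to secure the compactness input.
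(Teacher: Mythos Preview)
Your proposal is correct and takes essentially the same approach as the paper. The paper's own proof is a single sentence: the result is a direct consequence of \cite[Theorem 1.12]{M9} (or \cite[Theorem 1.13]{TY3}) combined with Theorem \ref{thm: main}; your write-up simply unpacks this, with the two hypotheses (non-toric blowup, or nef boundary components) serving exactly the role you identify, namely placing us in the setting where Theorem \ref{thm: main} (and Lemma \ref{blow down}) applies so that $W(L)$ coincides with the broken-line superpotential to which the cited quantum period theorems apply.
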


\subsection{Further examples} 

Below, we use the new machinery developed in this paper to recover classical examples in earlier works of Auroux, and provide a tropical explanation of the calculations.

\subsubsection*{\textnormal{(i)} $(\mathbb{P}^2,D)$ with $D=C \cup L$} We first look into the example of $Y\cong\mathbb{P}^2$ with the anticanonical divisor given by $D=C \cup L$, where $C$ is a conic, and $L$ is a line $L$ in $Y$. The corresponding mirror potential was first computed in \cite[Proposition 5.5, Proposition 5.8]{A}.
On the other hand, $(Y,D)$ viewed as a Looijenga pair, the associated non-toric blowup $(\widetilde{Y},\widetilde{D})$ and its toric model $(\bar{Y},\bar{D})$ are computed in  \cite[Example 2.9]{HK}, which we now explain. 

 \begin{figure}[h]
	\begin{center}
		\includegraphics[scale=0.5]{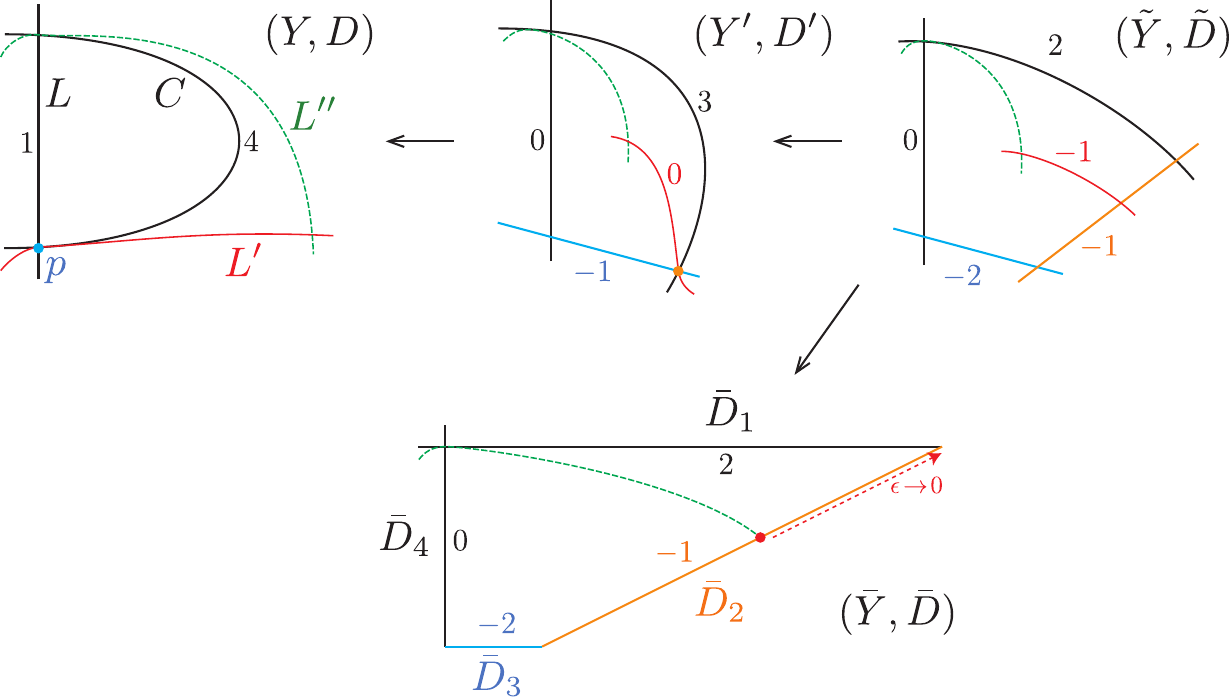}
		\caption{Toric model for (a toric blowup of) the pair $(\mathbb{P}^2, D)$}
		\label{fig:toricmodelp2c}
	\end{center}
\end{figure}

Let $p$ be one of the nodes in $D$, and $L'$ the tangent of $C$ at $p$. We take the blowup $Y'\cong \mathbb{F}_1\rightarrow Y$ of $Y$ at $p$, and write $D'$ for the preimage of $D$. Then $\widetilde{Y}$ is obtained as the blowup of $Y'$ at the intersection of the proper transform of $C$ and the exceptional divisor for $Y' \to Y$. We denote the preimage of $D'$ by $\widetilde{D}$. 
The proper transform of $L'$ is now a $(-1)$-curve in $\widetilde{Y}$, and its blow down gives rise to the Hirzebruch surface $\bar{Y}\cong \mathbb{F}_2$. The push-forward $\bar{D}$ of $\widetilde{D}$ is the toric boundary divisor (see Figure \ref{fig:toricmodelp2c}). One may choose the fan of $\bar{Y}$ whose $1$-cones are spanned by $v_1=(0,-1)$ ,$v_2=(-1,2)$, $v_3=(0,1)$ and $v_4=(1,0)$.	
 We will denote by $\bar{D}_i$ the component of the toric boundary divisor normal to $v_i$.

Given a moment fibre $\bar{L}$ of $\bar{Y}$, one has the basic disc class $\beta_i\in H_2(\bar{Y},\bar{L})$ for each $\bar{D}_i$, which intersects $\bar{D}_i$ exactly once. The superpotential $W_{\mathbb{F}_2}$ for $\mathbb{F}_2$, however, has more contributions than those from $\beta_i$'s. It is well-known by, for e.g., \cite{A, CL, FOOO9}
   \begin{align*}
      W_{\mathbb{F}_2}=t^{\beta_1}y^{-1}+t^{\beta_2}x^{-1}y^2+(1+t^{[\bar{D}_3]})t^{\beta_3}y+t^{\beta_4}x
   \end{align*} where we set $t^{\beta}:=T^{\omega(\beta)}$ in order to keep track of the relative classes easily. Notice the sphere bubble contributions as seen from the term $t^{[\bar{D}_3] + \beta_3}y$.
   
       \begin{figure}[h]
	\begin{center}
		\includegraphics[scale=0.45]{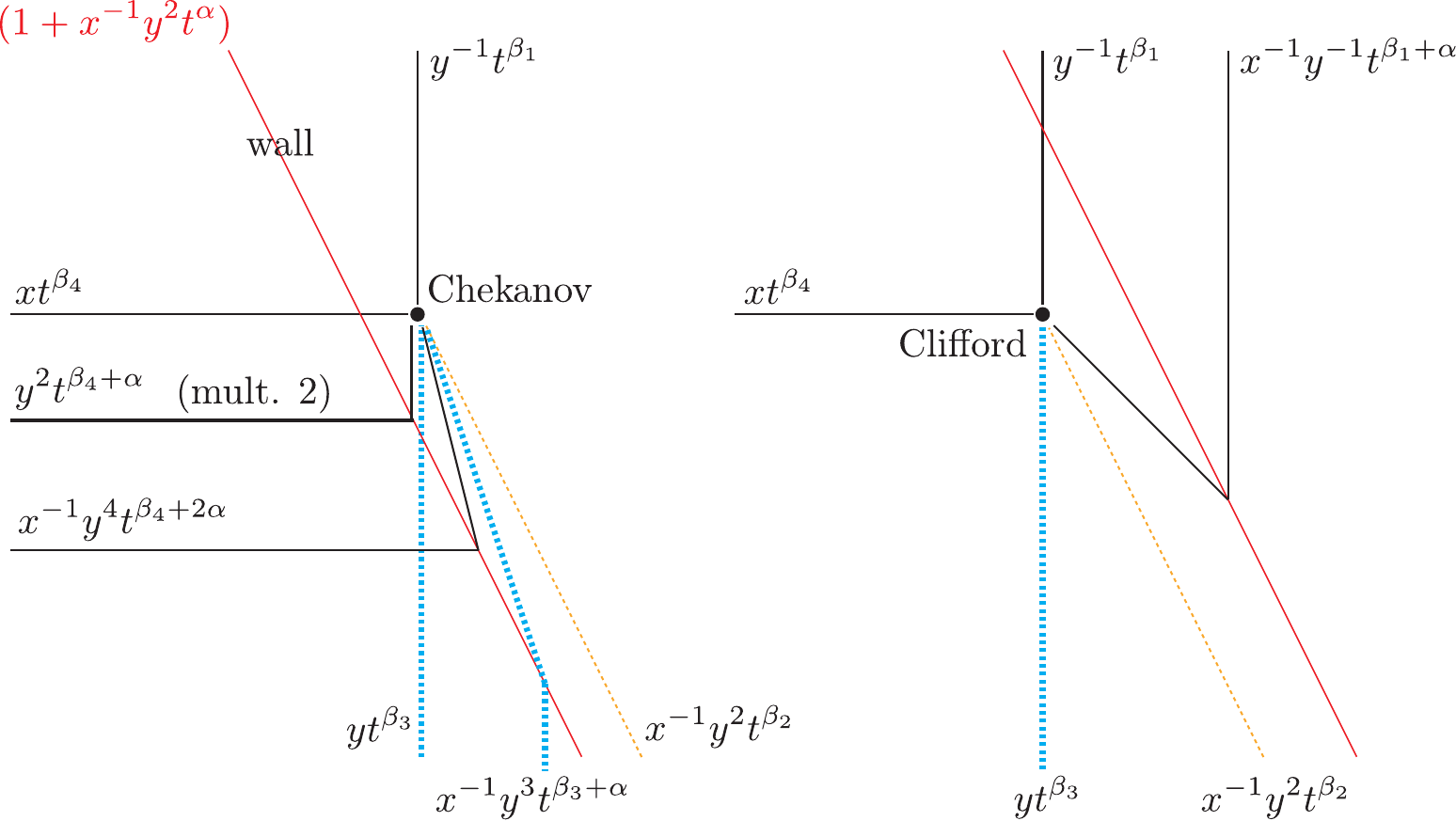}
				\caption{Broken line counts on the two chambers for the toric model of $(\mathbb{P}^2,D)$ in Figure \ref{fig:toricmodelp2c}.\textsuperscript{a} Dashed are the contributions that disappear after blowdowns to $Y$. The broken lines with infinity ends parallel to the negative $y$-axis survive until $Y'$. On $\widetilde{Y}$, one also has $t^{[\bar{D}_3]} y t^{\beta_3} $ and $t^{[\bar{D}_3]} x^{-1} y^3 t^{\beta_3}$ in addition to these, due to sphere-bubbles.}
  \small\textsuperscript{$^{\textnormal{a}}$We use $\beta_i$ to denote the proper transform of the relative class for simplicity.} 
				\label{fig:brlines}
	\end{center}
\end{figure}
    
   Applying Theorem \ref{correspondence: scattering diagram}, the blowup introduces a single ray (wall) for the scattering diagram $\mathfrak{D}^{LF}$ for $(\widetilde{Y},\widetilde{D})$. Let us denote by $\alpha$ the Maslov zero disc class corresponding to the ray.  By Theorem \ref{thm: main}, the superpotential in each chamber can be computed by counting broken lines as illustrated in Figure \ref{fig:brlines}. The sphere bubble contributions are omitted in the figure, which are $t^{[\bar{D}_3]}$ multiplies of the broken lines with infinity ends parallel to the negative $y$-axis. 
   
   During the sequence of blowdowns $\widetilde{Y}\rightarrow Y'\rightarrow Y$, geometry of the complement of the boundary divisor is unchanged, and hence, the scattering diagram remains the same. Finally, we can derive the superpotentials for $(Y',D')$ and $(Y,D)$ with help of Lemma \ref{blow down}. See Figure \ref{fig:brlines} for the precise terms in the potential on each chamber.

Observe that the two superpotentials are exactly those for the Clifford torus and the Chekanov torus in $\mathbb{P}^2$ after a suitable coordinate change. 
The reason that the two chambers support Clifford and Chekanov tori can be explained as follows. When the blowup point $\widetilde{Y} \rightarrow \bar{Y}$ is approaching $\bar{D}_1\cap \bar{D}_2$, then the unique ray in the scattering diagram is shifting upward. At the same time, the conic $C$ degenerates into the union of $L'$ and $L''$, where $L''$ is the tangent of $C$ at the node of $D$ other than $p$. The resulting anticanonical divisor $L\cup L'\cup L''$ is the toric boundary of $Y\cong \mathbb{P}^2$.

	\begin{remark}
	In the above procedure, we do not need to know the precise contributions of the sphere bubble configurations in $\mathbb{F}_2$. For our purpose, it suffices to start with
	  \begin{align*}
	      W_{\mathbb{F}_2}=t^{\beta_1}y^{-1}+t^{\beta_2}x^{-1}y^2+(1+\sum_k a_kt^{k[\bar{D}_3]})t^{\beta_3}y+t^{\beta_4}x,
	  \end{align*} 
	  with some unknown $a_k\in \mathbb{Q}$. Any term involving $t^{[\bar{D}_3]}$ eventually vanishes in the superpotential for $(Y,D)$ since the corresponding discs turn out to have higher Maslov indices in $Y$. Once we computed the superpotential for $(\mathbb{P}^2,C\cup L)$, we can derive the superpotential of $(\mathbb{F}_2,\bar{D})$ reversing the procedures in the previous example. In particular, we see $a_1=1$ and all other $a_k$ vanish.  
\end{remark}

\subsubsection*{\textnormal{(ii)} The third Hirzebruch surface $\mathbb{F}_3$ (non-Fano)} 
	Auroux  \cite[3.2]{A5} (see Figure 4 therein) constructed special Lagrangian fibrations on $Y'\setminus D'$, where we the pair $(Y', D')$ is the same as in the previous example. There are two chambers in the base of the fibration such that fibres over the points in the same chambers have the same superpotential (with suitable correction of symplectic flux). 
	
	Fibres over one chambers are Hamiltonian isotopic to Clifford tori, and the other ones are Hamiltonian isotopic to Chekanov tori. Auroux further showed that on the Chekanov side, the superpotential for $(Y',D')$ matches the superpotential for the moment fibres of $\mathbb{F}_3$. Therefore, the latter can be read off from broken lines in (left) Figure \ref{fig:brlines}. The difference from the previous example of $Y$ is that we need to take into account two more broken lines whose infinity ends are parallel to the negative $y$-axis. Hence, in our choice of coordinates, the potential for $\mathbb{F}_3$ is given as 
	\begin{equation*}
	\begin{array}{ll}
	& x t^{\beta_4} + 2 y^2 t^{\beta_4 + \alpha} +x^{-1} y^4 t^{\beta_4 + 2 \alpha} + y t^{\beta_3} + x^{-1} y^3 t^{\beta_3 + \alpha } + y^{-1} t^{\beta_1} \\
	&= y t^{\beta_3}+  x t^{\beta_4} ( 1 + x^{-1} y^2 t^{\alpha})^2   + (y t^{-\beta_1})^{-1}  (1 + x^{-1} y^4 t^{\beta_3 - \beta_1 + \alpha} ).
	\end{array}
	\end{equation*}
It is not very difficult to match the above with equation (3.7) of \cite{A5} up to some corrections by flux (powers of $t$).

\subsubsection*{\textnormal{(iii)} Cubic surface} The mirror superpotentials of del Pezzo surfaces were written down by Galkin-Usnich \cite{GU}. 
In particular, there are 21 terms in the superpotential for a cubic surface. Therefore, Sheridan \cite{S12} conjectured that there are $21$ discs with boundaries on certain Lagrangian in a cubic surface, as the open analogue of the classical result of $21$ lines in a cubic surface. The conjecture was verified by Pascaleff-Tonkonog \cite{PT} and Vanugopalan-Woodward \cite{VW}. Here we provide a different method: consider the cubic surface $Y$ as the non-toric blowup at 6 points on $\bar{Y}=\mathbb{P}^2$. We will choose the blowup loci such that there are two points on each toric boundary divisors close to a corner and exactly two points in a small neighborhood of each corner. The corresponding GPS scattering diagram has a chamber in the middle and the $21$ broken lines with ends in the middle chamber are listed in Figure \ref{fig:cubic surface}. Then we recover the superpotential of a cubic surface from Theorem \ref{thm: main}.
\begin{figure}[h]
	\begin{center}
		\includegraphics[scale=0.5]{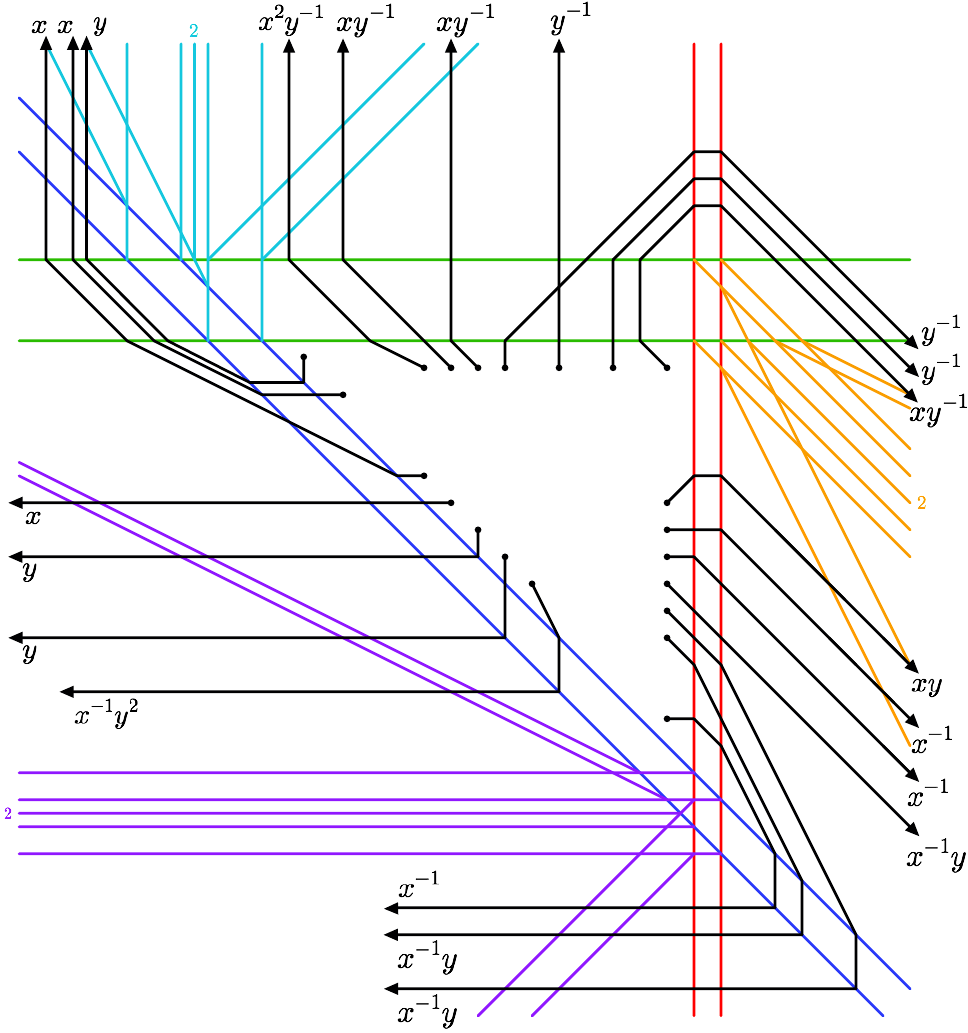}
		\caption{The $21$ broken lines in cubic surface with ends in the middle chamber.}
		\label{fig:cubic surface}
	\end{center}
\end{figure}
	
%

\section{Application to Mirror Symmetry for Rank $2$ Cluster Varieties}\label{sec:apptocluster}

In this section, we will apply Theorem \ref{correspondence: scattering diagram} to rank 2 cluster varieties. 
Here `rank 2' refers to the skew-symmetric form in fixed data (defined in the next section) are of rank 2. 
In this section, we are going to show that the quotients of the $\cA$ cluster varieties are mirror to the fibers of the Langlands dual $\cX$ cluster varieties. 

There have been many works on the mirror symmetry for cluster varieties in the literature.
From the algebro-geometric perspective, Keel-Yu \cite{keelyu} made use of Berkovich non-archimedean methods to construct mirror and compared their mirror algebra with the cluster algebra in \cite{GHKK}. 
Note that Keel-Yu assumed that the skew-symmetric forms are non-degenerate, while our skew-symmetric forms are of rank 2, i.e. we allow those which are degenerate. 
We further work on the general skew-symmetrizable case instead of skew-symmetric case. 
Mandel \cite{travis} showed that theta bases for cluster varieties are determined by descendant log Gromov-Witten invariants of the symplectic leaves of the Langlands dual cluster variety.
From the symplectic perspective, Gammage-Le \cite{ben} constructed a symplectic manifold from a given truncated cluster variety such that the homological mirror symmetry holds. Kim-Lau-Zheng \cite{KLZ} calculated the Lagrangian Floer potential for cluster varieties of type A, which supposedly provide their Landau-Ginzburg mirror.
The following discussion would be the first to start with the $\cA$ side and obtain mirrors in the Langlands dual dual of the $\cX$ side.  

\subsection{Cluster varieties} \label{sec:clusterdata}

Let us begin by defining the \emph{fixed data} $\Gamma$ for a pair of cluster varieties without frozen variables $(\cA,\cX)$, which consists of the following:
\begin{itemize} 
\setlength\itemsep{0em}
    \item a finite set $I$ with $|I|=n$; 
    \item a lattice $N$ of rank $ |I|=n$;
    \item a saturated sublattice $ N_{\mathrm{uf}} \subseteq N$ of rank $n$;
    \item a skew-symmetric bilinear form $\{ \cdot , \cdot \}: N\times N \to \Q$;
    \item a sublattice $N^{\circ}\subseteq N$ of finite index satisfying
    $\lbrace N_{\text{uf}}, N^{\circ}\rbrace \subset \Z  $ and $  \lbrace N,N_{\mathrm{uf}}\cap N^{\circ}\rbrace \subset \Z$;
    \item a tuple of positive integers $(d_i:i\in I)$ with greatest common divisor 1;
    \item $M=\Hom(N,\Z)$ and $M^{\circ}=\Hom(N^{\circ},\Z)$.
\end{itemize}

Given this fixed data, a $\emph{seed data}$ for this fixed data is $\seed := ( e_i \in N \mid i \in I)$,
where  $\{ e_i \}$ is a basis for $N$.
The basis for $M^{\circ}$ would then be $f_i = \frac{1}{d_i} e_i^*$ . 
Define the bilinear map $[\cdot, \cdot]:N\times N \to \Q$ as $[e_i,e_j]=\{e_i, d_je_j\}$, and $\langle \cdot, \cdot \rangle$ denotes the canonical pairing given by evaluation.

Define the map
\begin{align*}
	p^* &:N \rightarrow M^{\circ} \\
	n &\mapsto (N^{\circ} \ni n' \mapsto \{ n, n'\}). 
\end{align*}
We will then define $v_i = p^*(e_i) \in M^{\circ}$.
One can then associate the seed tori 
\[\cA_{\seed} = T_{N^{\circ}} = \Spec \Bbbk [M^{\circ}], \quad
\quad \cX_{\seed} = T_{M} = \Spec \Bbbk [N].\]
We will denote the coordinates as $X_i = z^{e_i}$ and $A_i = z^{f_i}$ and they are called the {cluster variables}. 
Similar to the definition of cluster algebras, there is a procedure, called \emph{mutation}, to produce a new seed data $\mu(\seed)$ from a given seed $\seed$ which is stated in \cite[Equation 2.3]{ghk_bir}. 
The essence is that we will obtain new seed tori $\cA_{\mu(\seed)}$, $\cX_{\mu(\seed)}$ from the mutated seed. 
We will consider the oriented rooted tree $\mathfrak{T}$ with $n$ outgoing edges from each vertex, labeled by the elements of $I$. Let $v$ be the root of the tree and then attach the seed $\seed$ to the vertex $v$. By associating each path in the tree with a sequence of mutation then we can then attach a seed to each vertex of $\mathfrak{T}$. 

Between the tori, there are birational maps, called $\mathcal{A}$-mutation and $\mathcal{X}$-mutation,
$\mu_{\cX}: \cX_{\seed} \dashrightarrow \cX_{\mu(\seed)}$, $\mu_{\cA}: \cA_{\seed} \dashrightarrow \cA_{\mu(\seed)}$ which are defined by pull-back of functions
\begin{align}
	\mu_{\cX,k}^* z^n &= z^n (1+z^{e_k})^{-[n , e_k]}, \label{eqn:Amut} \\
	\mu_{\cA,k}^* z^m &= z^m (1+z^{v_k})^{- \langle d_k e_k, m \rangle}, 
\end{align}
for $n \in N$, $m \in M^{\circ}$.
Note that those birational maps are basically the mutations of cluster variables at direction $k$ as in Fomin and Zelevinsky \cite{cluster1}. 

Let $\cA$ be an union of tori glued by $\cA$-mutations.
While one can define the cluster varieties as gluing of tori, cluster varieties are defined up to certain ambiguity in codimension 2 as follows:
\begin{defn} \cite{CMN}
	A smooth scheme $V$ is a cluster variety of type $\cA$ if there is a birational map $ \mu: V \dashrightarrow \cA$ which is an isomorphism outside codimension two subsets of the domain and range.
	We define a {\it{cluster variety of type $\cX$}} analogously. 
\end{defn}
By definition, the ring of regular functions of $\cA$ cluster varieties are upper $\cA$ cluster algebras $\mathrm{up}(\cA)$. 
Similarly the ring of regular functions of the $\cX$ cluster varieties are the (upper) $\cX$ cluster algebras $\mathrm{up}(\cX)$.
Note that by \cite[Theorem 3.14]{ghk_bir}, the canonical maps
\[
\cA \rightarrow \mathrm{Spec} (\mathrm{up} (\cA))
\]
are open immersions.

For the later discussion, we will note the following structure of the cluster $\cA$ and $\cX$ cluster varieties. 
Let $K = \ker p^* $\footnote{
Again as there is no frozen variables, $\ker p_2^* = \ker p^*$ if one follows the notation in \cite{ghk_bir}.}.
Then the inclusion $K \subseteq N$ gives a map $\cX_{\seed} \rightarrow T_{K^*}$. The map is compatible with mutation maps so that there is a canonical map 
\begin{align} \label{eq:Xfamily}
	\lambda: \cX \rightarrow T_{K^*}.
\end{align}
Let $\ell$ be the rank of the $p^*$ map. Then $\lambda$ is a flat family of $\ell$-dimensional schemes. The $\ell=2$ case is indicated in \cite[Section 5]{ghk_bir}. 

Similarly we consider the inclusion $K^{\circ} := K \cap N^{\circ} \hookrightarrow N^{\circ} $ induces a map of tori $T_{K^{\circ}} \rightarrow \cA_{\seed}$. This gives a torus action $T_{K^{\circ}} $ on $\cA_{\seed}$. 
The action is compatible with the mutations so that there is a canonical action of $T_{K^{\circ}}$ on $\cA$.

\subsubsection{Langlands dual}\label{subsubsec:Langlandsdual}

Consider a fixed data $\Gamma$.
The Langlands dual cluster varieties are defined with the Langlands dual fixed data ${}^L\Gamma$ as stated \cite[appendix A]{GHKK}. 
The fixed data ${}^L\Gamma$ is defined by having ${}^Ld_i= d_i^{-1}D$, where $D = \mathrm{lcm} (d_1, \dots d_n)$. The Langlands dual lattice is ${}^LN=N^{\circ}$, sublattice $({}^LN)^{\circ}= D \cdot N$, and the skew-symmetric form on ${}^LN$ as $\{ \cdot, \cdot\} = D^{-1} \{ \cdot, \cdot\}$. A seed data ${}^L \seed = ({}^Le_i =d_ie_i)$ gives a basis for ${}^LN$ and so the dual basis for ${}^L M= M^{\circ}$ is $\{ {}^L e_i^* = \frac{1}{d_i} e_i^*\}$. 
The basis for ${}^LM^{\circ} = \frac{1}{D} M$ can then be given by ${}^Lf_i = \frac{1}{{}^Ld_i} {}^L e_i^* = \frac{d_i}{D} \cdot \frac{1}{d_i} e_i^* = \frac{1}{D}  e_i^*$. One can similarly define the ${}^Lp^*$ map. Note that now ${}^L v_i = {}^Lp^*({}^Le_i) = \frac{1}{D} p^* ( d_i e_i) = \frac{d_i}{D}v_i$. 

Repeat the same procedure and associate the tori
\[{}^L \cA_{\seed} = T_{({}^L N^{\circ})} = \Spec \Bbbk [{}^L M^{\circ}], \quad
\quad {}^L \cX_{\seed} = T_{({}^L M)} = \Spec \Bbbk [{}^L N].\]

Again there are birational maps between the tori,
$\mu_{{}^L \cX}: {}^L \cX_{\seed} \dashrightarrow {}^L \cX_{\mu(\seed)}$, $\mu_{{}^L \cA}: {}^L \cA_{\seed} \dashrightarrow {}^L \cA_{\mu(\seed)}$ which are defined by pull-back of functions
\begin{align*}
\mu_{{}^L \cX,k}^* z^{{}^L n} &= z^{{}^L n} (1+z^{{}^Le_k})^{-{}^L[{}^Ln , {}^Le_k]}, \\
\mu_{{}^L \cA,k}^* z^{{}^Lm} &= z^{{}^Lm} (1+z^{{}^Lv_k})^{- \langle {}^Ld_k {}^Le_k, {}^Lm \rangle}, 
\end{align*}
for ${}^Ln \in {}^LN$, ${}^Lm \in {}^LM^{\circ}$.
Let us unfold the mutation maps and express it in terms of the fixed data $\Gamma$: 
\begin{align}
\mu_{{}^L \cX,k}^* z^{{}^L n} &= z^{{}^L n} (1+z^{d_k e_k})^{-{}^L[{}^Ln , d_k e_k]}, \label{eqn:LangX}\\
\mu_{{}^L \cA,k}^* z^{{}^Lm} &= z^{{}^Lm} (1+z^{{}^Lv_k})^{-\langle De_k, {}^Lm \rangle}.
\end{align}

One can then define the Langlands dual cluster varieties ${}^L\cA$ and ${}^L\cX$ by repeating the same gluing construction. 

The mirror conjecture is that $\cA$ and $ {}^L\cX$ are mirror to each other.
We will give a rough digression here. Recall the $\cA$ and ${}^L \cX$ mutation maps as in \eqref{eqn:Amut} and \eqref{eqn:LangX}:
\begin{align*}
\mu_{\cA,k}^* z^m &= z^m (1+z^{v_k})^{- \langle d_k e_k, m \rangle}, \\
\mu_{({}^L \cX),k}^* z^{{}^L n} &= z^{{}^L n} (1+z^{d_k e_k})^{-{}^L[{}^Ln , d_k e_k]}.
\end{align*}
While the notation may be confusing, we recall that ${}^L X_i = z^{d_ie_i}$. 
Let us illustrate the duality with a more explicit example. 
Consider a fixed data with a dimension 2 lattice $N$, skew-form
$\begin{pmatrix}
	0 & 1\\
	-1 & 0
\end{pmatrix},$
and $d_1=k$, $d_2=l$, where $k$ and $l$ are coprime. Then the mutation function of $\mu^*_{\cA}$ are
\[
1+A_1^{-k} \, \text{ and } \ 1+A_2^{l}.  
\]
On the other hand, the mutation functions for the $\cX$ Langlands dual $\mu_{{}^L\cX}$ are
\[
(1+{}^LX_1)^l \, \text{ and } (1+{}^LX_2)^k. 
\]
Note there are interchanging of exponent vectors. 
We are going to see the geometric reason later. 

\subsubsection{Cluster scattering diagrams} \label{sec:clusterD}
Gross-Hacking-Keel-Kontsevich \cite{GHKK} constructed bases for cluster algebras by using ideas in the Gross-Hacking-Keel mirror construction. 
The gluing construction of cluster varieties can be seen as gluing tori associated to each chamber of the scattering diagrams and then the collections of theta functions give the bases for cluster algebras. 

We start by first defining the $\cA_{\mathrm{prin}}$ cluster scattering diagrams.   
The fixed data for cluster algebras with principal coefficients can be obtained by `doubling' the original fixed data, i.e. considering the lattice $\widetilde{N} = N \oplus M^{\circ}$ as described in \cite[Construction 2.11]{ghk_bir} and notation as in \cite{CMN}. 
The initial scattering diagram is of the form
\[
\fD_{in, \seed}^{\cA_{\mathrm{prin}}} := \left( (e_i, 0)^{\perp} , 1+z^{ \tilde{p}^*\left( (e_i,0) \right)}\right). 
\]
The $\fD^{\cA_{\mathrm{prin}}}$ scattering diagrams are the consistent scattering diagrams containing $\fD_{in, \seed}^{\cA_{\mathrm{prin}}}$
 \footnote{There is an unfortunate sign switch between the setups in GPS \cite{GHK} and cluster \cite{GHKK} scattering diagrams. It would be resulted in the flipping scattered walls but not the initial walls of the diagram. Also the resulting algebras given by the collection of theta functions would be the same. }. 
Theta functions are then defined on the cluster scattering diagrams which are similar to the one in \eqref{eqn:thetadinbeta}. One can refer to \cite[Section 3]{GHKK} for the precise definition. 

Let $\mathrm{can}(V)$ be the vector spaces generated by the theta functions for $V=\cA, \cX$, and let $\mathrm{mid}(V)$ be the algebra generated by those theta functions which are polynomials (see \cite[Definition 7.2]{GHKK} for more precise definition). 
Note that $\mathrm{can}(V)$ is not an algebra in general. 
Then it is said that the full Fock–Goncharov conjecture holds when $\mathrm{mid}(V)=\mathrm{up} (V) = \mathrm{can} (V)$.
The existence of positive polytope \cite[Proposition 8.17]{GHKK} give algebra structures to $\mathrm{can}(V)$.
With the enough global monomial condition satisfying,  $\mathrm{can}(V)$ is finitely generated $\Bbbk$-algebras.
and the Fock-Goncharov conjecture holds. 
 
Note that even though the walls of $\fD^{\cA_{\mathrm{prin}}}$ rely on a choice of the seed $\seed$; however, Gross-Hacking-Keel-Kontsevich \cite{GHKK} showed that the construction is mutation invariant, i.e., the algebras generated by the collections of theta functions are the same.

The `$\cA$ and $\cX$ scattering diagrams' are the diagrams such that we can compute the corresponding theta functions. 
The $\cA$ scattering diagrams is the image of $\fD^{\cA_{\mathrm{prin}}}$ under the projection map $\rho^T: {}^L\cX_{\mathrm{prin}}(\Z^T) = M^{\circ} \times N \rightarrow  {}^L\cX(\Z^T) =M^{\circ}$, $(m,n) \mapsto m$.
On the other hand, the $\cX$ theta functions are defined from broken lines which lie in the slice of the scattering diagram $\fD^{\cA_{\mathrm{prin}}}$ intersected by the subspace
\begin{align} \label{eq:slices}
\left\{ \left.(m,n) \in \widetilde{M}^\circ_\R \right| m=p^*(n) \right\} \subseteq \widetilde{M}^\circ_\R, 
\end{align}
which we will call the diagrams on the slices as the $\cX$ scattering diagrams $\fD^{\cX}_{\seed}$.

On the other hand, the scattering diagrams for the Langlands dual cluster algebras are defined in the same way. The initial walls of the ${}^L \cX$ scattering diagrams will be stated in later discussion.  The ${}^L \cX$ scattering diagrams can be obtained by the slicing, similar to \eqref{eq:slices}, in the ${}^L \cA_{\mathrm{prin}}$ scattering diagram. 
One can also see the initial wall functions by using \eqref{eqn:LangX}:
\begin{align} \label{eq:LangXdiag}
\bigg( \fd = \{ ( p^*({}^Ln) , {}^L n) \mid {}^L p^*({}^Ln)  \in d_ie_i^{\perp}  \}  , \, (1+z^{d_i e_i})^{\mathrm{ind} v_i} \bigg) ,
\end{align}
where $\mathrm{ind} v_i$ denotes the index for $v_i$ in ${}^LM = M^{\circ}$. With a change of variables, we can see the scattering diagrams lie in $ {}^LN_{\R}= N^{\circ}_\R$.

\subsubsection{Toric models for cluster varieties}

In this section, we are going to describe the relation between the cluster varieties and the blowups of toric varieties \cite{ghk_bir}. 
Let $\seed$ be a seed and consider the fans
\begin{align}
\Sigma_{\seed, \cA} &:= \{ 0 \} \cup \{ \R_{\geq 0} d_i e_i | i \in I \} \subseteq N^{\circ}, \label{eqn:Afan} \\
\Sigma_{\seed, \cX} &:= \{ 0 \} \cup \{- \R_{\geq 0} d_i v_i | i \in I \} \subseteq M. 
\end{align}
We can then consider the toric varieties $\TV_{\seed, \cA}$, $\TV_{\seed, \cX}$ associated to the fans respectively. 
Let $D_i$, $i \in I$, be the toric divisor corresponding to the one-dimensional ray of the fan. 
Then we consider the closed subschemes
\begin{align*}
Z_{\cA, i} &: = D_i \cap \overline{V} (1+z^{v_i}) \subseteq \TV_{\seed, \cA}, \\
Z_{\cX, i} &: = D_i \cap \overline{V} \left( (1+z^{e_i})^{\mathrm{ind}d_i v_i} \right) \subseteq \TV_{\seed, \cX},
\end{align*}
where $\mathrm{ind} d_i v_i$ denotes the greatest degree of divisibility of $d_i v_i$ in $M$. 
We will denote $(\widetilde{\TV}_{\seed, \cA}, D)$ (and $(\widetilde{\TV}_{\seed, \cX}, D)$) be the blowups of $\TV_{\seed, \cA}$ along $Z_{\cA, i} $ over all $i$ (the blow up of $\TV_{\seed, \cX}$ along $Z_{\cX, i}$ over all $i$ respectively), where $D$ is the proper transform of the toric boundaries. 
As the construction for the $\cA$ and $\cX$ are parallel to each other, we will denote $\cA$ or $\cX$ as $V$.
Define $U_{\seed, V} = \widetilde{\TV}_{\seed, V} \setminus D$. 
We will further consider $U'_{\seed, V} \subset V$, the union of the tori $V_{\seed}$ and $V_{\mu_i(\seed)}$, for all $ i \in I$. 

Note that the construction of $U_{\seed, V} $
may appear to depend on a choice of seed data. Gross-Hacking-Keel
\cite[Lemma 3.8 (1)]{ghk_bir} showed that for a seed $\seed_w$ and a mutated seed $\mu_i(\seed_w)$, then one has $U_{\seed_w, V}$ and $U_{\mu(\seed_{w}), V}$ are isomorphic outside codimension two for $V = \cA_{\mathrm{prin}}$, $\cX$. 
For the $\cA$ case, they are still  isomorphic outside codimension two if the seed $\seed_w$ is coprime (\cite[Definition 3.7]{ghk_bir}).

\subsection{Mirror symmetry for rank 2 cluster varieties}

Gross-Hacking-Keel \cite{ghk_bir} indicated the Looijenga pairs constructed in \cite{GHK} agree with the cluster $\cX$ varieties in the rank $2$ case.
In this section, we will investigate the $T_{K^{\circ}}$ action on $\cA$ in the rank $2$ case.

Fix a seed $\seed = (e_i)$. We will assume without loss of generality that $p^*(e_i) \neq 0$, and for  $i \neq j$, $p^*(e_i)$ and $p^*(e_j)$ are not scalar multiples of each other. 
We will consider the lattice quotient by the kernel of $p^*$ map. If $p^*(e_i) = p^*(e_j)$, we can consider a lower rank lattice and then increase the degree of the blow-ups. The case when $p^*(e_i) = \alpha p^*(e_j)$ for some $\alpha \neq 1$ are similar but require more subtle combinatorics.

Consider a choice of splitting of $N \cong N/K \oplus K$ and this induces a dual splitting $M = K^{\perp} \oplus K^*$. 
This gives us $M^{\circ} = (K^{\circ})^{\perp} \oplus (K^{\circ})^*$.
Then we define the rank two lattices:  $\bar{N} = N/K$, $\bar{N}^{\circ} = N^{\circ} / K^{\circ}$,  $\bar{M}= M/ K^{*} = K^{\perp}$ and $\bar{M}^{\circ}= M^{\circ}/ (K^{\circ})^* $.
We further consider the descended $p^*$ map
\begin{align*}
\bar{p}^* : \bar{N} \rightarrow \bar{M}^{\circ}.
\end{align*}
Let $\bar{e}_k \in \bar{N}$, $\bar{v}_k \in \bar{M}^{\circ}$ be the image of $e_k$, $v_k$ respectively in the quotient spaces. 
By construction, $\bar{v}_k = \bar{p}^* (\bar{e}_k)$ for all $k$. 
Therefore, we still have $\langle \bar{e}_k , \bar{v}_k \rangle = 0$. 

Recall the inclusion $K^{\circ} \hookrightarrow N^{\circ}$ gives us the $T_{K^{\circ}}$ action on $\cA_{\seed}$. 
Note that the torus action commutes with the mutation of seed data, i.e. for $k \in I$, the following diagram commutes
\[ \begin{tikzcd}
T_{K^{\circ}} \arrow{r} \arrow{d}{=} & \cA_{\seed} \arrow[dashrightarrow]{d}{\mu_k} \\
T_{K^{\circ}} \arrow{r}& \cA_{\mu_k(\seed)}
\end{tikzcd}
\]

Consider the tori $\cA_{k, \seed} = \mathrm{Spec} \Bbbk [\bar{M}^{\circ}] \cong \cA_{\seed} / T_{K^{\circ}} $. 
Since the $T_{K^{\circ}}$ action on $\cA_{\seed}$ commutes with mutation, we can define the birational maps
\begin{align} \label{eq:quotientmut}
	\bar{\mu}_k: \cA_{k', \seed} \dashrightarrow \cA_{k', \mu_k(\seed)} 
\end{align}
via
\begin{align*}
	\bar{\mu}_k^* (z^{\bar{m}}) = z^{\bar{m}} \left( 1+ z^{\bar{v}_k}\right) ^{-\langle d_k \bar{e}_k , \bar{m} \rangle},
\end{align*}
for $\bar{m} \in \bar{M}^{\circ}$.
Hence, by \cite[Proposition 2.4]{ghk_bir} we can define the scheme $\cA_k$ as the scheme by gluing all $\cA_{k, \mu_k(\seed)} $ via the mutation map $\bar{\mu}_k$.

Recall we assume that for $i \neq j$, $p^*(e_i)$ and $p^*(e_j)$ are not scalar multiple of each other in the beginning of this section. 
Note that this is not an extra condition. 
If $p^*(e_i)$ and $p^*(e_j)$ are scalar multiple of each other for some $i \neq j$, then $e_i -\alpha e_j \in K$ for some scalar $\alpha$, i.e. $\bar{e}_i = \alpha \bar{e}_j$. Hence by \eqref{eq:quotientmut} mutation at index $i$ would be
\begin{align} \label{cluster transf}
\bar{\mu}_i^* (z^{\bar{m}}) = z^{\bar{m}} \left( 1+ z^{\alpha\bar{v}_j}\right) ^{\alpha\langle \bar{e}_j , \bar{m} \rangle}.
\end{align}
Hence $\bar{\mu}_i,\bar{\mu}_j$ have the same mutating directions, in the sense that the function factor on the right hand side of \eqref{cluster transf} and that of the $\bar{\mu}_j^*$ can be polynomials of the same variable.
We can then reduce the rank of the cocharacter lattice in the fixed data $\Gamma$ and combine the mutations by multiplying them together.
One may have noticed that the mutation formulas have changed; however, they still satisfied the cluster or generalized cluster structure as in \cite{origgeneralized}. Further, their scattering diagrams and theta functions are developed in \cite{generalized}. 

Next we would describe the scheme $\cA_k$ as blowup of toric surfaces by following \cite[Construction 3.4]{ghk_bir}. 
Define $\overline{\Sigma}_{\seed, \cA_k} $ as the quotient of the fan $\Sigma_{\seed, \cA}$ in \eqref{eqn:Afan} as
\[
\overline{\Sigma}_{\seed, \cA_k} = \{ 0\} \cup \{ \R_{\geq 0} d_i \bar{e}_i\} \subseteq \bar{N}^{\circ }.
\]
Note that $ \R_{\geq 0} d_i \bar{e}_i \neq  \R_{\geq 0} d_i \bar{e}_j$ for $i\neq j$ from our assumption. 
Let $\mathrm{TV}_{\overline{\Sigma}_{\seed, \cA_k}}$ be the toric variety associated to the fan $\overline{\Sigma}_{\seed, \cA_k}$. 
We denote $\bar{D}_i$ to be the toric boundary divisor of $\mathrm{TV}_{\overline{\Sigma}}$ corresponding to the ray generated by $d_i\bar{e}_i\in \overline{\Sigma}$.  
Let $\bar{v}_i$ be the image of $v_i$.
Note that $\bar{v}_i \neq 0$ as it is in the image of the $p^*$ map. 

For $i=1, \dots, n$, consider the closed subscheme
\[
\bar{Z}_i = \bar{D}_i \cap \bar{V} \left( (1+z^{\bar{v}_i})^{\mathrm{ind} \, d_i \bar{e}_i} \right),
\]
where $\mathrm{ind}\, d_i \bar{e}_i$ denotes the greatest degree of divisibility of $d_i \bar{e}_i$ in $\bar{N}^{\circ}$. 
Note that even though $d_i e_i$ are primitive in $N^{\circ}$, however, they may not be primitive in the quotient lattice $\bar{N}^{\circ}$. 
We will then consider the blow up $\widetilde{\mathrm{TV}}_{\overline{\Sigma}_{\seed, \cA_k}}$ at $\bar{Z}_i$'s.
Let $\widetilde{D}_i$ be the proper transform of the toric boundaries $\bar{D}_i$ and define $U_{\seed, \cA_k} :=  \widetilde{\mathrm{TV}}_{\overline{\Sigma}_{\seed, \cA_k}} \setminus \cup_i \widetilde{D}_i$. 
We further define $U_{\seed, \cA_k}'$ as the union of the tori $ \cA_{k, \seed} $ and $ \cA_{k, \mu_k( \seed)}$ for $i \in I$. 
Note that the condition $p^*(e_i)$ and $p^*(e_j)$ not scalar multiple of each other for $i \neq j$, so we can apply both \cite[Lemma 3.5(i)]{ghk_bir} and \cite[Lemma 3.6]{ghk_bir}.
Hence by \cite[Lemma 3.5(i)]{ghk_bir}, $U_{\seed, \cA_k}$ and $U_{\seed, \cA_k}'$ are isomorphic outside a codimension two set. 
We also have $U_{\seed, \cA_k}$ and $U_{\mu(\seed), \cA_k}$ isomorphic outside a codimension two set by \cite[Lemma 3.6]{ghk_bir}.

\subsubsection{The GPS scattering diagrams associated to rank 2 cluster $\cA$ varieties}

We can now run the machinery discussed in Section \ref{subsec:GPS}. 
First we would like to associate $\widetilde{\mathrm{TV}}_{\overline{\Sigma}_{\seed, \cA_k}}$ with a scattering diagram which the walls carry enumerative interpretation.   
Denote the fan $\overline{\Sigma}=\overline{\Sigma}_{\seed, \cA_k}$.
Let $\bar{\Sigma}'$ be a complete fan containing $\overline{\Sigma}$ and $\TV_{\bar{\Sigma}'}$ be the corresponding toric surface. 
Then we have $\TV_{\bar{\Sigma}} \hookrightarrow  \TV_{\bar{\Sigma}'} $. 
We denote the Gross-Pandharipande-Siebert scattering diagram for the blow up $\widetilde{\TV}_{\bar{\Sigma}'}$ of $\TV_{\bar{\Sigma}'}$ at the proper transform of $Z_i$ by $\mathfrak{D}^{GPS}_{\overline{\Sigma},\seed}$. 
Note that the additional divisors associated to $\bar{\Sigma}' \setminus \bar{\Sigma}$ do not intersect the $Z_i$'s. 
Hence the $\mathbb{A}^1$-curve countings in the wall-functions of $\mathfrak{D}^{GPS}_{\overline{\Sigma},\seed}$ are independent of the choice of $ \bar{\Sigma}' \supseteq \bar{\Sigma}$.
We would then switch back to denote $\bar{\Sigma}$ as the complete fan containing $\overline{\Sigma}_{\seed, \cA_k} $. 

We are going to define the initial scattering diagram associated to $ \mathrm{TV}_{\overline{\Sigma}_{\seed, \cA_k}}$. 
First note that all the initial walls will pass through the origin. 
Denote $\bar{v}_i = (\mathrm{ind } \, \bar{v}_i ) q_i$, where $q_i$ is primitive in $\bar{N}^{\circ}$.
The reason is the blow-up points are at the zero locus of the equations $1+ z^{\bar{v}_i}$. Hence we have $z^{ \bar{v}_i} = -1$ which then means $|z^{q_i}| = 1$. If one uses $z_1,z_2$ as the toric coordinates on $(\mathbb{C}^*)^2\subseteq TV_{\bar{\Sigma}_{s,\mathcal{A}_k}}$, then there exist $a,b\in \mathbb{Z}$ such that $z^{q_i}=z_1^az_2^b$. Therefore, under the $\mbox{Log}$-map, the wall induced by the non-toric blow up at any point on $\{|z^{q_i}=1|\}\cap \bar{D}_i$ is a line passing through the origin. 

The wall functions from one point blowups are divided into two cases: $d_i \bar{e}_i$ is primitive or not primitive in $\bar{N}^{\circ}$.
If $d_i \bar{e}_i$ is primitive, we can apply the discussion in Section \ref{subsec:GPS} to define the corresponding wall
\[
\left( \R \cdot d_i \bar{e}_i, 1+z^{d_i\bar{e}_i} \right).
\]
In Section \ref{subsec:GPS}, the associated wall functions are of the form $1+t^{-[E_{ij}]}z^{m_i}$. Here we put $t^{-[E_{ij}]}=1$ in the cluster setting. 

If $d_i \bar{e}_i$ is not primitive, i.e. $\mathrm{ind } \, d_i \bar{e}_i >1$, the corresponding blow-up would be a  $(\mathrm{ind } \, d_i \bar{e}_i )$-orbifold blowup of the toric variety along the points $\bar{D}_i \cap V(1+z^{d_i\bar{e}_i})$. 
We can apply \cite[Theorem 5.6]{GPS} to associate the wall
\[
\left( \R \cdot q_i, 1+z^{(\mathrm{ind } \, d_i \bar{e}_i ) q_i} \right)=
\left( \R \cdot d_i \bar{e}_i, 1+z^{d_i\bar{e}_i} \right)
\]
is again in the same form. 

Roughly speaking, the construction in Section \ref{subsec:GPS} is done by associating each point of blowing-up with an initial wall.  
Let us work over $\C$ or any algebraically closed characteristic 0 fields.
For each $i$, the subscheme $\bar{D}_i \cap \bar{V} \left( 1+z^{\bar{v}_i} \right)$ consists of $\mathrm{ind} \bar{v}_i$ many distinct points. From the discussion above, to each point in the subscheme $\bar{D}_i \cap \bar{V} \left( 1+z^{\bar{v}_i} \right)$, we associate the wall
\[
\left( \R \cdot d_i \bar{e}_i, 1+z^{d_i\bar{e}_i} \right).
\]
Hence the initial walls associated to the blowup loci $Z_i$ are
\[
\left( \R \cdot d_i \bar{e}_i, (1+z^{d_i\bar{e}_i})^{\mathrm{ind} \bar{v}_i} \right).
\]

Therefore, the initial scattering diagram associated to $ \widetilde{\mathrm{TV}}_{\overline{\Sigma}_{\seed, \cA}}$ would be
\begin{align} \label{eq:GPSscatt}
\fD^{{GPS}}_{\mathrm{in}, \overline{\Sigma}, \seed} =\left\{ \left(\R \cdot d_i \bar{e}_i, (1+z^{d_i\bar{e}_i})^{\mathrm{ind} \bar{v}_i}  \right) \mid i = 1 \dots n \right\}.
\end{align}
Let $\fD^{{GPS}}_{\overline{\Sigma},\seed} $ be the consistent scattering diagram containing $\fD_{\mathrm{in}, \seed}$. 
As described in \eqref{eq:gps}, the wall functions of  $\fD^{{GPS}}_{\overline{\Sigma},\seed} $ would be expressed in terms of log Gromov-Witten invariants. 

\subsubsection*{Matching the cluster scattering diagrams}

Now we would like to obtain the scattering diagrams \eqref{eq:GPSscatt} from the cluster viewpoint. 

Recall when the $p^*$ map is not injective, there is a morphism $\lambda: \cX \rightarrow T_{K^*}$ as stated in \ref{eq:Xfamily}. 
Let $X=U'_{\cX}$ denotes the open subset of $\cX$ obtained by gluing the seed tori $\cX_{\seed}$ and $\cX_{\mu_{k}(\seed)}$, for all $k$. 
The morphism $\lambda$ descends to $\lambda: X \rightarrow T_{K^*}$.
In the discussion above, we see that there is a flat family of surface $\mathcal{Y} \rightarrow T_K^*$ obtained by blowing up toric varieties such that $\mathcal{Y} \setminus
\mathcal{D}$ are isomorphic to $U'_{\cX}$ up to codimension two, where $\mathcal{D}$ is the proper transform of the toric boundary under the blowup. 
Gross-Hacking-Keel \cite{ghk_bir} indicated that the family $\mathcal{Y} \rightarrow T_K^*$ is the universal family of Looijenga pairs constructed in \cite{GHK}. 
For $\phi \in T_K^*$, let $\cX_\phi$ denote the fibre over $\phi \in T_{K^*}$. 
The $\cX_{\phi}$ scattering diagram $\fD^{X_{\phi}}_{\seed}$ defined in \cite[Section 2.2.3]{CMN} can be described as the quotient of the $\fD^{\cX}_{\seed}$ diagram in $\bar{N} \otimes \R$. This construction is well-defined as laid out in \cite[Section 2.2.3]{CMN}. Besides, Zhou \cite{yang} also gave a description of quotient scattering diagrams. 

We will explicitly write down $\fD^{{}^L\cX_{e}}_{\seed}$, where $e$ is the identity of $T_{{}^LK^*}$. 
Recall ${}^LN = N^{\circ}$. 
With a choice of the splitting $N^{\circ} = \bar{N}^{\circ} \oplus K^{\circ}$ as in the earlier section, there is the inclusion
\begin{align*}
	K^{\circ} &\hookrightarrow \bar{N}^{\circ} \oplus K^{\circ} \\
	k &\mapsto (0,k). 
\end{align*}
Hence similar to the $\cA$ scattering diagrams defined from the $\cA_{\mathrm{prin}}$ scattering diagrams, 
we can consider a projection of the $\fD^{{}^L\cX}$ scattering diagram descended from $N^{\circ}_{\R} \rightarrow \overline{N}^{\circ}_{\R}$. 
Note that in the two-dimensional lattice $N^{\circ}$, the normal space to $(d_i \bar{e}_i )^{\perp}$ is the simply one-dimensional ray $\R d_i \bar{e_i}$. 
Combining with \eqref{eq:LangXdiag}, the initial ${}^L\cX_{e}$ scattering diagrams are of the form
\begin{align*}
\big( \R  \cdot d_i \bar{e_i} , \, (1+z^{d_i \bar{e}_i})^{\mathrm{ind} \bar{v}_i} \big). 
\end{align*}
The ${}^L\cX_{e}$ scattering diagrams are the consistent scattering diagrams containing the initial diagram. 
Similar to the discussion in cluster scattering diagram, if there exist positive polytopes and the enough global monomials conditions are satisfied, the vector space $\mathrm{can} ({}^L\cX_e)$ are finitely generated algebra. 
Then we have $\mathrm{mid} ({}^L\cX_e) = \mathrm{can} ({}^L\cX_e) = H^0 ({}^L\cX_e, \mathcal{O} )$.

Note that the initial ${}^L\cX_{e}$ scattering diagram is exactly the same as the Gross-Pandharipande-Siebert scattering diagram in \eqref{eq:GPSscatt}. 
In other words, we have 
 \begin{align}
    \mathfrak{D}^{GPS}_{\overline{\Sigma},\seed}\cong \mathfrak{D}^{{}^L\cX_{e}}_{{}^L\seed},
 \end{align} for every seed $\seed$. Since  $\mathfrak{D}^{{}^L\cX_{e}}_{{}^L\seed}$ and $\mathfrak{D}^{{}^L\cX_e}_{\mu({}^L\seed)}$ differ by mutation by \cite[Theorem 1.24]{GHKK}, we have   $\mathfrak{D}^{GPS}_{\overline{\Sigma},\seed}$ and  $\mathfrak{D}^{GPS}_{\overline{\Sigma},\mu(\seed)}$ are also differed by a mutation.
 
Let us recall the ideas of the Gross-Hacking-Keel \cite{GHK} mirror construction. 
Once the canonical scattering diagram in Section \ref{sec: can} is constructed, the mirrors are constructed as the spectrums of the algebras generated by the collections of theta functions. 
Further the algebras given by the canonical scattering diagrams and the Gross-Pandharipande-Siebert scattering diagrams are identified in \cite{GHK}. 
Now we see that the latter coincides with the ${}^L\cX_e$ scattering diagrams. 
Hence the mirror spaces are ${}^L\cX_e$. 

Now we are ready to combine Floer theory into this story. 
Theorem \ref{correspondence: scattering diagram} tells us that the limiting Lagrangian Floer scattering diagrams agree with the Gross-Pandharipande-Siebert scattering diagrams. Hence we showed that quotients of the $\cA$ cluster varieties are mirror to the fibers at $e\in T_{K^*}$ of the $\cX$ cluster varieties in the sense that the Floer theory of the $\cA$ cluster varieties determined the complex structure of the fibres at $e\in T_{K^*}$ of the $\cX$ cluster varieties.

\begin{theorem} \label{thm:cluster}
	Consider a cluster fixed data $\Gamma$ without frozen direction where the skew-symmetric form is of rank 2.  
	We further fix a seed data $\seed$. 
	Assume none of the $p^*(e_i)$ are scalar multiples of each other.
	Consider the quotient map $N^{\circ} \rightarrow N^{\circ}/K^{\circ}$ and let $\bar{e}_i$ be the image of $e_i$ under this quotient map. 
	Define 	\[
	\overline{\Sigma}_{\seed, \cA_k} = \{ 0\} \cup \{ \R_{\geq } d_i \bar{e}_i\} \subseteq \bar{N}^{\circ }.
	\]
	For $i=1, \dots , n$, let $v_i = p^*(e_i)$ and let $\bar{v}_i$ be the image of $v_i$ under the quotient map $M^{\circ} \rightarrow M^{\circ} / (K^{\circ})^*$. 
	Define
	\[
	\bar{Z}_i = \bar{D}_i \cap \bar{V} \left( (1+z^{\bar{v}_i})^{\mathrm{ind} \, d_i \bar{e}_i} \right),
	\]
	where $\mathrm{ind}\, d_i \bar{e}_i$ denotes the greatest degree of divisibility of $d_i \bar{e}_i$ in $\bar{N}^{\circ}$. 
	
	Let $\widetilde{\mathrm{TV}}_{\overline{\Sigma}_{\seed, \cA_k}}$ be the surface obtained by blowing up the subschemes $\bar{Z}_i \subseteq \mathrm{TV}(\overline{\Sigma}_{\seed, \cA_k})$. 
	Let $\widetilde{D}_i$ be the proper transform of the toric boundaries $\bar{D}_i$ and define $U_{\seed, \cA_k} :=  \widetilde{\mathrm{TV}}_{\overline{\Sigma}_{\seed, \cA}} \setminus \cup_i \widetilde{D}_i$. 
	Then $U_{\seed, \cA_k}$ is isomorphic to $\cA / T_{K^\circ}$ outside a set of codimension at least two, where  $\cA / T_{K^\circ}$ is the quotient of the $\cA$ variety under the $T_{K^\circ}$-action. 
	
	Then the corresponding GPS scattering diagram of  $\widetilde{\mathrm{TV}}_{\overline{\Sigma}_{\seed, \cA_k}}$ is the  ${}^L\cX_{e, \seed}$ scattering diagram, where ${}^L\cX_{e}$ is the fiber of the Langlands dual $\cX$ family ${}^L\cX \rightarrow T_{({}^LK)^*}$ at $e \in T_{({}^LK)^*}$. 
\end{theorem}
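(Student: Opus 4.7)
The plan is to prove the theorem in two largely independent steps: first, realize $\cA/T_{K^\circ}$ as the log Calabi-Yau surface $U_{\seed, \cA_k}$ (up to codimension two) by following the Gross-Hacking-Keel blowup construction; second, compute the initial GPS scattering diagram for this surface and match it term-by-term with the initial ${}^L\cX_{e,\seed}$ scattering diagram, so that consistency (Theorem \ref{consistency}) forces the two full scattering diagrams to coincide.

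For the first step, I would start from the observation that the inclusion $K^\circ \hookrightarrow N^\circ$ gives a $T_{K^\circ}$-action on each seed torus $\cA_\seed$ which commutes with the $\cA$-mutations, and therefore descends to an action on all of $\cA$. Taking pointwise quotients produces tori $\cA_{k,\seed} = \Spec\Bbbk[\bar M^\circ]$ glued by the descended birational maps
$$\bar\mu_k^* z^{\bar m} = z^{\bar m}\bigl(1+z^{\bar v_k}\bigr)^{-\langle d_k \bar e_k,\bar m\rangle},$$
which define a scheme $\cA_k$ that is a model of $\cA/T_{K^\circ}$. Under our hypothesis that the classes $p^*(e_i)$ are pairwise non-proportional, the rays $\R_{\geq 0} d_i \bar e_i$ in the fan $\overline\Sigma_{\seed,\cA_k}$ are all distinct, and hence \cite[Lemma 3.5(i), Lemma 3.6]{ghk_bir} applies to identify $U_{\seed,\cA_k}$ with $\cA_k$ outside a codimension-two subset, and to show that this identification is independent of the choice of seed (the ambiguity is absorbed by the mutation equivalence of toric models).

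For the second step, I would compute the initial walls of $\mathfrak{D}^{GPS}_{\overline\Sigma,\seed}$ directly. The blowup locus $\bar Z_i = \bar D_i \cap \bar V\bigl((1+z^{\bar v_i})^{\mathrm{ind}\,d_i\bar e_i}\bigr)$ consists, over an algebraically closed field of characteristic zero, of $\mathrm{ind}\,\bar v_i$ distinct points on $\bar D_i$. Writing $d_i\bar e_i = (\mathrm{ind}\,d_i\bar e_i)\,q_i$ with $q_i$ primitive, each such point contributes a wall of the form $(\R\cdot d_i\bar e_i,\, 1+z^{d_i\bar e_i})$: in the case $\mathrm{ind}\,d_i\bar e_i = 1$ this is the simple non-toric blowup treated in Section \ref{subsec:GPS} with $t^{-[E]}$ specialized to $1$, and in the case $\mathrm{ind}\,d_i\bar e_i > 1$ it is the orbifold blowup formula from Section \ref{sympl orbifold} (equivalently \cite[Theorem 5.6]{GPS}). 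Multiplying the $\mathrm{ind}\,\bar v_i$ contributions on the ray $\R\cdot d_i\bar e_i$ gives the initial wall
$$\Bigl(\R\cdot d_i\bar e_i,\;(1+z^{d_i\bar e_i})^{\mathrm{ind}\,\bar v_i}\Bigr).$$
On the Langlands dual side, substituting into \eqref{eq:LangXdiag} and slicing the $\fD^{{}^L\cX}_{{}^L\seed}$ diagram along the fibre over the identity $e \in T_{({}^LK)^*}$ produces initial walls of exactly the same shape, so the two initial data match verbatim. Theorem \ref{consistency} then upgrades this equality to an equality of full consistent scattering diagrams, $\mathfrak{D}^{GPS}_{\overline\Sigma,\seed} \cong \mathfrak{D}^{{}^L\cX_e}_{{}^L\seed}$.

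The main obstacle will be the orbifold/index bookkeeping in step two: one must carefully separate the contribution of the index of $d_i\bar e_i$ in $\bar N^\circ$ (which governs the orbifold structure of a \emph{single} blowup point and hence determines the exponent of $z^{d_i\bar e_i}$ in \emph{each} factor) from the index of $\bar v_i$ in $\bar M^\circ$ (which counts the \emph{number} of blowup points on $\bar D_i$, and hence the overall power). Verifying that the orbifold Floer computation of Section \ref{sympl orbifold} indeed produces the wall-function $1+z^{d_i\bar e_i}$ (rather than $1+z^{q_i}$ or some primitive version) in each case is where the argument is most delicate, and where the choice to pass through the $\bar v_i$-weighted counting is essential. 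Once this is settled, the reduction step handling the case where some $p^*(e_i)$ are scalar multiples of each other—replacing the fixed data by a lower-rank one in which individual mutations in those directions are combined, as noted in the discussion preceding the theorem—preserves the quotient $\cA/T_{K^\circ}$ and completes the proof. As a final remark, combining this identification with Theorem \ref{correspondence: scattering diagram} promotes the equivalence to a Floer-theoretic mirror statement, realizing $\cA/T_{K^\circ}$ as the SYZ/Floer mirror of ${}^L\cX_{e,\seed}$.
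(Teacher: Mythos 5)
Your proposal is correct and follows essentially the same route as the paper: identify $\cA/T_{K^\circ}$ with $U_{\seed,\cA_k}$ via the descended mutations and \cite[Lemmas 3.5(i), 3.6]{ghk_bir}, then compute the initial GPS walls point-by-point on $\bar Z_i$ (simple versus orbifold blowup, carefully separating $\mathrm{ind}\,d_i\bar e_i$ from $\mathrm{ind}\,\bar v_i$) and invoke the uniqueness of the consistent completion (Theorem \ref{consistency}) to match with the initial ${}^L\cX_{e,\seed}$ walls from \eqref{eq:LangXdiag}. Even your closing remarks, on reducing to pairwise non-proportional $p^*(e_i)$ and on combining with Theorem \ref{correspondence: scattering diagram} for the Floer-theoretic statement, mirror the paper's discussion.
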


\begin{remark}
	If $\mathrm{can}({}^L \cX_e)$ are finitely generated algebras, we obtain the mirror algebras of $\cA / T_{K^{\circ}}$ are rings of regular functions of ${}^L\cX_{e}$. 
\end{remark}

\begin{remark}
We use the algebraic geometric enumerative invariants (from Gross-Siebert-Pandharipande) of $\widetilde{\mathrm{TV}}_{\overline{\Sigma}_{\seed, \cA_k}}$ to recover the $\mathcal{X}$-scattering diagram of ${}^L\mathcal{X}_{e,\seed}$ in Theorem \ref{thm:cluster}. 
If we further assume $d_i \bar{e}_i$ are primitive in $N^{\circ}/ K^{\circ}$, i.e. excluding the case about orbifold blowup, we can recover ${}^L\mathcal{X}_{e,\seed}$ from the open Gromov-Witten invariants of $\widetilde{\mathrm{TV}}_{\overline{\Sigma}_{\seed, \cA_k}}$ by Theorem \ref{thm: main}. 
\end{remark}

\subsection{Example}

Let us consider an example with $\rank \ N = 3$ and the skew symmetric form is of the form 
\begin{equation*}
\begin{pmatrix}
0 & 1 & -1 \\
-1 & 0 & 1 \\
1 & -1 & 0
\end{pmatrix},
\end{equation*}
and $d_i=1$ for all $i$. Note that since $d_i=1$ for all $i$, the fixed data would be the same for the Langlands dual side. 
Consider the initial seed $\seed =\{ e_i\}$, where $e_i$ are the standard basis of $\Z^3$. Then we have
\begin{align*}
	\Sigma_{\seed, \cA} &= \{0\} \cup \R_{\geq 0} (1,0,0)\cup \R_{\geq 0} (0,1,0)\cup \R_{\geq 0} (0,0,1), \\
	\Sigma_{\seed, \cX} &= \{0\} \cup \R_{\geq 0} (0,-1,1)\cup \R_{\geq 0} (1,0,-1)\cup \R_{\geq 0} (-1,1,0).
\end{align*}

Let $\mathrm{TV}_{\seed, \cA}$ be the toric variety associated to the fan $\Sigma_{\seed, \cA}$. 
One can see immediately that $\mathrm{TV}_{\seed, \cA} = \bA^3 \setminus \{$codimension 2 toric strata$\}$. 
In this example, we have $K = \ker p^*= \langle e_1 + e_2 + e_3\rangle \subseteq N$.
We then consider $K^{\circ} = K \cap N^{\circ} \subset N^{\circ}$. 
In this example $K^{\circ} = K$.  
The $T_{K^{\circ}}$ action is of the form
\[
\alpha \cdot (A_1, A_2, A_3) = (\alpha A_1,\alpha A_2, \alpha A_3),
\] 
where $\alpha \in \C^*$. 
One can see that this is the standard torus action to obtain $\PP^2$ from $\bA^3$. 
In this case, $\mathrm{TV}_{\seed, \cA} / T_{K^{\circ}}$ will give us $\PP^2 \setminus \{$3 points$\}$. 
Alternately, consider the quotient map $N^{\circ} \rightarrow N^{\circ}/K^{\circ}$. Then the imagine of the fan $\Sigma_{\seed, \cA}$ is of the form
\[
\overline{\Sigma}_{\seed, \cA} = \{0\} \cup \R_{\geq 0} \bar{e}_1\cup \R_{\geq 0} \bar{e}_2 \cup \R_{\geq 0} \cdot(-\bar{e}_1-\bar{e}_2).
\]
Hence the toric variety is the projective space $\PP^2$ up to codimension 2. 
The space we considered would be $\PP^2$ blow up at the 3 points. 
The corresponding initial scattering  diagram would be
\begin{figure}[H]
	\centering
	\begin{tikzpicture}
	\draw	(0,-1) -- (0,1)node[left] {$1+z^{\bar{e}_2}$};
	\draw	(-1,0) -- (1,0)node[right] {$1+z^{\bar{e}_1}$};
	\draw	(1,1) -- (-1,-1)node[below] {$1+z^{-\bar{e}_1-\bar{e}_2}$};
	\end{tikzpicture}
\end{figure}
It is easy to see these are the initial walls of the ${}^L\cX_e$ scattering diagram of the dual seed of $\seed$.

\section{Mirror Symmetry}\label{sec:msdp5cal}

In this section, we focus on the concrete example of the del Pezzo surface of degree $5$, and show that the Landau-Ginzburg model constructed by disc counting fulfills closed-string mirror symmetry. For more general examples beyond this, the calculation of critical points becomes too complicated to carry out by hands. We speculate that there is a more effective argument that avoids a direct calculation. The main theorem in this section is the following.

\begin{theorem}\label{thm:maindp5}
Let $\widetilde{Y}$ be a del Pezzo surface of degree $5$ represented by the non-toric blowup of a toric surface $\bar{Y }$ as in Figure \ref{fig:dp5base} (note that $\widetilde{Y}$ with the symplectic form from Lemma \ref{Kahler form} is not monotone), and let $(\check{Y}, W)$ be its LG mirror constructed from the SYZ fibration on $\widetilde{Y}\setminus \widetilde{D}$ (see Section \ref{sec:constsyznon-toric}). Then the quantum cohomology ring of $\widetilde{Y}$ is isomorphic to the Jacobian ideal ring of $W$.
\end{theorem}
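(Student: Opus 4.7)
My plan is to produce explicit formulas for both sides of the claimed isomorphism and then match them directly. The first step is to write down the scattering diagram $\mathfrak{D}^{GPS}$ associated with the toric model $\bar{Y} \to \widetilde{Y}$ and identify the chamber structure on $M_\R$. Since $\widetilde{Y}=dP_5$ can be realized as a non-toric blowup of a small toric surface with only a handful of non-toric blowup points (one on each of four toric divisors, say), the rays of $\mathfrak{D}^{GPS}$ and their wall functions are completely determined by an explicit, finite scattering calculation modulo sufficiently high powers of the Novikov parameter; the relevant $\A^1$-curve counts on $dP_5$ are well-known. I will then fix a chamber $\mathcal{C}$ sufficiently deep in the base and enumerate all broken lines ending at a generic point of $\mathcal{C}$.

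The second step is to compute $W$ in $\mathcal{C}$ by applying Theorem~\ref{thm: main} (the tropical/holomorphic correspondence for Maslov index~$2$ discs) followed by Lemma~\ref{blow down} to blow down any toric exceptional divisors and pass to $(Y,D)$ if needed. This yields $W$ as an explicit Laurent polynomial in $z_1,z_2$ over $\Lambda_0$ with one monomial $n_\beta\,T^{\omega(\beta)}z^{\partial\beta}$ per broken line. The third step is to write down the Jacobian ring
\[
\mathrm{Jac}(W)\;=\;\Lambda\bigl[z_1^{\pm1},z_2^{\pm1}\bigr]\Big/\Bigl\langle z_1\tfrac{\partial W}{\partial z_1},\; z_2\tfrac{\partial W}{\partial z_2}\Bigr\rangle
\]
and verify, by direct Gr\"obner-basis/resultant computation, that it is a free $\Lambda$-module of rank $7=\dim_\C H^\ast(dP_5;\C)$. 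The non-degeneracy of the critical points (Morse-ness of $W$ after a generic perturbation of K\"ahler classes) guarantees semisimplicity, matching the known semisimple quantum cohomology of $dP_5$.

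The final step is the ring-theoretic identification. I will use the Batyrev-style presentation of $QH^\ast(dP_5)$ as a deformation of its classical cohomology, where the generators are the classes of the $(-1)$-curves (or equivalently of the proper transforms of the toric divisors and exceptional divisors under $\bar{Y}\to\widetilde{Y}$) modulo linear and quantum-corrected quadratic relations. The tropical derivation of $W$ assigns to each $(-1)$-curve class a theta-like monomial in $\mathrm{Jac}(W)$ via the identification \eqref{eqn:thetadinbeta}. The map $[D_i]\mapsto \theta_{[D_i]}$ extends to a $\Lambda$-algebra homomorphism from the classical cohomology ring into $\mathrm{Jac}(W)$, and matching areas on both sides (K\"ahler class of $\widetilde{Y}$ versus Novikov weights in $W$) shows that the quantum-corrected relations in $QH^\ast$ are precisely the relations $z_i\partial_{z_i}W=0$ in $\mathrm{Jac}(W)$. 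Combined with the rank match above, this gives the desired isomorphism.

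The main obstacle will be the ring-theoretic comparison in the last step, since $\widetilde{Y}$ is not monotone for our symplectic form (this is why Seidel's or Pascaleff-Tonkonog's framework does not apply directly) and one cannot simply quote an existing closed-string mirror symmetry statement. In particular, we must verify that the broken-line counts carry enough quantum information to recover all quadratic structure constants of $QH^\ast(dP_5)$, not merely the linear ones encoded in the theta functions $\theta_{[D_i]}$. I expect this to reduce to a finite check after choosing appropriate divisor and symplectic class degenerations that express the $A_4$-type symmetry of $dP_5$ within the scattering diagram, but the combinatorics of the five-term relations in $\mathfrak{D}^{GPS}$ may require care.
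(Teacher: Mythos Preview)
Your plan has a genuine gap in how you model the mirror $\check{Y}$, and separately an unnecessary complication in the final ring-theoretic step.

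\textbf{The gap.} You propose to compute $\mathrm{Jac}(W)$ as $\Lambda[z_1^{\pm1},z_2^{\pm1}]/\langle z_i\partial_{z_i}W\rangle$ in a single torus chart. If you actually carry out this resultant computation with the potential \eqref{eqn:winonechamlim}, you will find \emph{eight} critical points, not seven: besides the five coming from a degree-$5$ equation and two more with $z_1+z_2+1=0$, there is an extra solution on $z_1+z_2=0$. The paper's point is that $\check{Y}$ is \emph{not} the algebraic torus $\mathrm{Spec}\,\Lambda[z_1^{\pm1},z_2^{\pm1}]$: it is a rigid-analytic space glued from the five chamber charts (each with valuation constraints) together with two additional $(\Lambda_+)^2$-charts coming from the immersed nodal fibres $\mathbb{L}_1,\mathbb{L}_2$. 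On this correctly assembled $\check{Y}$, the spurious eighth critical point lies outside every chart and is discarded, while the two critical points from $z_1+z_2+1=0$ have $\mathrm{val}(z_i)=0$ and hence are \emph{not} visible in any torus chamber---they only become geometric after passing to the $(u_i,v_i)$ immersed coordinates via \eqref{eqn:uv1cc2}, \eqref{eqn:uv2cc2}. Your Gr\"obner computation in one Laurent chart cannot see this; it will give the wrong rank and you will have no principled way to throw away the extra point or to justify keeping the two boundary ones.

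\textbf{The unnecessary step.} Once you know $\check{Y}$ carries exactly seven nondegenerate critical points, you do not need the generator-and-relation matching you outline in your final paragraph. Since $QH^\ast(dP_5)$ is known to be semisimple of rank $7$ (Bayer--Manin), and the Jacobian ring of a Morse function with seven nondegenerate critical points is likewise a product of seven copies of the ground field, the two are isomorphic as $\Lambda$-algebras without any further comparison of structure constants. This is exactly how the paper proceeds: the entire content of the proof is the critical-point analysis on the enlarged $\check{Y}$, together with an elementary verification of nondegeneracy. Your worry about recovering quadratic structure constants from broken lines is therefore moot.
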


We will give a concrete description of the LG mirror $(\check{Y},W)$ in \ref{subsec:lgmdp5}. The proof of Theorem \ref{thm:maindp5} is essentially to show that $W$ only has on $\check{Y}$ as many non-degenerate critical points as the rank of the cohomology of $\widetilde{Y}$, since $QH(\widetilde{Y})$ is known to be semi-simple \cite{BM}. This will be done in \ref{eqn:winonechamlim}.

Recall from Section \ref{subsec:LGmirrorY} that once we realize the surface as a non-toric blowup of a toric surface, we automatically obtain a LG model whose chamber structure is recorded in the scattering diagram $\mathfrak{D}^{LF}$. However, this LG model is not enough to capture the full mirror. 
In fact, it is crucial to enlarge it to include nodal fibres, as the mirror potential would miss a few geometrically meaningful critical points otherwise. Recall that in the local form of the SYZ fibration on $X$ \eqref{eqn:locfibbu}, the nodal fibre is given as $|x| = |x(q_i)|$ and $\mu_{S^1} = \epsilon_i$ where $q_i$ is the blowup point, and $\epsilon_i$ is the value of $\mu_{S^1}$ at the isolated $S^1$-fixed point $\tilde{q}_i$ in the exceptional divisor $E_i$. See Figure \ref{fig:loc_nontor}. We remark that even if the loop $|x| = |x(q_i)|$ in the reduction is slightly deformed, it still gives an immersed Lagrangian $S^2$ as long as it passes through $x(q_i)$. This way we have a small freedom in choosing the valuation of this Lagrangian (along the direction perpendicular to the divisor $\{x=0\}$).

\begin{figure}[h]
	\begin{center}
		\includegraphics[scale=0.5]{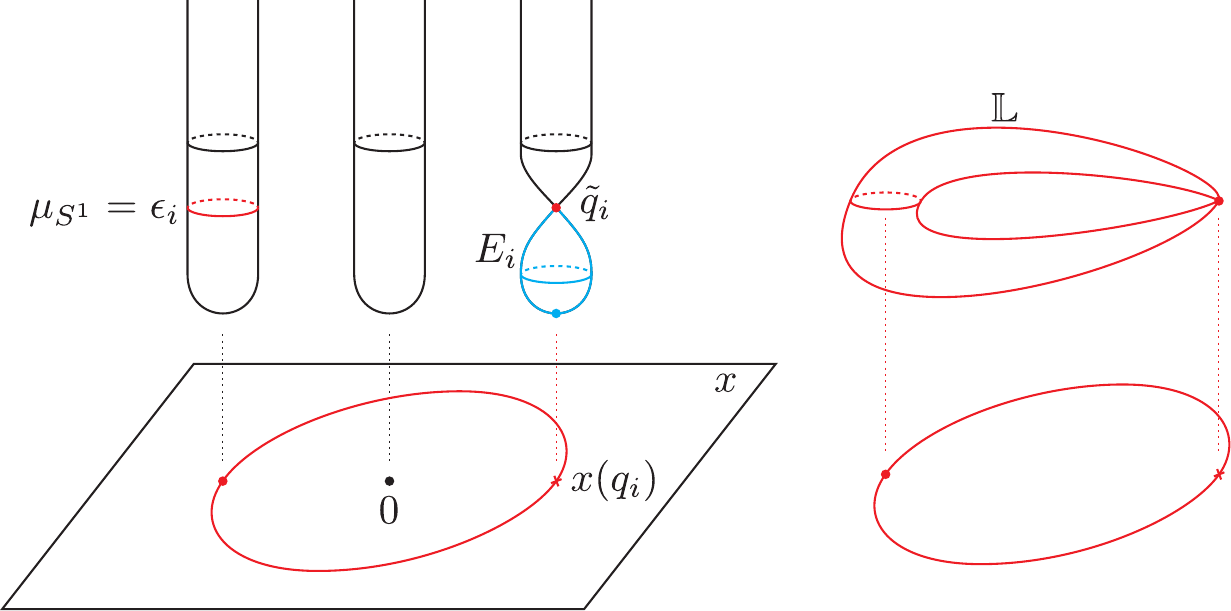}
		\caption{The nodal fibre in the local SYZ fibration on a non-toric blowup}
		\label{fig:loc_nontor}
	\end{center}
\end{figure}

\subsection{Degree 5 del Pezzo surface and its LG mirror}\label{subsec:lgmdp5}

We realize the degree $5$ del Pezzo surface $\widetilde{Y}$ as the blowup of $\mathbb{P}^2$ at two of the torus-fixed points and the other two on the interior of the toric boundary. The toric blowups correspond to chopping off two corners of the moment polytope of $\mathbb{P}^2$ as shown in Figure \ref{fig:dp5base}, and the resulting surface $\bar{Y}$ is a (toric) del Pezzo surface of degree $7$. We set $(a',0)$ and $(0,b')$ to be the locations of the latter two blowup points in the moment polytope of $\mathbb{P}^2$. 

The non-toric blowup in this case produces $\mathfrak{D}^{LF}$ which coincides with the $A_2$-scattering diagram. It has five chambers that support different mirror potentials, which agree with each other after suitable cluster transforms.
For instance, the shaded chamber in Figure \ref{fig:dp5base} (that contains the origin)\footnote{To be more precise, the genuine scattering diagram should be drawn on the $\log$-base.} carries the potential given by
\begin{equation}\label{eqn:winonecham}
W = z_1 + z_2 + (T^a + T^{c \pm \epsilon_1}) \frac{1}{z_1} + (T^b + T^{c \pm\epsilon_2} ) \frac{1}{z_2} + T^{a \pm \epsilon_1} \frac{z_2}{z_1} + T^{b \pm \epsilon_2} \frac{z_1}{z_2} + T^c \frac{1}{z_1 z_2}
\end{equation}
For computational simplicity, we will consider the limiting situation in which $\epsilon_1= \epsilon_2=0$ from now on. 
On the adjacent chambers, the potential takes the form of
$$W_{up}=z_1'  + (1+T^a) z_2' + z_1' z_2' + (T^a + T^c  ) \frac{1}{z_1'}  + T^{b} \frac{1}{z_2'} + T^a \frac{z_2'}{z_1'} + T^{c} \frac{1}{z_1' z_2'},$$
$$W_{right}=(1+T^b)z_1'' + z_2''  +z_1'' z_2'' + T^{a} \frac{1}{z_1''} + (T^b + T^c ) \frac{1}{z_2''} +T^b\frac{z_1''}{z_2''} + T^{c} \frac{1}{z_1 z_2},$$
 where the corresponding coordinate changes are given by
 \begin{equation}\label{eqn:coordchangesdp5}
 z_1=z_1'=z_1''(1+z_2'')^{-1}, \quad z_2=z_2'(1+z_1')^{-1} = z_2'' 
 \end{equation}
These coordinates cover the region
\begin{equation}\label{eqn:regiondp5m}
\begin{array}{c}
0<val(z_1) <a', \,\, 0<val(z_2) <b' \\
0<val(z_1')<a', \,\, b'<val(z_2') < b \\
a'<val(z_1'')<a, \,\, 0<val(z_2'')<b' 
\end{array} 
\end{equation}
(the same inequalities also describe corresponding chambers drawn in Figure \ref{fig:dp5base}). The computation for the other two chambers can be done similarly, and is omitted. By the consistency of $\mathfrak{D}^{LF}$, the five local charts are glued without ambiguity.   
  In order to glue these local LG models, one needs to perturb slightly a complex structure to make local patches overlapped, and use the same coordinate transition as given in \eqref{eqn:coordchangesdp5}. Notice that \eqref{eqn:coordchangesdp5} preserves the $T$-adic valuation.

\begin{figure}[h]
	\begin{center}
		\includegraphics[scale=0.5]{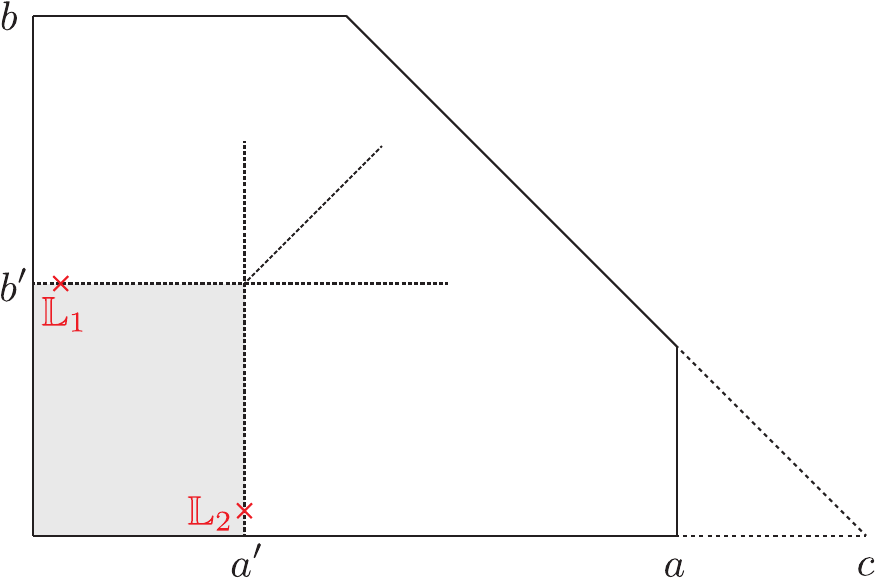}
		\caption{The degree $5$ del Pezzo surface as a non-toric blowup}
		\label{fig:dp5base}
	\end{center}
\end{figure}

In what follows, we enlarge the above LG model to obtain the full mirror $(\check{Y},W)$ by adding two more charts, each isomorphic to $(\Lambda_+)^2$.
Observe that the fibration comes with two immersed Lagrangians $\mathbb{L}_1$ and $\mathbb{L}_2$. We denote by $U_i$ the immersed generators of the Floer complex of $\mathbb{L}_i$ whose associated smoothing produces the torus fibre sitting in the shaded chamber in Figure \ref{fig:dp5base}. Let $V_i$ denote the immersed generator complementary to $U_i$.
For $u_i,v_i \in \Lambda_+^1$, one can take a linear combination $b_i = u_i U_i + v_i V_i$ for $\mathbb{L}_i$, and boundary-deform $\mathbb{L}_i$ into a new Lagrangian brane $(\mathbb{L}_i,b_i)$. One should think of $(\mathbb{L}_i,b_i)$ as a family of objects in the Fukaya category parametrized by the weak Maurer-Cartan space of $\mathbb{L}_i$. It is shown in \cite[3.1]{HKL} that $b_i=(u_i,v_i)$ solves the weak Maurer-Cartan equation for $\mathbb{L}_i$ using reflection symmetry.

Observe that we have a freedom to choose $\mathbb{L}_i$ by adjusting the size of the base circle drawn in Figure \ref{fig:loc_nontor} (since the circle does not have to be concentric about $0$). By suitably choosing its size, one may assume that $(\mathbb{L}_i,b_i=(u_i,v_i))$ is  quasi-isomorphic to the Lagrangian torus fibre represented 
by $z_1=T^{x_1} \rho_1, z_2 = T^{x_2} \rho_2$ (with $x_i=val(z_i)$ and $val(\rho_i)=0$) where $(u_1,v_1)$ and $(z_1,z_2)$ are related by the coordinate change
\begin{equation}\label{eqn:uv1cc1}
u_1 v_1 -1 = \rho_1,\quad u_1 = T^{x_2 - b''} \rho_2.
\end{equation}
Here, $b''$ is the symplectic area of the holomorphic disc bounded by $\mathbb{L}_1$ whose smoothing at the $U_1$-corner lies in the same class as the $z_2$-disc. While it can be chosen to be very close to $b''$, a further adjustment may be needed in order to have a genuine torus fibre satisfying $val(u_1) =x_2 -b'' >0$ (note that we do have such a flexibility in the construction of $\mathbb{L}_1$). One can rewrite \eqref{eqn:uv1cc1} as
\begin{equation}\label{eqn:uv1cc2}
u_1 v_1 -1 = T^{-val(z_1)} z_1,\quad u_1 = T^{-b''} z_2.
\end{equation}
Analogously defined coordinates $(u_2,v_2)$ on the weak Maurer-Cartan deformation space of $\mathbb{L}_2$ are related with others by
\begin{equation}\label{eqn:uv2cc2}
u_2 v_2 -1 =   T^{-val(z_2)} z_2,\quad u_2  = T^{-a''} z_1.
\end{equation}

Notice that there exists $(u_1,v_1)$ satisfying \eqref{eqn:uv1cc2} as long as $ val(T^{val(z_1)} +1) > val(z_2) - b''$ and $val(z_2) > b''$ regardless of whether or not $(z_1,z_2)$ corresponds to an actual torus fibre, and the similar is true for $(u_2,v_2)$.
For instance, $(z_1,z_2)$ with $val(z_1)=0$ or $val(z_2)=0$ can represent a geometric object that is supported over an immersed Lagrangian. Obviously, torus fibres themselves cannot have this feature. In other words, the chart $(\Lambda_+)^2$ from $(u_i,v_i)$ non-trivially enlarges the original region parametrized $(z_1,z_2)$ that supports torus fibres. 

One can argue similarly to find relations between this $(\Lambda_+)^2$-chart with the chambers adjacent to  the shaded one in Figure \ref{fig:dp5base}, which are compatible with  \eqref{eqn:uv1cc2} and \eqref{eqn:uv2cc2} (see \cite[3.2]{HKL} for more details).
We conclude that $\mathbb{L}_i$ strictly extends the LG model obtained from torus fibres, i.e. the LG model constructed by gluing \eqref{eqn:winonecham} with (local) ones from the other four chambers in Figure \ref{fig:dp5base}. In summary, the domain of the mirror LG model $\check{Y}$ are glued from the five open subsets given as in \eqref{eqn:regiondp5m} (two of them were omitted in the equation) together with two $(\Lambda_+)^2$-charts parametrized by $(u_1,v_1)$ and $(u_2,v_2)$ via the transition maps given in \eqref{eqn:uv1cc2} and \eqref{eqn:uv2cc2}, respectively.

\begin{remark}\label{rmk:cubicrel}
If two blowup points are close enough, then $\mathbb{L}_1$ and $\mathbb{L}_2$ can be made quasi-isomorphic to each other after small perturbation (shifting the corresponding base circles and adjusting their sizes as before). Assuming the convergence, we may put $T=1$ to work over $\C$. Then one can derive from \eqref{eqn:uv1cc2} and \eqref{eqn:uv2cc2} the following coordinate transition between complex coordinates $(u_1,v_1)$ and $(u_2,v_2)$ 
$$u_1 v_1 - 1 =  u_2, \quad  u_2 v_2 -1 =  u_1.$$
 Setting $x= u_1, y=-v_1, z=v_2$, we have $-xy-1 = u_2 =( x+1) z^{-1}$,
$$ xyz  +x+z+1 =0,$$
which describes the $A_2$ cluster variety as an affine hypersurface. 
\end{remark}

\subsection{Critical points of the mirror potential}\label{subsec:critdp57}

Let us now compute the critical points of the potential 
\begin{equation}\label{eqn:winonechamlim}
W = z_1 + z_2 + (A+C) \frac{1}{z_1} + (B+C) \frac{1}{z_2} + A \frac{z_2}{z_1} + B\frac{z_1}{z_2} + C \frac{1}{z_1 z_2},
\end{equation}
where we put $A:=T^a$, $B:=T^b$ and $C:=T^c$ for simplicity.
Elementary computation tells us that the critical points of $W$ should satisfy 
\begin{equation}\label{eqn:partder12}
\begin{array}{c}
 z_1^2 z_2- (A+C) z_2 - A z_2^2 + B z_1^2 - C  =0,\\
 z_1 z_2^2 - (B+C) z_1 - B z_1^2 + A z_2^2 - C =0.
 \end{array}
\end{equation}
Subtracting the bottom multiplied by $z_1$ from the above multiplied by $z_2$, we obtain
$$  (z_1 + z_2 +1) (z_1(C + Bz_1) -z_2(C + A z_2)) =0.$$
We then proceed by dividing into two cases depending on which of the two factors vanishes.

\vspace{0.3cm}

\noindent{\bf Case 1:} If $z_1 + z_2 +1 =0$, then the sum of two equations in \eqref{eqn:partder12} implies $ z_1 z_2 + B z_1 + A z_2 + C = 0$, and hence  
$$  z_1^2+  (A-B +1) z_1 +A -C =0.$$
Thus we obtain two solutions
$$ z_1= \frac{-A+B-1 \pm \sqrt{1  -2A - 2B  - 2AB +A^2 + B^2 + 4C}}{2}$$
$$ z_2= \frac{A-B-1 \mp \sqrt{1  -2A - 2B  - 2AB +A^2 + B^2 + 4C}}{2}$$
Using the expansion
$$\sqrt{1  -2A - 2B  - 2AB +A^2 + B^2 + 4C} = 1-A -B + T^{higher},$$
one of the solutions takes the form of
$$(z_1,z_2) = (-1 +B + T^{higher}, -B + T^{higher} ),$$
but $val(z_1)=0$ is not valid if we restrict to torus fibres only. However, transferring to $(u_1,v_1)$-coordinate charts via
$$u_1 v_1 -1   = z_1,\quad u_1 =  T^{-b'} z_2,$$
we have a legitimate Lagrangian brane supported over the immersed Lagrangian $\mathbb{L}_1$, since $val(u_1)>0$ and $val(v_1)>0$.
Likewise, the other solution
$$(z_1,z_2) = (-A + T^{higher}, -1 + A +T^{higher})$$
does not make sense in the original coordinates from torus fibres, but making use of
$$u_2 v_2 -1  = z_2,\quad u_2 =   T^{-a'} z_1,$$
one can replace it by the Lagrangian brane supported over $\mathbb{L}_2$.

\vspace{0.3cm}

\noindent{\bf Case 2:}
 Let us now look into the case of $z_1 (C+Bz_1) - z_2 (C+A z_2) = 0$.
We introduce an extra variable $\lambda$ to keep the symmetry of the equation,
\begin{equation}\label{eqn:introlambdaab}
(Bz_1 + C) =\lambda z_2, \quad (Az_2 + C) =\lambda z_1
\end{equation}
which makes sense since neither of $z_1, z_2$ is zero.
From the sum of two equations in \eqref{eqn:partder12}, we have
$$(z_1 z_2 + 1-\lambda) (z_1 + z_2) = 0.$$
If $z_1+z_2=0$, then $(z_1,z_2)=(\frac{2C}{A-B}, \frac{2C}{B-A})$, whose valuation lives outside the allowed region, nor can it be replaced by appropriate boundary deformation of $\mathbb{L}_i$. Thus this critical point is not geometric in our SYZ setting. 
We speculate that there is \emph{no} nontrivial object in the Fukaya category which represents this point, as otherwise its Floer cohomology with any of torus fibres would vanish.

Therefore geometrically meaningful is to examine the case $\lambda = z_1 z_2 +1$. Making use of \eqref{eqn:introlambdaab}, one can easily find that the critical points are given as
\begin{equation}\label{eqn:critz1z255}
 (z_1,z_2) = \left( \frac{1}{\lambda^2 - AB} (\lambda C + AC), \frac{1}{\lambda^2 - AB} ( \lambda C + BC)\right)
\end{equation}
where $\lambda$ satisfies the degree-5 polynomial equation
$$ \lambda^5 -\lambda^4 - 2 AB \lambda^3 +(2AB-C^2)\lambda^2 + (A^2 B^2 - C^2 (A+B)) \lambda - AB C^2 - A^2 B^2=0.$$
Hence, for generic $A,B$ and $C$, we obtain $5$ mutually distinct critical points. Given the two critical points in the previous case, it only remains to check that the five points are represented by torus fibres.
  Observe that even if the valuation of a critical point escapes the chamber that supports our $W$ \eqref{eqn:winonechamlim}, one can interpret it as the critical point from some other chamber (and vice versa) by applying the valuation-preserving coordinate change \eqref{eqn:coordchangesdp5} as long as its valuation lies in the moment polytope. Note that the Jacobian ideal ring is not affected by coordinate changes.
Therefore it is enough to work with $W$ after enlarging its domain to include the other four chambers and two more $(\Lambda_+)^2$-charts. 

Taking valuations of both sides of $\lambda = z_1 z_2 +1$, 
\begin{equation}\label{eqn:valz1z2lambda}
val(z_1) + val (z_2) = val(\lambda)
\end{equation}
which we will use quite often below. We subdivide the argument into the following four cases.

\begin{itemize}
\item[(i)] If $val(z_1) \leq val (C) - val (B)$ and $val(z_2) \leq val (C) - val (A)$,  then \eqref{eqn:introlambdaab} leads to
$$val(B) + val(z_1) = val(\lambda) + val(z_2)$$
$$val(A) + val (z_2) = val(\lambda) + val(z_1)$$
so that
$$  val(z_1) - val(z_2) = \frac{1}{2} (val(A)-val(B)).$$
On the other hand, combining with \eqref{eqn:valz1z2lambda} we see that
$$  val(z_1) + val(z_2)= val(\lambda)  = \frac{1}{2} (val(A) + val (B)).$$
Hence $val (z_1) = \frac{1}{2} val(A)$ and $val(z_2) = \frac{1}{2} val (B)$, which lie inside the allowed region.

\item[(ii)] If $val(z_1) \leq val (C) - val (B)$ and $val(z_2) > val (C) - val (A)$, we have 
$$val(B) + val(z_1) = val(\lambda) + val(z_2)$$
$$val(C) = val(\lambda) + val(z_1),$$

With \eqref{eqn:valz1z2lambda}, the above implies
$$  val(z_1)=\frac{1}{2} val(C) - \frac{1}{4}val(B), \quad   val(z_2) = \frac{1}{2} val(B).$$
It is not difficult to see that the corresponding point lie inside the region.

\item[(iii)] The case where $val(z_1) > val (C) - val (B)$ and $val(z_2) \leq val (C) - val (A)$ is symmetric to (ii), and we omit.

\item[(iv)] Finally, if $val(z_1)> val (C) - val (B)$ and $val(z_2) > val (C) - val (A)$, then  
$$val(C) = val(\lambda) + val(z_2)$$
$$val(C) = val(\lambda) + val(z_1)$$
Using \eqref{eqn:valz1z2lambda}, we find that 
$$val(z_1) = val(z_2) = \frac{1}{3} val(C).$$
\end{itemize}
Thus all the five critical points given by \eqref{eqn:critz1z255} admit Lagrangian torus fibres with suitable $\Lambda_U$-holonomies. Note, however, that the above calculation does not tell us exactly the chamber which contains a given critical torus fibre. In fact, such a chamber can vary if we change the K\"{a}hler parameter $A,B$ and $C$.

\subsubsection*{Non-degeneracy of the critical points of $W$ and the mirror symmetry for the degree $5$ del Pezzo}

It is well-known that the quantum cohomology of the degree $5$ del Pezzo is semi-simple, and that its rank is $7$ (see \cite{BM}). Therefore, it suffices to show that all the seven critical points of the potential $W$ \eqref{eqn:winonechamlim} are non-degenerate to have Theorem \ref{thm:maindp5}.
The following is an elementary calculation proving their non-degeneracy, which reflects (and is essentially equivalent to) the fact that the choice of generic $A$, $B$ and $C$ in Section \ref{subsec:critdp57} is to achieve the maximal number of critical points of $W$.

Let $f:=\partial_{z_1} W$ and $g:= \partial_{z_2} W$. The non-degeneracy of a critical point of $W$ can be reformulated as the transversality of the intersection between $\{f=0\}$ and $\{g=0\}$. More concretely, we need to check whether or not $df_p$ and $dg_p$ is linearly independent for each intersection point $p$ (i.e. a critical point of $W$). Consider two linear combinations $\lambda_1 f + \lambda_2 g$ and $\eta_1 f + \eta_2 g$ with $\lambda_i$ and $\eta_i$ generic enough in the sense that $\det \left(\begin{array}{cc} \lambda_1 & \lambda_2 \\ \eta_1 & \eta_2 \end{array}\right) $ does not vanish at any $ p \in \{f=0\} \cap \{g=0\}$. Then it is easy to see that the transversality between $\{f=0\}$ and $\{g=0\}$ at $p$ is equivalent to that between $\{\lambda_1 f + \lambda_2 g =0\}$ and $\{ \lambda_1 f + \lambda_2 g\}$ at $p$. 
The strategy is to find such  linear combinations which factor into lower degree polynomials so that checking transversality becomes a bit simpler.

Going back to our situation, we may begin with
\begin{equation}\label{eqn:fandg2}
\begin{array}{c}
f:= z_1^2 z_2- (A+C) z_2 - A z_2^2 + B z_1^2 - C ,\\
 g:= z_1 z_2^2 - (B+C) z_1 - B z_1^2 + A z_2^2 - C 
\end{array}
\end{equation}
as in \eqref{eqn:partder12}. Recall that
$$z_1 f - z_2 g = (z_1 + z_2 +1) (z_1(C + Bz_1) -z_2(C + A z_2)).$$
On the other hand, we have
$$ f+ g = (z_1 + z_2)(z_1 z_2 -C) - Bz_1 - Az_2 - 2C = 0.$$
Set $\alpha:= z_1 + z_2 +1$, $\beta:= z_1(C + Bz_1) -z_2(C + A z_2)$ and $\gamma:= (z_1 + z_2)(z_1 z_2 -C) - Bz_1 - Az_2 - 2C$.
By the above discussion, a sufficient condition for $W$ being Morse is the transversality between $\{\alpha=0\}$ and $\{ \gamma=0\}$ and that between $\{\beta =0 \}$ and $\{ \gamma=0\}$. The former can be checked straightforwardly, whereas the latter involves higher order terms. 
Similarly to before, let us introduce new variables $\lambda=\frac{B z_1 + C}{z_2}$ and $\lambda'=\frac{A z_2 + C}{z_1}$. In these variables, one has
$$\beta = (\lambda - \lambda') z_1 z_2 $$
One can freely replace $\beta$ by $\lambda - \lambda'$, and $\lambda$ can be taken as a coordinate of $\{\beta=0\}$. $(\lambda (z_1,z_2),\lambda' (z_1,z_2) )$ is singular along some affine plane, which finitely many critical points of $W$ can easily avoid by choosing generic parameters.
 
Observe that $\gamma|_{\{\beta=0\}}$ can be rewritten as
$$(z_1 + z_2) (z_1 z_2 - C) -\lambda z_1 - \lambda' z_2  = \gamma = (z_1 + z_2)(z_1 z_2 + 1 -\lambda)$$
with $z_1 = C\frac{\lambda + A}{\lambda^2 - AB}$ and $z_2 =  C\frac{ \lambda + B}{\lambda^2 - AB}$.
Denote the last two factors by $\gamma_1$ and $\gamma_2$, respectively.
Finally, the question of $W$ being Morse boils down to ask if $0$ is a critical value of $\gamma_i|_{\{\beta=0\}}$ for $i=0,1$.
From our earlier discussion in Section \ref{subsec:critdp57}, we can actually ignore $\gamma_1|_{\{\beta=0\}}$ since $0$ is not a critical value for a critical point lying in the domain of definition of $W$. On the other hand, $(\lambda^2 - AB)^2 \gamma_2|_{\{\beta=0\}}$ is explicitly given as
$$  -\lambda^5 +\lambda^4 + 2 AB \lambda^3 +(C^2-2AB)\lambda^2 + (C^2(A+B)-A^2 B^2 ) \lambda + AB C^2 + A^2 B^2,$$
which has five zeroes for generic $A$, $B$ and $C$, and hence $0$ is not a critical for such $A$, $B$ and $C$. 

\begin{bibdiv}
\begin{biblist}
 \bibselect{file001}
\end{biblist}
\end{bibdiv}

\end{document}